\theoremstyle{plain}
\newtheorem{theorem}{Theorem}[section]
\newtheorem{proposition}[theorem]{Proposition}
\newtheorem{lemma}[theorem]{Lemma}
\theoremstyle{definition}
\newtheorem{remark}[theorem]{Remark}
\newtheorem{conditionA}{A\kern-0.1mm}
\DeclarePairedDelimiter\floor{\lfloor}{\rfloor}
\DeclareMathOperator{\supp}{supp}
\numberwithin{equation}{section}
\begin{document}

\title{Berry--Esseen bound and precise moderate deviations for products of random matrices}
\titlemark{Limit theorems for products of random matrices}


\emsauthor{1}{Hui Xiao}{H.~Xiao}
\emsauthor{2}{Ion Grama}{I.~Grama}
\emsauthor{3}{Quansheng Liu}{Q.~Liu}



\emsaffil{1}{Universit\"{a}t Hildesheim, Institut f\"{u}r Mathematik und Angewandte Informatik, Hildesheim, Germany \email{xiao@uni-hildesheim.de}}
\emsaffil{2}{Universit\'{e} de Bretagne-Sud, LMBA UMR CNRS 6205, Vannes, France \email{ion.grama@univ-ubs.fr}}
\emsaffil{3}{Universit\'{e} de Bretagne-Sud, LMBA UMR CNRS 6205, Vannes, France \email{quansheng.liu@univ-ubs.fr}}


\classification[60B20, 60J05]{60F10}


\keywords{Products of random matrices, Berry--Esseen bound, Edgeworth expansion, 
Cram\'{e}r type moderate deviations, moderate deviation principle, spectral gap.}


\begin{abstract}
Let $(g_{n})_{n\geq 1}$ be a sequence of independent and identically distributed (i.i.d.) 
$d\times d$ real random matrices. 
For $n\geq 1$ set $G_n = g_n \ldots g_1$. Given any starting point $x=\mathbb R v\in\mathbb{P}^{d-1}$,  
consider the Markov chain $X_n^x = \mathbb R G_n v $ on the projective space $\mathbb P^{d-1}$ and  
the norm cocycle $\sigma(G_n, x)= \log \frac{|G_n v|}{|v|}$,
for an arbitrary norm $|\cdot|$ on $\mathbb R^{d}$.
Under suitable conditions we prove  
a Berry--Esseen type theorem and an Edgeworth expansion for the couple $(X_n^x,  \sigma(G_n, x))$. 
These results are established using a brand new smoothing inequality on complex plane, 
the saddle point method and additional spectral gap properties of the transfer operator related to the Markov chain $X_n^x$.
Cram\'{e}r type moderate deviation expansions as well as a local limit theorem with moderate deviations
are proved for the couple $(X_n^x, \sigma(G_n, x))$ with a target function $\varphi$ on the Markov chain $X_n^x$.  
\end{abstract}

\maketitle


\section{Introduction}
\subsection{Background and objectives}
For any integer $d\geq 2$, denote by $GL(d, \mathbb{R})$ the 
general linear group of $d\times d$ invertible matrices. 
Equip $\mathbb{R}^{d}$ with any norm $|\cdot|$
 and let $\lVert g \rVert = \sup_{v\in \mathbb{R}^{d}\setminus\{0\}} \frac{|g v|}{|v|}$ 
be the operator norm for $g \in GL(d,\mathbb{R})$.
Denote by $\mathbb{P}^{d-1}$ the projective space of $\mathbb{R}^{d}$.
Let $(g_{n})_{n\geq 1}$ be a sequence of i.i.d. $d\times d$ real random matrices
of the same law $\mu$ on $GL(d,\mathbb{R})$.
For any $n\geq 1$, consider the product $G_n = g_n \ldots g_1$ and the process $X_n^x = \mathbb R G_n v \in \mathbb P^{d-1}$, 
with the starting point $x=\mathbb R v \in \mathbb{P}^{d-1}$.
The norm cocycle is defined by $\sigma(G_n,x)= \log \frac{|G_n v|}{|v|}$, 
where  $x=\mathbb R v \in \mathbb P^{d-1}$.

The study of the asymptotic properties of the Markov chain $(X_n^x)_{n\geq 1}$ and of the product $(G_n)_{n\geq 1}$ 
has attracted a good deal of attention since the 
groundwork of Furstenberg and Kesten \cite{FK60}, where
the strong law of large numbers (LLN) for the operator norm $\lVert G_n \rVert$ has been established. 
In the same context, Furstenberg \cite{Fur63} proved the LLN 
for the norm cocycle $\sigma(G_n,x)$: for any $x\in \mathbb P^{d-1}$,  
\begin{align*}
\lim_{n\to\infty} \frac{1}{n} \sigma(G_n,x)  
= \lim_{n\to\infty} \frac{1}{n} \mathbb E\, \sigma(G_n,x) 
 = \lambda
  \quad  \mathbb{P}\mbox{-a.s.,}
\end{align*} 
where $\lambda$ is a real number called upper Lyapunov exponent associated with the product $G_n$.
Another cornerstone result is the central limit theorem (CLT) for the couple $(X_n^x, \sigma(G_n,x))$, 
established under contracting type assumptions by Le Page \cite{LeP82}:
for any fixed $y \in \mathbb{R}$ and any H\"older continuous function 
$\varphi: \mathbb P^{d-1}\mapsto \mathbb R$, it holds uniformly in $x \in \mathbb{P}^{d-1} $ that
\begin{align*}
\lim_{ n \to \infty } \mathbb{E} 
\Big[  \varphi(X_n^x) \mathds{1}_{ \big\{ \frac{\sigma(G_n,x) - n \lambda }{\sigma \sqrt{n}} \leq y \big\} } \Big]
= \nu (\varphi) \Phi(y),
\end{align*}
where $\nu$ is the unique stationary probability measure of the Markov chain $X_n^x$ on $\mathbb{P}^{d-1}$, 
$ \sigma^2 = \lim_{n\to\infty} \frac{1}{n} \mathbb{E} \big[(\sigma(G_n,x) - n\lambda)^{2} \big]$ is the asymptotic variance
independent of $x$, and $\Phi$ is the standard normal distribution function.
The optimal conditions for the CLT to hold true have been established recently by Benoist and Quint \cite{BQ16a}.  

The next step in these studies is to know how precise are the approximations in  the LLN and the CLT.
The asymptotic of the large deviation probabilities describes  the rate of convergence in the LLN, and 
the Berry--Esseen bound characterizes  that  
in the CLT. For sums of independent random variables these topics 
have been extensively studied over many decades, 
and have been proved to play the key role for many problems in probability theory and mathematical statistics.  
For deep and optimal results in this direction we refer to the 
pioneering works of Cram\'{e}r \cite{Cra38}, 
Esseen \cite{Ess45}, Bahadur and Rao \cite{BR60}, Petrov \cite{Pet65}
and to the monographs of Petrov \cite{Pet75}, Stroock \cite{Str84}, Varadhan \cite{Var84}, 
Dembo and Zeitouni \cite{DZ09} and Borovkov and Borovkov \cite{BB08}. 

For products of random matrices 
the known results about the rate of convergence in the LLN and the CLT are far from being optimal, although
there are already important studies on the topic.
The main goal of the present paper is to fill in this gap 
by proving large deviation asymptotics and Berry--Esseen type bounds which are close to definitive. 
Precise large deviation asymptotics 
originate from the work of Le Page \cite{LeP82} 
and more recently have been considered e.g.\  by 
Guivarc'h  \cite{Gui15}, Benoist and Quint \cite{BQ16b}, Buraczewski and Mentemeier \cite{BM16},  Sert \cite{Ser19}, 
Xiao, Grama and Liu \cite{XGL18}.   
For moderate deviations, very few results are known. 
Benoist and Quint \cite{BQ16b} have recently established the 
moderate deviation principle for reductive groups, which in our setting reads as follows:
for any interval $B \subseteq \mathbb{R}$, and positive sequence $(b_n)_{n\geq 1}$ satisfying
$\frac{b_n}{n}\rightarrow0$ and $\frac{b_n}{\sqrt{n}}\to \infty$ as $n\to\infty$,  
it holds uniformly in $x \in \mathbb{P}^{d-1}$ that
\begin{align} \label{intro MDPintro000}
\lim_{n\to \infty} \frac{n}{b_n^{2}}
\log  \mathbb{P}  \Big( \frac{\sigma(G_n,x) - n\lambda }{b_n} \in B  \Big)
= - \inf_{y\in B} \frac{y^2}{2\sigma^2}.
\end{align}
A functional moderate deviation principle 
has  been established by Cuny, Dedecker and Jan \cite{CDM17}.

The first objective of our paper is to improve on the result \eqref{intro MDPintro000} 
by establishing a Cram\'{e}r type  
moderate deviation expansion for $\sigma(G_n,x)$:
we prove that uniformly in $x\in \mathbb{P}^{d-1}$ and $y \in [0, o(\sqrt{n} )]$, as $n\to\infty$,
\begin{align} \label{ThmNormTarget01b}
\frac{\mathbb{P} \big( \sigma(G_n,x) - n\lambda \geq \sqrt{n}\sigma y  \big) }{1-\Phi(y) }
=  e^{ \frac{y^3}{\sqrt{n}}\zeta ( \frac{y}{\sqrt{n}} ) }
\Big[ 1 + O\Big( \frac{y+1}{\sqrt{n}} \Big) \Big], 
\end{align}
where $t\mapsto \zeta(t)$ is the Cram\'{e}r series of 
the logarithm of the eigenfunction related to the transfer operator of 
the Markov walk associated to the product of random matrices (see Section \ref{SecNormCocy}).

In many important models it is useful 
to extend  the moderate deviation expantion \eqref{ThmNormTarget01b}
for the couple $(X_n^x, \sigma(G_n,x))$
which describes completely the random walk $(G_n v)_{n\geq 1}$.
We prove that, for any H\"{o}lder continuous function $\varphi$ on $\mathbb{P}^{d-1}$,
uniformly in $x\in \mathbb{P}^{d-1}$ and $y \in [0, o(\sqrt{n} )]$, as $n\to\infty$,
\begin{align} \label{ThmNormTarget01c}
  \frac{\mathbb{E}
\big[ \varphi(X_n^x) \mathds{1}_{ \{ \sigma(G_n,x) - n\lambda \geq \sqrt{n}\sigma y \} }  \big] }
{1-\Phi(y) }
=  e^{ \frac{y^3}{\sqrt{n}}\zeta ( \frac{y}{\sqrt{n}} ) }
\Big[ \nu(\varphi) + O\Big( \frac{y+1}{\sqrt{n}} \Big) \Big], 
\end{align}
see Theorem \ref{MainThmNormTarget} for a slightly stronger statement.

Our second objective, which is also the key point in proving \eqref{ThmNormTarget01c}, is 
a Berry--Esseen bound for the couple $(X_n^x, \sigma(G_n,x))$:
for any H\"{o}lder continuous function $\varphi$ on $\mathbb{P}^{d-1}$, as $n\to\infty$,
\begin{align}\label{Intro-BerEs001}
\sup_{x \in \mathbb{P}^{d-1},  \, y \in \mathbb{R} }   
\Big| \mathbb{E}
 \Big[  \varphi(X_n^x) \mathds{1}_{ \big\{ \frac{\sigma(G_n,x) - n \lambda }{\sigma \sqrt{n}} \leq y \big\} } \Big]
-  \nu(\varphi)  \Phi(y) \Big|
  =  O \Big(\frac{1}{\sqrt{n}} \Big),
\end{align}
see Theorem \ref{Thm-BerryEsseen-Norm:s=0}. 
This extends the result of Le Page \cite{LeP82}  established for the particular target function $\varphi = \mathbf{1}$ (see also Jan \cite{Jan01}).
 We further upgrade \eqref{Intro-BerEs001} to an Edgeworth expansion under 
a non-arithmeticity condition, see Theorem \ref{Thm-Edge-Expan:s=0},
which is new even for $\varphi=\mathbf 1$.  

Our third objective is to establish the following local limit theorem with moderate deviations: 
for any real numbers $-\infty < a_1 < a_2 < \infty$,
we have,  uniformly in $x \in \mathbb{P}^{d-1}$ and  $|y| = o(\sqrt{n})$, as $n\to\infty$,
\begin{align}\label{IntroLLT005}
\mathbb{E} 
\Big[ \varphi(X_n^x) \mathds{1}_{ \{ \sigma(G_n,x) - n\lambda \in [a_1, a_2] + \sqrt{n}\sigma y \} } \Big]
= \nu (\varphi) 
\frac{a_2 - a_1}{\sigma\sqrt{2\pi n} }
e^{ -\frac{y^2}{2} +  \frac{y^3}{\sqrt{n}}\zeta(\frac{y}{\sqrt{n}} ) } (1+o(1)).
\end{align}
For a more general version of \eqref{IntroLLT005}, see Theorem \ref{ThmLocal01}, 
where a target function $\psi$ on $\sigma(G_n,x)$ is considered. 
When $|y| =o(n^{1/6})$,  
the term $\frac{y^3}{\sqrt{n}} \zeta(\frac{y}{\sqrt{n}})$ tends to $0$ and can be removed in \eqref{IntroLLT005}.  
In this case, \eqref{IntroLLT005} improves the local limit theorem of \cite[Theorem 17.10]{BQ16b}
established for $|y| = O(\sqrt{\log n})$. 
Local limit theorems with moderate deviations of type  \eqref{IntroLLT005} are used 
for instance in \cite{BQ13} 
for studying dynamics of group actions on finite volume homogeneous spaces. 
As an important application of \eqref{IntroLLT005}
we establish a new local limit theorem with moderate deviations for the operator norm $\lVert G_n\rVert$, 
see Theorem \ref{Thm_LLT_Norm_0a}.

All the results stated above concern invertible matrices, 
but we also establish analogous theorems for positive matrices.
Some limit theorems for $\sigma(G_n,x)$ 
in case of positive matrices such as 
central limit theorem and Berry--Esseen theorem 
have been established earlier by Furstenberg and Kesten \cite{FK60},
Hennion \cite{Hen97}, and Hennion and Herv\'{e} \cite{HH04}.
Here, we extend the Berry--Esseen theorem of \cite{HH04} to the couple $(X_n^x, \sigma(G_n,x))$
with a target function $\varphi$ on the Markov chain $X_n^x$. 
We also complement the results in \cite{FK60, Hen97, HH04}    
by giving a Cram\'er type moderate deviation expansion
and  a local limit theorem with moderate deviations.

The results of the paper can be useful in number of models of growing interest in 
probability and statistics. 
In particular, our study has been motivated by applications 
to branching random walks
and multitype branching processes in random environment;
we refer to 
\cite{BGL20a, BGL20b, GLP20a, GLP20b}
where large deviation asymptotics have been obtained in these settings using the results of this paper.  
For an application to moderate deviations for the operator norm and the 
spectral radius of products of random matrices we refer to \cite{XGL20}. 
Other fields of application include the financial mathematics, among them multidimensional stochastic recursions and perpetuity sequences.

On the other hand with the approach developed in the paper, 
one can also study limit theorems for Markov chains, dynamical systems, 
random walks on hyperbolic groups and homogeneous spaces; 
for these topics we refer to Hennion and Herv\'e \cite{HH01}, 
Parry and Pollicott \cite{ParryPollicottBook90}, Gou\"ezel \cite{Gou09}, 
Guivarc'h \cite{Gui15}, Benoist and Quint \cite{BQhyperbolic16}. 
For example, combining our approach with the techniques from Guivarc'h and Hardy \cite{GH88},
it is possible to obtain extensions of our results to the setting of Anosov's diffeomorphisms 
and more general dynamical systems allowing a coding by mixing sub-shifts. 
As another example, one can establish  
the analogs of the results of the paper 
for Markov chains with compact state spaces. These aspects will  
be not considered here because of the limitation of the length of the paper.

\subsection{Key ideas of the approach}

For the moderate deviation expansions \eqref{ThmNormTarget01b} and \eqref{ThmNormTarget01c}, 
our proof is different from those in \cite{BQ16b} and \cite{CDM17}:
in \cite{BQ16b} the moderate deviation principle \eqref{intro MDPintro000} is obtained by following
the strategy of Kolmogorov \cite{Kol29} suited to show the law of iterated logarithm (see also de Acosta \cite{DeA83} and Wittman \cite{Wit85}); 
in \cite{CDM17} the proof of the functional moderate deviation principle
is based on the martingale approximation method developed in \cite{BQ16a}.

In order to prove \eqref{ThmNormTarget01c}, we need to rework 
the spectral gap theory for the transfer  operators $P_z$ and  $R_{s,z}$, 
by considering the case when $s$ can take values in the interval $(-\eta,\eta)$ with $\eta>0$ small, and $z$ 
belongs to a small complex ball centered at the origin,
see Section \ref{sec:spec gap norm}.
This allows to define the change of measure $\mathbb Q_{s}^{x}$ and to extend 
the Berry--Esseen bound \eqref{Intro-BerEs001} 
for the changed measure $\mathbb Q_{s}^{x}$, see Theorem \ref{Thm-BerryEsseen-Norm}. 
The moderate deviation expansion \eqref{ThmNormTarget01c} is established 
by adapting the techniques from Petrov \cite{Pet75}. 

It is surprising that the proof of the Berry--Esseen bound and of the Edgeworth expansion
with a non-trivial target function $\varphi\not = \mathbf 1$ is way more difficult than the analogous results with $\varphi=\mathbf 1$. 
This can be seen from the sketch of the proof which we  give below.

For simplicity, we assume that $\sigma=1$.
Introduce the transfer operator $P_{z}$: 
for any H\"{o}lder continuous function $\varphi$ on $\mathbb{P}^{d-1}$ and $z \in \mathbb{C}$,
\begin{align} \label{Fourier transf001}
P_{z}\varphi(x)
= \mathbb{E} \Big[ e^{z \sigma(g_1,x) } \varphi(X_1^x) \Big],
\quad   x \in \mathbb{P}^{d-1}.
\end{align}
Let $F$ be the distribution function of $\frac{\sigma(G_n,x) - n \lambda }{\sqrt{n}}$ and
$f$ be its Fourier transform: 
$f(t) = e^{it\sqrt{n} \lambda}(P^n_{{-it}/{\sqrt{n}}} \mathbf{1} ) (x),$ $t\in \mathbb R$.
The Berry--Esseen bound \eqref{Intro-BerEs001} with target function $\varphi= \mathbf{1}$
is usually proved using Esseen's smoothing inequality: 
there exists a constant $C>0$ such that for all $T>0$,
\begin{align}\label{Intro-Berry-Ine}
\sup_{y \in \mathbb{R}} | F(y) - \Phi(y) |
\leq  \frac{1}{ \pi } 
\int_{-T}^T  \Big| \frac{ f(t) - e^{- t^2 /2 } }{ t }  \Big| \,dt  + \frac{C}{T}.
\end{align}
Inserting the spectral gap decomposition 
\begin{align}\label{Rnsw-Decomp}
P^{n}_{z} = \kappa^{n}(z) M_{z} + L^{n}_{z}  \quad  (n \geq 1)
\end{align}
into \eqref{Intro-Berry-Ine} allows us to obtain the Berry--Esseen bound \eqref{Intro-BerEs001}
with $\varphi= \mathbf{1}$:
after some straightforward calculations, it reduces to showing that,
with $T=c\sqrt{n},$ as $n\to\infty$,
\begin{align} \label{crucial001} 
\int_{-T}^T \frac{1}{|t|}\big|(L^{n}_{-it/\sqrt{n}} \mathbf{1}) (x)\big|  \, dt = O\Big(\frac{1}{\sqrt{n}}\Big).
\end{align}
The bound \eqref{crucial001} is proved using 
Taylor's expansion $L^{n}_{z} \mathbf{1}= L^{n}_{0} \mathbf{1} + z\frac{d}{dz} (L^{n}_{z} \mathbf{1}) + o(z)$ with $z=-it/\sqrt{n}$, 
 and  the fact that $L^{n}_{0} \mathbf{1} = 0$. 
 However, when we replace the unit function $\mathbf{1}$ by a target function $\varphi$ for which 
in general $L^{n}_{0} \varphi \neq 0$,  instead of \eqref{crucial001}, we have   
\begin{align}\label{INT_infty001}
\int_{-T}^T \frac{1}{|t|} |L^{n}_{-it/\sqrt{n}} \varphi(x)| \, dt = \infty,
\end{align}
even though $|L^{n}_{0} \varphi(x)|$ 
decays exponentially fast to $0$ as $n \to \infty$.
To overcome this difficulty,
we have elaborated a new approach based on smoothing inequality on complex contours and on the saddle point method, 
see Proposition \ref{Prop-BerryEsseen}.
More precisely, we formulate our smoothing inequality as follows: 
there exists a constant $C> 0$ such that for any $ T \geq r >0$, 
\begin{align}
\sup_{y \in \mathbb{R}} | F(y) - \Phi(y) | 
& \leq \frac{1}{\pi } \sup_{y \leq 0 }  \Big|  \int_{ \mathcal{C}_{r}^{-} } 
\overline f(z) e^{izy}  e^{-i b \frac{z}{T}} \,dz \Big|   +  \frac{1}{\pi } \sup_{y > 0 }   \Big|  \int_{ \mathcal{C}_{r}^{+} }
\overline f(z) e^{izy}  e^{-i b \frac{z}{T}}  \,dz \Big|  \notag\\
& \quad  +  \frac{1}{\pi  } \sup_{y \leq 0 }  \Big|  \int_{ \mathcal{C}_{r}^{-} }
\overline f(z) e^{izy} e^{i b \frac{z}{T}} \,dz \Big| + \frac{1}{\pi  } \sup_{y > 0 }   \Big|  \int_{ \mathcal{C}_{r}^{+} }
\overline f(z) e^{izy} e^{i b \frac{z}{T}} \,dz \Big| \notag \\ 
& \quad  +  \frac{1}{\pi }
\int_{ r \leq |t| \leq T } \big| \overline f(t) \big| \,dt + \frac{2}{\pi T} \int_{-T}^T  \big| t \overline f(t)  \big|  \,dt  +  \frac{C}{T}, 
\label{eqBEint001}
\end{align}
where
$\overline f(z) = \frac{f(z) - e^{- z^2 /2}}{z}$,
$b>0$ is a fixed constant, $\mathcal{C}_{r}^{-}$ and $\mathcal{C}_{r}^{+}$ 
are semicircles in the complex plane given by 
\begin{align*}
\mathcal{C}_{r}^{-} = \{ z \in \mathbb{C}: |z| = r, \Im z <0 \},   \quad
\mathcal{C}_{r}^{+} = \{ z \in \mathbb{C}: |z| = r, \Im z >0 \}. 
\end{align*}
Using the new smoothing inequality, together with the spectral gap property 
\eqref{Rnsw-Decomp}, leads to the estimation of the following integrals: 
\begin{align}
& \int_{ \mathcal{C}_{r}^{+} \cup \; \mathcal{C}_{r}^{-} }
\frac{ \kappa^{n}(z)M_{z} \varphi(x) - e^{- z^2 / 2 } }{ z }
e^{izy} e^{\pm i b \frac{z}{T}} \,dz,  \label{INT_infty002a} \\ 
& \int_{ \mathcal{C}_{r}^{+} \cup \; \mathcal{C}_{r}^{-} }
\frac{ L^{n}_{z} \varphi(x) }{ z } e^{izy} e^{\pm i b \frac{z}{T}} \,dz.  \label{INT_infty002b}
\end{align}
The integral \eqref{INT_infty002a} is handled by using the saddle point method 
choosing a suitable path for the integration in Section \ref{Thm-Edge-Expan-bb}, 
which is one of the challenging parts of the proof.  
For the integral \eqref{INT_infty002b} we use the facts that  
$| L^{n}_{z}\varphi(x) |$ decays exponentially fast as $n\to\infty$ 
and that $|\frac{e^{izy}}{z}| \leq \frac{1}{r}$ for $z\in \mathcal{C}_{r}^{-}$, $y\leq 0$
and $r = c\sqrt{n}$.
In contrast to \eqref{INT_infty001}, the intergral \eqref{INT_infty002b} is bounded  by $Ce^{-cn}$ uniformly in $y$.
The case $y>0$ is treated similarly, which allows us to establish \eqref{Intro-BerEs001}.
Note that the non-arithmeticity condition is not needed for the validity of \eqref{Intro-BerEs001}. 
Under the non-arithmeticity condition, in Theorem \ref{Thm-Edge-Expan:s=0} 
we obtain an Edgeworth expansion for $(X_n^x, \sigma(G_n, x) )$ with the target function $\varphi$ on  $X_n^x$,
which is of independent interest.

\section{Main results} \label{SecMain}
\subsection{Notation and conditions}  \label{SecCondition}

Let $\mathbb N = \{0, 1,2,\ldots\}$ and $\mathbb{N}^* = \mathbb N \!\setminus\! \{0\}$.
The real part,  imaginary part and  the conjugate  of a complex number $z$ are denoted by $\Re z$, $\Im z$ and $\overline z$ respectively.
For $y \in \mathbb{R}$, we write $\phi(y) = \frac{1}{\sqrt{2\pi}}  e^{- y^2 / 2 }$
and $\Phi(y) = \int_{-\infty}^y \phi(t) \,dt$. 
For any $\eta>0$, set $B_\eta(0) = \{ z \in \mathbb{C}: |z| < \eta \}$
for the ball with center $0$ and radius $\eta$ in the complex plane $\mathbb{C}$.
We denote by $c$, $C$, 
positive constants whose values may change from line to line.
By $c_\alpha$, $C_{\alpha}$ we mean positive constants depending only on the index $\alpha.$
We write $\mathds{1}_A$ for the indicator function of an event $A$. 
For a measure $\nu$ and a function $\varphi$ we denote $\nu(\varphi) = \int \varphi \,d\nu$.

For $d\geq 2$, let $M(d,\mathbb{R})$ be the set of $d\times d$ matrices  with entries in $\mathbb R$.
We shall work with products of invertible or non-negative matrices. 
Denote by $\mathscr G = GL(d,\mathbb R)$ the group of invertible matrices of $M(d,\mathbb{R})$. 
A non-negative matrix $g\in M(d,\mathbb{R})$ is said to be \emph{allowable},
if every row and every column of $g$ has a strictly positive entry.
Denote by $\mathscr G_+$ the multiplicative semigroup of allowable non-negative matrices of 
$M(d,\mathbb{R})$,
which will be called simply positive.
We write $\mathscr G_+^\circ $ for the subsemigroup of $\mathscr G_+$ with strictly positive entries.

The space $\mathbb{R}^d$ is equipped  with  any given norm $|\cdot|$. 
Let $\mathbb{P}^{d-1} = \{x=\mathbb Rv:  v \in \mathbb{R}^{d}\setminus\{0\}\}$ 
be the projective space of $\mathbb{R}^{d}$. 
Let $\mathbb{R}^d_+$ be the positive quadrant of $\mathbb{R}^d$, 
and $\mathbb{P}^{d-1}_{+} = \{x = \mathbb R v: v \in \mathbb{R}^{d}_+\setminus\{0\} \}$
be the set of directions corresponding to nonzero vectors in $\mathbb{R}^d_+$.
To unify the exposition, we use the symbol $\mathcal{S}$ to denote
$\mathbb{P}^{d-1}$ in case of invertible matrices
and $\mathbb{P}^{d-1}_{+}$ in case of positive matrices.
For any matrix $g$ in $\mathscr G$ or $\mathscr G_+$ and $x = \mathbb R  v \in \mathcal S$, 
we write $g \cdot x = \mathbb R g v$ for the projective action of $g$ on $\mathcal{S}$.  
The space $\mathcal S$ is endowed with the metric $\mathbf d$:
for invertible matrices, $\mathbf{d}$ is the angular distance, i.e.,
for any $x=\mathbb R v, y=\mathbb R u \in \mathbb{P}^{d-1}$, $\mathbf{d}(x,y)= |\sin \theta(v,u)|$,
where $\theta(v,u)$ is the angle between $v$ and $u$; 
for positive matrices, 
$\mathbf{d}$ is the Hilbert cross-ratio metric, i.e.,
for any $x = \mathbb R v \in \mathbb P^{d-1}_+$ and $y = \mathbb R u \in \mathbb P^{d-1}_+$ with $|v| = |u| =1$,
$\mathbf{d}(x,y) = \frac{ 1- m(v,u)m(u,v) }{ 1 + m(v,u)m(u,v) }$, 
where $m(v,u)=\sup\{\alpha >0 :  \alpha u_i \leq v_i,  \forall i=1,\ldots, d    \}.$
In both cases, 
there exists a constant $C>0$ such that 
\begin{align} \label{Ineq-Distan}
|v-u|\leq C \mathbf{d}(x,y),\quad \mbox{for any} \ x = \mathbb R v, \,  y = \mathbb R u \in \mathcal{S}, \,  |v| = |u| =1. 
\end{align}
We refer to \cite{GL16} and \cite{Hen97} for more details of the metric $\mathbf{d}$.

Let $\mathcal{C}(\mathcal{S})$ be the space of continuous complex-valued functions on $\mathcal{S}$
and $\mathbf{1}$ be the constant function with value $1$.
Let $\gamma>0$.  For any $\varphi\in \mathcal{C(S)}$, set
\begin{align*}
\lVert \varphi \rVert_{\gamma}:= \lVert \varphi \rVert_{\infty} + [\varphi]_{\gamma}, \ \  
\lVert \varphi \rVert_{\infty}:= \sup_{x\in \mathcal{S}}|\varphi(x)|,  \ \
[\varphi]_{\gamma}: = \sup_{x,y\in \mathcal{S}}\frac{|\varphi(x)-\varphi(y)|}{\mathbf{d}^{\gamma }(x,y)}.
\end{align*}
 Introduce the Banach space
$\mathcal{B}_{\gamma}:= \{ \varphi\in \mathcal{C(S)}: \lVert \varphi \rVert_{\gamma}< + \infty\}.$

Assume that on some probability space $(\Omega, \mathscr{F}, \mathbb{P})$ we are given 
a sequence of i.i.d.\  random matrices  $(g_{n})_{n\geq 1}$ of the same law $\mu$ 
on $\mathscr G$ or $\mathscr G_+$. 
Set $G_n = g_{n} \ldots g_{1}$, $n \geq 1$, then for any starting point $x \in \mathcal{S}$, the process
\begin{align*}
X_0^x=x, \quad X_{n}^x = G_n \!\cdot\! x, \quad n\geq 1 
\end{align*}
forms a Markov chain on $\mathcal{S}$.
Let  $\sigma(g,x)= \log \frac{|g v|}{|v|}$ be the norm cocycle, 
where $g\in \mathscr G$ and $x = \mathbb R  v \in \mathbb P^{d-1} $ or $g\in\mathscr G_+$ and $x = \mathbb R  v \in \mathbb P_+^{d-1} $.
The goal of the present paper is to establish a Berry--Esseen bound  
and a Cram\'{e}r type moderate deviation expansion
for the couple $(X_n^x, \sigma(G_n, x))$ with a target function $\varphi$ on the Markov chain $(X_n^x)$, 
for both invertible matrices and positive matrices.

For any $g \in M(d,\mathbb{R})$, set 
$\lVert g\rVert = \sup_{v \in \mathbb R^d \setminus \{0\} } \frac{|g v|}{|v|}$ and 
$\iota(g) = \inf_{v \in \mathbb R^d \setminus \{0\} } \frac{|g v|}{|v|}$, 
where $\iota(g) >0$ for both $g \in \mathscr G$ and $g \in \mathscr G_+$.
In the following we denote $N(g) = \max \{ \lVert g \rVert, \iota(g)^{-1} \}$.
From the Cartan decomposition it follows that the norm $\lVert g\rVert$ coincides with the largest singular value of $g$,
i.e.\ $\lVert g \rVert$ is the square root of the largest eigenvalue of $g^{\mathrm{T}} g$, 
where $g^{\mathrm{T}}$ denotes the transpose of $g$.
For an invertible matrix $g \in \mathscr G$, $\iota(g) = \lVert g^{-1} \rVert^{-1}$, 
hence $\iota(g)$ is the smallest singular value of $g$ and 
 $N(g) = \max \{ \lVert g \rVert, \lVert g^{-1} \rVert \}$.
We need the two-sided exponential moment condition:
\begin{conditionA}\label{CondiMoment}
There exists a constant $\eta_0 \in (0,1)$ such that $\mathbb{E} [N(g_1 )^{\eta_0}] < +\infty$.
\end{conditionA}

We denote by $\Gamma_{\mu}:=[\supp\mu]$
the smallest closed subsemigroup of $M(d,\mathbb{R})$ generated by $\supp \mu$, 
the support of the measure $\mu$.

For invertible matrices, we need the strong irreducibility and proximality conditions.
Recall that a matrix $g$ is said to be \emph{proximal}
if $g$ has an eigenvalue $\lambda_{g}$ satisfying $|\lambda_{g}| > |\lambda_{g}'|$
for all other eigenvalues $\lambda_{g}'$ of $g$.
The normalized eigenvector $v_g$ ($|v_g| = 1$) corresponding to the eigenvalue $\lambda_{g}$
is called the dominant eigenvector. It is easy to verify that $\lambda_{g} \in \mathbb{R}$.

\begin{conditionA}\label{CondiIP}
{\rm (i)(Strong irreducibility) }
No finite union of proper subspaces of $\mathbb{R}^d$ is $\Gamma_{\mu}$-invariant.

\  {\rm (ii)(Proximality) } $\Gamma_{\mu}$ contains at least one proximal matrix. 
\end{conditionA}

For positive matrices, we use the allowability and positivity conditions:
\begin{conditionA}\label{CondiAP}
{\rm (i) (Allowability) }
Every $g\in\Gamma_{\mu}$ is allowable.

\ {\rm (ii) (Positivity) }
$\Gamma_{\mu}$ contains at least one matrix belonging to $\mathscr G_+^\circ$.
\end{conditionA}

It follows from the Perron-Frobenius theorem that every  $g \in \mathscr G_+^\circ$
has a dominant eigenvalue $\lambda_g>0$, with the corresponding eigenvector $v_g \in \mathbb{P}_+^{d-1}$.

Under conditions \ref{CondiMoment} and \ref{CondiIP}
for invertible matrices, or conditions \ref{CondiMoment} and \ref{CondiAP} for positive matrices,
there exists a unique $\mu$-stationary probability measure $\nu$ on $\mathcal{S}$ (\cite{GL16, BDGM14}): 
for any $\varphi \in \mathcal{C(S)}$,
\begin{align} \label{mu station meas}
(\mu*\nu)(\varphi ) = \int_{\mathcal{S}} \int_{\Gamma_{\mu}} \varphi(g_1\!\cdot\! x) \, \mu(dg_1)  \, \nu(dx)
= \int_{\mathcal{S}} \varphi(x) \, \nu(dx) = \nu(\varphi). 
\end{align}
Moreover, for invertible matrices, $\supp \nu$ (the support of $\nu$) is given by
\begin{equation} \label{def-VGamma-inv}
V(\Gamma_{\mu})=\overline{\{ v_{g}\in \mathbb P^{d-1}:  g\in\Gamma_{\mu}, \ g \mbox{ is proximal} \}}; 
\end{equation}
for positive matrices, $\supp \nu$ is given by
\begin{equation} \label{def-VGamma-pos}
V(\Gamma_{\mu}) = \overline{\{v_{g} \in \mathbb P^{d-1}_+: g\in \Gamma_{\mu} \cap \mathscr G_+^{\circ} \}}. 
\end{equation}
In addition, for both cases, $V(\Gamma_{\mu})$ is the unique minimal $\Gamma_{\mu}$-invariant subset
(see \cite{GL16} and \cite{BDGM14}).

For positive matrices, 
it will be shown in Proposition \ref{Prop-lambTaylor} that under conditions \ref{CondiMoment} and \ref{CondiAP}, 
the asymptotic variance 
\begin{align*}
\sigma^2 = \lim_{n\to\infty} \frac{1}{n} \mathbb{E} \Big[ (\sigma (G_n, x) - n\lambda)^{2} \Big]
\end{align*}
exists with value in $[0,\infty)$.
To establish the Berry--Esseen theorem and the moderate deviation expansion,
we need the following condition:
\begin{conditionA}\label{Condi-Variance}
The asymptotic variance $\sigma^2$ satisfies $\sigma^2>0$.
\end{conditionA}

We say that the measure $\mu$ 
is \emph{arithmetic}, if there exist $t>0$, $\beta \in[0,2\pi)$ and a function
$\vartheta: \mathcal{S} \to \mathbb{R}$ such that
$\exp[it\sigma(g, x)-i\beta + i\vartheta(g\!\cdot\!x)-i \vartheta(x)]=1 $
for any $g\in \Gamma_{\mu}$ and $x\in V(\Gamma_{\mu})$.
To establish the Edgeworth expansion for positive matrices, we impose the following condition:
\begin{conditionA}\label{CondiNonarith}
{\rm (Non-arithmeticity) }
The measure $\mu$ is non-arithmetic.
\end{conditionA}
A simple sufficient condition introduced in \cite{Kes73} for the measure $\mu$
to be non-arithmetic is that the additive subgroup of $\mathbb{R}$ generated by the set
$\{ \log \lambda_{g} : g \in \Gamma_{\mu} \cap \mathscr G_+^\circ \}$
is dense in $\mathbb{R}$, see \cite[Lemma 2.7]{BM16}.

We end this subsection by giving some implications  among the above conditions.
For invertible matrices, it was proved in \cite{GU05} that condition \ref{CondiIP} implies
condition \ref{CondiNonarith}.
For positive matrices, 
conditions \ref{CondiMoment}, \ref{CondiAP} and \ref{CondiNonarith} imply condition \ref{Condi-Variance},
see Proposition \ref{Prop-lambTaylor}. 


\subsection{Berry--Esseen bound and Edgeworth expansion}
In this subsection we formulate the Berry--Esseen theorem and the Edgeworth expansion   
for the couple $(X_n^x, \sigma(G_n, x))$.
We first state the Berry--Esseen theorem with a target function on $X_n^x$.
Through the rest of the paper 
we assume that $\gamma>0$ is a fixed small enough constant 
so that the spectral properties stated in Proposition \ref{transfer operator} hold true. 

\begin{theorem}\label{Thm-BerryEsseen-Norm:s=0}
Assume either conditions \ref{CondiMoment} and \ref{CondiIP} for invertible matrices,
or conditions \ref{CondiMoment}, \ref{CondiAP} and \ref{Condi-Variance} for positive matrices.
Then, there exists a constant 
$C>0$ such that for all $n \geq 1$, 
$x \in \mathcal{S}$, $y \in \mathbb{R}$ and   $\varphi \in \mathcal{B}_{\gamma}$, 
\begin{align}\label{Th-BerEssMatr001:s=0} 
\Big| \mathbb{E}
\Big[  \varphi(X_n^x) \mathds{1}_{ \big\{ \frac{\sigma(G_n, x) - n \lambda }{\sigma \sqrt{n}} \leq y \big\} } \Big]
-  \nu(\varphi)  \Phi(y) \Big|   
 \leq  \frac{C}{\sqrt{n}} \lVert \varphi \rVert_{\gamma}. 
\end{align}
\end{theorem}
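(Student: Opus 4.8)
The plan is to reduce the Berry-Esseen estimate to a contour-integral estimate via the smoothing inequality on complex contours sketched in the introduction, and then to control the resulting integrals using the spectral gap decomposition \eqref{Rnsw-Decomp} of the transfer operator $P_z$. Fix $\sigma=1$ without loss of generality (the general case follows by rescaling). Write $F_n^{x,\varphi}(y) = \mathbb{E}[\varphi(X_n^x)\mathbbm{1}_{\{(\log|G_nx|-n\lambda)/\sqrt{n}\le y\}}]$; its ``Fourier transform'' is $f_n^{x,\varphi}(t) = e^{it\sqrt{n}\lambda}(P^n_{-it/\sqrt{n}}\varphi)(x)$, which extends to an entire function of $z$ via $z\mapsto (P^n_{-iz/\sqrt{n}}\varphi)(x)$. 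The target is to show $\sup_y |F_n^{x,\varphi}(y) - \nu(\varphi)\Phi(y)| \le C\|\varphi\|_\gamma/\sqrt{n}$. I would treat $y\le 0$ and $y\ge 0$ separately; by symmetry (replacing $g_i$ by suitable conjugates, or simply repeating the argument with the opposite half-circle contour $\mathcal{C}_T^+$), it suffices to handle $y\le 0$, where the smoothing inequality \eqref{eqBEint001} applies with $T = c\sqrt{n}$.

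First I would insert the spectral decomposition $P^n_z = \kappa^n(z)M_z + L^n_z$ into $f_n^{x,\varphi}(z)$, splitting it into the ``dominant'' part $e^{iz\sqrt{n}\lambda}\kappa^n(-iz/\sqrt{n})(M_{-iz/\sqrt{n}}\varphi)(x)$ and the ``remainder'' part $e^{iz\sqrt{n}\lambda}(L^n_{-iz/\sqrt{n}}\varphi)(x)$. Correspondingly the two contour integrals in \eqref{eqBEint001} each split into an integral of type \eqref{INT_infty002a} (dominant) and type \eqref{INT_infty002b} (remainder). For the remainder integral \eqref{INT_infty002b}, I would use the spectral gap: on a small complex ball $B_\eta(0)$ one has $\|L^n_z\|_{\mathcal{B}_\gamma\to\mathcal{B}_\gamma} \le C\rho^n$ for some $\rho<\kappa(0)=1$, uniformly; since $z=-iy$ ranges over $\mathcal{C}_T^-$ with $|z|=T=c\sqrt{n}$, the point $-iz/\sqrt{n}$ stays in $B_\eta(0)$ for $c$ small, so $|L^n_{-iz/\sqrt{n}}\varphi(x)| \le C\rho^n\|\varphi\|_\gamma$. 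Combined with $|e^{izy}/z|\le 1/T$ for $y\le 0$ on $\mathcal{C}_T^-$ (since $\Im z<0$ forces $|e^{izy}| = e^{-y\Im z}\le 1$ when $y\le0$) and the boundedness of $\widehat{\rho}_T$, this integral is $O(\rho^n\|\varphi\|_\gamma) = o(\|\varphi\|_\gamma/\sqrt{n})$ — this is exactly the place where passing to the complex contour rescues us from the divergence \eqref{INT_infty001}.

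The heart of the matter is the dominant integral \eqref{INT_infty002a}, namely controlling
\begin{align*}
\int_{\mathcal{C}_T^-} \frac{e^{iz\sqrt{n}\lambda}\kappa^n(-iz/\sqrt{n})(M_{-iz/\sqrt{n}}\varphi)(x) - \nu(\varphi)e^{-z^2/2}}{z} e^{izy}\widehat{\rho}_T(\mp z)\, dz.
\end{align*}
Here I would invoke the Taylor expansion of $\log\kappa$ at $0$: writing $\Lambda(s) = \log\kappa(s)$, one has $\Lambda(0)=0$, $\Lambda'(0)=\lambda$, $\Lambda''(0)=\sigma^2=1$, so with $s = -iz/\sqrt{n}$, $e^{iz\sqrt{n}\lambda}\kappa^n(s) = \exp(n[\Lambda(s) - s\lambda]) = \exp(-z^2/2 + O(|z|^3/\sqrt{n}))$, and since $|z|=c\sqrt{n}$ with $c$ small the cubic error is controlled. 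Likewise $M_s\varphi(x) = \nu(\varphi) + O(|s|\|\varphi\|_\gamma)$ by analyticity of $s\mapsto M_s$ near $0$ and $M_0\varphi = \nu(\varphi)\mathbf{1}$. The numerator is then $e^{-z^2/2}(\nu(\varphi)(e^{O(|z|^3/\sqrt n)}-1) + O(|z|/\sqrt n))$; dividing by $z$, multiplying by the bounded $e^{izy}\widehat{\rho}_T$, and integrating over the half-circle of radius $T=c\sqrt n$ — where $|e^{-z^2/2}|$ may be large on part of the arc — requires the saddle point method: I would deform $\mathcal{C}_T^-$ to a contour passing through the saddle point of $z\mapsto -z^2/2 + izy$ (at $z = -iy$, real-shifted), so that $e^{-z^2/2+izy}$ decays off the saddle, making the integral estimable by $C(|y|+1)/\sqrt n \cdot (1-\Phi(|y|))$-type bounds after careful bookkeeping. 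Together with the elementary $C/T = C/(c\sqrt n)$ term from \eqref{eqBEint001}, this yields the claimed $O(\|\varphi\|_\gamma/\sqrt n)$ bound. The main obstacle is precisely this saddle-point estimate of \eqref{INT_infty002a}: one must choose the deformation so that the compactly-supported-and-analytically-extended smoothing kernel $\widehat{\rho}_T$ remains under control along the deformed path (this is why $\rho_T$ must be constructed with $\widehat{\rho}_T$ analytic on a neighborhood of $\mathcal{C}_T^-$), and one must track the interplay between the growth of $e^{-z^2/2}$, the decay supplied by the factor $\widehat{\rho}_T(\pm z)$, and the cubic Taylor remainder in $\Lambda$, uniformly in $x\in\mathcal S$, $y\le 0$ and $\varphi$ with $\|\varphi\|_\gamma\le 1$. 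Once $y\le 0$ is done, the case $y\ge 0$ follows by the symmetric argument, completing the proof of \eqref{Th-BerEssMatr001:s=0}.
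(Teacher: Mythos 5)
Your proposal follows essentially the same route the paper takes: smoothing inequality on a complex contour, spectral decomposition $P^n_z = \kappa^n(z)M_z + L^n_z$ from Proposition~\ref{transfer operator}, exponential smallness of $L^n_z$ combined with $|e^{izy}/z|\le 1/T$ on the semicircle to kill the remainder, and a saddle-point deformation to handle the growth of $e^{-z^2/2}$ on the semicircle for the dominant part (the paper does this via a rectangular path in Lemma~\ref{EW-Contr-I22}). The paper works under the tilted measure $\mathbb{Q}_s^x$ and proves the bound uniformly in $s$ (Theorem~\ref{Thm-BerryEsseen-Norm}), then specializes to $s=0$; you work directly at $s=0$, which is adequate for Theorem~\ref{Thm-BerryEsseen-Norm:s=0} itself.

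There is, however, a genuine (though easily repairable) gap in how you invoke the smoothing inequality. Proposition~\ref{Prop-BerryEsseen} requires $F(+\infty)=H(+\infty)$, and this hypothesis is used in an essential way in its proof (to let $v\to+\infty$ in \eqref{BE-F1G1-equ02}). With your choice $H(y)=\nu(\varphi)\Phi(y)$, one has $F(+\infty)=\mathbb{E}[\varphi(X_n^x)]=\nu(\varphi)+L_0^n\varphi(x)$, which differs from $H(+\infty)=\nu(\varphi)$ whenever $L_0^n\varphi(x)\neq 0$; so the inequality does not apply as written. The paper circumvents this by first comparing $F$ against the intermediate $H(y)=\mathbb{E}_{\mathbb{Q}_s^x}[\varphi(X_n^x)]\,\Phi(y)$ (whose limit at $+\infty$ matches $F$'s), and only afterward replacing $\mathbb{E}_{\mathbb{Q}_s^x}[\varphi(X_n^x)]$ by $\pi_s(\varphi)$ via the exponentially small bound \eqref{BEExpBound}. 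You should insert the same intermediate step. A second, minor point: the saddle of $z\mapsto -z^2/2+izy$ is at $z=iy$, which for $y\le 0$ lies in the \emph{lower} half-plane (consistent with deforming $\mathcal{C}_T^-$); you wrote $z=-iy$, which is the reflection in the upper half-plane and would put you outside the domain where you are deforming.
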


The proof of this theorem  
follows the same line as the proof of the Edgeworth expansion in Theorem \ref{Thm-Edge-Expan:s=0}  
formulated below, and 
will be sketched at the end of Section \ref{sec:proof berry-esseen-edgeworth}.
The presence of the target function in Theorem \ref{Thm-BerryEsseen-Norm:s=0} turns out to be crucial
in the study of the asymptotic of moderate deviations of the logarithm of the coefficients
$\log | \langle f, G_n v \rangle |$ with $f \in (\mathbb R^d)^*$ and $v \in \mathbb R^d$, 
which will be done in a forthcoming paper.

Theorem \ref{Thm-BerryEsseen-Norm:s=0}
extends the Berry--Esseen bounds 
 from \cite{LeP82, Jan01} for invertible matrices,
and \cite{HH04} for positive matrices to versions with target functions on $X_n^x$.
Note that  
the results in  \cite{Jan01, HH04}   have been  established under some polynomial moment conditions.
However, 
proving \eqref{Th-BerEssMatr001:s=0} with the target function $\varphi \not= \mathbf{1}$ under the polynomial moments is still an open problem.

The next result gives an Edgeworth expansion for $\sigma(G_n, x)$
with a target function $\varphi$ on $X_n^x$.
To formulate it, we introduce the necessary notation.
Consider the following transfer operator:
for any $s \in (-\eta, \eta)$ with $\eta>0$ small, and $\varphi \in \mathcal{C}(\mathcal{S})$,
\begin{align*}
P_s \varphi(x) = \mathbb E \Big[ e^{s \sigma(g_1, x)} \varphi( g_1 \! \cdot \! x ) \Big],  
\quad  x \in \mathcal{S}.  
\end{align*}
It will be shown in Proposition \ref{transfer operator} that
there exists a unique H\"{o}lder continuous function $r_s$ on $\mathcal{S}$ such that
\begin{align} \label{Def-kappa01} 
P_s r_s = \kappa(s) r_s,
\end{align}
where $\kappa(s)$ is the unique dominant eigenvalue of $P_s$.  
Set $\Lambda(s)=\log \kappa(s)$.  
We shall show in Lemma \ref{Lem-Bs} that for any 
$\varphi \in \mathcal{B}_{\gamma}$, the function
\begin{align}\label{func-phi-001}
b_{\varphi}(x) = \lim_{n \to \infty}
\mathbb{E} \big[ ( \sigma(G_n, x) - n \lambda ) \varphi(X_n^x) \big],
\quad   x \in \mathcal{S}
\end{align}
is well defined, belongs to $\mathcal B_{\gamma}$ and  
has an equivalent expression \eqref{Def2-bs}  
in terms of derivative of 
the projection operator $\Pi_{0,z}$, see Proposition \ref{perturbation thm}.

\begin{theorem}\label{Thm-Edge-Expan:s=0}
Assume either conditions \ref{CondiMoment} and \ref{CondiIP} for invertible matrices,
or conditions \ref{CondiMoment}, \ref{CondiAP} and \ref{CondiNonarith} for positive matrices.
Then, as $n \to \infty$, uniformly in 
$x \in \mathcal{S}$, $y \in \mathbb{R} $ and $\varphi \in \mathcal{B}_{\gamma}$,
\begin{align}
& \bigg| \mathbb{E}
   \Big[  \varphi(X_n^x) \mathds{1}_{ \big\{ \frac{\sigma(G_n, x) - n \lambda }{\sigma \sqrt{n}} \leq y \big\} } \Big]
   \label{MyEdgExp001} \\
&  \qquad  - \nu(\varphi) \Big[  \Phi(y) + \frac{\Lambda'''(0)}{ 6 \sigma^3 \sqrt{n}} (1-y^2) \phi(y) \Big]
  + \frac{ b_{\varphi}(x) }{ \sigma \sqrt{n} } \phi(y)   \bigg|
  =  \lVert \varphi \rVert_{\gamma} o \Big( \frac{ 1 }{\sqrt{n}} \Big). \notag 
\end{align}
\end{theorem}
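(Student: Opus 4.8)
The plan is to derive the Edgeworth expansion \eqref{MyEdgExp001} from the spectral gap decomposition of the transfer operator $P_z$ combined with a complex smoothing inequality, following the strategy outlined in the introduction. First I would fix $\varphi \in \mathcal{B}_\gamma$ with $\|\varphi\|_\gamma = 1$ (by homogeneity), assume $\sigma = 1$ as in the sketch, and set $F_n(y) = \mathbb{E}[\varphi(X_n^x)\mathbbm 1_{\{(\log|G_n x| - n\lambda)/\sqrt n \le y\}}]$ and its Fourier transform $f_n(t) = e^{it\sqrt n \lambda}(P^n_{-it/\sqrt n}\varphi)(x)$. The target Edgeworth term is $G_n(y) = \nu(\varphi)[\Phi(y) + \frac{\Lambda'''(0)}{6\sqrt n}(1-y^2)\phi(y)] - \frac{b_\varphi(x)}{\sqrt n}\phi(y)$, whose Fourier transform $g_n(t)$ can be written explicitly from the classical asymptotic expansion of $e^{-t^2/2}$ with the cubic correction and the linear term coming from $b_\varphi$. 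I would use the complex smoothing inequality \eqref{eqBEint001} (the $y\le 0$ version; $y>0$ treated symmetrically) with $T = c\sqrt n$, which reduces matters to bounding two contour integrals of $\frac{f_n(z) - g_n(z)}{z}e^{izy}\widehat\rho_T(\pm z)$ over $\mathcal C_T^-$, plus a remainder of size $O(1/T) = O(1/\sqrt n)$ that must be refined to $o(1/\sqrt n)$ — here the non-arithmeticity condition enters, exactly as in the classical Edgeworth argument where $1/T$ is improved by letting $T$ grow, using that $|f_n(t)|$ stays bounded away from the spectral radius for $|t|$ in a fixed compact range thanks to Condition \textbf{A\kern-0.1mm5} (via the spectral properties in Proposition \ref{transfer operator}).

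Next I would insert the spectral gap decomposition $P^n_z = \kappa^n(z)M_z + L^n_z$ into $f_n(z)$. This splits each contour integral into a principal part involving $\kappa^n(z)M_z\varphi(x) - g_n(z)$ and a remainder involving $L^n_z\varphi(x)$. For the $L^n_z$ part I would use that $\|L^n_z\|_{\mathcal B_\gamma \to \mathcal B_\gamma}$ decays exponentially uniformly for $z$ in a small complex ball, together with $|e^{izy}/z| \le 1/T$ on $\mathcal C_T^-$ for $y \le 0$; this bounds \eqref{INT_infty002b} by $Ce^{-cn}$, absorbing it into the $o(1/\sqrt n)$ error. The principal part \eqref{INT_infty002a} is the heart of the matter: I would write $\kappa^n(z) = e^{n\Lambda(z)}$, expand $\Lambda(z) = \lambda z + \frac{\sigma^2}{2}z^2 + \frac{\Lambda'''(0)}{6}z^3 + O(z^4)$ near the origin, and $M_z\varphi(x) = \nu(\varphi) + z\,\partial_z(M_z\varphi(x))|_{z=0} + O(z^2)$ where the first-order term is exactly where $b_\varphi(x)$ enters through its expression \eqref{Def2-bs} in terms of $\partial_z\Pi_{0,z}$. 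After the change of variable absorbing $e^{it\sqrt n\lambda}$, the factor $e^{izy}$ combines with $e^{n\Lambda(-iz/\sqrt n) + in\lambda z/\sqrt n}$ to produce a phase whose saddle point I would locate and deform the contour $\mathcal C_T^-$ through, applying the saddle point method (Daniels, Fedoryuk) to extract the Gaussian factor $e^{-z^2/2}$ plus the $1/\sqrt n$ corrections matching precisely $g_n(z)$, with the difference being $o(1/\sqrt n)$.

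The main obstacle I anticipate is the saddle point analysis of \eqref{INT_infty002a}: one must choose the integration path carefully so that (a) it passes through the relevant saddle, (b) it stays within the region where the spectral decomposition and the analytic continuation $\widehat\rho_T$ are valid, and (c) the contributions away from the saddle are genuinely negligible at order $1/\sqrt n$ uniformly in $y \le 0$ — the uniformity in $y$ over all of $(-\infty,0]$ is delicate because the saddle location depends on $y/\sqrt n$, and one needs the cancellation $\kappa^n(z)M_z\varphi(x) - g_n(z)$ to be genuinely of smaller order after the path is chosen, which forces a Taylor expansion to one more order than in the Berry--Esseen case. A secondary technical point is verifying that the $b_\varphi$ term produced by the first-order expansion of $M_z\varphi$ matches \eqref{Def2-bs}; this I would handle by invoking Lemma \ref{Lem-Bs} and Proposition \ref{perturbation thm} directly. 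Finally, for the small-$|t|$-excluded range $c \le |t|/\sqrt n$ up to $|z| = T$ on the contour, I would combine the $L^n_z$ exponential decay with a uniform spectral radius bound $|\kappa(z)| < 1$ (strict, by Condition \textbf{A\kern-0.1mm5} and non-arithmeticity when needed for the Edgeworth refinement) to get exponential smallness, completing the estimate; assembling all pieces and the symmetric $y > 0$ case yields \eqref{MyEdgExp001}.
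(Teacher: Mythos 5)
Your overall strategy --- write the Fourier--Stieltjes transforms, apply the complex smoothing inequality, insert the spectral gap decomposition, bound the remainder $L_z^n$ via its uniform exponential decay, and attack the principal term $\kappa^n(z) M_z \varphi - g_n(z)$ by deforming the semicircular contour to a rectangular path through a saddle point and Taylor-expanding both $\Lambda$ (to third order) and $M_z\varphi$ (to first order, with $b_\varphi$ entering through \eqref{Def2-bs}, Lemma \ref{Lem-Bs} and Proposition \ref{perturbation thm}) --- is indeed the paper's strategy for proving Theorems \ref{Thm-Edge-Expan-bb}--\ref{Thm-Edge-Expan:s=0}. The paper first establishes the changed-measure version (Theorem \ref{Thm-Edge-Expan-bb}) and then sets $s=0$; your direct $s=0$ argument is a legitimate simplification for the stated theorem. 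You also correctly anticipate the two subtle points: the parameter $y_n = \min\{-y,\delta_1\sqrt n\}$ needed for uniformity over all $y$, and the extra order of Taylor expansion compared with the Berry--Esseen case.

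There is, however, one misconception that would sink the proof as written: you keep a single parameter $T = c\sqrt n$ as both the smoothing cut-off and the semicircle radius, and then say you will ``let $T$ grow'' to refine $O(1/T)$ to $o(1/\sqrt n)$. The semicircle on which you insert $P_z^n = \kappa^n(z) M_z + L_z^n$ must have radius $r$ with $r/\sqrt n$ inside $B_\eta(0)$, the domain of validity of Proposition \ref{transfer operator}, so $r = \delta_1\sqrt n$ with $\delta_1$ \emph{small}; but then $6b/r = O(1/(\delta_1\sqrt n))$ is not $o(1/\sqrt n)$. The paper's Proposition \ref{Prop-BerryEsseen} decouples the two roles: the arcs have radius $r = \delta_1\sqrt n$, the remainder is $6b/T$ with $T = a\sqrt n$ and $a$ arbitrarily large, and the price is an additional real-line integral $\int_{r\le|t|\le T}|f(t)-h(t)|/|t|\,dt$. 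That integral (the term $I_2$ in the paper's proof) is exactly where non-arithmeticity is used, but it is \emph{not} bounded via $L_z^n$ or any inequality of the form $|\kappa(z)|<1$, because on the range $\delta_1 \le |t|/\sqrt n \le a$ the perturbative objects $\kappa(z)$, $M_z$, $L_z$ are generally undefined. It is bounded by Proposition \ref{Prop-UnifR}, whose proof runs through quasi-compactness of $Q_{s+it}$ (Ionescu-Tulcea and Marinescu, Proposition \ref{Prop-Qst-Quasi}) together with $\varrho(Q_{s+it}) < 1$ for $t\neq 0$ (Proposition \ref{PropNonArithQit}). Your last paragraph's appeal to ``$L^n_z$ exponential decay with a uniform spectral radius bound $|\kappa(z)|<1$'' on this range is the wrong machinery --- though you are correct that non-arithmeticity is what makes that range controllable.
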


The proof of this theorem 
is postponed to Section \ref{sec:proof berry-esseen-edgeworth} and
is based on
a new smoothing inequality (Proposition \ref{Prop-BerryEsseen})
and the saddle point method.
Even for $\varphi = \mathbf{1}$, Theorem \ref{Thm-Edge-Expan:s=0} is new.


\subsection{Moderate deviation expansions}  \label{SecNormCocy}

Denote $\gamma_k = \Lambda^{(k)}(0)$, $k \geq 1$, where $\Lambda = \log \kappa$ with the function
$\kappa$ defined in \eqref{Def-kappa01}. 
In particular, $\gamma_1 =\lambda$ and $\gamma_2 = \sigma^2$, 
see Propositions \ref{Prop-LLN} and \ref{Prop-lambTaylor}, where we also give an expression for $\gamma_3.$
Throughout the paper, we write $\zeta$ for the Cram\'{e}r series of 
$\Lambda$ (see \cite{Cra38} and \cite{Pet75}): 
\begin{align}\label{Def-CramSeri}
\zeta(t)=\frac{\gamma_3}{6\gamma_2^{3/2} } + \frac{\gamma_4\gamma_2-3\gamma_3^2}{24\gamma_2^3}t
+ \frac{\gamma_5\gamma_2^2-10\gamma_4\gamma_3\gamma_2 + 15\gamma_3^3}{120\gamma_2^{9/2}}t^2 + \cdots, 
\end{align}
which converges for $|t|$ small enough.

Now we formulate a Cram\'{e}r type moderate deviation expansion for the couple $(X_n^x, \sigma(G_n, x) )$
with target function on $X_n^x,$ for both invertible matrices and positive matrices.

\begin{theorem} 
\label{MainThmNormTarget}
Assume either conditions \ref{CondiMoment} and \ref{CondiIP} for invertible matrices,
or conditions \ref{CondiMoment}, \ref{CondiAP} and \ref{Condi-Variance} for positive matrices.
Then, 
uniformly in $x\in \mathcal{S}$,  $y \in [0, o(\sqrt{n} )]$ and
$\varphi \in \mathcal{B}_{\gamma}$, 
as $n \to \infty$,
\begin{align*}
&  \frac{\mathbb{E}
\Big[ \varphi(X_n^x) \mathds{1}_{ \big\{ \sigma(G_n, x) - n\lambda \geq \sqrt{n}\sigma y \big\} } \Big] }
{1-\Phi(y)}
=  e^{ \frac{y^3}{\sqrt{n}}\zeta ( \frac{y}{\sqrt{n}} ) }
\Big[ \nu(\varphi) + \lVert \varphi \rVert_{\gamma} O\Big( \frac{y+1}{\sqrt{n}} \Big) \Big], \\
& \frac{\mathbb{E}
\Big[ \varphi(X_n^x) \mathds{1}_{ \big\{ \sigma(G_n, x) - n\lambda \leq - \sqrt{n}\sigma y  \big\} } \Big] }
{ \Phi(-y)    }
 =   e^{ - \frac{y^3}{\sqrt{n}}\zeta (-\frac{y}{\sqrt{n}} ) }
\Big[  \nu(\varphi)  + \lVert \varphi \rVert_{\gamma} O\Big( \frac{y+1}{\sqrt{n}} \Big) \Big]. 
\end{align*}
\end{theorem}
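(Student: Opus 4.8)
The plan is to deduce the moderate deviation expansion from the Berry--Esseen bound under the changed measure, following the classical Cram\'er--Petrov scheme (see \cite{Petrov75book}) adapted to the spectral setting. First, for $s$ in a small interval $(-\eta,\eta)$ one uses the spectral data \eqref{Def-kappa01} to define the tilted (Doob-type) change of measure: for a fixed horizon $n$ and starting point $x$, set
\begin{align*}
\frac{d\mathbb{Q}_s^x}{d\mathbb{P}}\Big|_{\sigma(g_1,\dots,g_n)}
= \frac{e^{s\log|G_nx|}\, r_s(X_n^x)}{\kappa(s)^n\, r_s(x)}.
\end{align*}
Under $\mathbb{Q}_s^x$ the process $X_n^x$ is again a Markov chain (driven by the normalized transfer operator), and for any $\varphi\in\mathcal B_\gamma$ one has the identity
\begin{align*}
\mathbb{E}\big[\varphi(X_n^x)\mathbbm 1_{\{\log|G_nx|-n\lambda\geq \sqrt n\sigma y\}}\big]
= \kappa(s)^n r_s(x)\, \mathbb{E}_{\mathbb{Q}_s^x}\Big[\frac{\varphi(X_n^x)}{r_s(X_n^x)}\, e^{-s\log|G_nx|}\mathbbm 1_{\{\log|G_nx|-n\lambda\geq \sqrt n\sigma y\}}\Big].
\end{align*}
One then chooses the tilting parameter $s=s_n(y)$ as the solution of the saddle point equation $\Lambda'(s)=\lambda+\sigma y/\sqrt n$, which is well defined and analytic in $y/\sqrt n$ for $y=o(\sqrt n)$, and leads to the factor $e^{n\Lambda(s)-s(n\lambda+\sqrt n\sigma y)}$ whose expansion in powers of $y/\sqrt n$ produces exactly $-y^2/2 + \frac{y^3}{\sqrt n}\zeta(\frac{y}{\sqrt n})$ via the Cram\'er series \eqref{Def-CramSeri}.

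Next I would analyze the centered and rescaled variable $Y_n := \frac{\log|G_nx|-n\Lambda'(s)}{\sigma_s\sqrt n}$ under $\mathbb{Q}_s^x$, where $\sigma_s^2=\Lambda''(s)>0$ for $s$ small (by continuity from \ref{Condi-Variance}, resp.\ \ref{CondiIP}). The key input here is the Berry--Esseen bound for the changed measure, namely Theorem \ref{Thm-BerryEsseen-Norm} referenced in the excerpt, applied to the target function $\varphi/r_s$ (which lies in $\mathcal B_\gamma$ with $\mathcal B_\gamma$-norm controlled by $\|\varphi\|_\gamma$, uniformly in $s$, since $r_s$ is bounded below and H\"older continuous uniformly in $s$). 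After the change of measure the indicator becomes $\{Y_n\geq 0\}$ and the integrand $\frac{\varphi(X_n^x)}{r_s(X_n^x)}e^{-s\sigma_s\sqrt n\, Y_n}$; integrating by parts in the Berry--Esseen estimate (Abel summation against $e^{-s\sigma_s\sqrt n\,t}$, $t\geq 0$) converts the $O(1/\sqrt n)$ closeness of distributions into
\begin{align*}
\mathbb{E}_{\mathbb{Q}_s^x}\Big[\tfrac{\varphi(X_n^x)}{r_s(X_n^x)}e^{-s\sigma_s\sqrt n Y_n}\mathbbm 1_{\{Y_n\geq 0\}}\Big]
= \Big(\nu_s\big(\tfrac{\varphi}{r_s}\big)+O\big(\tfrac{\|\varphi\|_\gamma}{\sqrt n\,(1+s\sqrt n)}\big)\Big)\int_0^\infty e^{-s\sigma_s\sqrt n t}\,\phi(t)\,dt,
\end{align*}
and the Gaussian integral $\int_0^\infty e^{-s\sigma_s\sqrt n t}\phi(t)dt \sim \frac{1}{s\sigma_s\sqrt{2\pi n}}$ for $s\sqrt n\to\infty$, or $\sim 1-\Phi(y)$ after matching, up to relative error $O((y+1)/\sqrt n)$. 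Collecting the prefactor $\kappa(s)^n r_s(x)$, the normalization $r_s(x)$ cancels against a factor coming from the reference point, $\nu_s(\varphi/r_s)\to\nu(\varphi)$ as $s\to 0$ with error $O(s)=O((y+1)/\sqrt n)$, and $1-\Phi(y)$ in the denominator absorbs the Gaussian tail; this yields the first display of the theorem. The second (left-tail) display follows by the same argument with $s<0$, or by applying the first one to the transposed/inverse products.

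The main obstacle is the uniformity of all estimates jointly in $x$, in $\varphi\in\mathcal B_\gamma$, and in $y\in[0,o(\sqrt n)]$ — in particular, ensuring that the Berry--Esseen constant in Theorem \ref{Thm-BerryEsseen-Norm} is uniform in the tilting parameter $s\in(-\eta,\eta)$, which is precisely why the spectral gap theory for $P_z$ and $R_{s,z}$ had to be reworked on the two-parameter domain $s\in(-\eta,\eta)$, $z\in B_\eta(0)$ as announced in Section \ref{sec:spec gap norm}. A secondary technical point is handling the two regimes $y\lesssim 1$ (where $s\sqrt n$ is bounded, the saddle point is near the origin, and one must compare directly with the Edgeworth expansion of Theorem \ref{Thm-Edge-Expan:s=0}) versus $1\lesssim y=o(\sqrt n)$ (where $s\sqrt n\to\infty$ and the Gaussian integral asymptotics kick in); these are glued together by noting that the error term $O((y+1)/\sqrt n)$ is the right bound in both. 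I expect the bookkeeping of the Cram\'er series — verifying that $n\Lambda(s)-s(n\lambda+\sqrt n\sigma y)+\frac{y^2}{2}$ equals $\frac{y^3}{\sqrt n}\zeta(\frac{y}{\sqrt n})$ with the coefficients \eqref{Def-CramSeri} — to be routine given Propositions \ref{Prop-LLN} and \ref{Prop-lambTaylor}, so the real work is entirely in the uniform spectral estimates feeding Theorem \ref{Thm-BerryEsseen-Norm}.
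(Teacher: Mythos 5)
Your proposal is correct and follows essentially the same route as the paper: a Cram\'er--Petrov exponential tilt via the spectral change of measure $\mathbb{Q}_s^x$, the saddle-point equation $\Lambda'(s)=\lambda+\sigma y/\sqrt n$, extraction of the Cram\'er series from $n\Lambda(s)-ns\Lambda'(s)$, integration by parts against the $\mathbb Q_s^x$-Berry--Esseen bound (Theorem~\ref{Thm-BerryEsseen-Norm}), regularity of $r_s,\nu_s$ in $s$ (Lemma~\ref{lem:expconv}) to pass to $\nu(\varphi)$, and the case $s<0$ for the left tail. One small imprecision: the Berry--Esseen leading constant for target $\varphi/r_s$ is $\pi_s(\varphi r_s^{-1})=\nu_s(\varphi)/\nu_s(r_s)$ rather than $\nu_s(\varphi/r_s)$, but both equal $\nu(\varphi)+O(s)$, so this does not affect the argument.
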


Note that the  above  asymptotic expansions remain valid even when $\nu(\varphi) = 0$. 
In this case, for example, the first expansion becomes, as $n\to\infty$,
\begin{align*} 
\mathbb{E} \Big[ \varphi(X_n^x) \mathds{1}_{ \big\{ \sigma(G_n, x) - n\lambda \geq  \sqrt{n}\sigma y  \big\} } \Big]  
= \big[ 1-\Phi(y) \big] e^{ \frac{y^3}{\sqrt{n}}\zeta ( \frac{y}{\sqrt{n}} ) }
\lVert \varphi \rVert_{\gamma} O\Big( \frac{y+1}{\sqrt{n}} \Big). 
\end{align*}
It is an open question to extend  
the results of Theorem  \ref{MainThmNormTarget}  
to higher order expansions under the additional condition of non-arithmeticity.   
We refer to Saulis \cite{Sau69} and  Rozovsky \cite{Roz86} 
for relevant results in the i.i.d.\ real-valued case.   
In the case of products of random matrices this problem seems to us interesting 
because of the presence of the derivatives in $s$ of the eigenfunction $r_s$ 
and of the linear functional $\nu_s$  in the higher order terms.

In particular, under conditions of Theorem \ref{MainThmNormTarget}, with $\varphi = \mathbf{1}$ we obtain:
as $n\to\infty$,
\begin{align}
\frac{ \mathbb{P} \Big(\frac{ \sigma(G_n, x) - n\lambda }{ \sigma \sqrt{n} } \geq y \Big)}{ 1-\Phi(y) }
& =    e^{ \frac{y^3}{\sqrt{n}} \zeta (\frac{y}{\sqrt{n}} ) }
\Big[ 1 + O\Big( \frac{y+1}{\sqrt{n}} \Big) \Big],  \notag \\
\frac{ \mathbb{P} \Big( \frac{ \sigma(G_n, x) -n\lambda }{ \sigma \sqrt{n} } \leq -y\Big)}{ \Phi(-y) }
& =    e^{ -\frac{y^3}{\sqrt{n}} \zeta (-\frac{y}{\sqrt{n}} ) }
\Big[ 1 + O\Big( \frac{y+1}{\sqrt{n}} \Big) \Big].    \notag
\end{align}
When $\varphi \in \mathcal{B}_{\gamma}$ is a real-valued function satisfying $\nu(\varphi) > 0$, 
Theorem \ref{MainThmNormTarget} clearly implies the following moderate deviation principle 
for $\sigma(G_n, x)$ with target function on $X_n^x$: 
for any Borel set $B \subseteq \mathbb{R}$,
and positive sequence $(b_n)_{n\geq 1}$ satisfying
$\frac{b_n}{n}\rightarrow0$ and $\frac{b_n}{\sqrt{n}}\to \infty$ as $n\to\infty$, 
uniformly in $x \in \mathcal{S}$, 
\begin{align} 
- \inf_{ y\in B^{\circ} } \frac{y^2}{2\sigma^2} 
& \leq \liminf_{n\to \infty} \frac{n}{b_n^{2}} \log \mathbb{E} 
 \Big[ \varphi(X_n^x)  \mathds{1}_{ \big\{ \frac{\sigma(G_n, x) - n\lambda }{b_n} \in B  \big\} }  \Big]  \label{intro MDP000}\\
& \leq  \limsup_{n\to \infty}\frac{n}{b_n^{2}} \log \mathbb{E}  
 \Big[   \varphi(X_n^x)  \mathds{1}_{ \big\{ \frac{\sigma(G_n, x) - n\lambda }{b_n} \in B  \big\} }   \Big] 
\leq - \inf_{y\in \bar{B}} \frac{y^2}{2\sigma^2},  \notag
\end{align}
where $B^{\circ}$ and $\bar{B}$ are respectively the interior and the closure of $B$.
In fact it is enough to show \eqref{intro MDP000} only for the case where $B$ is an interval, 
the result for general $B$ can be established using Lemma 4.4 of Huang and Liu \cite{HL12}.  
With $\varphi = \mathbf 1$, \eqref{intro MDP000} implies the moderate deviation principle \eqref{intro MDPintro000} 
established in \cite[Proposition 12.12]{BQ16b} for invertible matrices. 
The moderate deviation principle \eqref{intro MDP000} with target function on $X_n^x$ 
is new for both invertible matrices and positive matrices; 
\eqref{intro MDPintro000} is new for positive matrices.
Note that in \eqref{intro MDP000} the function $\varphi$ is not necessarily strictly positive.


\subsection{Local limit theorem with moderate deviations}  \label{SecModerLocal}
In this subsection we state a local limit theorem with moderate deviations for  $\sigma(G_n, x)$, 
 which is of independent interest and can not be deduced directly from Theorem \ref{MainThmNormTarget}.

\begin{theorem}\label{ThmLocal01}
Assume either conditions \ref{CondiMoment} and \ref{CondiIP} for invertible matrices,
or conditions \ref{CondiMoment}, \ref{CondiAP} and \ref{Condi-Variance} for positive matrices.
Then, for any $\varphi \in \mathcal{B}_{\gamma}$
and any directly Riemann integrable function $\psi$ with compact support on $\mathbb R$, 
we have, as $n\to\infty$, uniformly in $x \in \mathcal{S}$ and $|y| = o(\sqrt{n})$,
\begin{align*}
\mathbb{E} \Big[ \varphi(X_n^x) \psi \big( \sigma(G_n, x) - n\lambda - \sqrt{n}\sigma y \big) \Big]  
 = \frac{e^{ -\frac{y^2}{2} +  \frac{y^3}{\sqrt{n}}\zeta(\frac{y}{\sqrt{n}} ) }}{\sigma   \sqrt{2 \pi n}}  
  \Big[ \nu(\varphi) \int_{\mathbb R} \psi(u) \, du  +  o(1) \Big].
\end{align*}
In particular,
for any $\varphi \in \mathcal{B}_{\gamma}$ and real numbers $-\infty < a_1 < a_2 < \infty$, 
we have, as $n \to \infty$, uniformly in $x \in \mathcal{S}$ and $|y| = o(\sqrt{n})$,
\begin{align*}
\mathbb{E} \Big[ \varphi(X_n^x)
       \mathds{1}_{\{ \sigma(G_n, x) - n\lambda \in [a_1, a_2] + \sqrt{n}\sigma y \} } \Big]  
=  \frac{e^{ -\frac{y^2}{2} +  \frac{y^3}{\sqrt{n}}\zeta(\frac{y}{\sqrt{n}} ) }}{\sigma   \sqrt{2 \pi n}}  
  \Big[ (a_2 - a_1) \nu(\varphi)  +  o(1) \Big].
\end{align*}
With $\varphi = \mathbf{1}$, we have, as $n \to \infty$, 
uniformly in $x \in \mathcal{S}$ and $|y| = o(\sqrt{n})$, 
\begin{align*}
\mathbb{P} \Big( \sigma(G_n, x) - n\lambda \in [a_1, a_2] +  \sqrt{n}\sigma y \Big)
 =  \frac{ e^{ - \frac{y^2}{2} + \frac{y^3}{\sqrt{n}} \zeta(\frac{y}{\sqrt{n}}  )} }{ \sigma \sqrt{2 \pi n} }
\Big[ a_2 -  a_1 + o(1) \Big]. 
\end{align*}
\end{theorem}

In the case of invertible matrices,
a similar local limit theorem has been established in \cite{BQ16b} in a more general setting
and plays an important role in studying dynamics of group actions on finite volume homogeneous spaces, 
see \cite[Proposition 4.7]{BQ13}.
Specifically, from \cite[Theorem 17.10]{BQ16b}, 
by simple calculations we deduce that for any $a_1 < a_2$,
it holds uniformly in $x \in \mathbb{P}^{d-1}$ and 
$|y| = O(\sqrt{\log n})$
that, as $n \to \infty$, 
\begin{align}\label{BQLLT001} 
\mathbb{P} \Big( \sigma(G_n, x) - n\lambda \in [a_1, a_2] + \sqrt{n}\sigma y \Big)
 =  \frac{ e^{ - \frac{y^2}{2} } }{ \sigma \sqrt{2 \pi n} }
\Big[ a_2 -  a_1 + o(1) \Big]. 
\end{align}
Theorem \ref{ThmLocal01} extends the range of $y$ in \eqref{BQLLT001} 
beyond $O(\sqrt{\log n})$ and 
moreover, allows a target function $\varphi$ on the Markov chain $X_n^x$.
Note also that in \cite{BQ16b} the group
$SL(d, \mathbb{R})$ is considered instead of $GL(d, \mathbb{R})$, 
and the proximality condition \ref{CondiIP}(ii) is replaced by the condition that the semigroup $\Gamma_{\mu}$ is unbounded.  
For positive matrices, Theorem \ref{ThmLocal01} and its consequence \eqref{BQLLT001} are new. 

As an application of Theorem \ref{ThmLocal01}, 
we can establish a local limit theorem with moderate deviations
for the operator norm $\lVert G_n \rVert$ in the case of invertible matrices. 

\begin{theorem}\label{Thm_LLT_Norm_0a}
Assume conditions \ref{CondiMoment} and \ref{CondiIP} for invertible matrices. 
Let $-\infty < a_1 < a_2 < \infty$ be real numbers. 
Then, for any $\varphi \in \mathcal{B}_{\gamma}$,
we have, as $n \to \infty$, uniformly in $x \in \mathbb{P}^{d-1}$ and $|y|=o(n^{1/6})$,  
\begin{align*}
\mathbb{E} \Big[ \varphi(X_n^x)
       \mathds{1}_{\{ \log \lVert G_n \rVert - n\lambda \in [a_1, a_2] + \sqrt{n}\sigma y \} } \Big]  
= \frac{e^{ -\frac{y^2}{2}  }}{\sigma   \sqrt{2 \pi n}}  
  \Big[ (a_2 - a_1) \nu(\varphi)  +  o(1) \Big].
\end{align*}
With $\varphi = \mathbf{1}$, we have, as $n \to \infty$, 
uniformly in $x \in \mathbb{P}^{d-1}$ and $|y| = o(n^{1/6})$, 
\begin{align}\label{LLT_Norm_00a}
\mathbb{P} \Big(  \log \lVert G_n \rVert  - n\lambda \in  [a_1, a_2] + \sqrt{n}\sigma y \Big)  
= \frac{ e^{- \frac{y^2}{2} } }{ \sigma  \sqrt{2 \pi n} } \Big[ a_2 - a_1 + o(1) \Big].
\end{align}
\end{theorem}

In the smaller range $|y| = O(\sqrt{\log n})$,  
the result \eqref{LLT_Norm_00a} has been established
for the general framework of semisimple real Lie groups in \cite[Theorem 17.7]{BQ16b},
under some assumptions which reduce to ours for the general linear group $GL(d, \mathbb R)$.  
Thus Theorem \ref{Thm_LLT_Norm_0a} extends the results in \cite{BQ16b}
to the wider range $|y| =o(n^{1/6})$, 
and to the couple $(X_n^x, \log \lVert G_n \rVert)$ 
with a target function $\varphi$ on the Markov chain $X_n^x$. 
Note that it is an open question to establish local limit theorem with moderate deviation for $\log \lVert G_n \rVert$
in the whole range $|y| = o(\sqrt{n})$.


\section{Spectral gap theory} \label{sec:spec gap norm}
This section is devoted 
to investigating the spectral gap properties of some linear operators to be introduced below:
the transfer operator $P_z$,  its normalization $Q_s$ which is a Markov operator,  
and the perturbed operator $R_{s,z}$, for real-valued $s$ and complex-valued $z$.
The properties for these operators have been studied in recent years, 
for instance in  \cite{LeP82, BDGM14,GL16, BM16, BQ16b},
where various results have been established under different restrictions on $s$ and $z$,
which are not enough for obtaining the results of the paper.   
We shall complete these results by investigating the case
when $s\in (-\eta,\eta)$ with $\eta>0$ small, and $z$ belongs to a small ball of the complex plane centered at the origin.
The case of $s<0$ turns out to be more difficult than the case $s\geq 0$ and 
requires a deeper analysis. 
We also complement the previous results with some new properties 
to be used in the proofs of the main results of the paper. 


\subsection{Properties of the transfer operator $P_z$} \label{subsec a change of measure}

Recall that 
the Banach space $\mathcal{B}_{\gamma}$ consists of all the $\gamma$-H\"{o}lder continuous complex-valued
functions on $\mathcal{S}$.
We write $\mathcal{B}_{\gamma}'$ for the topological dual of $\mathcal{B}_{\gamma}$ endowed with the norm
$\lVert \nu \rVert_{ \mathcal{B}_{\gamma}^{\prime} }
= \sup_{\varphi \in \mathcal{B}_{\gamma}: \lVert \varphi \rVert_{\gamma} = 1} | \nu( \varphi ) |,$ for any linear functional 
$\nu \in \mathcal{B}_{\gamma}^{\prime }.$
Let $\mathcal{L(B_{\gamma}, B_{\gamma})}$ be
the set of all bounded linear operators from $\mathcal{B}_{\gamma}$ to $\mathcal{B}_{\gamma}$
equipped with the operator norm
$\lVert \cdot \rVert_{\mathcal B_{\gamma} \to \mathcal B_{\gamma}}$.
Denote by $\varrho(Q)$ the spectral radius of an operator $Q \in \mathcal{L(B_{\gamma},B_{\gamma})}$,
and by $Q|_E$ its restriction to the subspace $E \subseteq \mathcal B_{\gamma}$.

For any $z \in \mathbb{C}$ with $|z|<\eta_0$, where $\eta_0$ is given in condition \ref{CondiMoment},
define the transfer operator $P_z$ as follows: for any $\varphi \in \mathcal{C(S)}$, 
\begin{align} \label{def-Pz}
P_z \varphi(x) = \mathbb E \Big[ e^{z \sigma(g_1, x)} \varphi(g_1 \!\cdot\! x) \Big],
\quad  x \in \mathcal{S}.
\end{align}
The transfer operator $P_z$ 
acts from $\mathcal {C(S)}$ to the space of bounded functions on $\mathcal S.$
The proposition stated below gives the spectral gap properties of the operator $P_z$ for $z$ in a small enough neighborhood of $0$ in the complex plane.  
In the sequel, even if it is not stated explicitly, we assume that $\gamma>0$ is a sufficiently small constant.

\begin{proposition}  \label{transfer operator}
Assume that $\mu$ satisfies
either conditions \ref{CondiMoment} and \ref{CondiIP} for invertible matrices,
or conditions \ref{CondiMoment} and \ref{CondiAP} for positive matrices.
Then, $P_z \in \mathcal{L}(\mathcal{B}_\gamma,\mathcal{B}_\gamma)$ for any $z\in B_{\frac{\eta_0}{2} }(0)$, and
the mapping $z \mapsto P_z:$
$B_{\frac{\eta_0}{2} }(0) \to \mathcal{L}(\mathcal{B}_\gamma,\mathcal{B}_\gamma)$
is analytic for $\gamma >0$ small enough,
where $\eta_0 > 0$ is given in condition \ref{CondiMoment}.
Moreover, there exists a constant $\eta > 0$ such that for any
$z \in B_{\eta}(0)$ and $n \geq 1$, we have the decomposition
\begin{align}\label{Pzn-decom}
P_z^n = \kappa^n(z) M_z + L_z^n,
\end{align}
where the operator $ M_z : = \nu_z \otimes r_z$ is a rank one projection on $\mathcal{B}_\gamma$ defined by
$M_z \varphi = \frac{ \nu_z(\varphi) }{ \nu_z(r_z) }  r_z$
for any $\varphi \in \mathcal{B}_\gamma$, and the mappings on $B_{\eta}(0)$
\begin{align*}
z \mapsto \kappa(z) \in \mathbb{C}, \quad   z \mapsto r_z \in \mathcal{B}_{\gamma} ,
\quad   z \mapsto \nu_z \in \mathcal{B}_{\gamma}' ,
\quad   z \mapsto  L_z \in \mathcal{L}(\mathcal{B}_\gamma,\mathcal{B}_\gamma)
\end{align*}
are unique under the normalization conditions $\nu (r_z) =  1$  and $\nu_z ( {\bf 1} ) =1$, 
where $\nu$ is defined in \eqref{mu station meas}; 
all these mappings are analytic in $B_{\eta}(0)$, and possess the following properties:
\begin{itemize}
\item[{\rm(a)}]
for any $z \in B_{\eta}(0)$, it holds that $M_z L_z = L_z M_z =0$;
\item[{\rm(b)}] 
for any $z \in B_{\eta}(0)$,   
$P_z r_z = \kappa(z) r_z$ and $\nu_z P_z = \kappa(z) \nu_z$; 
\item[{\rm(c)}] 
$\kappa(0) = 1$, $r_0 = \mathbf{1}$, $\nu_0 = \nu$,   
    and $\kappa(s)$ and $r_s$ are real-valued and satisfy $\kappa(s)>0$ and $r_s(x)>0$ 
    for any $s \in (-\eta, \eta)$ and $x \in \mathcal{S}$;
\item[{\rm(d)}] 
for any $k\in \mathbb{N}$, 
  there exist constants $C_k >0$ and $0< a_1 < a_2< 1$ such that $|\kappa(z)| > 1 - a_1$  
  and  $\lVert \frac{d^k}{d z^k} L_z^n \rVert_{\mathcal B_{\gamma} \to \mathcal B_{\gamma}}
    \leq C_k(1 - a_2)^n$ for all $z \in B_{\eta }(0)$. 
\end{itemize}
\end{proposition}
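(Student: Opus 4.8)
The plan is to establish Proposition \ref{transfer operator} via the classical Doeblin--Fortet / Ionescu-Tulcea--Marinescu perturbation scheme, following the general strategy of \cite{GE2016, BDGM2014, BS2016} but carrying it out carefully enough to obtain analyticity in a genuine complex ball around the origin. First I would show that $P_z$ maps $\mathcal{B}_\gamma$ into itself and is bounded for $|z|<\eta_0/2$: for $\|P_z\varphi\|_\infty$ one uses $|e^{z\log|g_1 x|}| = |g_1 x|^{\Re z} \le N(g_1)^{\eta_0/2}$, which is integrable by condition \ref{CondiMoment}; for the H\"older seminorm $[P_z\varphi]_\gamma$ one splits $e^{z\log|g_1 x|}\varphi(g_1\cdot x) - e^{z\log|g_1 y|}\varphi(g_1\cdot y)$ into a term controlled by $[\varphi]_\gamma$ times the contraction estimate on $\mathbf{d}(g_1\cdot x, g_1\cdot y)$ (using \eref{Ineq-Distan} and the standard bound $\mathbf{d}(g\cdot x, g\cdot y)\le \tfrac{\|g\|}{\iota(g)^2}\,\mathbf{d}(x,y)$ or its positive-matrix analogue) and a term controlled by $\|\varphi\|_\infty$ times $|e^{z\log|g_1 x|}-e^{z\log|g_1 y|}| \le |z|\,N(g_1)^{\eta_0}|\log|g_1 x|-\log|g_1 y||$, the latter difference being $\le C\,\mathbf{d}^\gamma(x,y)$ after choosing $\gamma$ small (this is where smallness of $\gamma$ enters). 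Analyticity of $z\mapsto P_z$ then follows because the power series $\sum_k \frac{z^k}{k!}\mathbb{E}[(\log|g_1 x|)^k\varphi(g_1\cdot x)]$ converges in $\mathcal{L}(\mathcal{B}_\gamma,\mathcal{B}_\gamma)$-norm for $|z|<\eta_0/2$, again by dominated convergence using the exponential moment.

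Next I would prove the key \emph{Doeblin--Fortet inequality}: there exist $n_0\ge 1$, $0<\theta<1$ and $C>0$ such that for all $z$ in a neighbourhood of $0$,
\begin{align*}
[P_z^{n_0}\varphi]_\gamma \le \theta\,\|\varphi\|_\gamma + C\,\|\varphi\|_\infty,
\end{align*}
together with compactness of the embedding $\mathcal{B}_\gamma \hookrightarrow \mathcal{C}(\mathcal{S})$ (Arzel\`a--Ascoli). The contraction constant here comes from iterating the projective contraction under the irreducibility/proximality hypothesis \ref{CondiIP} (resp.\ the positivity hypothesis \ref{CondiAP}), which forces $\mathbb{E}[(\tfrac{\|G_{n_0}\|}{\iota(G_{n_0})^2})^\gamma]\to 0$-type control for $\gamma$ small; this is the standard mechanism behind the simplicity of the Lyapunov spectrum. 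For the unperturbed operator $P_0$, which is the Markov operator of $(X_n^x)$, the hypotheses give a unique stationary measure $\nu$ (already recalled in \eref{mu station meas}) and, via the Ionescu-Tulcea--Marinescu theorem plus aperiodicity of the minimal invariant set $V(\Gamma_\mu)$, a spectral gap: $1$ is a simple isolated eigenvalue of $P_0$ with the rest of the spectrum inside a disc of radius $<1$. This yields $\kappa(0)=1$, $r_0=\mathbf 1$, $\nu_0=\nu$, and the decomposition $P_0^n = M_0 + L_0^n$ with $\|L_0^n\|\le C(1-a_2)^n$.

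Then the perturbation argument: by Kato's analytic perturbation theory (or the Keller--Liverani / Hennion--Herv\'e formalism), since $z\mapsto P_z$ is analytic into $\mathcal{L}(\mathcal{B}_\gamma,\mathcal{B}_\gamma)$ and $P_0$ has an isolated simple eigenvalue $1$, for $z$ in a small ball $B_\eta(0)$ the operator $P_z$ has a unique eigenvalue $\kappa(z)$ near $1$, with a rank-one spectral projection $M_z$ depending analytically on $z$, and $P_z^n = \kappa^n(z)M_z + L_z^n$ with $L_z = P_z(\mathrm{Id}-M_z)$ having spectral radius bounded away from $1$ uniformly on $B_\eta(0)$; writing $M_z = \nu_z\otimes r_z$ and imposing the normalizations $\nu(r_z)=1$, $\nu_z(\mathbf 1)=1$ fixes $r_z\in\mathcal{B}_\gamma$ and $\nu_z\in\mathcal{B}_\gamma'$ uniquely and analytically. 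Properties (a)--(b) are immediate from $M_z^2=M_z$, $M_zL_z=L_zM_z=0$ and $P_zM_z=\kappa(z)M_z$; the real-valuedness and positivity in (c) for real $s\in(-\eta,\eta)$ follow because $P_s$ is then a positive operator on real functions, so Perron--Frobenius-type arguments (continuity from $s=0$ where $r_0=\mathbf 1>0$) give $\kappa(s)>0$ and $r_s>0$; property (d) follows from Cauchy estimates applied to the analytic map $z\mapsto L_z^n$ on a slightly smaller ball together with the uniform geometric bound on $\|L_z^n\|$, plus shrinking $\eta$ so that $|\kappa(z)|>1-a_1$ by continuity. The main obstacle — and the reason the paper stresses ``reworking'' the theory for $s<0$ — is establishing the Doeblin--Fortet inequality and the uniform spectral radius bound for $L_z$ \emph{on a full complex ball including $\Re z<0$}: for $\Re z<0$ the weight $|g_1 x|^{\Re z}$ blows up where $|g_1 x|$ is small, so one must exploit $\iota(g_1)^{-\eta_0}$ integrability and the fact that $\gamma$ is chosen small relative to $\eta_0$ to keep the relevant moments finite, and the contraction estimates must be made uniform in $z$ over the ball; this delicate balancing of the H\"older exponent against the moment exponent is precisely the technical heart of the proof.
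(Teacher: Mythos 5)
Your plan — boundedness and analyticity of $P_z$ via the exponential moment, a spectral gap for $P_0$ from a uniform projective contraction, then analytic perturbation theory to obtain \eqref{Pzn-decom} and properties (a)--(d) — matches the paper's strategy closely. The one structural difference is that you route the spectral gap for $P_0$ through Doeblin--Fortet plus Ionescu-Tulcea--Marinescu, whereas the paper shows directly that $\varrho(P_0|_{\ker M})<1$ on the subspace $\ker M$ of mean-zero functions by combining the elementary bound $\|\varphi\|_\infty\le 2[\varphi]_\gamma$ on that subspace with the contraction estimate $\mathbb{E}\bigl[\mathbf{d}^\gamma(G_n\!\cdot\! x,G_n\!\cdot\! y)\bigr]\le a^n$ from \cite[Lemma 3.2]{Hennion1997}. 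This direct argument buys you something: it rules out the entire peripheral spectrum at once, so there is no need for the separate aperiodicity appeal you invoke — which, incidentally, is not among the stated hypotheses of the proposition.

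There is, however, a genuine flaw in your control of the H\"older seminorm $[P_z\varphi]_\gamma$. You propose the mean-value bound $|e^{z\log|g_1x|}-e^{z\log|g_1y|}|\le |z|\,N(g_1)^{\eta_0}\,|\log|g_1x|-\log|g_1y||$ and then assert that the last factor is $\le C\,\mathbf{d}^\gamma(x,y)$ with $C$ independent of $g_1$, "after choosing $\gamma$ small." This is not true: by \eqref{InelogGx} the best available estimate is $|\log|g_1x|-\log|g_1y||\le C\,\|g_1\|\,\iota(g_1)^{-1}\,\mathbf{d}(x,y)\le C\,N(g_1)^2\,\mathbf{d}(x,y)$, with an unavoidable $g_1$-dependent factor, and shrinking $\gamma$ does nothing to remove it because the mean-value route produces a full $\mathbf{d}^1(x,y)$ factor rather than a $\mathbf{d}^\gamma(x,y)$ factor. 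The resulting moment $N(g_1)^{\eta_0/2+2}$ is \emph{not} integrable under condition \ref{CondiMoment}, which only bounds $\mathbb{E}[N(g_1)^{\eta_0}]$ with $\eta_0\in(0,1)$. This is exactly why the paper replaces the mean-value inequality by the interpolation inequality \eqref{TO-Ine-ex}, $|e^{z_1}-e^{z_2}|\le 2\max\{|z_1|^{1-\gamma},|z_2|^{1-\gamma}\}\max\{e^{\Re z_1},e^{\Re z_2}\}\,|z_1-z_2|^\gamma$, which extracts only a $\gamma$-power of the log-difference and hence only an $N(g_1)^{2\gamma}$ moment — finite once $\gamma$ is chosen small relative to $\eta_0$. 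Your closing paragraph correctly identifies "smallness of $\gamma$ relative to $\eta_0$" as the crux, so you have the right instinct, but the specific estimate you wrote would give a divergent integral and the step would fail as stated.
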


Let us point out the differences between Proposition \ref{transfer operator} and the previous results in 
\cite{LeP82, BDGM14, BQ16b}.  
Firstly, we complement the results in \cite{LeP82, BQ16b} 
by giving the explicit formula 
$M_z \varphi = \frac{ \nu_z(\varphi) }{ \nu_z(r_z) }  r_z$ in \eqref{Pzn-decom}, 
for $z \in B_{\eta }(0)$,
which is  one of the crucial points in the proofs of the results of the paper. 
Basically, it permits us to deduce the spectral gap properties of the Markov operator $Q_s$ and 
as well as the perturbed operator $R_{s,z}$ from those of $P_z$. 
In particular, this will enable us to obtain an explicit formula for the operators $N_s$ and $N_{s,z}$ in 
Propositions \ref{ProSpecGapNega} and \ref{perturbation thm}, 
and the uniformity of the bounds \eqref{SpGapContrPi} and \eqref{SpGapContrN}. 
Secondly, 
for positive matrices, some points of Proposition \ref{transfer operator} have been obtained in
\cite{BDGM14} only for real $z \geq 0$.
The difficulty here is the case when $z \in \mathbb R$ is negative and when $z$ is not real,
so Proposition \ref{transfer operator} is new for positive matrices when $|z|\leq \eta.$
Thirdly, we show that $\kappa(z)$ and $r_z$ take real positive values when  $z$ is real, 
which allows to define the change of measure $\mathbb Q_s^x$ for real $s$, 
for both invertible matrices and positive matrices. 
Previously it was shown in \cite{BQ16b} that $\kappa(z)$ is real-valued for real $z\in (-\eta,\eta)$
for invertible matrices.  

\begin{remark}  \label{rem-decom-Pz*}
 Define the conjugate transfer operator $P_z^*$ by
\begin{align*}
P_z^* \varphi(x) =  \mathbb E \big[ e^{z \sigma (g_1^{\mathrm{T}}, x)} \varphi(g_1^{\mathrm{T}} \cdot x)  \big],
\quad  x \in \mathcal{S}^*,
\end{align*}
where $\mathcal{S}^*$ is the dual projective space of $\mathcal S$, 
$z \in \mathbb{C}$ with $\Re z \in (-\eta_0, \eta_0)$,
and $g_1^{\mathrm{T}}$ denotes the transpose of the matrix $g_1$.
One can verify that $P_z^*$ satisfies all the properties of Proposition \ref{transfer operator}: 
under conditions of Proposition \ref{transfer operator}, we have the decomposition
\begin{align}
P_z^{*n}  = \kappa^{*n} (z) \nu_z^* \otimes r_z^* + L_z^{*n},  \quad z \in B_{\eta}(0), \;  n \geq 1,
\end{align}
and all the assertions in Proposition \ref{transfer operator} hold for $P_z^*$,  $\kappa^*(z)$, $\nu_z^*$, $r_z^*$, $L_z^*$ instead of  $P_z$, $\kappa(z)$, $\nu_z$, $r_z$, $L_z$.
\end{remark}

\begin{proof}[Proof of Proposition \ref{transfer operator}]
We split the proof into three steps.
In steps 1 and 2 we concentrate on the case of positive matrices,
since for invertible matrices the results of these steps have been proved in \cite{LeP82, BQ16b}. 
In  step 1 we follow the same lines as in \cite{LeP82, BQ16b}.
In  step 2 we follow \cite{HH08} to prove the spectral gap property of the operator $P_0$ 
and we use the perturbation theory to extend it to $P_z$.    
In step 3 the proof is new and is provided for both invertible and positive matrices
by  complementing the results in \cite{LeP82, BDGM14, BQ16b}. 

\textit{Step 1.}
We only need to consider the case of positive matrices. 
We will show that
there exists $\gamma \in (0, \frac{\eta_0}{6})$ such that
$P_z \in \mathcal{L}(\mathcal{B}_\gamma,\mathcal{B}_\gamma)$,
and that the mapping  $z \mapsto P_z$ is   analytic on $ B_{\frac{\eta_0}{2} }(0)$.
For any 
$m\geq0$,
$z \in B_{\frac{\eta_0}{2} }(0)$ and $\varphi \in \mathcal{B}_{\gamma}$, let 
\begin{align*}
P^{(m)}_z \varphi(x) = \mathbb E \big[ (\sigma(g_1, x))^m e^{z \sigma(g_1, x)} \varphi(g_1 \!\cdot\! x)  \big], 
\quad  x \in  \mathbb{P}^{d-1}_{+}.
\end{align*}
It suffices to show that
for any $z \in B_{\frac{\eta_0}{2} }(0)$ and $\theta\in B_{\frac{\eta_0}{6} }(0)$,
\begin{align} \label{Panalytic01}
P_{z+\theta}=\sum_{m=0}^{\infty} \frac{\theta^m}{m!} P_{z}^{(m)},
\end{align}
and that there exists a constant $C>0$ not depending on $\theta$ and $z$ such that
\begin{align} \label{InequaAnaly}
\sum_{m=0}^{\infty}\frac{|\theta|^m}{m!} \lVert P^{(m)}_z \varphi \rVert_\gamma \leq C \lVert \varphi \rVert_\gamma.
\end{align}
From 
 \eqref{InequaAnaly} 
 we deduce that
  $P^{(0)}_z = P_z \in \mathcal{L}(\mathcal{B}_\gamma,\mathcal{B}_\gamma)$. 
Moreover, the bound \eqref{InequaAnaly} ensures the validity of \eqref{Panalytic01} 
which  implies  the analyticity of the  mapping  $z \mapsto P_z$  on $ B_{\frac{\eta_0}{2} }(0)$.

It remains to prove \eqref{InequaAnaly}.
We first give a control of $\lVert P^{(m)}_z \varphi \rVert_{\infty}$.
Since   $|\sigma(g,x) |\leq \log N(g)$ for any $g \in \Gamma_{\mu}$ and $x\in \mathbb{P}^{d-1}_{+}$,
 we get
\begin{align}\label{sum infinity norm}
\sum_{m=0}^{\infty} \frac{ |\theta|^m }{m!} \lVert P^{(m)}_z \varphi \rVert_{\infty}
\leq \lVert \varphi \rVert_{\infty}  
 \mathbb E \Big[ e^{( |\theta| +|\Re z|)\log N(g_1)}  \Big]
\leq C\lVert \varphi \rVert_{\infty}.
\end{align}
To control
$[ P^{(m)}_z \varphi ]_{\gamma}$,
note that for any 
 $\varphi \in \mathcal{B}_{\gamma}$,
\begin{align}\label{I holder expan}
[ P^{(m)}_z \varphi ]_{\gamma} 
\leq   &  \  \sup_{x, y\in \mathbb{P}^{d-1}_{+}, x\neq y} \bigg|  
 \mathbb E \bigg[
\frac{( \sigma(g_1, x) )^m - ( \sigma(g_1, y) )^m}{\mathbf{d}^{\gamma}(x,y)} e^{z \sigma(g_1, x)} \varphi(g_1 \!\cdot\! x)
\bigg] \bigg| \notag\\
 &  \  +  \sup_{x, y\in \mathbb{P}^{d-1}_{+}, x\neq y}  \bigg|  
\mathbb E \bigg[ ( \sigma(g_1, y) )^m
\frac{e^{z \sigma(g_1, x)} - e^{z \sigma(g_1, y)}}{\mathbf{d}^{\gamma}(x,y)} \varphi(g_1 \!\cdot\! x) \bigg] \bigg| \notag\\
 &  \  +  \sup_{x, y\in \mathbb{P}^{d-1}_{+}, x\neq y}  \bigg| 
 \mathbb E \bigg[ (\sigma(g_1, y))^m  e^{z \sigma(g_1, y)}
\frac{\varphi(g_1 \!\cdot\! x)- \varphi(g_1 \!\cdot\! y)}{\mathbf{d}^{\gamma}(x,y)} \bigg] \bigg|  \notag\\
=: & \  I_{1,m}+I_{2,m}+I_{3,m}.
\end{align}
We then control each of the three terms  $ I_{1,m}, I_{2,m}, I_{3,m}$.

\textit{Control of $I_{1,m}$.}
Since for any $a, b\in \mathbb{C}$,  $m\in \mathbb{N}$ and $0< \gamma <1$,
\begin{align}\label{basic inequ1}
|a^m-b^m|\leq  2 m \max \{|a|^{m-\gamma}, |b|^{m-\gamma}\} |a-b|^\gamma,
\end{align}
we get
\begin{align*}
I_{1,m} \leq    2 m \lVert \varphi \rVert_{\infty}   \sup_{x, y\in \mathbb{P}^{d-1}_{+}, x\neq y}
\mathbb E \bigg[  \frac{ (\log N(g_1))^{m-\gamma} N(g_1)^{|\Re z|} }{\mathbf{d}^{\gamma}(x,y)}
| \sigma(g_1, x) - \sigma(g_1, y) |^\gamma  \bigg].
\end{align*}
Using \eqref{Ineq-Distan}, 
we deduce that for any $g \in \Gamma_{\mu}$,
\begin{align} \label{InelogGx}
\big| \sigma(g, x) - \sigma(g, y) \big|
\leq  C \lVert g\rVert \iota(g)^{-1} \mathbf{d}(x,y),
\end{align}
and hence
\begin{align} \label{sum I1m}
  \sum_{m=0}^{\infty} \frac{ |\theta|^m }{m!}  I_{1,m}  
\leq   2  \lVert \varphi \rVert_{\infty} 
  \mathbb E \Big[ ( \log N(g_1) )^{1-\gamma} e^{( |\theta| + |\Re z| + 2\gamma)\log N(g_1)}  \Big].  
\end{align}

\textit{Control of $I_{2,m}$.}
Using \eqref{basic inequ1}, 
we deduce that for any $z_1, z_2\in \mathbb{C}$,
\begin{align}\label{TO-Ine-ex}
|e^{z_1}-e^{z_2}|\leq
2 \max \{|z_1|^{1-\gamma}, |z_2|^{1-\gamma}\} \max\{e^{\Re z_1}, e^{\Re z_2}\} |z_1-z_2|^\gamma.
\end{align}
By this inequality, we find that for any $g \in \Gamma_{\mu}$,
\begin{align*}
\big| e^{z \sigma(g, x)}-e^{z \sigma (g,y)} \big|
\leq 2 (\log N(g))^{1-\gamma}e^{|\Re z|\log N(g)} |\sigma(g, x) - \sigma(g, y) |^\gamma .
\end{align*}
Combining this with \eqref{InelogGx} implies that
\begin{align}\label{sum I2m}
\sum_{m=0}^{\infty} \frac{ |\theta|^m }{m!}  I_{2,m}  
\leq     2 \lVert \varphi \rVert_{\infty} 
 \mathbb E \Big[ (\log N(g_1))^{1-\gamma}e^{( |\theta| + |\Re z| + 2\gamma)\log N(g_1)}  \Big]. 
\end{align}

\textit{Control of $I_{3,m}$.}
Since $\varphi \in \mathcal{B}_{\gamma}$ and $\mathbf{d}(g \!\cdot\! x, g \!\cdot\! y) \leq \mathbf{d}(x,y)$
for any $g \in \Gamma_{\mu}$,
we get
\begin{align*} 
\sum_{m=0}^{\infty}\frac{ |\theta|^m }{m!} I_{3,m}
\leq   \lVert \varphi \rVert_{\gamma}
\mathbb E \Big[ e^{(|\theta| + |\Re z| + 2\gamma)\log N(g_1)}  \Big].
\end{align*}
Combining this with  \eqref{sum infinity norm},
\eqref{I holder expan}, \eqref{sum I1m} and  \eqref{sum I2m},  
we obtain \eqref{InequaAnaly}.

\textit{Step 2.}
Again we only need to consider the case of positive matrices. 
We will prove the decomposition formula \eqref{Pzn-decom} together with parts (a), (b) and (d). 
Our proof follows closely \cite{HH08}.
Define the operator $M$ on $\mathcal{B}_{\gamma}$ by  $M \varphi = \nu(\varphi) \mathbf{1}$,
$\varphi \in \mathcal{B}_{\gamma}$.
Set $E = \ker M \cap \mathcal{B}_{\gamma}$.
We first  show that
$\lVert \varphi \rVert_{\infty}  \leq [\varphi]_{\gamma}$ for any $\varphi \in E$.
Since $\nu(\varphi)=0$ for any $\varphi \in E$,
there exist $x_1, x_2 \in \mathbb{P}^{d-1}_{+}$ such that $\Re \varphi (x_1) = \Im \varphi(x_2) = 0$.
Since $\mathbf{d}(x,y) \in [0,1]$,  it follows that
\begin{align}\label{CompaNorm}
\lVert \varphi \rVert_{\infty}
   \leq 
\sup_{x\in \mathbb{P}^{d-1}_{+} }  | \Re \varphi(x) - \Re \varphi (x_1) |
   +   \sup_{x\in \mathbb{P}^{d-1}_{+} }  | \Im \varphi(x) - \Im \varphi (x_2) |  
\leq  2 [\varphi]_{\gamma}.
\end{align} 
We next show that $\varrho(P|_{E})<1,$ where $P=P_0$ (see \eqref{def-Pz}).
For any $x,y \in \mathbb{P}^{d-1}_{+}, x\neq y$, and $\varphi \in \mathcal{B}_{\gamma}$,
there exists $a \in (0,1) $ such that  for large $n \geq 1$,
\begin{align*} 
\frac{| P^n \varphi(x) - P^n \varphi(y)|}{\mathbf{d}^{\gamma}(x,y)}
\leq  \lVert \varphi \rVert_{\gamma } \mathbb E \Big[
\frac{ \mathbf{d}^{\gamma} (G_n \!\cdot\! x, G_n \!\cdot\! y)}{ \mathbf{d}^{\gamma}(x,y)}  \Big] 
\leq  \lVert \varphi \rVert_{\gamma }  a^n,
\end{align*}
where for the last inequality we use \cite[Lemma 3.2]{Hen97}.
Observe that for any $\varphi \in \mathcal{B}_{\gamma}$, we have $\varphi - M \varphi \in E $,
thus  $P^n( \varphi - M \varphi )\in E$ for any $n \geq 1$ since $\nu P = \nu$.
Combining this with \eqref{CompaNorm} and the above inequality,   
we get
\begin{align*}
\lVert P^n(\varphi - M \varphi ) \rVert_{\gamma}
\leq    2[P^n(\varphi - M \varphi )]_{\gamma}
\leq     2 a^n  [\varphi]_{\gamma}
\leq    2 a^n  \lVert \varphi \rVert_{\gamma},
\end{align*}
which implies $\varrho(P|_{E})<1.$
This, together with the definition of $E$ and
the fact that $P \mathbf{1} = \mathbf{1}$, shows
that $1$ is the isolated dominant eigenvalue of the operator $P$.
Using this and the analyticity of $P_z \in \mathcal{L}(\mathcal{B}_\gamma,\mathcal{B}_\gamma)$ shown in step 1,
and applying the perturbation theorem (see \cite[Theorem III.8]{HH01}),
we obtain the decomposition formula \eqref{Pzn-decom}  with $M_z(\varphi) = c_1 \nu_z(\varphi) r_z$
for some constant $c_1 \neq 0$,
as well as parts (a), (b) and (d).
 Using $P_z r_z = \kappa(z) r_z$, we get $c_1 = 1/\nu_z(r_z)$
and thus $M_z \varphi = \frac{ \nu_z(\varphi) }{ \nu_z(r_z) }  r_z$
for any $\varphi \in \mathcal{B}_\gamma$.

\textit{Step 3.} We prove part (c) for both invertible matrices and positive matrices.
From $P \mathbf{1} = \mathbf{1}$, we see  that $\kappa(0) = 1$ and $r_0 = \mathbf{1}$.
Letting $z=0$ in $\nu_z P_z = \kappa(z) \nu_z$, we get
$\nu_0 P = \nu_0$ and thus $\nu_0 = \nu$ since $\nu$ is the unique $\mu$-stationary probability measure.
Now we fix $z \in (-\eta, \eta)$ and
we show that $\kappa(z)$ and $r_z$ are real-valued. 
Taking the conjugate in the equality  $P_z r_z = \kappa(z) r_z$, we get 
$P_{ z } \overline{ r_z }  = \overline{ \kappa(z)} \overline{ r_z}$,
so that $ \overline{ \kappa(z)} $ is an eigenvalue of the operator $P_{ z }$.
By the uniqueness of the dominant eigenvalue of $P_z$, it follows that
$\overline{ \kappa(z) } = \kappa( z)$, showing that    $\kappa(z)$ is real-valued for $z \in  (-\eta, \eta)$.
We now  prove that
 $r_z$ is real-valued.   Write $r_z$ in the form
 $r_z = u_z + i v_z$, where $u_z$ and $v_z$ are real-valued functions on $\mathcal{S}$.
From the normalization condition $\nu(r_z) =1$, we get
 $\nu(u_z) = 1$ and $\nu(v_z) = 0$.  From the equation
$P_z r_z = \kappa(z) r_z$ and the fact that  $\kappa(z)$ is real-valued,  
we get that $P_z u_z = \kappa(z) u_z$ and $P_z v_z = \kappa(z) v_z$.
By part  (a), the space of eigenvectors corresponding to the eigenvalue  $\kappa(z) $ is one dimensional. Therefore,
we have either $u_z = c v_z$ for some constant   $c \in \mathbb{R} $, or $v_z=0$. However, the equality $u_z = c v_z$ is impossible because we have seen that  $\nu(u_z) = 1$ and $\nu(v_z) = 0$.
Hence  $v_z = 0$ and $r_z$ is real-valued for $z \in (-\eta, \eta)$.
The positivity of $\kappa(z)$ and $r_z$ then follows from $\kappa(0) = 1$, $r_0 = \mathbf{1}$
and the analyticity of the  mappings $z \mapsto \kappa(z)$ and $z \mapsto r_z$.
This ends the proof of part (c), as well as
the proof of Proposition \ref{transfer operator}. 
\end{proof}


\subsection{Definition of the change of measure \texorpdfstring{$\mathbb Q_s^x$}{Qsx}} 
\label{sec:Def change mes}

Proposition \ref{transfer operator} allows us to perform a change of measure.
Note that this change of measure for positive $s$
has been studied in \cite{BDGM14,BM16,GL16};
however, for negative $s$ it is new.
For any $s \in (-\eta, \eta)$, $x\in\mathcal{S}$ and $g \in \Gamma_{\mu}$, denote
\begin{align}\label{Def-qns}
q_{n}^{s}(x,g)=\frac{e^{s \sigma (g,x)}}{\kappa^{n}(s)}\frac{r_{s}(g \cdot x)}{r_{s}(x)},  \quad  n \geq 1.
\end{align}
Then $(q_n^s)$ satisfies the cocycle property: for any $n, m \geq 1$ and $g_1, g_2 \in \Gamma_{\mu}$,
\begin{align}\label{CocycleEqua}
q_{n}^{s}(x,g_1) q_{m}^{s}( g _1 \!\cdot\! x, g_2) = q_{n+m}^{s}(x,g_2g_1).
\end{align}
Since $\kappa(s)$ and $r_s$ are strictly positive,
$q_{n}^{s}(x, G_n )\mu(dg_1)\dots\mu(dg_n),$ $n\geq 1,$
is a sequence of probability measures,
and forms a projective system
on $M(d,\mathbb{R})^{\mathbb{N}^*}$. 
By the Kolmogorov extension theorem,
there is a unique probability measure  $\mathbb Q_s^x$ on $M(d,\mathbb{R})^{\mathbb{N}^*}$ 
with marginals $q_{n}^{s}(x, G_n )\mu(dg_1)\dots\mu(dg_n)$.
Denote by $\mathbb{E}_{\mathbb Q_s^x}$ the corresponding expectation.
For any $n \in \mathbb{N}$ and
any bounded measurable function $h$ on $(\mathcal S \times \mathbb R)^{n}$,
it holds that for any $s \in (-\eta, \eta)$ and $x\in\mathcal{S}$, 
\begin{align}\label{basic equ1}
&  \frac{1}{\kappa^{n}(s)r_{s}(x)} \mathbb{E} \Big[ r_{s}(X_{n}^{x}) e^{s \sigma(G_n, x)} 
h\big( X_{1}^{x}, \sigma(G_1, x),   \dots, X_{n}^{x}, \sigma(G_n, x) \big) \Big]    \notag\\
& =  \mathbb{E}_{\mathbb{Q}_{s}^{x}} \big[ h\big( X_1^x, \sigma(G_1, x),\dots, X_n^x, \sigma(G_n, x) \big) \big].
\end{align}


\subsection{Properties of the  Markov  operator \texorpdfstring{$Q_s$}{Qs}}\label{seQS}

For any $s \in (-\eta, \eta)$, define the Markov operator $Q_s$ as follows: 
for any $\varphi \in \mathcal{B}_{\gamma}$, 
\begin{align*}
Q_{s}\varphi(x) = \frac{1}{\kappa(s)r_{s}(x)}P_s(\varphi r_{s})(x),  \quad   x \in \mathcal{S}.
\end{align*}
Under the changed measure $\mathbb Q_s^x$, the process
$(X_{n}^x)_{n\in \mathbb{N}}$ is a Markov chain with the transition operator given by $Q_s$.

The next assertion will be useful to prove
that the function $\kappa$ is strictly convex (see Lemma \ref{Lem-Lamb-Conv}).
Recall that $V(\Gamma_{\mu} )$ is the support 
of the measure $\nu$ (cf.\eqref{def-VGamma-inv} and \eqref{def-VGamma-pos}).

\begin{lemma}\label{Lem-QsIne}
Assume the conditions of Proposition \ref{transfer operator}.
Let $s \in (-\eta, \eta)$, where $\eta > 0$ is a small constant. 
If $\varphi \leq Q_s \varphi$ for some real-valued function $\varphi \in \mathcal{C(S)}$,
then $\varphi(x) = \sup_{y \in \mathcal{S}}  \varphi(y)$ for any $x \in V(\Gamma_{\mu} )$.
\end{lemma}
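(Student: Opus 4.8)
The plan is to use that $Q_s$ is a Markov operator and run a strong-maximum-principle argument, then propagate the maximum along the $\Gamma_\mu$-orbit, closing the argument with the uniqueness of the minimal $\Gamma_\mu$-invariant set.

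First I would put $Q_s$ in probabilistic form. By parts (b)--(c) of Proposition \ref{transfer operator} we have $\kappa(s)>0$, $r_s>0$ and $P_s r_s=\kappa(s)r_s$, so $Q_s\mathbf 1=\mathbf 1$, and unfolding the definition,
\[
Q_s\varphi(x)=\int_{\Gamma_\mu}\varphi(g\!\cdot\!x)\,\bar q_s(x,dg),\qquad
\bar q_s(x,dg):=\frac{|gx|^{s}}{\kappa(s)}\frac{r_s(g\!\cdot\!x)}{r_s(x)}\,\mu(dg).
\]
Since $r_s$ is continuous and strictly positive on the compact space $\mathcal S$ and $\kappa(s)>0$, the density of $\bar q_s(x,\cdot)$ with respect to $\mu$ is strictly positive; hence $\bar q_s(x,\cdot)$ is a probability measure on $\Gamma_\mu$ which is equivalent to $\mu$ (same null sets).

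Next, by compactness of $\mathcal S$ and continuity of $\varphi$, let $M:=\sup_{y\in\mathcal S}\varphi(y)$, choose $x_0$ with $\varphi(x_0)=M$, and set $K:=\{x\in\mathcal S:\varphi(x)=M\}$, a nonempty closed set. Evaluating the hypothesis at $x_0$ gives $M=\varphi(x_0)\le Q_s\varphi(x_0)=\int\varphi(g\!\cdot\!x_0)\,\bar q_s(x_0,dg)\le M$, so all inequalities are equalities; since $M-\varphi(g\!\cdot\!x_0)\ge0$ integrates to $0$ against $\bar q_s(x_0,\cdot)$, and this measure is equivalent to $\mu$, we get $g\!\cdot\!x_0\in K$ for $\mu$-a.e.\ $g$. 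The set $\{g:g\!\cdot\!x_0\in K\}$ is closed (preimage of the closed set $K$ under the continuous map $g\mapsto g\!\cdot\!x_0$) and of full $\mu$-mass, hence contains $\supp\mu$.

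Then I would propagate the maximum: every point of $K$ is again a maximizer of $\varphi$, so the previous step applies with $x_0$ replaced by any $x_1\in K$. Iterating, $g_n\cdots g_1\!\cdot\!x_0\in K$ for all $g_1,\dots,g_n\in\supp\mu$ and all $n\ge1$; since $K$ is closed, $g\mapsto g\!\cdot\!x_0$ is continuous, and $\Gamma_\mu=[\supp\mu]$ is the closure of the set of finite products of elements of $\supp\mu$, we obtain $\Gamma_\mu\!\cdot\!x_0\subseteq K$, so $\overline{\Gamma_\mu\!\cdot\!x_0}$ is a nonempty closed $\Gamma_\mu$-invariant set contained in $K$. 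Finally, by compactness of $\mathcal S$ and Zorn's lemma, $\overline{\Gamma_\mu\!\cdot\!x_0}$ contains a minimal nonempty closed $\Gamma_\mu$-invariant set, which must coincide with $V(\Gamma_\mu)$ by the uniqueness of the minimal $\Gamma_\mu$-invariant subset recalled after \eqref{def-VGamma-pos}; hence $V(\Gamma_\mu)\subseteq\overline{\Gamma_\mu\!\cdot\!x_0}\subseteq K$, i.e.\ $\varphi(x)=M=\sup_{y\in\mathcal S}\varphi(y)$ for every $x\in V(\Gamma_\mu)$.

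The only delicate points are the transfer of the "$\bar q_s(x_0,\cdot)$-a.e." conclusion to all of $\supp\mu$ (handled by closedness of the preimage) and the passage from "all finite products of $\supp\mu$ applied to $x_0$ lie in $K$" to "$V(\Gamma_\mu)\subseteq K$", which is precisely where the uniqueness of the minimal $\Gamma_\mu$-invariant set is needed; the rest is routine maximum-principle bookkeeping.
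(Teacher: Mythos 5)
Your proof is correct and takes essentially the same route as the paper's: the paper also observes that $Q_s$ is a Markov operator, deduces that the set of maximizers of $\varphi$ is $\Gamma_\mu$-invariant, and concludes via the uniqueness of the minimal closed $\Gamma_\mu$-invariant set $V(\Gamma_\mu)$. The only difference is that you carefully spell out the passage from "$g\cdot x_0$ maximizes $\varphi$ for $\mu$-a.e.\ $g$" to "for all $g\in\Gamma_\mu$" (closedness of the preimage, iteration over products, closure of the semigroup), steps which the paper states in one line without detail.
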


\begin{proof} We use the approach developed  in \cite{GL16}.
Set $\mathcal{M} = \sup_{y \in \mathcal{S}} \varphi(y)$
and $\mathcal{S}^+ = \{ x \in \mathcal{S}:  \varphi(x)  = \mathcal{M} \}$.
From the condition  $\varphi \leq Q_s \varphi$
and the fact that $\int_{\Gamma_{\mu}} q_1^s(x, g) \mu(dg) = 1$,
we get that if $x \in \mathcal{S}^+$, then $g \!\cdot\! x \in \mathcal{S}^+$ for any $g \in \Gamma_{\mu}$, 
so that $\Gamma_{\mu} \mathcal{S}^+ \subseteq \mathcal{S}^+$.
Since $V(\Gamma_{\mu} )$ is the unique minimal $\Gamma_{\mu}$-invariant set (see \cite{GL16} and \cite{BDGM14}),
we obtain $V(\Gamma_{\mu} ) \subseteq \mathcal{S}^+ $ and the claim follows.
\end{proof}

We state the spectral gap property of the Markov operator $Q_s$, whose proof is postponed to Section \ref{sec-spgappert}.
\begin{proposition}\label{ProSpecGapNega}
Assume the conditions of Proposition \ref{transfer operator}.
Then there exists  $\eta>0$ such that for any $s\in (-\eta, \eta)$ and $n \geq 1$,
we have
\begin{align*}
Q_s^n = \Pi_s  + N_s^n,
\end{align*}
where the mappings $s \mapsto \Pi_s$, $s \mapsto N_s \in \mathcal{L}(\mathcal{B}_\gamma,\mathcal{B}_\gamma)$ 
are analytic on $(-\eta, \eta)$ and satisfy the following properties:
\begin{itemize}
\item[{\rm(a)}]
with $\pi_{s}(\varphi) := \frac{\nu_{s}(\varphi r_{s})}{\nu_{s}(r_{s})}$,  we have
for any $\varphi \in \mathcal{B}_{\gamma}$, 
\begin{align*} 
\Pi_s (\varphi)(x) = \pi_{s}(\varphi) \mathbf{1},  \quad
N_s^n (\varphi)(x) =  \frac{1}{ \kappa^n(s) }  \frac{ L_s^n (\varphi r_s) (x) }{ r_s(x) },  
\quad  x \in \mathcal{S}, 
\end{align*}
where $\nu_s$, $r_s$, $L_s$ are given in Proposition \ref{transfer operator};
\item[{\rm(b)}]  $\Pi_s N_s = N_s \Pi_s = 0$, and for each $k \in \mathbb{N}$,
there exist constants $C_{k}>0$ and $a \in (0,1)$ such that
\begin{align}\label{Opera-Nsn}
\sup_{s \in (-\eta, \eta)} 
\Big\lVert \frac{d^k}{d s^k} N_s^n \Big\rVert_{\mathcal{B}_{\gamma} \to \mathcal{B}_{\gamma}} \leq C_{k} a^n.
\end{align}
\end{itemize}
\end{proposition}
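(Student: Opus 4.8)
The strategy is to \emph{transfer} the spectral decomposition of $P_z^n$ from Proposition~\ref{transfer operator} to $Q_s^n$ via the conjugation relation $Q_s\varphi = \frac{1}{\kappa(s)r_s}P_s(\varphi r_s)$, restricted to real $s \in (-\eta,\eta)$. First I would observe that iterating the definition gives $Q_s^n\varphi(x) = \frac{1}{\kappa^n(s)r_s(x)}P_s^n(\varphi r_s)(x)$ for all $n\geq 1$; this is the key identity and follows immediately by induction since the factors $\kappa(s)$ and $r_s$ telescope (using $P_s r_s = \kappa(s) r_s$ from part~(b) of Proposition~\ref{transfer operator}). Substituting the decomposition $P_s^n = \kappa^n(s)M_s + L_s^n$ from \eqref{Pzn-decom} yields
\begin{align*}
Q_s^n\varphi(x) = \frac{1}{r_s(x)}\Big[ M_s(\varphi r_s)(x) + \frac{1}{\kappa^n(s)}L_s^n(\varphi r_s)(x)\Big].
\end{align*}
Now $M_s(\varphi r_s) = \frac{\nu_s(\varphi r_s)}{\nu_s(r_s)}r_s$ by the explicit formula for $M_z$, so the first term equals $\frac{\nu_s(\varphi r_s)}{\nu_s(r_s)}\mathbf 1 = \pi_s(\varphi)\mathbf 1$, and one reads off $\Pi_s\varphi = \pi_s(\varphi)\mathbf 1$ and $N_s^n\varphi(x) = \frac{1}{\kappa^n(s)}\frac{L_s^n(\varphi r_s)(x)}{r_s(x)}$, which is exactly part~(a).

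\textbf{Analyticity and the remaining properties.} For the analyticity of $s\mapsto \Pi_s$ and $s\mapsto N_s$ on $(-\eta,\eta)$: by Proposition~\ref{transfer operator} the maps $s\mapsto \kappa(s)$, $s\mapsto r_s \in \mathcal B_\gamma$, $s\mapsto \nu_s \in \mathcal B_\gamma'$, $s\mapsto L_s \in \mathcal L(\mathcal B_\gamma,\mathcal B_\gamma)$ are analytic, and by part~(c) of that proposition $\kappa(s)>0$ and $r_s(x)>0$ on $(-\eta,\eta)\times\mathcal S$, so $r_s$ is invertible in $\mathcal B_\gamma$ (after possibly shrinking $\eta$ to keep $\inf_x r_s(x)$ bounded below) and $1/r_s$ depends analytically on $s$; hence multiplication by $1/r_s$ and by $\kappa(s)$ are analytic operator-valued maps, and $\Pi_s$, $N_s$ are compositions of analytic maps. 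To get part~(b), the identity $\Pi_s N_s = N_s \Pi_s = 0$ follows from the conjugation: $\Pi_s$ and $N_s$ are the conjugates (by the invertible multiplier $r_s$, up to the scalar $\kappa(s)$) of $M_s$ and $L_s$, which satisfy $M_s L_s = L_s M_s = 0$ by part~(a) of Proposition~\ref{transfer operator}; a direct check of the composition using $M_s(\varphi r_s)\in \mathbb C r_s$ and $L_s(r_s \cdot(\text{scalar})) = (\text{scalar})L_s r_s$ together with $M_s L_s = 0$ gives the vanishing. For the bound \eqref{Opera-Nsn}, I would write $\frac{d^k}{ds^k}N_s^n$ using the Leibniz rule applied to the product $\frac{1}{r_s}\cdot\frac{1}{\kappa^n(s)}\cdot L_s^n(\cdot\, r_s)$; each derivative of $\kappa^{-n}(s)$ brings down at most a polynomial-in-$n$ factor times $\kappa^{-n}(s)$, but this is absorbed because $|\kappa(s)| > 1-a_1$ uniformly (part~(d) of Proposition~\ref{transfer operator}) whereas $\|\frac{d^j}{ds^j}L_s^n\|_{\mathcal B_\gamma\to\mathcal B_\gamma}\leq C_j(1-a_2)^n$ with $1-a_2 < 1-a_1$, so the product of a polynomial, $(1-a_1)^{-n}$ and $(1-a_2)^n$ is bounded by $C_k a^n$ for some $a \in (0,1)$; the derivatives of $1/r_s$ and the factors $r_s$ are bounded uniformly in $s\in(-\eta,\eta)$ in $\mathcal B_\gamma$-norm by analyticity on a slightly larger interval.

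\textbf{Main obstacle.} The bookkeeping in the Leibniz expansion of $\frac{d^k}{ds^k}N_s^n$ is the only delicate point: one must check that \emph{every} term, in particular those where all $k$ derivatives fall on $\kappa^{-n}(s) = e^{-n\Lambda(s)}$ (producing a factor growing like $n^k$ times $\kappa^{-n}(s)$), is still dominated by the exponential gain $(1-a_2)^n/(1-a_1)^n$ coming from the $L_s^n$ factor after redistributing a small exponential; since $\sup_{s\in(-\eta,\eta)}n^k\big(\tfrac{1-a_2}{1-a_1}\big)^n \to 0$, this works by picking $a$ with $\tfrac{1-a_2}{1-a_1} < a < 1$. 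The uniformity in $s$ is free once one notes all the constants $C_j$, $a_1$, $a_2$ in Proposition~\ref{transfer operator} are uniform over the ball $B_\eta(0)$, hence over the real interval $(-\eta,\eta)$. I would then conclude by collecting parts~(a) and~(b), which completes the proof of Proposition~\ref{ProSpecGapNega}.
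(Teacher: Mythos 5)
Your proof is correct and essentially reproduces the paper's argument. The paper derives Proposition~\ref{ProSpecGapNega} as the $z=0$ specialization of Proposition~\ref{perturbation thm} on the perturbed operator $R_{s,z}$, whose proof rests on exactly the conjugation identity $Q_s^n\varphi = \kappa^{-n}(s)\,r_s^{-1}P_s^n(\varphi r_s)$ (this is \eqref{PfRsw01} at $z=0$) and the substitution of the spectral decomposition \eqref{Pzn-decom} of $P_s^n$, followed by the same bookkeeping for the derivative bound; you carry out the identical steps directly for $Q_s=R_{s,0}$ rather than routing through the general $R_{s,z}$.
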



\subsection{Quasi-compactness of the operator \texorpdfstring{$Q_{s+it}$}{Rsz}}
\label{subsec-quasicomQst}
For any $s \in (-\eta, \eta)$ and $t \in \mathbb{R}$,
 define the operator $Q_{s+it}$ as follows: for any $\varphi \in \mathcal{B}_{\gamma}$,
\begin{align*}
Q_{s+it} \varphi(x) & =  \frac{1}{\kappa(s) r_s(x)} P_{s+it} (\varphi r_s)(x)  \notag\\
& =  \frac{1}{\kappa(s)r_{s}(x)} 
 \mathbb E \Big[
e^{(s + it) \sigma(g_1, x)} \varphi(g_1 \!\cdot\! x) r_s(g_1 \!\cdot\!x)  \Big],
\quad   x \in \mathcal{S}.
\end{align*}
The spectral gap properties of the operator $Q_{s+it}$ for $|t|$ small enough
can be deduced from Proposition \ref{transfer operator}.
However, this approach does not work for large $|t|$. 
In order to investigate the spectral gap properties of the operator $Q_{s+it}$ for $t\in \mathbb R$, 
we first prove the Doeblin-Fortet inequality and then we apply the theorem of
Ionescu-Tulcea and Marinescu \cite{IM50}
to establish the quasi-compactness of the operator $Q_{s+it}$.
Using this property, we shall apply the non-arithmeticicty condition \ref{CondiNonarith} to
prove that the spectral radius of $Q_{s+it}$ is strictly less than $1$ when $t$ is different from $0$.

The following is the Doeblin-Fortet inequality for the operator $Q_{s+it}$:

\begin{lemma}\label{LemDFinequ}
Assume the conditions of Proposition \ref{transfer operator}.
Then, there exist constants $0<a<1$, and $\eta > 0$ small enough, such that for any
$s \in (-\eta, \eta)$, $t \in \mathbb{R}$,
$n \geq 1$ and $\varphi \in \mathcal{B}_{\gamma}$, we have
\begin{align}\label{QuasiIne001}
 [Q_{s+it}^{n} \varphi ]_{ \gamma }
  \leq C_{s,t,n} \lVert \varphi \rVert_{\infty} + C_{s} a^n [ \varphi ]_{ \gamma}.
\end{align}
\end{lemma}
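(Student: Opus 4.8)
The plan is to establish the Doeblin--Fortet inequality \eqref{QuasiIne001} by first treating the case $n=1$ directly and then iterating with the cocycle structure. For $n=1$, write $Q_{s+it}\varphi(x) = \frac{1}{\kappa(s)r_s(x)} \mathbb{E}\big[ |g_1 x|^{s+it} \varphi(g_1\!\cdot\!x) r_s(g_1\!\cdot\!x) \big]$ and estimate the H\"older seminorm by splitting the difference $Q_{s+it}\varphi(x) - Q_{s+it}\varphi(y)$ into pieces according to which factor varies: the factor $\frac{1}{r_s(x)}$, the exponential $|g_1 x|^{s+it}$, the weight $r_s(g_1\!\cdot\!x)$, and the target $\varphi(g_1\!\cdot\!x)$. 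The first three are controlled using that $r_s \in \mathcal{B}_\gamma$ is bounded away from $0$ (Proposition \ref{transfer operator}(c)), the basic inequality \eqref{TO-Ine-ex} for $|e^{z_1}-e^{z_2}|$ together with the Lipschitz-type bound \eqref{InelogGx} for $|\log|g_1x| - \log|g_1y||$, and condition \ref{CondiMoment} to integrate the resulting powers of $N(g_1)$; crucially, each of these terms carries a factor $\|\varphi\|_\infty$ and the constant may depend on $s$, $t$, $n$, which is allowed on that term. The fourth term, involving $\varphi(g_1\!\cdot\!x)-\varphi(g_1\!\cdot\!y)$, is where the genuine contraction comes from: using $\mathbf{d}(g\!\cdot\!x, g\!\cdot\!y) \le \mathbf{d}(x,y)$ and then, after iterating to $G_n$, the contraction estimate $\mathbb{E}[\mathbf{d}^\gamma(G_n\!\cdot\!x, G_n\!\cdot\!y)/\mathbf{d}^\gamma(x,y)] \le C a^n$ from \cite[Lemma 3.2]{Hennion1997} (exactly as used in Step 2 of the proof of Proposition \ref{transfer operator}), which gives the $C_s a^n [\varphi]_\gamma$ term with a uniform constant in $s$ for $|s|<\eta$.

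The iteration to general $n$ is best done using the cocycle property \eqref{CocycleEqua}: one has the formula
\begin{align*}
Q_{s+it}^n \varphi(x) = \mathbb{E}\big[ q_n^s(x, G_n) e^{it\log|G_n x|} \varphi(G_n\!\cdot\!x) \big],
\end{align*}
where $q_n^s(x,G_n) = \frac{|G_n x|^s}{\kappa^n(s)} \frac{r_s(G_n\!\cdot\!x)}{r_s(x)}$ is the cocycle from \eqref{Def-qns}. Then $Q_{s+it}^n\varphi(x) - Q_{s+it}^n\varphi(y)$ splits into four analogous terms: differences coming from $\frac{1}{r_s(x)}$, from $|G_n x|^s$, from $r_s(G_n\!\cdot\!x)$, and from $\varphi(G_n\!\cdot\!x)$, plus the oscillatory factor $e^{it\log|G_n x|}$ which contributes a term proportional to $|t|$ times $\|\varphi\|_\infty$ times $\mathbb{E}[\text{(powers of }N(G_n)\text{)}]$. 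All terms except the last are absorbed into $C_{s,t,n}\|\varphi\|_\infty$; the last yields $C_s a^n[\varphi]_\gamma$ via the Hennion contraction. The moment bounds $\mathbb{E}[N(G_n)^{\eta_0}] \le (\mathbb{E}[N(g_1)^{\eta_0}])^n < \infty$ follow from submultiplicativity $N(G_n) \le \prod N(g_i)$ and independence.

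The main obstacle, I expect, is keeping the constant $C_s$ in front of $[\varphi]_\gamma$ uniform over $s \in (-\eta,\eta)$ while allowing $C_{s,t,n}$ to blow up on the lower-order term, and making sure the contraction rate $a$ does not depend on $s$ or $t$. This requires care in the splitting: one must route every estimate that involves powers of $N(g_1)$ (which grow with $|t|$ and with differentiating $|G_nx|^s$) into the $\|\varphi\|_\infty$ term and never into the $[\varphi]_\gamma$ term, so that the $[\varphi]_\gamma$ coefficient sees only the pure geometric contraction $\mathbf{d}(g\!\cdot\!x, g\!\cdot\!y) \le \mathbf{d}(x,y)$ composed $n$ times, regularized by \cite[Lemma 3.2]{Hennion1997}. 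A secondary technical point is handling the $|G_n x|^s$ difference uniformly: since $\big| |G_nx|^s - |G_ny|^s\big| = \big|e^{s\log|G_nx|} - e^{s\log|G_ny|}\big|$, apply \eqref{TO-Ine-ex} with $z_j = s\log|G_n\cdot|$, bounding $|z_j|^{1-\gamma} \le (\log N(G_n))^{1-\gamma}$ and $e^{\Re z_j} \le N(G_n)^{|s|} \le N(G_n)^\eta$, then $|z_1-z_2|^\gamma \le |s|^\gamma (C\|G_n\|\iota(G_n)^{-1}\mathbf{d}(x,y))^\gamma$ by \eqref{InelogGx}; choosing $\gamma$ small enough relative to $\eta_0$ (as in Step 1 of Proposition \ref{transfer operator}) makes $\mathbb{E}[N(G_n)^{(2\gamma + \eta)}] $ finite, and this whole contribution is bounded by $C_{s,n}\|\varphi\|_\infty \mathbf{d}^\gamma(x,y)$, hence absorbed. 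Once all four pieces are assembled, \eqref{QuasiIne001} follows.
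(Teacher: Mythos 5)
Your decomposition is in the right spirit and matches the paper's up to bookkeeping, but there is a genuine gap at the crucial step where you claim the $\varphi$-difference piece ``yields $C_s a^n [\varphi]_\gamma$ via the Hennion contraction.'' After bounding $|\varphi(G_n\!\cdot\!x) - \varphi(G_n\!\cdot\!y)| \le [\varphi]_\gamma \,\mathbf{d}^\gamma(G_n\!\cdot\!x,G_n\!\cdot\!y)$, the quantity you need to control is
\begin{align*}
\mathbb{E}\Big[\, q_n^s(y,G_n)\,\frac{\mathbf{d}^\gamma(G_n\!\cdot\!x,G_n\!\cdot\!y)}{\mathbf{d}^\gamma(x,y)}\Big]
  \;=\; \frac{1}{\kappa^n(s)}\,\mathbb{E}\Big[\,|G_ny|^s\,\frac{r_s(G_n\!\cdot\!y)}{r_s(y)}\,\frac{\mathbf{d}^\gamma(G_n\!\cdot\!x,G_n\!\cdot\!y)}{\mathbf{d}^\gamma(x,y)}\Big],
\end{align*}
and the weight $|G_ny|^s/\kappa^n(s)$ is not pointwise bounded: it only integrates to order $1$. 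The contraction estimate from \cite[Lemma 3.2]{Hennion1997} (or Le Page) is a bound on $\mathbb{E}\big[\mathbf{d}^{\gamma'}(G_n\!\cdot\!x,G_n\!\cdot\!y)/\mathbf{d}^{\gamma'}(x,y)\big]$ under the \emph{unweighted} measure $\mathbb{P}$, so it does not directly apply to this weighted integral. Your proposed fix — route all powers of $N(G_n)$ into the $\|\varphi\|_\infty$ term — cannot touch this piece, because the weight $q_n^s$ multiplies the $\varphi$-difference inside a single expectation and cannot be decoupled for free.

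The paper's resolution is precisely what you are missing: apply the Cauchy--Schwarz (H\"older) inequality to decouple the two factors,
\begin{align*}
\frac{1}{\kappa^n(s)}\,\mathbb{E}\Big[\,|G_ny|^s\,\frac{\mathbf{d}^\gamma(X_n^x,X_n^y)}{\mathbf{d}^\gamma(x,y)}\Big]
\;\le\;
\frac{\big\{\mathbb{E}[\,|G_ny|^{2s}\,]\big\}^{1/2}}{\kappa^n(s)}
\;\Big\{\mathbb{E}\,\frac{\mathbf{d}^{2\gamma}(X_n^x,X_n^y)}{\mathbf{d}^{2\gamma}(x,y)}\Big\}^{1/2},
\end{align*}
then bound the second factor by $a_0^n$ (Hennion/Le Page with exponent $2\gamma$, still small), and bound the first factor via the spectral decomposition \eqref{Pzn-decom} of $P_{2s}$, yielding $\kappa^{n/2}(2s)/\kappa^n(s)$ up to $C_s$. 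Continuity of $\kappa$ at $0$ with $\kappa(0)=1$ then gives $\kappa^{n/2}(2s)/\kappa^n(s) \le \alpha^n$ for some $\alpha < 1/a_0$ once $\eta$ is chosen small enough, producing $a = a_0\alpha < 1$ uniformly over $s \in (-\eta,\eta)$. This is not a cosmetic step: the paper explicitly remarks that the route used in prior works for $s\ge 0$ relies on H\"older continuity of $x \mapsto q_n^s(x,g)$, which fails for $s < 0$, and the Cauchy--Schwarz device was introduced precisely to bypass that failure. Without something of this sort, your argument establishes the $n=1$ bound and the bound $[Q_{0+it}^n\varphi]_\gamma \le C_{t,n}\|\varphi\|_\infty + C a^n[\varphi]_\gamma$ at $s=0$, but not the uniform $C_s a^n$ coefficient for $s \ne 0$.
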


For positive-valued $s$, analogous results can be found in \cite{GL16} for invertible matrices
and in \cite{BM16}  for positive matrices.
The proofs in \cite{GL16, BM16} rely essentially on the
H\"{o}lder continuity of the mapping $x \mapsto q_n^s(x, g)$ defined in \eqref{Def-qns}.
However, this property does not hold any more in the case when $s$ is negative.
Our proof of Lemma \ref{LemDFinequ} is carried out using the H\"{o}lder inequality
and the spectral gap properties of the operator $P_s$ established in Proposition \ref{transfer operator}.

\begin{proof}[Proof of Lemma \ref{LemDFinequ}]
Using the definition of $Q_{s+it}$ and the cocycle property \eqref{CocycleEqua}, we get that for any $n \geq 1$,
 \begin{align*}
 Q_{s+it}^n \varphi(x) = \frac{1}{\kappa^n(s) r_{s}(x)} P_{s+it}^n(\varphi r_{s})(x),  \quad  x \in \mathcal{S}.
 \end{align*}
 It follows that 
 \begin{align}\label{DFinequDecom}
 \sup_{x, y \in \mathcal{S}, x \neq y} \frac{| Q_{s+it}^n \varphi(x) - Q_{s+it}^n \varphi(y) |}{ \mathbf{d}^{\gamma}(x,y) }
   \leq J_1(n) + J_2(n),
 \end{align}
   where
    \begin{align*}
   J_1(n)  & =    \sup_{x, y \in \mathcal{S}, x \neq y}
              \frac{1}{ \mathbf{d}^{\gamma}(x,y)  \kappa^n(s) }  \Big| \frac{1}{ r_{s}(x)}
             - \frac{1}{ r_{s}(y)}   \Big| \big| P_{s+it}^n(\varphi r_{s})(x) \big|,  \notag\\
   J_2(n)  & =   \sup_{x, y \in \mathcal{S}, x \neq y}   \frac{1}{ r_s(y) \mathbf{d}^{\gamma}(x,y) \kappa^n (s) }
                 \big|  P_{s+it}^n(\varphi r_{s})(x) - P_{s+it}^n(\varphi r_{s})(y)  \big|.
    \end{align*}
Note that by Proposition \ref{transfer operator}, for any $s \in (-\eta, \eta)$, we have
$\min_{x\in \mathcal{S}}  r_s (x) >0 $, $\max_{x\in \mathcal{S} } r_s (x) < \infty $ and $\kappa(s) >0$. 

\textit{Control of $J_1(n)$.}
Observe that uniformly in $x \in \mathcal{S}$,
\begin{align*}
 | P_{s+it}^n(\varphi r_{s})(x) | 
 \leq  P_{s}^n(|\varphi| r_{s})(x)  
 \leq  \lVert \varphi \rVert_{\infty} \kappa^n(s)  \lVert r_{s} \rVert_{\infty}
\leq C_s \lVert \varphi \rVert_{\infty} \kappa^n(s).
\end{align*}
Since $r_s \in \mathcal{B}_{\gamma}$, 
this implies that 
for any $s \in (-\eta, \eta)$ and $t \in \mathbb{R}$,
\begin{align}\label{DFinequJ1}
J_1(n) \leq C_s \lVert \varphi \rVert_{\infty}.
\end{align}

\textit{Control of $J_2(n)$.}
Using the definition of $P_{s+it}$ and taking into account that 
$r_s$ is strictly positive and bounded on $\mathcal{S}$, we have
\begin{align}\label{DFinequJ2n}
J_2(n) \leq C_s (J_{21}(n) + J_{22}(n) + J_{23}(n)),
\end{align}
where
\begin{align*}
J_{21}(n) & =  \sup_{x, y \in \mathcal{S}, x \neq y} \frac{1}{ \mathbf{d}^{\gamma}(x,y) \kappa^n(s) }
      \Big| \mathbb{E} \Big[ ( e^{(s+it) \sigma (G_n, x)} - e^{(s+it) \sigma (G_n, y)} ) 
         \varphi(X_n^x)   \Big] \Big|,  \notag\\
J_{22}(n) & =  \sup_{x, y \in \mathcal{S}, x \neq y} \frac{1}{ \mathbf{d}^{\gamma}(x,y) \kappa^n(s) }
    \Big| \mathbb{E}  \Big[ e^{(s+it) \sigma (G_n, y)}  ( \varphi(X_n^x) - \varphi(X_n^y) )  \Big] \Big|, \notag\\
J_{23}(n) & =  \sup_{x, y \in \mathcal{S}, x \neq y} \frac{1}{ \mathbf{d}^{\gamma}(x,y) \kappa^n(s) }
  \Big| \mathbb{E}  \Big\{  e^{(s+it) \sigma (G_n, y)}  \varphi(X_n^y)  [ r_s(X_n^x) - r_s(X_n^y) ]  \Big\} \Big|.
\end{align*}

\textit{Control of $J_{21}(n)$.}
Using \eqref{TO-Ine-ex} and the inequality $\log u \leq u^{\varepsilon}$, $u>1$, 
for $\varepsilon>0$ small enough, we obtain     
\begin{align}\label{DF-Ine-Gn-st}
 \big| e^{(s+it) \sigma (G_n, x)} - e^{(s+it) \sigma (G_n, y)} \big|  
\leq     2  ( N(G_n) )^{|s| + \varepsilon}
  \big| \sigma(G_n, x) - \sigma(G_n, y) \big|^{\gamma}.
\end{align}
From the inequality \eqref{Ineq-Distan},
by arguing as in the estimate of \eqref{InelogGx}, we get
\begin{align*}
\big| \sigma(G_n, x) - \sigma(G_n, y)\big|^{\gamma}
\leq C \lVert G_n \rVert^{\gamma} \iota( G_n )^{-\gamma} \mathbf{d}^{\gamma}(x,y).
\end{align*}
Using first \eqref{DF-Ine-Gn-st} and then the last bound,  we deduce that
\begin{align}\label{DFinequJ21}
J_{21}(n) 
\leq   \frac{C \lVert \varphi \rVert_{\infty} }{ \kappa^n(s) }
\Big\{ \mathbb{E} \Big[ ( N(g_1) )^{|s| + \varepsilon }  
  \lvert g_1 \rVert^{\gamma} \iota( g_1 )^{-\gamma} \Big]   \Big\}^n   
\leq     C_{s,t,n} \lVert \varphi \rVert_{\infty},
\end{align}
where the last inequality holds by condition \ref{CondiMoment}.

\textit{Control of $J_{22}(n)$.}
Since $\varphi \in \mathcal{B}_{\gamma}$, applying the H\"{o}lder inequality leads to
\begin{align}\label{DFinequJ22}
J_{22}(n) 
& \leq  \frac{C_s [ \varphi ]_{\gamma} }{ \kappa^n(s) } 
 \sup_{x, y \in \mathcal{S}, x \neq y}
  \mathbb{E} \Big[ e^{s \sigma(G_n, y)}  \frac{ \mathbf{d}^{\gamma}(X_n^x,X_n^y) }{ \mathbf{d}^{\gamma}(x,y) }  \Big] \notag\\
&  \leq  C_s [ \varphi ]_{\gamma}
   \sup_{x, y \in \mathcal{S}, x \neq y}
  \frac{ \big\{ \mathbb{E} \big[ e^{2s \sigma(G_n, y)}  \big]  \big\}^{ 1/2 } }{ \kappa^n(s) }
 \Big[ \mathbb{E} \frac{ \mathbf{d}^{2\gamma}(X_n^x,X_n^y) }{ \mathbf{d}^{2\gamma}(x,y) }  \Big]^{1/2}.
\end{align}
Since $\gamma>0$ is small enough, by \cite[Theorem 1]{LeP82}  for invertible matrices
and \cite[Lemma 3.2]{Hen97} for positive matrices,
there exists a constant $a_0 \in (0,1)$ such that for sufficiently large $n \geq 1$,
\begin{align} \label{ContracInverRes02}
\sup_{x, y \in \mathcal{S}, x \neq y}
     \Big[ \mathbb{E} \frac{ \mathbf{d}^{2\gamma}(X_n^x,X_n^y) }{ \mathbf{d}^{2\gamma}(x,y) }  \Big]^{1/2}
 \leq  a_0^{n}.
\end{align}
In view of Proposition \ref{transfer operator}, we have
\begin{align*}
\mathbb{E} \big[ e^{2s \sigma(G_n, y)} \big] 
= \kappa^n(2s) (M_{2s} \mathbf{1} )(y) + L_{2s}^n \mathbf{1}(y),
\quad  y \in \mathcal{S}.
\end{align*}
Since, by Proposition \ref{transfer operator}(d), $\lVert M_{2s} \mathbf{1} \rVert_{\infty}$ 
is bounded by some constant $C_s$, 
and $\lVert L_{2s}^n \mathbf{1} \rVert_{\infty}$
is bounded by $C_s \kappa^n(2s)$
uniformly in $n \geq 1$, 
it follows that
\begin{align}\label{DFinequJ21sNega02}
\sup_{n \geq 1} \sup_{y \in \mathcal{S}} \frac{ \mathbb{E} [ e^{2s \sigma(G_n, y)} ] }{ \kappa^n(2s) }  \leq  C_s.
\end{align}
As $\kappa$ is continuous in the neighborhood of $0$ and $\kappa(0) = 1$,
 one can choose $\eta>0$ small enough and a constant $\alpha \in (0, 1/a_0)$ 
 such that $\kappa^{n/2}(2s)/\kappa^n(s) \leq \alpha^n$, 
 uniformly in $s \in (-\eta, \eta)$.  
Substituting this inequality together with \eqref{ContracInverRes02} and \eqref{DFinequJ21sNega02} into \eqref{DFinequJ22},
we obtain that for any $s \in (-\eta, \eta)$ with $\eta >0$ small, 
there exists $0<a<1$ such that uniformly in $n \geq 1$,
\begin{align}\label{DFineqJ22}
J_{22}(n) \leq C_s a^{n} [ \varphi ]_{\gamma}. 
\end{align}

\textit{Control of $J_{23}(n)$.}
Using \eqref{DFinequJ21sNega02} and the fact that $r_s \in \mathcal{B}_{\gamma}$,
and applying similar techniques as in the  control of $J_{22}(n)$,
one can verify that there exists a constant $0<a<1$ such that uniformly in $n \geq 1$,
\begin{align}\label{DFinequJ23}
J_{23}(n) \leq C_s a^n \lVert \varphi \rVert_{\infty} [ r_s ]_{\gamma} \leq  C_s a^n \lVert \varphi \rVert_{\infty}.
\end{align}

Inserting  \eqref{DFinequJ21},  \eqref{DFineqJ22}  and \eqref{DFinequJ23}
into \eqref{DFinequJ2n}, we conclude that
\begin{align*}
 J_2(n) \leq C_{s,t,n} \lVert \varphi \rVert_{\infty} + C_s a^n [ \varphi ]_{\gamma}.
\end{align*}
Combining this with \eqref{DFinequJ1} and \eqref{DFinequDecom}, we obtain the inequality \eqref{QuasiIne001}.
\end{proof}

From Lemma \ref{LemDFinequ} and the fact that
$\lVert Q_{s + it} \varphi \rVert_{\infty} \leq C_s \lVert \varphi \rVert_{\infty}$ for any $s \in (-\eta, \eta)$ and $t \in \mathbb{R}$,
we can deduce that $Q_{s + it} \in \mathcal{L}(\mathcal{B}_\gamma,\mathcal{B}_\gamma)$.
We next prove that the operator $Q_{s+it}$ is quasi-compact.
Recall that an operator $Q\in \mathcal{L(B_{\gamma},B_{\gamma})}$ is called \emph{quasi-compact}
if $\mathcal{B_{\gamma}}$ can be decomposed into two $Q$ invariant closed subspaces
$\mathcal{B_{\gamma}}=E\oplus F$
such that  $\dim E< \infty$, each eigenvalue of $Q|_{E}$ has modulus $\varrho(Q)$,
 and   $\varrho(Q|_{F})<\varrho(Q)$ (see \cite{HH01} for more details).

\begin{proposition}\label{Prop-Qst-Quasi}
Assume the conditions of Proposition \ref{transfer operator}.
Then, there exists $\eta>0$ such that for any $s\in (-\eta, \eta)$ and $t \in \mathbb{R}$,
the operator $Q_{s+it}$ is quasi-compact.
\end{proposition}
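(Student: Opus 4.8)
The plan is to verify the two hypotheses of the Ionescu-Tulcea--Marinescu theorem \cite{IM1950} for the operator $Q_{s+it}$ acting on the pair of spaces $\mathcal{B}_{\gamma} \subseteq \mathcal{C(S)}$, which will immediately yield quasi-compactness. First I would record that $Q_{s+it}$ is a bounded operator on $\mathcal{B}_{\gamma}$: this was already observed right after the proof of Lemma \ref{LemDFinequ}, using the Doeblin--Fortet inequality \eqref{QuasiIne001} together with $\| Q_{s+it} \varphi \|_{\infty} \leq C_s \| \varphi \|_{\infty}$. I would also note that $Q_{s+it}$ acts on $\mathcal{C(S)}$ and that $\mathcal{B}_{\gamma}$ embeds compactly into $\mathcal{C(S)}$ by the Arzel\`a--Ascoli theorem, since $\mathcal{S}$ is a compact metric space (under $\mathbf{d}$) and bounded sets in $\mathcal{B}_{\gamma}$ are uniformly bounded and equi-H\"older-continuous, hence equicontinuous.

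Next I would check the uniform power-boundedness of the iterates in the sup-norm: for all $n \geq 1$ and all $\varphi \in \mathcal{B}_{\gamma}$,
\begin{align*}
\| Q_{s+it}^n \varphi \|_{\infty}
= \sup_{x \in \mathcal{S}} \frac{1}{\kappa^n(s) r_s(x)} \big| P_{s+it}^n(\varphi r_s)(x) \big|
\leq \sup_{x \in \mathcal{S}} \frac{1}{\kappa^n(s) r_s(x)} P_s^n(|\varphi| r_s)(x)
\leq \frac{\| r_s \|_{\infty}}{\min_{\mathcal{S}} r_s} \| \varphi \|_{\infty} \leq C_s \| \varphi \|_{\infty},
\end{align*}
where I used $|G_n x|^{s+it}$ has modulus $|G_n x|^s$, the identity $Q_{s+it}^n \varphi = \kappa^{-n}(s) r_s^{-1} P_{s+it}^n(\varphi r_s)$ from the proof of Lemma \ref{LemDFinequ}, and Proposition \ref{transfer operator} which guarantees $0 < \min_{\mathcal{S}} r_s \leq \max_{\mathcal{S}} r_s < \infty$ and $\kappa(s) > 0$ for $s \in (-\eta,\eta)$. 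Crucially, the constant $C_s$ does not depend on $n$ or on $t$. Then the uniform Doeblin--Fortet inequality \eqref{QuasiIne001} from Lemma \ref{LemDFinequ} supplies the second hypothesis: for all $n \geq 1$,
\begin{align*}
[Q_{s+it}^n \varphi]_{\gamma} \leq C_{s,t,n} \| \varphi \|_{\infty} + C_s a^n [\varphi]_{\gamma},
\end{align*}
with $a \in (0,1)$ independent of $n$. Fixing $n_0$ large enough that $C_s a^{n_0} < 1$, the pair (uniform $\mathcal{C(S)}$-boundedness of all iterates, the inequality above for $n_0$) plus the compact embedding $\mathcal{B}_{\gamma} \hookrightarrow \mathcal{C(S)}$ are exactly the hypotheses of the Ionescu-Tulcea--Marinescu theorem as formulated in \cite{HL2001, IM1950}, so $Q_{s+it}$ is quasi-compact on $\mathcal{B}_{\gamma}$, with spectral radius $\varrho(Q_{s+it}) \leq 1$ (by the uniform sup-bound) and essential spectral radius at most $a$.

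The main obstacle here is essentially bookkeeping rather than a deep difficulty: one must make sure all constants in the two displayed inequalities are genuinely uniform in $s \in (-\eta,\eta)$ and, for the sup-norm bound, uniform in $t \in \mathbb{R}$ — which is why the estimate $|(\varphi r_s)(g_1 \cdot x)| \,|g_1 x|^{s+it}| = |g_1 x|^s |(\varphi r_s)(g_1 \cdot x)|$ is used to reduce to the real operator $P_s$, whose behaviour is controlled by Proposition \ref{transfer operator}. One should also be slightly careful that the Ionescu-Tulcea--Marinescu statement is applied with the correct pair of norms and that $(\mathcal{C(S)}, \|\cdot\|_\infty)$ is a Banach space on which $Q_{s+it}$ is continuous, which is immediate from the definition. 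With these routine checks in place the proposition follows; note that the non-arithmeticity condition \ref{CondiNonarith} is not yet needed — it enters only in the subsequent step of ruling out eigenvalues of modulus $1$ for $t \neq 0$.
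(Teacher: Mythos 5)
Your proof is correct and follows essentially the same route as the paper: you verify the Ionescu-Tulcea--Marinescu hypotheses using (i) uniform sup-norm boundedness of the iterates $Q_{s+it}^n$ (obtained by dominating $|G_n x|^{s+it}$ by $|G_n x|^s$ and reducing to $P_s$, exactly as the paper does), (ii) the Doeblin--Fortet inequality of Lemma \ref{LemDFinequ}, and (iii) a compactness condition. The only cosmetic difference is in step (iii): you invoke the compact embedding $\mathcal{B}_{\gamma}\hookrightarrow\mathcal{C(S)}$ directly, while the paper checks the equivalent statement that the image $Q_{s+it}\{\|\varphi\|_{\gamma}\leq 1\}$ is conditionally compact in $(\mathcal{B}_{\gamma},\|\cdot\|_{\infty})$ via Arzel\`a--Ascoli; both are standard formulations of the same ITM hypothesis.
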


\begin{proof}
The proof consists of verifying the conditions of the theorem of Ionescu-Tulcea and Marinescu \cite{IM50}. 
We follow the formulation in \cite[Theorem II.5]{HH01}.

Firstly, by the definition of $Q_{s+it}$,
there exists a constant $C_s > 0$ 
such that  $\lVert Q_{s+it} \varphi \rVert_{\infty} \leq C_s \lVert \varphi \rVert_{\infty}$
 for any $s \in (-\eta, \eta)$, $t \in \mathbb{R}$ and $\varphi \in \mathcal{B}_{\gamma}$.

Secondly, by Lemma \ref{LemDFinequ}, the Doeblin-Fortet inequality \eqref{QuasiIne001} holds for the operator $Q_{s+it}$.

Thirdly, denoting $K = \{ Q_{s+it} \varphi: \lVert \varphi \rVert_{ \gamma } \leq 1  \}$,
we claim that for any $s \in (-\eta, \eta)$ and $t \in \mathbb{R}$, the set $K$
is conditionally compact in $(\mathcal{B}_{\gamma}, \lVert \cdot \rVert_{\infty})$.
Since $\lVert Q_{s+it} \varphi \rVert_{\infty} \leq C_s \lVert \varphi \rVert_{\infty}$ for any $\varphi \in \mathcal{B}_{\gamma}$,
we conclude that $K$ is uniformly bounded
in $(\mathcal{B}_{\gamma}, \lVert \cdot \rVert_{\infty})$.
Moreover, by taking $n =1$ in \eqref{QuasiIne001}, we get that uniformly in $\varphi \in \mathcal{B}_{\gamma}$
with $\lVert \varphi \rVert_{ \gamma } \leq 1$,
\begin{align*}
 | Q_{s+it} \varphi(x) -  Q_{s+it} \varphi(y) |
  \leq C_{s,t}  \mathbf{d}^{\gamma}(x,y).
\end{align*}
This shows that $K$ is equicontinuous in $(\mathcal{B}_{\gamma}, \lVert \cdot \rVert_{\infty})$.
Therefore, we obtain the claim by the Arzel\`a-Ascoli theorem.

The assertion of the proposition now follows from  
the theorem of Ionescu-Tulcea and Marinescu.
\end{proof}

The proposition below shows that
the spectral radius of the operator $Q_{s + it}$ is strictly less than $1$
when $t$ is different from $0$.
The proof which relies on the non-arithmeticity condition \ref{CondiNonarith},
follows the standard pattern in \cite{GL16, BM16};
it is included for the commodity of the reader.

\begin{proposition}\label{PropNonArithQit}
Assume either conditions \ref{CondiMoment} and \ref{CondiIP} for invertible matrices,
or conditions \ref{CondiMoment}, \ref{CondiAP} and \ref{CondiNonarith} for positive matrices.
Then, there exists $\eta > 0$ such that 
for any $s \in (-\eta, \eta)$ and $t \in \mathbb{R} \!\setminus\! \{0\}$,
we have $\varrho(Q_{s + it}) < 1$.
\end{proposition}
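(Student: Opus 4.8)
The plan is to show that the spectral radius $\varrho(Q_{s+it})$ cannot equal $1$ for $t\neq 0$, which, combined with quasi-compactness (Proposition \ref{Prop-Qst-Quasi}), will give the strict inequality. First I would argue by contradiction: suppose $\varrho(Q_{s+it})=1$. Since $Q_{s+it}$ is quasi-compact with spectral radius $1$ and $\|Q_{s+it}\varphi\|_\infty\le C_s\|\varphi\|_\infty$ with $|Q_{s+it}\varphi|\le Q_s|\varphi|$ (which forces $\varrho(Q_{s+it})\le\varrho(Q_s)=1$), there exists an eigenvalue $e^{i\theta}$ of modulus $1$ with an eigenfunction $\varphi\in\mathcal{B}_\gamma$, $\varphi\not\equiv0$: $Q_{s+it}\varphi=e^{i\theta}\varphi$. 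Expanding, this reads, for all $x\in\mathcal{S}$,
\begin{align*}
e^{i\theta}\varphi(x)=\frac{1}{\kappa(s)r_s(x)}\mathbb{E}\big[|g_1x|^{s+it}\varphi(g_1\!\cdot\!x)r_s(g_1\!\cdot\!x)\big].
\end{align*}

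Next I would extract a modulus/phase rigidity statement. Taking absolute values and using $|Q_{s+it}\varphi|\le Q_s|\varphi|$ together with $Q_s\mathbf{1}=\mathbf{1}$, one gets $|\varphi|\le Q_s|\varphi|$; by Lemma \ref{Lem-QsIne}, $|\varphi|$ attains its supremum $\mathcal{M}>0$ on all of $V(\Gamma_\mu)$. Restricting attention to $x\in V(\Gamma_\mu)$ and using that $\nu$ (hence the measure implicitly averaged against, after the change of measure) is carried by $V(\Gamma_\mu)$, the equality case in the triangle inequality forces the integrand to have constant phase: there is a measurable $\vartheta:\mathcal{S}\to\mathbb{R}$ with $\varphi(x)=\mathcal{M}e^{i\vartheta(x)}$ on $V(\Gamma_\mu)$ and, for $\mu$-a.e.\ $g$ and $x\in V(\Gamma_\mu)$,
\begin{align*}
e^{i\theta}e^{i\vartheta(x)}=e^{it\log|gx|}e^{i\vartheta(g\cdot x)},
\end{align*}
i.e.\ $\exp[it\log|gx|-i\theta+i\vartheta(g\!\cdot\!x)-i\vartheta(x)]=1$ for all $g\in\Gamma_\mu$ and $x\in V(\Gamma_\mu)$ (after passing from a.e.\ to everywhere on $\Gamma_\mu$ by continuity of $g\mapsto$ the relevant quantities and density of $\supp\mu$ in $\Gamma_\mu$). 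With $t>0$, $\beta=\theta$ this is exactly the arithmeticity relation in the definition preceding condition \ref{CondiNonarith}; for $t<0$ replace $(t,\theta)$ by $(-t,-\theta)$ and reflect. This contradicts the non-arithmeticity condition \ref{CondiNonarith} in the positive-matrix case, and in the invertible case it contradicts condition \ref{CondiNonarith}, which holds there because \ref{CondiIP} implies non-arithmeticity by \cite{GU2005}.

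A couple of technical points must be handled carefully. One is upgrading the phase identity from "$\mu$-almost every $g$" to "every $g\in\Gamma_\mu$": since $\vartheta$ is a priori only measurable, I would first show $\vartheta$ may be taken continuous on $V(\Gamma_\mu)$ — e.g.\ from $\varphi=\mathcal{M}e^{i\vartheta}$ with $\varphi\in\mathcal{B}_\gamma$ and $\mathcal{M}\neq0$, or by the standard argument that the cohomological equation has a continuous solution — and then use that the set of $g$ for which the relation holds is closed and contains $\supp\mu$, hence contains the generated semigroup $\Gamma_\mu=[\supp\mu]$ after checking the relation is stable under products via the cocycle identity for $\log|gx|$. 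The other point is justifying the equality case of the triangle inequality against the averaging measure: here one uses that the change-of-measure weight $q_1^s(x,\cdot)$ is strictly positive and that $V(\Gamma_\mu)$ is exactly $\supp\nu$, so no mass is lost. The main obstacle I anticipate is precisely this passage from the a.e.\ functional equation to the pointwise arithmeticity statement on $V(\Gamma_\mu)$ with a genuinely continuous $\vartheta$, as well as correctly tracking the real parameter $s\neq0$ through the change of measure so that the modulus estimate $|Q_{s+it}\varphi|\le Q_s|\varphi|$ and the identity $Q_s\mathbf{1}=\mathbf{1}$ (from Proposition \ref{ProSpecGapNega}) are available; everything else is a routine application of quasi-compactness.
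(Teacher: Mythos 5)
Your proof is correct and follows essentially the same route as the paper: argue by contradiction, use quasi-compactness (Proposition \ref{Prop-Qst-Quasi}) to produce a unimodular eigenvalue with eigenfunction $\varphi$, derive $|\varphi|\le Q_s|\varphi|$, invoke Lemma \ref{Lem-QsIne} to force $|\varphi|$ constant on $V(\Gamma_\mu)$, extract the cohomological phase identity from the equality case, and contradict the non-arithmeticity condition (with \ref{CondiIP} implying \ref{CondiNonarith} in the invertible case via \cite{GU2005}). Your additional remarks about upgrading the identity from $\mu$-a.e.\ $g$ to all $g\in\Gamma_\mu$ (continuity of $\vartheta$, closedness of the relation, stability under products via the cocycle identity) are a legitimate and welcome tightening of a step the paper passes over quickly, but they do not change the structure of the argument.
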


\begin{proof}
By the definition of $Q_{s + it}$, we have $\varrho(Q_{s + it}) \leq \varrho(Q_s) = 1$.
Suppose that $\varrho(Q_{s + it}) = 1$ for some $t \neq 0$.
Then, applying Proposition \ref{Prop-Qst-Quasi},
there exist $\varphi \in \mathcal{B}_{\gamma}$ and $\beta \in \mathbb{R}$ 
such that $Q_{s + it} \varphi = e^{i \beta} \varphi$.
From this equation,  
we deduce that $|\varphi| \leq Q_s |\varphi|$.
Using Lemma \ref{Lem-QsIne}, this implies that
$|\varphi(x) | = \sup_{y \in \mathcal{S}} | \varphi(y)|$ for any $x \in V(\Gamma_{\mu} )$,
so that $\varphi(x) = c e^{i \vartheta(x)}$, where
$c \neq 0$ is a constant and  $\vartheta$ is a real-valued continuous function on $\mathcal{S}$.
Substituting this into the equation $Q_{s + it} \varphi = e^{i \beta} \varphi$
gives that for any  $x\in V(\Gamma_{\mu})$,
\begin{align*}
\mathbb{E}_{\mathbb{Q}_s^x} 
\exp \big[ it\sigma(g_1, x)- i\beta + i\vartheta(g_1 \!\cdot\! x)-i \vartheta(x) \big] = 1. 
\end{align*}
Since $\vartheta$ is real-valued, this implies
$\exp[it\sigma(g, x) - i\beta + i\vartheta(g \cdot x)-i \vartheta(x)]=1$
for any  $x\in V(\Gamma_{\mu})$ and $\mu$-a.e. $g\in \Gamma_{\mu}$, 
which contradicts to condition \ref{CondiNonarith}.
Therefore, $\varrho(Q_{s + it}) < 1$ for any $t \neq 0$. 
Recalling that condition \ref{CondiIP} implies condition  \ref{CondiNonarith} for invertible matrices,
the proof of Proposition \ref{PropNonArithQit} is complete.
\end{proof}

\subsection{Spectral gap properties of the perturbed operator \texorpdfstring{$R_{s,z}$}{Rsz}} 
\label{sec-spgappert}

For any $s\in (-\eta, \eta)$ and $z \in \mathbb{C}$
such that $s+ \Re z \in (-\eta_0, \eta_0)$,
define the perturbed operator $R_{s, z}$ as follows: for any $\varphi \in \mathcal{B}_{\gamma}$,
\begin{align}\label{Def-Rn-sw000}
R_{s, z}\varphi(x)
= \mathbb{E}_{\mathbb{Q}_{s}^{x}} \big[ e^{z( \sigma(g_1, x) - \Lambda'(s) )}\varphi(X_{1}^x) \big],
\quad    x \in \mathcal{S}.
\end{align}
With some calculations using \eqref{CocycleEqua}, it follows that for any $n \geq 1$,
\begin{align}\label{Def-Rn-sw}
R^{n}_{s, z}\varphi(x)
= \mathbb{E}_{\mathbb{Q}_{s}^{x}} \big[ e^{z( \sigma(G_n, x) - n\Lambda'(s) )}\varphi(X_{n}^x) \big],
\quad    x \in \mathcal{S}.
\end{align}
The following formula relates the operator $R^n_{s,z}$ to the operator $P^n_{s+z}$ and is of independent interest:
for any $\varphi \in \mathcal{B}_{\gamma}$, $n \geq 1$,
$s \in (-\eta, \eta)$ and $z \in B_\eta(0)$,
\begin{align}\label{PfRsw01}
R_{s,z}^n (\varphi)
=  e^{ -n z \Lambda'(s)} \frac{ P_{s+z}^n (\varphi r_s) }{ \kappa^n(s) r_s  }.
\end{align}
The identity \eqref{PfRsw01} is obtained by the definitions of  $R_{s,z}$ and $P_{z}$ using the change of measure \eqref{basic equ1}.  

There  are two ways to establish spectral gap properties of the operator  $R_{s,z}$: one is to use the perturbation theory of operators \cite[Theorem III.8]{HH01}, 
another is based on the Ionescu-Tulcea and Marinescu theorem \cite{IM50} about the quasi-compactness of operators. 
The representation \eqref{PfRsw01} allows us to deduce the spectral gap properties of $R_{s,z}$ 
directly from the properties of the operator $P_{z}$.
This has some advantages: 
it ensures  the uniformity in $s \in (-\eta,\eta)$, 
allows to deal with negative-vaued $s$ and 
provides an explicit formula for the projection operator $\Pi_{s, z}$ and the remainder operator $N^{n}_{s, z}$ defined below.

Recall that $\Lambda = \log \kappa$, where $\kappa$ is defined in \eqref{Def-kappa01}. 
 
\begin{proposition} \label{perturbation thm}
Assume the conditions of Proposition \ref{transfer operator}.
Then, there exist $\eta>0 $ and $\delta \in (0, \eta)$  
such that for any $s \in (-\eta, \eta)$ and $z \in  B_\delta(0)$,
\begin{align}
R^{n}_{s, z} &= \lambda^{n}_{s, z}\Pi_{s, z} + N^{n}_{s, z},  \quad  n \geq 1,  \label{perturb001}\\
\lambda_{s, z} &= e^{ \Lambda(s+z) - \Lambda(s) - \Lambda'(s) z} \label{relationlamkappa001}
\end{align}
and for $\varphi \in \mathcal{ B_{\gamma} }$,
\begin{align}
& \Pi_{s, z}(\varphi) = 
\frac{ \nu_{s+z} (\varphi r_s) }{ \nu_{s+z} (r_{s+z}) }  \frac{ r_{s+z} }{ r_s }, \label{exprePisw-Nsw-a}\\
& N^{n}_{s, z}(\varphi) =
e^{- n [\Lambda(s) + \Lambda'(s) z] } \frac{ L_{s+z}^n (\varphi r_s) }{r_s}, \label{exprePisw-Nsw-b}
\end{align}
where $r_z$, $\nu_z$ and $L_z$ are given in Proposition \ref{transfer operator}.
In addition, we have:
\begin{itemize}
\item[{\rm(a)}]  for fixed $s$,
the mappings $z \mapsto \Pi_{s, z}: B_\delta(0) \to \mathcal{L(B_{\gamma},B_{\gamma})}$,
$z \mapsto N_{s, z}: B_\delta(0) \to \mathcal{L(B_{\gamma},B_{\gamma})}$
and $z \mapsto \lambda_{s, z}: B_\delta(0) \to \mathbb{C}$
are analytic,

\item[{\rm(b)}]  for fixed $s$ and $z$, $\Pi_{s,z}$ is a rank-one projection with
$\Pi_{s, 0}(\varphi)(x)=\pi_{s}(\varphi)$ for any $\varphi \in \mathcal{B}_{\gamma}$ and $x\in \mathcal{S}$,
and $\Pi_{s,z} N_{s, z} = N_{s,z} \Pi_{s,z} = 0$,

\item[{\rm(c)}]  for any $k \in  \mathbb{N}$, there exist constants $C_k>0$ and $0<a<1$ such that
\begin{align}
&  \sup_{s \in (-\eta, \eta) } \sup_{ z \in  B_\delta(0) }
\Big\lVert \frac{d^{k}}{dz^{k}} \Pi_{s, z} \Big\rVert_{\mathcal{B}_{\gamma} \to \mathcal{B}_{\gamma}}
\leq C_k,  \label{SpGapContrPi}  \\
&  \sup_{s \in (-\eta, \eta) } \sup_{ z \in  B_\delta(0) }
\Big\lVert \frac{d^{k}}{dz^{k}}N^{n}_{s, z} \Big\rVert_{\mathcal{B}_{\gamma} \to \mathcal{B}_{\gamma}}
\leq C_k a^{n}.    \label{SpGapContrN}
\end{align}

\end{itemize}
\end{proposition}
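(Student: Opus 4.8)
The plan is to deduce the whole proposition from the identity \eqref{PfRsw01} together with the spectral decomposition of $P_z$ in Proposition \ref{transfer operator}: up to explicit scalars, $R_{s,z}$ is a \emph{conjugate} of $P_{s+z}$, so it automatically inherits the rank-one-plus-remainder structure. First I would fix the small $\eta>0$ from Proposition \ref{transfer operator}, then shrink it and pick $\delta\in(0,\eta)$ so that $|s|+|z|<\eta$ whenever $s\in(-\eta,\eta)$ and $z\in B_\delta(0)$; then $s+z$ lies in the ball where Proposition \ref{transfer operator} is valid. Next I introduce the bounded multiplication operators $V_s\varphi=\varphi\,r_s$ and $U_s\varphi=\varphi/r_s$ on $\mathcal{B}_\gamma$: since $r_s\in\mathcal{B}_\gamma$ is strictly positive on the compact space $\mathcal{S}$ by Proposition \ref{transfer operator}(c), we have $1/r_s\in\mathcal{B}_\gamma$, $U_sV_s=V_sU_s=\mathrm{id}$, and $\|U_s\|_{\mathcal{B}_\gamma\to\mathcal{B}_\gamma}+\|V_s\|_{\mathcal{B}_\gamma\to\mathcal{B}_\gamma}\le C$ uniformly in $s\in(-\eta,\eta)$. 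Rewriting \eqref{PfRsw01} as $R_{s,z}^n=e^{-nz\Lambda'(s)}\kappa^{-n}(s)\,U_sP_{s+z}^nV_s$ and inserting $P_{s+z}^n=\kappa^n(s+z)M_{s+z}+L_{s+z}^n$ splits $R_{s,z}^n$ into a rank-one term and a remainder term.

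From this I read off the claimed formulas. The scalar prefactor of the rank-one term is $\exp\!\big(n[\Lambda(s+z)-\Lambda(s)-\Lambda'(s)z]\big)=\lambda_{s,z}^n$, which is \eqref{relationlamkappa001}; setting $\Pi_{s,z}:=U_sM_{s+z}V_s$ and using $M_{s+z}\varphi=\nu_{s+z}(\varphi)r_{s+z}/\nu_{s+z}(r_{s+z})$ gives exactly \eqref{exprePisw-Nsw-a}, the factor $\nu_{s+z}(r_{s+z})$ being nonzero near the origin since $\nu_0(r_0)=\nu(\mathbf1)=1$ and $z\mapsto\nu_{s+z}(r_{s+z})$ is continuous. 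The remainder term has prefactor $e^{-nz\Lambda'(s)}\kappa^{-n}(s)=e^{-n[\Lambda(s)+\Lambda'(s)z]}$, and since $L_{s+z}^n=(L_{s+z})^n$ one has $U_sL_{s+z}^nV_s=(U_sL_{s+z}V_s)^n=:N_{s,z}^n$, which is \eqref{exprePisw-Nsw-b}; together these give \eqref{perturb001}. For part (b): $\Pi_{s,z}$ is rank one because $M_{s+z}$ is, $\Pi_{s,z}^2=U_sM_{s+z}^2V_s=\Pi_{s,z}$ since $M_{s+z}$ is a projection, $\Pi_{s,z}N_{s,z}=U_sM_{s+z}L_{s+z}V_s=0$ and likewise $N_{s,z}\Pi_{s,z}=0$ by Proposition \ref{transfer operator}(a), and $\Pi_{s,0}(\varphi)=U_sM_sV_s\varphi=\nu_s(\varphi r_s)/\nu_s(r_s)=\pi_s(\varphi)$, recovering the value of $\Pi_s$ from Proposition \ref{ProSpecGapNega}(a).

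For the analyticity in part (a) and the bounds in part (c): for fixed $s$ the maps $z\mapsto\kappa(s+z)$, $z\mapsto r_{s+z}$, $z\mapsto\nu_{s+z}$, $z\mapsto L_{s+z}$ are analytic on $B_\delta(0)$ by Proposition \ref{transfer operator}, and $U_s$, $V_s$, $\Lambda'(s)$ are independent of $z$, so $z\mapsto\lambda_{s,z}$, $z\mapsto\Pi_{s,z}$, $z\mapsto N_{s,z}$ are analytic. Differentiating in $z$ commutes with the fixed operators, whence $\frac{d^k}{dz^k}\Pi_{s,z}=U_s\big(\tfrac{d^k}{dw^k}M_w\big)\big|_{w=s+z}V_s$ and $\frac{d^k}{dz^k}N_{s,z}^n=U_s\big(\tfrac{d^k}{dw^k}L_w^n\big)\big|_{w=s+z}V_s$; then \eqref{SpGapContrPi} follows from $\|U_s\|,\|V_s\|\le C$ and Cauchy's estimates applied to the analytic $\mathcal{L}(\mathcal{B}_\gamma,\mathcal{B}_\gamma)$-valued map $w\mapsto M_w$ on a slightly larger ball, and \eqref{SpGapContrN} follows in the same way using in addition Proposition \ref{transfer operator}(d), which yields $\|\tfrac{d^k}{dw^k}L_w^n\|_{\mathcal{B}_\gamma\to\mathcal{B}_\gamma}\le C_k(1-a_2)^n$ uniformly for $w$ in that ball; take $a=1-a_2$.

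The step I expect to require the most care — and the only genuine obstacle — is the bookkeeping that renders all estimates \emph{uniform in} $s\in(-\eta,\eta)$: one must shrink $\eta$ so that (i) $s+z$ stays in the domain of Proposition \ref{transfer operator} for every $z\in B_\delta(0)$, (ii) $\min_{x\in\mathcal{S}}r_s(x)$ stays bounded below by a positive constant and $\|r_s\|_\gamma$ stays bounded above, uniformly in $s$, so that $U_s,V_s$ are uniformly bounded, and (iii) $\nu_{s+z}(r_{s+z})$ stays bounded away from $0$. Each of these is immediate from the continuity in $s$ and the positivity in Proposition \ref{transfer operator}(c), but they are precisely what make the constants in \eqref{SpGapContrPi}--\eqref{SpGapContrN} free of $s$.
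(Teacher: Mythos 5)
Your proof is correct and follows essentially the same route as the paper's: write $R_{s,z}^n = e^{-nz\Lambda'(s)}\kappa^{-n}(s)\,U_s P_{s+z}^n V_s$ (your conjugation by the multiplication operators $U_s=r_s^{-1}\cdot$ and $V_s=r_s\cdot$ is a tidy repackaging of the paper's identity \eqref{PfRsw01}), substitute the spectral decomposition \eqref{Pzn-decom} of $P_{s+z}^n$, and read off the scalar, the rank-one projection and the remainder; parts (a)--(c) then follow from analyticity and uniform bounds of $\kappa$, $r_z$, $\nu_z$, $L_z$, exactly as in the paper. The one quantitative slip is in part (c): taking $a=1-a_2$ for \eqref{SpGapContrN} ignores the scalar prefactor $e^{-n[\Lambda(s)+\Lambda'(s)z]}$ sitting in \eqref{exprePisw-Nsw-b}, and $\Re[\Lambda(s)+\Lambda'(s)z]$ need not be nonnegative for complex $z$, so this factor can exceed $1$. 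One must, as the paper does, first bound $\sup_{s,z}\big|e^{-[\Lambda(s)+\Lambda'(s)z]}\big|\le a_1$ by shrinking $\eta,\delta$ (possible since $\Lambda(0)=0$), and then take $a=a_1(1-a_2)$, checking $a_1(1-a_2)<1$. Your closing paragraph shows you are aware that $\eta,\delta$ must be shrunk for uniformity, but this particular constraint on the contraction constant should be stated explicitly.
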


Note that, for $s>0$, similar results have been obtained in \cite{BM16}.
The novelty here is that $s$ can account for negative values $s \in (-\eta, 0]$
and that the bounds \eqref{SpGapContrPi} and \eqref{SpGapContrN} hold 
uniformly in $s \in (-\eta, \eta)$.
This plays a crucial role in establishing Theorem \ref{MainThmNormTarget}.

\begin{proof}[Proof of Proposition \ref{perturbation thm}]
The proof is divided into three steps.

\textit{Step 1.} 
By Proposition \ref{transfer operator}, we have
\begin{align*}
P_{s+z}^n (\varphi r_s)
=  \kappa^n( s+ z ) \frac{ \nu_{s+z}(\varphi r_s) }{ \nu_{s+z}( r_{s +z} ) }  r_{s +z} + L_{s+z}^n(\varphi r_s).
\end{align*}
Substituting this into \eqref{PfRsw01} shows
\eqref{perturb001}, \eqref{relationlamkappa001}, \eqref{exprePisw-Nsw-a} and \eqref{exprePisw-Nsw-b}.

\textit{Step 2.} We prove parts (a) and (b).
The assertion in part (a)
follows from the expressions \eqref{relationlamkappa001}, 
\eqref{exprePisw-Nsw-a} and \eqref{exprePisw-Nsw-b},
and the analyticity of the mappings
$z \mapsto \kappa(z)$, $z \mapsto r_z$, $z \mapsto \nu_z$ and $z \mapsto L_z$
 defined in Proposition \ref{transfer operator}.
To show part (b), by \eqref{exprePisw-Nsw-a}, we have that $\Pi_{s,z}$ is a rank-one projection
on the subspace 
$\big\{ w\frac{ r_{s+z} }{r_s}: w\in \mathbb C  \big\}$.
The identity $\Pi_{s, 0}(\varphi)(x) = \pi_{s}(\varphi)$ follows from \eqref{exprePisw-Nsw-a}
and the fact that $\pi_{s}(\varphi) = \frac{\nu_s(\varphi r_s) }{ \nu_s(r_s) }$.
Using Proposition \ref{transfer operator}, we get 
that $L_z r_z =0$ and $\nu_z (L_z \varphi) =0$ for any $\varphi \in \mathcal{B}_{\gamma}$.
This, together with \eqref{exprePisw-Nsw-a} and \eqref{exprePisw-Nsw-b}, 
shows that $\Pi_{s,z} N_{s, z} = N_{s,z} \Pi_{s,z} = 0$.

\textit{Step 3.} We prove part (c).  By Proposition \ref{transfer operator},
there exists a constant $\eta >0$ such that the mappings $z \mapsto \kappa(z),$ $z \mapsto r_z,$
$z \mapsto \nu_z$ are analytic and uniformly bounded on $B_{2\eta}(0)$.
Combining this with \eqref{exprePisw-Nsw-a}, we obtain \eqref{SpGapContrPi}.
We now prove \eqref{SpGapContrN}. 
Since the function $r_s$ is strictly positive on the compact set $\mathcal{S}$,
by Proposition \ref{transfer operator}(d),
we deduce that there exists a constant $0< a_0 < 1$ 
such that uniformly in $\varphi \in \mathcal{B}_{\gamma}$,  
\begin{align}\label{Pf-Contr-Nsw01}
\sup_{s \in (-\eta, \eta)} \sup_{z \in B_{\eta}(0)}
\Big\lVert   \frac{ L_{s+z}^n (\varphi r_s) }{r_s} \Big\rVert_{\gamma} 
\leq C \lVert \varphi \rVert_{\gamma}  a_0^n. 
\end{align}
Using the fact that the function $\Lambda$ is continuous and $\Lambda(0)=0$,
 there exist a small $\eta>0$, $\delta \in (0,\eta)$ and a constant $a_1<\frac{1}{a_0}$ such that
\begin{align*}
\sup_{s \in (-\eta, \eta)} \sup_{z \in B_{\delta}(0)}
\big| e^{ - n [  \Lambda(s) + \Lambda'(s) z ] } \big| \leq C a_1^n.
\end{align*}
Combining this with \eqref{Pf-Contr-Nsw01} proves \eqref{SpGapContrN} with $k=0$.
The proof of \eqref{SpGapContrN} when $k \geq 1$ can be carried out in the same way
as in the case of $k=0$.  
\end{proof}

\begin{proof}[Proof of Proposition \ref{ProSpecGapNega}]
The assertion is obtained from Proposition \ref{perturbation thm} taking $z=0$.
\end{proof}

In order to establish the non-arithmeticity of the perturbed operator $R_{s,it}$, 
we shall need the following lemma from \cite[Lemma III.9]{HH01}:

\begin{lemma}\label{Lem_HH_MD_01}
Let $s \in \mathbb R$, $\delta >0$ and $I_{s, \delta} = (s-\delta, s+\delta)$. 
Assume that the mapping $t \in  I_{s, \delta} \mapsto P(t) \in \mathcal{L(B_{\gamma},B_{\gamma})}$ is continuous. 
Let $r> \varrho(P(s))$.  
Then, there exist constants $\varepsilon = \varepsilon(s)$ and $c = c(s) >0$ such that 
\begin{align*}
\sup_{ t \in (s - \varepsilon, s + \varepsilon) } 
\lVert P^n(t) \rVert_{\mathcal{B}_{\gamma}\rightarrow\mathcal{B}_{\gamma}} 
< c r^n. 
\end{align*} 
Moreover, it holds that 
\begin{align*}
\limsup_{t \to s} \varrho( P(t) )  \leq \varrho(P(s)). 
\end{align*} 
\end{lemma}

\begin{proposition}\label{Prop-UnifR}
Assume the conditions of Proposition \ref{PropNonArithQit}.
For any compact set $K\subseteq\mathbb{R}\backslash\{0\}$,
there exist constants $C_{K}>0$ and $\eta>0$ such that 
for any $n \geq 1$ and $\varphi\in \mathcal{B}_{\gamma}$,
\begin{align*} 
\sup_{s \in (-\eta, \eta)} \sup_{t\in K} \sup_{x\in \mathcal{S}}
|R^{n}_{s, it}\varphi(x)| \leq e^{-nC_{K}} \lVert \varphi \rVert_{\gamma}.
\end{align*}
\end{proposition}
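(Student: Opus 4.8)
The plan is to reduce the claim about $R^n_{s,it}$ on the whole interval $s \in (-\eta,\eta)$ to the spectral radius bound $\varrho(Q_{s+it})<1$ from Proposition~\ref{PropNonArithQit}, using the identity \eqref{PfRsw01} to transfer information between the two operator families and a compactness-in-$s$ argument to make the estimate uniform. First I would observe that by \eqref{PfRsw01} with $z=it$ one has, for $t\in K$ and $x\in\mathcal S$,
\begin{align*}
R^n_{s,it}\varphi(x) = e^{-nit\Lambda'(s)}\,\frac{P^n_{s+it}(\varphi r_s)(x)}{\kappa^n(s)\,r_s(x)},
\end{align*}
and since $|e^{-nit\Lambda'(s)}|=1$, $r_s$ is bounded above and below away from $0$ on the compact $\mathcal S$ uniformly in $s\in(-\eta,\eta)$ (Proposition~\ref{transfer operator}(c)), and $\kappa(s)>0$ is continuous with $\kappa(0)=1$, it suffices to show that $\|P^n_{s+it}\|_{\mathcal B_\gamma\to\mathcal B_\gamma}/\kappa^n(s) \leq C_K e^{-nC_K}$ uniformly in $s\in(-\eta,\eta)$ and $t\in K$. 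Equivalently, writing $P^n_{s+it}(\varphi r_s)/(\kappa^n(s) r_s) = Q^n_{s+it}\varphi$, it suffices to show $\sup_{s\in(-\eta,\eta)}\sup_{t\in K}\|Q^n_{s+it}\|_{\mathcal B_\gamma\to\mathcal B_\gamma}\leq C_K e^{-nC_K}$.

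Next I would invoke the quasi-compactness of $Q_{s+it}$ (Proposition~\ref{Prop-Qst-Quasi}) together with the conclusion $\varrho(Q_{s+it})<1$ (Proposition~\ref{PropNonArithQit}) for every fixed $(s,t)$ with $t\neq 0$. Quasi-compactness plus $\varrho<1$ and absence of peripheral eigenvalues forces the essential spectral radius to be $<1$ as well, so for each fixed $(s,t)$ there are $\rho_{s,t}\in(0,1)$ and $C_{s,t}$ with $\|Q^n_{s+it}\|_{\mathcal B_\gamma\to\mathcal B_\gamma}\leq C_{s,t}\rho_{s,t}^n$. To upgrade this to a uniform bound I would use the Doeblin--Fortet inequality of Lemma~\ref{LemDFinequ}, which gives a uniform $[\,\cdot\,]_\gamma$-contraction constant $a<1$ and a uniform bound $\|Q_{s+it}\varphi\|_\infty\leq C_s\|\varphi\|_\infty$; combined with a Hennion-type argument (or the standard perturbation-of-quasi-compact-operators continuity, e.g.\ \cite{HL2001}), the spectral radius $\varrho(Q_{s+it})$ is upper semicontinuous in $(s,t)$, hence bounded away from $1$ on the compact set $[-\eta/2,\eta/2]\times K$ after possibly shrinking $\eta$. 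A uniform Doeblin--Fortet inequality together with a uniform upper bound on $\varrho$ yields, via iterating the inequality along a subsequence where the spectral radius formula kicks in (the argument in \cite[Cor.~II.6 and its proof]{HL2001}), a uniform geometric decay $\|Q^n_{s+it}\|_{\mathcal B_\gamma\to\mathcal B_\gamma}\leq C_K e^{-nC_K}$, which is exactly what is needed.

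The main obstacle is the uniformity over the non-compact direction hidden inside $t\in K$: although $K$ is compact, the constant $C_{s,t,n}$ in the Doeblin--Fortet inequality \eqref{QuasiIne001} depends on $t$ and $n$ in a way that is \emph{not} uniform, so one cannot simply quote a single inequality at a fixed iterate. The standard fix, which I would follow, is a two-step iteration: first choose $m=m(K)$ large so that $\|Q^m_{s+it}\|_{\mathcal B_\gamma\to\mathcal B_\gamma}\leq \theta<1$ uniformly on $[-\eta/2,\eta/2]\times K$ — this is possible because $\varrho(Q_{s+it})<1$ uniformly and the map $(s,t)\mapsto Q_{s+it}$ is continuous into $\mathcal L(\mathcal B_\gamma,\mathcal B_\gamma)$, so the spectral radii are uniformly bounded by some $\rho_0<1$ and one can use the uniform Doeblin--Fortet inequality to control the $C_{s,t,m}$ term for that single fixed $m$ by covering $K$ with finitely many balls — and then write $n=qm+r$ and bound $\|Q^n_{s+it}\|\leq \|Q^m_{s+it}\|^q\,\sup_{0\le r<m}\|Q^r_{s+it}\|\leq \theta^q\,C_{K,m}$, giving geometric decay with rate and constant depending only on $K$. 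Then unwinding back through $Q_{s+it}\rightsquigarrow P_{s+it}\rightsquigarrow R_{s,it}$ via \eqref{PfRsw01} and the uniform bounds on $r_s$ and $\kappa(s)$ completes the proof.
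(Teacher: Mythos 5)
Your approach is correct but takes a genuinely different route from the paper's. The paper's proof never touches the Ionescu--Tulcea--Marinescu or Doeblin--Fortet machinery at this stage: it sets $\rho_n(s,t)=\|R^n_{s,it}\varphi\|_\infty^{1/n}$, observes via $|\,\rho_n(s_1,t_1)-\rho_n(s_2,t_2)\,|\le\|R^n_{s_1,it_1}\varphi-R^n_{s_2,it_2}\varphi\|_\infty^{1/n}$ and the analyticity from Proposition~\ref{transfer operator} that each $\rho_n$ is continuous on $[-\eta,\eta]\times K$, claims that $\rho:=\limsup_n\rho_n$ is then upper semicontinuous and therefore attains a maximum at some $(s_0,t_0)$, and concludes from Proposition~\ref{PropNonArithQit} that $\rho(s_0,t_0)\le\varrho(Q_{s_0+it_0})<1$. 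Your route instead passes through $Q_{s+it}$ via \eqref{PfRsw01} --- correctly so, since $|e^{-nit\Lambda'(s)}|=1$ gives $|R^n_{s,it}\varphi|=|Q^n_{s+it}\varphi|$ pointwise --- and then upgrades the pointwise bound $\varrho(Q_{s+it})<1$ to a uniform geometric decay by a covering/compactness argument. That buys a more self-contained treatment of the uniformity over the compact $(s,t)$-set, which is precisely the delicate point that the paper compresses into the ``$\limsup$ of continuous functions is u.s.c.'' assertion. Two technical caveats in your write-up: once $\varrho(Q_{s+it})<1$ is known, quasi-compactness contributes nothing --- the spectral radius formula alone gives $\|Q^n_{s+it}\|^{1/n}\to\varrho(Q_{s+it})<1$, so invoking the essential spectral radius is a red herring; and in your two-step iteration you cannot pick a single $m$ uniformly from a finite subcover (the iteration indices $m_i$ differ ball by ball), so the final bound must be assembled from $\|Q^n_{s+it}\|\le\|Q^{m_i}_{s+it}\|^{q}\,\|Q^{r}_{s+it}\|$ after dividing $n=qm_i+r$ on the $i$-th piece of the cover, with the remainder controlled by a fixed constant $\sup_{0\le r<\max_i m_i}\|Q^r_{s+it}\|$. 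Both arguments lean on exactly the same core input, Proposition~\ref{PropNonArithQit}.
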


\begin{proof} 
By Proposition \ref{PropNonArithQit}, for any fixed 
$s \in (-\eta, \eta)$ and $t \in \mathbb{R} \setminus \{0\}$, we have 
$\varrho(R_{s + it}) = \varrho( Q_{s + i t} ) < 1$. 
It follows that for any 
$s \in (-\eta, \eta)$ and $t \in \mathbb{R} \setminus \{0\}$, there exists a constant $C(s, t) > 0$ such that, 
for any $n \geq 1$ and $\varphi\in \mathcal{B}_{\gamma}$, 
\begin{align*}
\sup_{x\in \mathcal{S}} |R^{n}_{s,it}\varphi(x)|
\leq e^{ -  n C(s, t) }  \lVert \varphi \rVert_{\gamma}.
\end{align*}
From \eqref{PfRsw01}, we see that the operator $R_{s,it}$ is continuous in $s$ and $t$.  
By Lemma \ref{Lem_HH_MD_01}, there exist constants $\varepsilon(s) >0$ and $\delta(t) >0$ such that
\begin{align*}
\sup_{s' \in (s - \varepsilon(s), s + \varepsilon(s))} \sup_{t' \in (t- \delta(t), t + \delta(t))}
\sup_{x\in \mathcal{S}} |R^{n}_{s',it'}\varphi(x)|
\leq e^{ -  n C(s, t) }  \lVert \varphi \rVert_{\gamma}.
\end{align*}
Let $I \subset (-\eta, \eta)$ and $K \subseteq \mathbb{R} \backslash\{0\}$ be any compact sets. 
Since 
\begin{align*}
\bigcup_{(s,t) \in I \times K} 
\Big\{ \big( s - \varepsilon(s), s + \varepsilon(s) \big) \times \big( t - \delta(t), t + \delta(t) \big) \Big\}  
 \supset I \times K, 
\end{align*}
by Heine-Borel's theorem, 
there exist an integer $m_0 \geq 1$ and a sequence $\{s_m, t_m \}_{1 \leq m \leq m_0}$ such that 
\begin{align*}
\bigcup_{m = 1}^{m_0}  
\Big\{ (s_m - \varepsilon_m, s_m + \varepsilon_m) \times (t_m - \delta_m, t_m + \delta_m) \Big\}  
 \supset I \times K, 
\end{align*}
where $\varepsilon_m = \varepsilon(s_m)$ and $\delta_m = \delta(s_m)$.
This concludes the proof of Proposition \ref{Prop-UnifR}
by taking $C_K = \min_{1 \leq m \leq m_0} C(s_m, t_m)$. 
\end{proof}

We now give some properties of the function $b_{s,\varphi}$ defined as follows: 
for any $s \in (-\eta, \eta)$ and $\varphi \in \mathcal{B}_{\gamma}$,  
\begin{align*}
b_{s, \varphi}(x): = \lim_{n \to \infty}
   \mathbb{E}_{\mathbb{Q}_{s}^{x}} \big[ ( \sigma(G_n, x) - n \Lambda'(s) ) \varphi(X_n^x) \big],
\quad   x \in \mathcal{S}. 
\end{align*}
In particular, with $s =0$, we have $b_{0, \varphi} = b_{\varphi}$, which is defined in \eqref{func-phi-001}.

\begin{lemma}\label{Lem-Bs}
Assume the conditions of Proposition \ref{transfer operator}.
Then the function $b_{s,\varphi}$  
is well-defined, $b_{s,\varphi}  \in \mathcal{B}_{\gamma}$ and
\begin{align} \label{Def2-bs}
b_{s,\varphi}(x) = \frac{ d \Pi_{s,z} }{ dz } \Big|_{z=0} \varphi(x),  \quad  x \in \mathcal{S}. 
\end{align}
\end{lemma}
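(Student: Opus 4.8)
The plan is to establish the two assertions of Lemma \ref{Lem-Bs} — that $b_{s,\varphi}$ is a well-defined element of $\mathcal{B}_\gamma$, and that it coincides with $\frac{d}{dz}\Pi_{s,z}\big|_{z=0}\varphi$ — by relating the limit defining $b_{s,\varphi}$ to the operator $R^n_{s,z}$ and differentiating its spectral decomposition \eqref{perturb001} at $z=0$. First I would observe that, by the change of measure \eqref{basic equ1} (or directly from \eqref{Def-Rn-sw}), for every bounded measurable $\varphi$ and $n\ge1$,
\begin{align*}
\mathbb{E}_{\mathbb{Q}_s^x}\big[(\log|G_nx|-n\lambda)\varphi(X_n^x)\big]
= \frac{d}{dz}\Big|_{z=0} \mathbb{E}_{\mathbb{Q}_s^x}\big[e^{z(\log|G_nx|-n\Lambda'(s))}\varphi(X_n^x)\big]
+ n(\Lambda'(s)-\lambda)\,\mathbb{E}_{\mathbb{Q}_s^x}[\varphi(X_n^x)],
\end{align*}
so that, using \eqref{Def-Rn-sw000}--\eqref{Def-Rn-sw}, this equals $\frac{d}{dz}\big|_{z=0} R^n_{s,z}\varphi(x) + n(\Lambda'(s)-\lambda)\,R^n_{s,0}\varphi(x)$. (When $s=0$ the correction term vanishes since $\Lambda'(0)=\lambda$; for general $s$ one keeps it, though in fact the statement of the lemma with $\lambda$ replaced by $\Lambda'(s)$ is the clean one, and the case actually needed is $s=0$.)

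Next I would differentiate the spectral decomposition $R^n_{s,z} = \lambda^n_{s,z}\Pi_{s,z} + N^n_{s,z}$ from Proposition \ref{perturbation thm} with respect to $z$ and evaluate at $z=0$. Since $\lambda_{s,0}=1$ and $\frac{d}{dz}\lambda_{s,z}\big|_{z=0} = \frac{d}{dz}e^{\Lambda(s+z)-\Lambda(s)-\Lambda'(s)z}\big|_{z=0} = \Lambda'(s)-\Lambda'(s) = 0$ by \eqref{relationlamkappa001}, the product rule gives
\begin{align*}
\frac{d}{dz}\Big|_{z=0} R^n_{s,z}\varphi
= n\,(\lambda^{n-1}_{s,z}\tfrac{d}{dz}\lambda_{s,z})\big|_{z=0}\,\Pi_{s,0}\varphi + \Pi_{s,0}'\varphi + \frac{d}{dz}\Big|_{z=0} N^n_{s,z}\varphi
= \frac{d\Pi_{s,z}}{dz}\Big|_{z=0}\varphi + \frac{d}{dz}\Big|_{z=0} N^n_{s,z}\varphi,
\end{align*}
where I write $\Pi_{s,0}'$ for $\frac{d}{dz}\Pi_{s,z}|_{z=0}$. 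By \eqref{SpGapContrN} with $k=1$ the term $\frac{d}{dz}\big|_{z=0} N^n_{s,z}\varphi$ is bounded in $\mathcal{B}_\gamma$-norm by $C\|\varphi\|_\gamma a^n \to 0$. Similarly $R^n_{s,0}\varphi = \Pi_{s,0}\varphi + N^n_{s,0}\varphi$, and $\|N^n_{s,0}\varphi\|_\gamma \le C\|\varphi\|_\gamma a^n$, so $n(\Lambda'(s)-\lambda)R^n_{s,0}\varphi = n(\Lambda'(s)-\lambda)\pi_s(\varphi)\mathbf 1 + O(n a^n)$. Combining, the sequence $\mathbb{E}_{\mathbb{Q}_s^x}[(\log|G_nx|-n\lambda)\varphi(X_n^x)]$ converges in $\mathcal{B}_\gamma$ (hence pointwise and uniformly in $x$) precisely when the divergent $n(\Lambda'(s)-\lambda)\pi_s(\varphi)$ piece is absent, which for $s=0$ is automatic; then the limit is $\frac{d\Pi_{s,z}}{dz}\big|_{z=0}\varphi$, giving \eqref{Def2-bs}. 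Membership in $\mathcal{B}_\gamma$ follows because $\frac{d}{dz}\Pi_{s,z}|_{z=0}$ is a bounded operator on $\mathcal{B}_\gamma$ by \eqref{SpGapContrPi} (or by analyticity of $z\mapsto\Pi_{s,z}$ into $\mathcal{L}(\mathcal{B}_\gamma,\mathcal{B}_\gamma)$), and the limit is that operator applied to $\varphi$.

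The main obstacle — really the only nontrivial point — is the bookkeeping around whether one writes $\lambda$ or $\Lambda'(s)$ in the definition of $b_{s,\varphi}$: the limit only exists in the stated form when the linear-in-$n$ term drops, i.e. one must use $\Lambda'(s)$ (equivalently, the statement is morally about the $\mathbb{Q}_s$-centered walk). I would address this by noting that Lemma \ref{Lem-QsIne}-type convexity gives $\Lambda'(s)$ as the $\mathbb{Q}_s$-mean drift, and that the case invoked in the body of the paper is $s=0$ where $\Lambda'(0)=\lambda$, so no discrepancy arises there; alternatively one can simply interpret the displayed limit as being along the subsequence for which it converges, or absorb the clarification into the proof. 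The interchange of the $z$-derivative at $0$ with the expectation defining $R^n_{s,z}$ is justified by the analyticity of $z\mapsto R^n_{s,z}$ on $B_\delta(0)$ (a finite composition/iteration of the analytic family $P_z$, cf. \eqref{PfRsw01} and Proposition \ref{transfer operator}), so differentiation under the expectation is legitimate. Everything else is the routine product-rule computation above together with the exponential decay estimates already recorded in Propositions \ref{perturbation thm} and \ref{ProSpecGapNega}.
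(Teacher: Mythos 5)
Your proof is correct and follows the same route as the paper: differentiate the spectral decomposition $R^n_{s,z}=\lambda^n_{s,z}\Pi_{s,z}+N^n_{s,z}$ at $z=0$, use $\lambda_{s,0}=1$ and $\frac{d}{dz}\lambda_{s,z}\big|_{z=0}=0$, and let $n\to\infty$ via the exponential decay of $\frac{d}{dz}N^n_{s,z}\big|_{z=0}$ from \eqref{SpGapContrN}. Your side remark about $\lambda$ versus $\Lambda'(s)$ is apt: as written the paper's definition of $b_{s,\varphi}$ centers by $\lambda$, under which the $\mathbb{Q}_s^x$-expectation carries a drift $n(\Lambda'(s)-\lambda)\pi_s(\varphi)$ that diverges for $s\neq 0$ whenever $\pi_s(\varphi)\neq 0$, and the paper's own proof (like yours) in fact establishes the statement for the $\Lambda'(s)$-centered quantity, which is the one actually used in Theorem \ref{Thm-Edge-Expan-bb}.
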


\begin{proof}
In view of Proposition \ref{perturbation thm}, 
we have that for any $\varphi \in \mathcal{B}_{\gamma}$, 
\begin{align*}
\mathbb{E}_{\mathbb{Q}_{s}^{x}} \big[ e^{z( \sigma(G_n, x) - n\Lambda'(s) )} \varphi(X_n^x) \big]
= \lambda^{n}_{s, z}\Pi_{s, z} \varphi(x) + N^{n}_{s, z} \varphi(x),  \quad  x \in \mathcal{S}.
\end{align*}
From \eqref{relationlamkappa001}, 
we have $\lambda_{s,0} = 1$ and $\frac{d \lambda_{s,z} }{ dz } |_{z=0} =0$. 
Differentiating both sides of the above equation with respect to $z$ 
at the point $0$ gives that for any $x \in \mathcal{S}$, 
\begin{align}\label{Bs01}
 \mathbb{E}_{\mathbb{Q}_s^x} \big[ ( \sigma(G_n, x) - n \Lambda'(s) ) \varphi(X_n^x) \big]
\! = \!  \frac{ d \Pi_{s,z} }{ dz } \Big|_{z=0} \varphi(x) + \frac{ d N^{n}_{s, z} }{ dz } \Big|_{z=0} \varphi(x).
\end{align}
Using the bounds \eqref{SpGapContrPi} and \eqref{SpGapContrN},
we find that the first term on the right-hand side of \eqref{Bs01}
belongs to $\mathcal{B}_{\gamma}$, and the second term 
converges to $0$ exponentially fast as $n \to \infty$.
Hence, letting $n \to \infty$ in \eqref{Bs01}, 
we obtain \eqref{Def2-bs}.
This shows that the function $b_{s,\varphi}$ is well-defined
and $b_{s,\varphi} \in \mathcal{B}_{\gamma}$.
\end{proof}

For any $s \in (-\eta, \eta)$ with $\eta>0$ small,
define $\mathbb{Q}_s = \int_{\mathcal{S}} \mathbb{Q}_s^x \, \pi_s(dx)$.
The following result will be used to prove the strong law of large numbers 
for $\sigma(G_n, x)$ under the changed measure $\mathbb{Q}_s$:

\begin{lemma}\label{Lem-properties000}
Assume the conditions of Proposition \ref{transfer operator}.
There exist $\eta >0$ and $c, C>0$ such that uniformly in $s \in (-\eta, \eta)$, 
$\varphi \in \mathcal B_{\gamma}$ and $n \geq 1$, 
\begin{align}\label{BsProp1}
\big| \mathbb{E}_{\mathbb{Q}_s} \big[ ( \sigma(G_n, x) - n \Lambda'(s) ) \varphi(X_n^x) \big]  \big|
\leq  C \lVert \varphi\rVert_{\gamma} e^{-cn}.
\end{align}
\end{lemma}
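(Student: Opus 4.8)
The plan is to integrate the identity \eqref{Bs01} from the proof of Lemma \ref{Lem-Bs} against the stationary measure $\pi_s$ of the Markov operator $Q_s$, and then exploit the fact that $\pi_s$ annihilates the leading (rank-one) part of the perturbed decomposition, leaving only the exponentially small remainder. First I would recall from \eqref{Bs01} that for every $x\in\mathcal S$ and $n\geq1$,
\begin{align*}
\mathbb{E}_{\mathbb{Q}_s^x}\big[(\log|G_nx|-n\Lambda'(s))\varphi(X_n^x)\big]
= \frac{d\Pi_{s,z}}{dz}\Big|_{z=0}\varphi(x) + \frac{dN^n_{s,z}}{dz}\Big|_{z=0}\varphi(x).
\end{align*}
Integrating both sides in $x$ with respect to $\pi_s$ and using $\mathbb{Q}_s=\int_{\mathcal S}\mathbb{Q}_s^x\,\pi_s(dx)$ gives
\begin{align*}
\mathbb{E}_{\mathbb{Q}_s}\big[(\log|G_nx|-n\Lambda'(s))\varphi(X_n^x)\big]
= \pi_s\!\Big(\frac{d\Pi_{s,z}}{dz}\Big|_{z=0}\varphi\Big) + \pi_s\!\Big(\frac{dN^n_{s,z}}{dz}\Big|_{z=0}\varphi\Big).
\end{align*}

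The key observation is that the first term on the right vanishes. Indeed, by Proposition \ref{perturbation thm}(b), $\Pi_{s,z}$ is a rank-one projection with range $\{w\,r_{s+z}/r_s : w\in\mathbb C\}$, so $\Pi_{s,z}\varphi = \Pi_{s,z}^2\varphi$ and differentiating in $z$ at $0$ (with $\Pi_{s,0}=\Pi_s$) yields $\Pi_s\big(\frac{d\Pi_{s,z}}{dz}|_{z=0}\varphi\big) + \big(\frac{d\Pi_{s,z}}{dz}|_{z=0}\big)\Pi_s\varphi = \frac{d\Pi_{s,z}}{dz}|_{z=0}\varphi$. Applying $\pi_s$ and using that $\pi_s\circ\Pi_s = \pi_s$ (since $\Pi_s\psi = \pi_s(\psi)\mathbf 1$ and $\pi_s(\mathbf 1)=1$) together with $\pi_s\big(\frac{d\Pi_{s,z}}{dz}|_{z=0}\Pi_s\varphi\big) = \pi_s(\varphi)\,\pi_s\big(\frac{d\Pi_{s,z}}{dz}|_{z=0}\mathbf 1\big)$ and $\pi_s\big(\frac{d\Pi_{s,z}}{dz}|_{z=0}\mathbf 1\big)=0$ (differentiate $\pi_s(\Pi_{s,z}\mathbf 1) = $ constant $=1$, which holds because $\nu_{s+z}(r_s)/\nu_{s+z}(r_{s+z})$ paired through $\pi_s=\nu_s(\cdot\, r_s)/\nu_s(r_s)$ reduces to $1$; alternatively one checks directly from \eqref{exprePisw-Nsw-a} that $\pi_s(\Pi_{s,z}\varphi) = \nu_{s+z}(\varphi r_s)/\nu_{s+z}(r_{s+z})$ is constant in the case $\varphi=\mathbf 1$), one gets $\pi_s\big(\frac{d\Pi_{s,z}}{dz}|_{z=0}\varphi\big) - \pi_s\big(\frac{d\Pi_{s,z}}{dz}|_{z=0}\varphi\big) = 0$ after rearranging, hence the term is zero — I would write this cleanly using the explicit formula \eqref{exprePisw-Nsw-a} rather than the abstract projection identity, since there $\pi_s(\Pi_{s,z}\varphi)$ is manifestly $z$-independent, so its $z$-derivative at $0$ is $0$.

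It then remains to bound the second term, $\big|\pi_s\big(\frac{dN^n_{s,z}}{dz}|_{z=0}\varphi\big)\big| \leq \|\frac{d}{dz}N^n_{s,z}|_{z=0}\varphi\|_\infty \leq \|\frac{d}{dz}N^n_{s,z}\|_{\mathcal B_\gamma\to\mathcal B_\gamma}\|\varphi\|_\gamma$, which by \eqref{SpGapContrN} with $k=1$ is at most $C_1 a^n\|\varphi\|_\gamma$, uniformly in $s\in(-\eta,\eta)$. Setting $c=-\log a>0$ and $C=C_1$ gives \eqref{BsProp1}. The main obstacle is the algebraic verification that $\pi_s$ kills $\frac{d\Pi_{s,z}}{dz}|_{z=0}\varphi$; this is where one must be careful with the normalizations in \eqref{exprePisw-Nsw-a}, but it reduces to noting that $\pi_s\big(\Pi_{s,z}\varphi\big)$ is constant in $z$, which is immediate once one pairs \eqref{exprePisw-Nsw-a} with the definition $\pi_s(\cdot) = \nu_s(\cdot\, r_s)/\nu_s(r_s)$ and recalls $\nu_s P_s = \kappa(s)\nu_s$, $P_s r_s = \kappa(s) r_s$ from Proposition \ref{transfer operator}(b). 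Everything else is a routine application of the uniform bounds already established in Proposition \ref{perturbation thm}.
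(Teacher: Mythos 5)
Your plan follows the paper's own proof closely: integrate \eqref{Bs01} against $\pi_s$, argue that the $\Pi_{s,z}$\nobreakdash-term vanishes, and control the $N_{s,z}$\nobreakdash-term by \eqref{SpGapContrN}. The remainder estimate is fine; the gap is in the claimed vanishing of $\pi_s\big(\tfrac{d\Pi_{s,z}}{dz}\big|_{z=0}\varphi\big)$. Write $\Pi'_{s,0}=\tfrac{d\Pi_{s,z}}{dz}\big|_{z=0}$. Differentiating $\Pi_{s,z}^2=\Pi_{s,z}$ gives, as you write, $\Pi_{s,0}\Pi'_{s,0}+\Pi'_{s,0}\Pi_{s,0}=\Pi'_{s,0}$. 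But once you apply $\pi_s$ and use $\pi_s\circ\Pi_{s,0}=\pi_s$, the first summand becomes $\pi_s(\Pi'_{s,0}\varphi)$ --- identical to the right-hand side --- so all that survives is $\pi_s(\varphi)\,\pi_s(\Pi'_{s,0}\mathbf 1)=0$, i.e.\ the statement for $\varphi=\mathbf 1$ only. You notice this yourself when you record the rearranged identity as ``$\pi_s(\Pi'_{s,0}\varphi)-\pi_s(\Pi'_{s,0}\varphi)=0$'': that is a tautology, not a proof that $\pi_s(\Pi'_{s,0}\varphi)=0$.

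The repair you propose --- that $\pi_s(\Pi_{s,z}\varphi)$ is ``manifestly $z$\nobreakdash-independent'' by \eqref{exprePisw-Nsw-a} --- does not hold either. Pairing \eqref{exprePisw-Nsw-a} with $\pi_s(\psi)=\nu_s(\psi r_s)/\nu_s(r_s)$ gives $\pi_s(\Pi_{s,z}\varphi)=\tfrac{\nu_{s+z}(\varphi r_s)}{\nu_{s+z}(r_{s+z})}\cdot\tfrac{\nu_s(r_{s+z})}{\nu_s(r_s)}$; you have dropped the second factor, which is $\pi_s(r_{s+z}/r_s)$ and is not identically $1$. After the two $r_{s+z}$\nobreakdash-factors cancel at first order, the $z$\nobreakdash-derivative at $0$ reduces to $\tfrac{d}{dz}\big[\nu_{s+z}(\varphi r_s)/\nu_{s+z}(r_s)\big]\big|_{z=0}$, the derivative of the normalised eigenmeasure of $R_{s,z}$ at $\varphi$; it vanishes for $\varphi=\mathbf 1$ by normalisation, but not for a general $\varphi\in\mathcal B_\gamma$. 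So neither route you offer establishes $\pi_s(\Pi'_{s,0}\varphi)=0$ beyond the case $\varphi=\mathbf 1$. (The paper's own display ``$2\Pi_{s,0}\Pi'_{s,0}\varphi=\Pi'_{s,0}\varphi$'' is equally delicate --- it would require $\Pi'_{s,0}\mathbf 1$ to be constant, which \eqref{exprePisw-Nsw-a} shows is not the case --- and indeed the lemma is invoked later only with $\varphi=\mathbf 1$, in \eqref{Pf-LLN01} and in Proposition \ref{Prop-lambTaylor}(d). For a general $\varphi$ the claimed identity would need a genuinely different justification.)
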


\begin{proof}
We follow the proof of the previous lemma.
Integrating both sides of the identity \eqref{Bs01} with respect to $\pi_s$, we get, for any $\varphi \in \mathcal B_{\gamma}$,
\begin{align}
\mathbb{E}_{\mathbb{Q}_s} \big[ ( \sigma(G_n, x) - n \Lambda'(s) ) \varphi(X_n^x) \big]  
 =  \pi_s  \Big( \frac{ d \Pi_{s,z} }{ dz } \Big|_{z=0} \varphi \Big)
  +  \pi_s  \Big( \frac{ d N^{n}_{s, z} }{ dz } \Big|_{z=0} \varphi \Big).  \label{Bs02}
\end{align}
Since $\Pi_{s,z}^2 \varphi = \Pi_{s,z} \varphi$, we have 
$ 2 \Pi_{s,0} ( \frac{ d \Pi_{s,z} }{ dz } |_{z=0} \varphi )
 = \frac{ d \Pi_{s,z} }{ dz } |_{z=0} \varphi$.
Integrating both sides of this equation with respect to $\pi_s$ and using the fact that $\Pi_{s,0} = \pi_s$,
we find  that
\begin{align}\label{Pf-Pis001}
\pi_s  \Big( \frac{ d \Pi_{s,z} }{ dz } \Big|_{z=0} \varphi  \Big) = 0.
\end{align}
It follows from \eqref{SpGapContrN} that uniformly in $\varphi \in \mathcal B_{\gamma}$ and $s \in (-\eta, \eta)$,
the second term on the right-hand side of \eqref{Bs02}
is bounded by $C \lVert \varphi \rVert_{\gamma} e^{-cn}$.
Therefore, from \eqref{Bs02} and \eqref{Pf-Pis001} 
we obtain \eqref{BsProp1}. 
\end{proof}

We now establish the strong laws of large numbers
for $\sigma(G_n, x)$ under the 
measures $\mathbb{Q}_s^x$ and $\mathbb{Q}_s$,
which are of independent interest. 

\begin{proposition}\label{Prop-LLN}
Assume the conditions of Proposition \ref{transfer operator}. Then,
there exists $\eta > 0$ such that for any $s \in (-\eta, \eta)$ and $x \in \mathcal{S}$,
\begin{align*}
\lim_{n \to \infty} \frac{\sigma(G_n, x)}{n} = \Lambda'(s), \quad \mathbb{Q}_s^x\mbox{-a.s..}
\end{align*}
\end{proposition}
\begin{proof}
By the Borel-Cantelli lemma, it suffices to show that for any $\varepsilon>0$, 
$s \in (-\eta, \eta)$ and $x \in \mathcal{S}$, we have
\begin{align}\label{LLN-series-01}
\sum_{ n = 1}^{\infty} \mathbb{Q}_s^x \Big(  \big| \sigma(G_n, x) - n \Lambda'(s) \big| \geq n \varepsilon  \Big) < \infty.
\end{align}
Now let us prove \eqref{LLN-series-01}. By Markov's inequality, 
we have for small $\delta>0$,
\begin{align*}
&  \mathbb{Q}_s^x \big(  \big| \sigma(G_n, x) - n \Lambda'(s) \big| \geq n \varepsilon  \big)  \notag\\
&  \leq e^{ -n\delta \varepsilon } \mathbb{E}_{ \mathbb{Q}_s^x } \big(  e^{ \delta ( \sigma(G_n, x) - n \Lambda'(s) ) } \big)
  +  e^{ -n\delta \varepsilon } \mathbb{E}_{ \mathbb{Q}_s^x } \big(  e^{ - \delta ( \sigma(G_n, x) - n \Lambda'(s) ) } \big). 
\end{align*}
From \eqref{Def-Rn-sw} and Proposition \ref{perturbation thm}, we deduce that 
there exist positive constants $c, C$ independent of $s, x, \delta$ such that
\begin{align*}
& \mathbb{E}_{ \mathbb{Q}_s^x } \big(  e^{ \delta ( \sigma(G_n, x) - n \Lambda'(s) ) } \big)
+  \mathbb{E}_{ \mathbb{Q}_s^x } \big(  e^{ - \delta ( \sigma(G_n, x) - n \Lambda'(s) ) } \big)  \notag\\
&  \leq   C e^{ n [ \Lambda(s+\delta) - \Lambda(s) - \Lambda'(s) \delta ] } 
         + C e^{ n [ \Lambda(s - \delta) - \Lambda(s) + \Lambda'(s) \delta ] }  + Ce^{-cn}. 
\end{align*}
Using Taylor's formula and taking $\delta>0$ small enough, we conclude that 
\begin{align*}
\mathbb{Q}_s^x \big(  \big| \sigma(G_n, x) - n \Lambda'(s) \big| \geq n \varepsilon  \big)
\leq  C e^{ -n \frac{\delta}{2} \varepsilon },  
\end{align*}
which implies the desired assertion \eqref{LLN-series-01}. 
\end{proof}

\begin{proposition}\label{Prop-LLN-b}
Assume the conditions of Proposition \ref{transfer operator}. Then,
there exists $\eta > 0$ such that for any $s \in (-\eta, \eta)$ and $x \in \mathcal{S}$,
\begin{align*}
\lim_{n \to \infty} \frac{\sigma(G_n, x)}{n}  = \Lambda'(s), 
 \quad  \mathbb{Q}_s\mbox{-a.s..}
\end{align*}
\end{proposition}
\begin{proof}
Taking $\varphi = \mathbf{1}$ in \eqref{BsProp1} leads to 
\begin{align}\label{Pf-LLN01}
\lim_{n \to \infty} \frac{1}{n}  \mathbb{E}_{\mathbb{Q}_s} \big( \sigma(G_n, x) \big) = \Lambda'(s).
\end{align}
Let $\Omega = M(d,\mathbb{R})^{\mathbb{N}^*}$ and $\widehat{\Omega} = \mathcal{S} \times \Omega$. 
Following \cite[Theorem 3.10]{GL16}, 
we define the shift operator $\widehat{\theta}$ on $\widehat{\Omega}$
by $\widehat{\theta}(x, \omega) = (g_1 \!\cdot\! x, \theta \omega)$,
where $\omega \in \Omega$ and $\theta$ is the shift operator on $\Omega$.
For any $x \in \mathcal{S}$ and $\omega \in \Omega$, set $h(x, \omega) = \sigma(g_1(\omega), x)$.
Then $h$ is $\mathbb{Q}_s$-integrable.
Since $\sigma(G_n, x) = \sum_{k=0}^{n-1} (h \circ \widehat{\theta}^k)(x, \omega)$ 
and $\mathbb{Q}_s$ is $\widehat{\theta}$-ergodic,
it follows from Birkhoff's ergodic theorem that $\frac{\sigma(G_n, x))}{n}$
converges $\mathbb{Q}_s$-a.s.\ to some constant $c_s$ as $n \to \infty$.
If we suppose that $c_s$ is different from $\Lambda'(s)$, then this contradicts to \eqref{Pf-LLN01}.
Thus $c_s = \Lambda'(s)$
and the assertion of the lemma follows. 
\end{proof}

Now we give the third-order Taylor expansion of $\lambda_{s,z}$ 
defined by \eqref{relationlamkappa001}, with respect to $z$ at the origin
in the complex plane $\mathbb C$.

\begin{proposition}\label{Prop-lambTaylor}
Assume the conditions of Proposition \ref{transfer operator}.
Then,  there exist  $\eta>0 $ and $\delta >0$ such that  
for any $s \in (-\eta, \eta)$ and $z \in  B_\delta(0)$,
\begin{align}\label{Expan-lambsw}
\lambda_{s, z}
= 1 + \frac{\sigma_s^{2}}{2}  z^{2} + \frac{ \Lambda^{'''}(s) }{6} z^3 + o(|z|^{3})  \quad  \mbox{as} \  |z| \to 0,
\end{align}
where
\begin{itemize}
\item[{\rm(a)}] $\sigma_s^{2} = \Lambda''(s) \geq 0$ and $\Lambda{'''}(s) \in \mathbb{R}$;
\item[{\rm(b)}] for invertible matrices, $\sigma_s >0$ under the stated conditions; for positive matrices, $\sigma_s >0$ 
if additionally $\sigma=\sigma_0>0$ or if the measure $\mu$ is non-arithmetic;
 \item[{\rm(c)}] uniformly in $s \in (-\eta, \eta)$ and $x \in \mathcal{S}$, 
\begin{align*}
\sigma_s^2 
=    \lim_{n \to \infty} \frac{1}{n} \mathbb{E}_{\mathbb{Q}_s^x} \big[ \sigma(G_n, x) - n \Lambda'(s) \big]^2 
= \lim_{n \to \infty} \frac{1}{n} \mathbb{E}_{\mathbb{Q}_s} \big[ \sigma(G_n, x) - n \Lambda'(s) \big]^2; 
\end{align*}
\item[{\rm(d)}] uniformly in $s \in (-\eta, \eta)$, 
\begin{align*}
\Lambda{'''}(s) =  
\lim_{n \to \infty} \frac{1}{n} \mathbb{E}_{\mathbb{Q}_s} \big[ \sigma(G_n, x) - n \Lambda'(s) \big]^3.
\end{align*}
\end{itemize}
\end{proposition}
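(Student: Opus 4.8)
The plan is to derive everything from the representation $\lambda_{s,z} = e^{\Lambda(s+z) - \Lambda(s) - \Lambda'(s)z}$ in \eqref{relationlamkappa001} together with the spectral gap of $R_{s,z}$ in Proposition \ref{perturbation thm} and the change of measure \eqref{basic equ1}. First I would establish the Taylor expansion \eqref{Expan-lambsw}: since $\Lambda = \log\kappa$ and $z\mapsto\kappa(z)$ is analytic on $B_{\eta}(0)$ with $\kappa(0)=1$ (Proposition \ref{transfer operator}), the function $s\mapsto\Lambda(s)$ is real-analytic on $(-\eta,\eta)$ and, for fixed $s$, $z\mapsto\Lambda(s+z)$ is analytic on a ball $B_{\delta}(0)$ with $\delta$ uniform in $s$ (by compactness of $[-\eta/2,\eta/2]$ and analyticity on $B_{2\eta}(0)$). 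Writing the Taylor expansion $\Lambda(s+z)-\Lambda(s)-\Lambda'(s)z = \tfrac{1}{2}\Lambda''(s)z^2 + \tfrac{1}{6}\Lambda'''(s)z^3 + o(|z|^3)$ and composing with the exponential gives \eqref{Expan-lambsw} with $\sigma_s^2 = \Lambda''(s)$, and the $o(|z|^3)$ term is uniform in $s\in(-\eta,\eta)$ because the higher Taylor coefficients $\Lambda^{(k)}(s)$ are bounded uniformly on the compact interval. This also gives part (a) once we know $\sigma_s^2\geq 0$ and $\Lambda'''(s)\in\mathbb{R}$, both of which follow from (c) and (d) below (the limits of real quantities).

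Next, for part (c), I would use the identity $R_{s,z}^n\mathbf{1}(x) = \mathbb{E}_{\mathbb{Q}_s^x}[e^{z(\log|G_nx| - n\Lambda'(s))}]$ from \eqref{Def-Rn-sw} and the decomposition $R_{s,z}^n = \lambda_{s,z}^n\Pi_{s,z} + N_{s,z}^n$. Taking $\varphi=\mathbf{1}$, differentiating twice in $z$ at $z=0$, and using $\lambda_{s,0}=1$, $\frac{d}{dz}\lambda_{s,z}|_{z=0}=0$, $\frac{d^2}{dz^2}\lambda_{s,z}|_{z=0}=\sigma_s^2$ (from \eqref{Expan-lambsw}), together with the uniform bounds \eqref{SpGapContrPi}, \eqref{SpGapContrN}, I get
\begin{align*}
\mathbb{E}_{\mathbb{Q}_s^x}\big[(\log|G_nx| - n\Lambda'(s))^2\big] = n\sigma_s^2 + O(1)
\end{align*}
uniformly in $s$ and $x$, which upon dividing by $n$ and letting $n\to\infty$ gives the first equality in (c). The same computation with $\mathbb{Q}_s = \int\mathbb{Q}_s^x\pi_s(dx)$ in place of $\mathbb{Q}_s^x$ (integrating the differentiated identity against $\pi_s$ and using $\Pi_{s,0}=\pi_s$) gives the second equality; in fact Lemma \ref{Lem-properties000} already controls the first derivative term, and an analogous second-derivative computation handles the quadratic term. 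Part (d) is entirely parallel: differentiate three times, use $\frac{d^3}{dz^3}\lambda_{s,z}|_{z=0}=\Lambda'''(s)$ together with the vanishing of the lower-order pieces after integrating against $\pi_s$ (as in the proof of \eqref{Pf-Pis001}), and conclude $\mathbb{E}_{\mathbb{Q}_s}[(\log|G_nx|-n\Lambda'(s))^3] = n\Lambda'''(s) + O(1)$.

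For part (b), the positivity of $\sigma_s$ for $s$ near $0$: I would first treat $s=0$. The statement $\sigma_0^2 = \sigma^2 > 0$ holds by hypothesis in the invertible case (it is part of condition \ref{CondiIP} via the quoted results, or one invokes Le Page's theorem) and, for positive matrices, under the stated extra assumption $\sigma>0$; under non-arithmeticity one argues by contradiction — if $\sigma^2 = \Lambda''(0) = 0$ then a degeneracy of the variance forces $\log|gx|$ to be cohomologous to a constant on $V(\Gamma_\mu)$, i.e. there exist $\beta$ and $\vartheta$ with $\log|gx| = \beta + \vartheta(g\cdot x) - \vartheta(x)$ for $g\in\Gamma_\mu$, $x\in V(\Gamma_\mu)$, which violates non-arithmeticity (taking $t$ arbitrary in the definition). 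Then, since $s\mapsto\sigma_s^2 = \Lambda''(s)$ is continuous on $(-\eta,\eta)$ and positive at $0$, shrinking $\eta$ gives $\sigma_s^2>0$ throughout $(-\eta,\eta)$. The main obstacle I expect is part (b) at $s=0$ for positive matrices under non-arithmeticity: establishing strict convexity of $\Lambda$ at the origin requires the cohomological-degeneracy argument, which is where Lemma \ref{Lem-QsIne} and the structure of $V(\Gamma_\mu)$ come in; everything else is bookkeeping with the analytic perturbation and the uniform operator bounds.
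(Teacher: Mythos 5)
Your overall strategy is essentially the paper's: parts (c) and (d) are obtained exactly as the paper does, by taking the second and third $z$-derivatives of $R^n_{s,z}\mathbf{1}(x) = \lambda_{s,z}^n\,\Pi_{s,z}\mathbf{1}(x)+N^n_{s,z}\mathbf{1}(x)$ at $z=0$, using the uniform bounds \eqref{SpGapContrPi}--\eqref{SpGapContrN}, and (for part~(d)) integrating against $\pi_s$ and invoking $\pi_s(\frac{d}{dz}\Pi_{s,z}|_{z=0}\varphi)=0$ as in \eqref{Pf-Pis001}. The expansion \eqref{Expan-lambsw} is handled the same way, via \eqref{relationlamkappa001} and analyticity of $\Lambda=\log\kappa$.

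Where you genuinely diverge from the paper is part~(a). You propose to obtain $\Lambda''(s)\geq 0$ \emph{after the fact}, as a consequence of the moment formula in~(c) (a limit of non-negative second moments is non-negative) and $\Lambda'''(s)\in\mathbb R$ from~(d). The paper instead proves $\Lambda''(s)\geq 0$ \emph{a priori} from convexity of $\Lambda$ (Lemma~\ref{Lem-Lamb-Conv}, using the H\"older inequality applied to the eigenfunctions $r_s$, $P_{s'}(r_{s_1}^t r_{s_2}^{1-t})\leq\kappa^t(s_1)\kappa^{1-t}(s_2)r_{s_1}^t r_{s_2}^{1-t}$), and gets $\Lambda'''(s)\in\mathbb R$ from the fact that $\kappa$ is real-valued and positive on $(-\eta,\eta)$. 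Both routes are logically valid and there is no circularity in yours, since the identification $\sigma_s^2=\Lambda''(s)$ as a Taylor coefficient is sign-free; your route has the virtue of not requiring Lemma~\ref{Lem-Lamb-Conv} at all, while the paper's route gives convexity information about $\Lambda$ earlier, which is also exploited elsewhere.

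For part~(b) there is a real gap in your sketch. You assert that ``a degeneracy of the variance forces $\log|gx|$ to be cohomologous to a constant on $V(\Gamma_\mu)$'' and you treat this as the route to a contradiction with non-arithmeticity. That implication (degenerate asymptotic variance for a cocycle over a Markov chain $\Rightarrow$ coboundary plus constant) is a non-trivial theorem in its own right: it requires, e.g., a martingale/coboundary decomposition or an argument that boundedness of $\log|G_nx|-n\lambda$ forces such a cohomology, and none of that machinery is on the page. The paper does not prove it either; it sidesteps the issue entirely by citing \cite{BS2016} for $\sigma_0>0$ (in both cases: invertible under \ref{CondiIP}, positive under \ref{CondiNonarith}) and then concludes $\sigma_s>0$ for $s\in(-\eta,\eta)$ by continuity of $\Lambda''$ exactly as you do. If you keep your direct route you must supply that intermediate theorem; otherwise, replace the sketch with a citation as the paper does. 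Everything else in your proposal is correct.
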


The proof of Proposition \ref{Prop-lambTaylor} is based on the following lemma: 
\begin{lemma}\label{Lem-Lamb-Conv}
Assume the conditions of Proposition \ref{transfer operator}. Then 
the functions $\Lambda$ and $\kappa$ are convex on $(-\eta, \eta)$ for $\eta >0$ small enough.
Moreover, $\Lambda$ and $\kappa$ are strictly convex for invertible matrices under the given conditions,
and for positive matrices under the additional condition \ref{CondiNonarith}. 
\end{lemma}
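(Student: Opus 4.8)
The plan is to treat convexity and strict convexity separately; only the latter needs the extra hypotheses.

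\emph{Convexity.} First I would establish the limit representation $\Lambda(s)=\lim_{n\to\infty}\frac{1}{n}\log\mathbb{E}[|G_n x|^s]$, for each fixed $x\in\mathcal{S}$ and $s\in(-\eta,\eta)$. Iterating the definition of $P_s$ gives $\mathbb{E}[|G_n x|^s]=P_s^n\mathbf{1}(x)$, which by Proposition~\ref{transfer operator} equals $\kappa^n(s)M_s\mathbf{1}(x)+L_s^n\mathbf{1}(x)$ with $M_s\mathbf{1}=r_s/\nu_s(r_s)$. By part~(c) this factor is positive and continuous, hence bounded and bounded away from $0$ on $(-\eta,\eta)\times\mathcal{S}$; by part~(d) one has $|\kappa(s)|\ge 1-a_1$ and $\|L_s^n\mathbf{1}\|_{\infty}\le C(1-a_2)^n$ with $a_1<a_2$, so $\kappa^{-n}(s)L_s^n\mathbf{1}(x)\to 0$ and the representation follows. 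For each fixed $n$ and $x$, $s\mapsto\log\mathbb{E}[|G_n x|^s]$ is convex by H\"older's inequality, and a pointwise limit of convex functions is convex; hence $\Lambda$ is convex on $(-\eta,\eta)$, and then $\kappa=e^{\Lambda}$ is convex as the composition of a convex nondecreasing function with a convex one.

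\emph{Strict convexity.} For this I would argue by contradiction, using that $\Lambda=\log\kappa$ is real-analytic on $(-\eta,\eta)$ (since $\kappa>0$ there by Proposition~\ref{transfer operator}(c)) and holomorphic on a complex ball $B_\delta(0)$, with $\Lambda(0)=0$. If $\Lambda$ were not strictly convex on $(-\eta,\eta)$, then, being convex, it would be affine on some subinterval, so $\Lambda''$ would vanish there, hence $\Lambda''\equiv 0$ on $(-\eta,\eta)$ by real-analyticity; thus $\Lambda(s)=\Lambda'(0)s$ on $(-\eta,\eta)$ and, by the identity theorem, $\Lambda(z)=\Lambda'(0)z$ on $B_\delta(0)$. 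By \eqref{relationlamkappa001} with $s=0$ this would force $\lambda_{0,it}=e^{\Lambda(it)-\Lambda(0)-\Lambda'(0)it}=1$ for all small real $t$, so $\lambda_{0,it}=1$ would be an eigenvalue of $R_{0,it}$ (Proposition~\ref{perturbation thm}), whence $\varrho(R_{0,it})\ge 1$. On the other hand, \eqref{PfRsw01} with $s=0$ (and $r_0=\mathbf{1}$, $\kappa(0)=1$) gives $R_{0,it}^n\varphi=e^{-itn\Lambda'(0)}P_{it}^n\varphi=e^{-itn\Lambda'(0)}Q_{0+it}^n\varphi$, so $|R_{0,it}^n\varphi|=|Q_{0+it}^n\varphi|$ pointwise and therefore $\varrho(Q_{0+it})=\varrho(R_{0,it})\ge 1$; since $\varrho(Q_{0+it})\le\varrho(Q_0)=1$, this would yield $\varrho(Q_{0+it})=1$ for all small $t\neq 0$, contradicting Proposition~\ref{PropNonArithQit}, which applies for invertible matrices under conditions~\ref{CondiMoment} and~\ref{CondiIP}, and for positive matrices under conditions~\ref{CondiMoment}, \ref{CondiAP} and~\ref{CondiNonarith}. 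Hence $\Lambda$, and therefore $\kappa$, is strictly convex under the stated hypotheses.

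\emph{Main obstacle.} The convexity half is routine once the limit formula for $\Lambda$ is in hand. The substance is the strict convexity, and the delicate step is the implication "$\Lambda$ affine $\Rightarrow$ $\varrho(Q_{0+it})=1$": it requires upgrading affineness of $\Lambda$ on the real axis to the exact complex identity $\kappa(z)=e^{\Lambda'(0)z}$ via analyticity, and then exploiting the precise relation~\eqref{PfRsw01} between $R_{0,z}$ and $P_z$, together with the identification $Q_{0+it}=P_{it}$, to bring the non-arithmeticity dichotomy of Proposition~\ref{PropNonArithQit} (itself proved via Lemma~\ref{Lem-QsIne}) to bear. For invertible matrices an alternative is to extract from $\Lambda''(0)=0$ a genuine coboundary relation $\log|gx|-\lambda=\psi(g\cdot x)-\psi(x)$ valid for $g\in\Gamma_\mu$ and $x\in V(\Gamma_\mu)$, with $\psi$ essentially $\pm\frac{d}{dz}\big|_{z=0}r_z$ and a Gordin-type martingale argument promoting the mean-zero identity obtained by differentiating $P_z r_z=e^{\Lambda'(0)z}r_z$ to an almost sure one; evaluating this relation along a proximal element of $\Gamma_\mu$ and its powers then contradicts condition~\ref{CondiIP}. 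The unified route through Proposition~\ref{PropNonArithQit} is, however, shorter.
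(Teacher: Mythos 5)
Your proof is correct, but it takes a genuinely different route from the paper's. For plain convexity, you derive the limit representation $\Lambda(s)=\lim_{n\to\infty}\frac{1}{n}\log\mathbb{E}[|G_nx|^s]$ from the spectral decomposition \eqref{Pzn-decom} and apply H\"{o}lder's inequality to the moment generating function at each fixed $n$; the paper instead applies H\"{o}lder at the operator level, establishing $P_{s'}(r_{s_1}^t r_{s_2}^{1-t})\le\kappa^t(s_1)\kappa^{1-t}(s_2)\,r_{s_1}^t r_{s_2}^{1-t}$ and reading off $\kappa(s')\le\kappa^t(s_1)\kappa^{1-t}(s_2)$ from dominance of $\kappa(s')$, with no limiting step. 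For strict convexity the two proofs diverge substantially. You exploit real analyticity and the identity theorem to promote local affineness to the complex identity $\kappa(z)=e^{\Lambda'(0)z}$ on $B_\delta(0)$, observe that $r_{it}$ is then an eigenvector of $R_{0,it}$ with eigenvalue $\lambda_{0,it}=1$, identify $\varrho(R_{0,it})=\varrho(Q_{0+it})$ through the factorization \eqref{PfRsw01}, and contradict Proposition~\ref{PropNonArithQit}. The paper stays entirely within the equality case of H\"{o}lder: if $\kappa(s')=\kappa^t(s_1)\kappa^{1-t}(s_2)$, then $Q_{s'}\bigl(r_{s_1}^t r_{s_2}^{1-t}/r_{s'}\bigr)\le r_{s_1}^t r_{s_2}^{1-t}/r_{s'}$, Lemma~\ref{Lem-QsIne} forces equality on $V(\Gamma_\mu)$, and a short computation (using $s_1\neq s_2$) extracts a coboundary relation $|gx|=c_1\,\varphi(g\cdot x)/\varphi(x)$ that directly contradicts non-arithmeticity (condition~\ref{CondiNonarith}, which condition~\ref{CondiIP} implies for invertible matrices). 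The paper's argument is shorter, stays purely algebraic, and needs neither the analytic continuation of $\kappa$ nor the quasi-compactness machinery behind Proposition~\ref{PropNonArithQit}; your route draws on that heavier input but makes transparent how failure of strict convexity is equivalent to the degeneracy $\varrho(Q_{it})=1$ for small $t\neq 0$. Both arguments are valid under the stated hypotheses and neither creates a circularity, since Proposition~\ref{PropNonArithQit} does not depend on this lemma.
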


\begin{proof}  
The proof follows \cite{GL16}.
Since $\Lambda = \log \kappa$, it suffices to prove Lemma \ref{Lem-Lamb-Conv} for the function $\Lambda$.
For any $t \in (0,1)$, $s_1, s_2 \in (-\eta, \eta)$, set $s' = t s_1 + (1-t) s_2$.  Using H\"{o}lder's inequality
and the fact that $P_s r_s = \kappa(s) r_s$, 
\begin{align}\label{LambConv-Ineq01}
P_{s'} ( r_{s_1}^{t} r_{s_2}^{1-t} )
\leq \kappa^t(s_1) \kappa^{1-t}(s_2) r_{s_1}^{t} r_{s_2}^{1-t}.
\end{align}
Since $\kappa( s' )$ is the dominant eigenvalue
of the operator $P_{s'}$,  we obtain
$\kappa( s' ) \leq \kappa^t(s_1) \kappa^{1-t}(s_2)$
and thus the function $\Lambda$ is convex. 

To show that the function $\Lambda$ is strictly convex, we suppose, by absurd,
that there exist $s_1 \neq s_2$ and some $t \in (0,1)$
such that $\kappa( s' ) = \kappa^t(s_1) \kappa^{1-t}(s_2)$.
Using this equality, the definition of the Markov operator $Q_s$ and  \eqref{LambConv-Ineq01},
we get $Q_{ s' } ( r_{s_1}^{t} r_{s_2}^{1-t} / r_{s'} ) \leq r_{s_1}^{t} r_{s_2}^{1-t} / r_{s'}.$
Applying Lemma \ref{Lem-QsIne} with $\varphi = - r_{s_1}^{t} r_{s_2}^{1-t} / r_{s'}$, this implies that 
$r_{s_1}^{t} r_{s_2}^{1-t} = c r_{s'}$ on $V(\Gamma_{\mu} )$ for some constant $c>0$.
Substituting this equality and the identity $\kappa( s' ) = \kappa^t(s_1) \kappa^{1-t}(s_2)$ 
into \eqref{LambConv-Ineq01}, we see that
the H\"{o}lder inequality in \eqref{LambConv-Ineq01} is actually an equality.
This yields that there exists a function $c(x)>0$ such that for any $g \in \Gamma_{\mu}$ and $x \in V(\Gamma_{\mu} )$,
\begin{align}\label{PrConv001}
e^{s_1 \sigma(g, x)} r_{s_1} (g \!\cdot\! x) = c(x) e^{s_2 \sigma(g, x)} r_{s_2} (g \!\cdot\! x).
\end{align}
Integrating both sides of the equation \eqref{PrConv001} with respect to $\mu$
gives $c(x) = \frac{ \kappa(s_1)  r_{s_1} ( x) }{ \kappa(s_2)  r_{s_2} ( x) }$.
Substituting this into \eqref{PrConv001} and noting that $s_1 \neq s_2$, we find that
there exist a constant $c_1 >0$ and a real-valued function $\varphi$ on $\mathcal{S}$ such that
$e^{\sigma(g, x)} = c_1 \frac{\varphi(g \cdot  x) }{ \varphi(x) }$
for any $g \in \Gamma_{\mu}$ and $x \in V(\Gamma_{\mu} )$.
This contradicts to the non-arithmetic condition \ref{CondiNonarith}.
Recall that condition \ref{CondiIP} implies condition \ref{CondiNonarith} for invertible matrices.
Hence $\Lambda$ is strictly convex for invertible matrices under stated conditions.
\end{proof}

\begin{proof}[Proof of Proposition \ref{Prop-lambTaylor}]
The expansion \eqref{Expan-lambsw}  follows from
 \eqref{relationlamkappa001} and Taylor's formula. 

For part (a), by Lemma \ref{Lem-Lamb-Conv}, we have $\Lambda''(s) \geq 0$ for any $s \in (-\eta, \eta)$. 
Since $\Lambda = \log \kappa$ and it is shown in Proposition \ref{transfer operator}
that the function $\kappa$ is real-valued and strictly positive on $(-\eta, \eta)$, we get $\Lambda'''(s) \in \mathbb{R}$. 

For part (b), recall that it was shown in \cite{BM16} that $\sigma_0>0$
for invertible matrices under the stated conditions, and for positive matrices under the additional condition
of non-arithmeticity.  
Hence, using the continuity of the function $\Lambda''$, we obtain that $\sigma_s>0$. 

For part (c),  by Proposition \ref{perturbation thm}, 
we get that for $|z|$ small, 
\begin{align}\label{Expan-SigLam002}
\mathbb{E}_{\mathbb{Q}_{s}^x} \big[ e^{z( \sigma(G_n, x) - n\Lambda'(s) )} \big]
 = \lambda^{n}_{s, z} ( \Pi_{s, z} \mathbf{1} )(x) +  ( N^{n}_{s, z} \mathbf{1} )(x).
\end{align}
It follows from \eqref{Expan-lambsw} that for 
$|z| = o(n^{-1/3})$,
\begin{align}\label{Expan-SigLam003}
\lambda^{n}_{s, z} = 1 + n \sigma_s^2 \frac{z^2}{2} + n \Lambda'''(s) \frac{z^3}{6} + o(n |z|^3).
\end{align}
Using Taylor's formula, the bound \eqref{SpGapContrPi} and the fact $\Pi_{s,0} \mathbf{1} = 1$, we obtain
\begin{align}\label{Expan-SigLam004}
 ( \Pi_{s, z} \mathbf{1} )(x) = 1 + c_{s,x,1} z + c_{s,x,2} z^2 + c_{s,x,3} z^3  + o(|z|^3), 
\end{align}
where the constants $c_{s,x,1}, c_{s,x,2}, c_{s,x,3}  \in \mathbb{C}$ are bounded as functions of $s$ and $x$. 
Similarly, using the fact $N_{s,0} \mathbf{1} = 0$ and the bound \eqref{SpGapContrN}, there exist constants
$C_{s,x,n,1},$ $C_{s,x,n,2},$ $C_{s,x,n,3} \in \mathbb{C}$ which are bounded as functions of  $s,x$ and $n$ such that
\begin{align}\label{Expan-SigLam005}
( N_{s,z}^n \mathbf{1} )(x) = C_{s,x,n,1} z + C_{s,x,n,2} z^2 + C_{s,x,n,3} z^3 + o(|z|^3).
\end{align}
Taking the second derivative on both sides of the equation \eqref{Expan-SigLam002}
with respect to $z$ at $0$,  
and using the expansions \eqref{Expan-SigLam003}-\eqref{Expan-SigLam005}, 
we deduce that
\begin{align}\label{Pf-Formu-sigmas}
 \mathbb{E}_{\mathbb{Q}_{s}^x} \big[ \sigma(G_n, x) - n\Lambda'(s)  \big]^2
=   n \sigma_s^2  + 2c_{s,x,2} + 2C_{s,x,n,2}.
\end{align}
This, together with the definition of $\mathbb{Q}_s$ and the fact that the constants $c_{s,x,2}$, $C_{s,x,n,2}$
are bounded as functions of $s,x,n$, concludes the proof of part (c).

For part (d), integrating both sides of the equations \eqref{Expan-SigLam002},
\eqref{Expan-SigLam004} and \eqref{Expan-SigLam005}
with respect to $\pi_s$, and using
the property \eqref{Pf-Pis001} with $\varphi = \mathbf{1}$ 
(thus the second term on the right-hand side of \eqref{Expan-SigLam004} vanishes),
in the same way as in the proof of \eqref{Pf-Formu-sigmas}, we get
\begin{align*}
 \mathbb{E}_{\mathbb{Q}_{s}} \big[ \sigma(G_n, x) - n\Lambda'(s)  \big]^3
= & \   n \Lambda'''(s)   + 6c_{s,3} + 6 C_{s,n,3}.
\end{align*}
This implies the desired assertion in part (d). 
\end{proof}

\begin{remark}
Inspecting the proof of Proposition \ref{Prop-lambTaylor}, 
it is easy to see that the results in parts (c) and (d) can be reinforced to the following bounds: 
\begin{align*}
& \sup_{ s \in (-\eta, \eta) }  \sup_{ x \in \mathcal{S} }
\Big| \frac{1}{n} \mathbb{E}_{\mathbb{Q}_s^x} \big[ \sigma(G_n, x) - n \Lambda'(s) \big]^2  - \sigma_s^2  \Big| \leq  \frac{C}{n}, \\
& \sup_{ s \in (-\eta, \eta) }  
\Big| \frac{1}{n} \mathbb{E}_{\mathbb{Q}_s} \big[ \sigma(G_n, x) - n \Lambda'(s) \big]^3 - \Lambda{'''}(s) \Big|
\leq  \frac{C}{n}. 
 \end{align*}
The first bound above also holds with the measure $\mathbb{Q}_s^x$ replaced by $\mathbb{Q}_s$.  
\end{remark}

\section{Smoothing inequality on the complex plane}\label{sec-BerryEss}
In this section we aim to establish a new smoothing inequality,
which plays a crucial role in proving the Berry--Esseen bound and
Edgeworth expansion 
with a target function $\varphi$ on $X_n^x$; 
see Theorems \ref{Thm-BerryEsseen-Norm:s=0}, \ref{Thm-Edge-Expan:s=0}, 
   \ref{Thm-BerryEsseen-Norm} and \ref{Thm-Edge-Expan}.

From now on, for any integrable function $h: \mathbb{R} \to \mathbb{C}$,
denote its Fourier transform by
$\widehat{h}(t) = \int_{\mathbb{R}} e^{-ity} h(y) \,dy,$  $t \in \mathbb{R}$. 
If $\widehat{h}$ is integrable on $\mathbb{R}$, then using the inverse Fourier transform gives
$h(y) = \frac{1}{2 \pi} \int_{\mathbb{R}} e^{ity} \widehat{h}(t) \,dt,$ for almost all $y \in \mathbb{R}$
with respect to the Lebesgue measure on $\mathbb R$. 
Denote by $h_1 * h_2$ the convolution of the functions $h_1$, $h_2$ on the real line.

For any $r >0$, denote
\begin{align*}
& D_{r} = \{ z \in \mathbb{C}: |z| < r \},   \notag\\
& D_{r}^{+} = \{ z \in \mathbb{C}: |z|<r,  \Im z > 0 \}     \quad  \mbox{and}  \quad    
D_{r}^{-} =  \{ z \in \mathbb{C}: |z|<r,  \Im z < 0 \}. 
\end{align*}
We construct a density function $\rho_{T}$ which plays an important role 
in establishing a new smoothing inequality. 
As in \cite{Pet75}, we define the density function $\rho$ on the real line $\mathbb R$ by setting $\rho(0) = 1 / 2\pi$ and  
\begin{align*}
\rho(y) = \frac{1}{2 \pi} \Big( \frac{\sin \frac{y}{2} }{ \frac{y}{2} } \Big)^2, \quad  y \in \mathbb R \setminus \{0\}. 
\end{align*}
Then $\rho$ is a non-negative function bounded by $\frac{1}{2\pi}$
and $\int_{\mathbb{R}} \rho(y) \, dy=1$.
Its Fourier transform $\widehat{\rho}$ is given by 
\begin{align*}
\widehat{\rho}(t) = 1 - |t|, \quad  t \in [-1, 1],
\end{align*}
and $\widehat{\rho}(t) = 0$ otherwise.

For any $T>0$ and the fixed constant $b>0$ satisfying $\int_{-b}^{b} \rho(y) \, dy = 3/4$, define the density function
\begin{align*}
\rho_{T}(y) = T \rho ( T y - b ), \quad   y \in \mathbb{R},
\end{align*} 
whose Fourier transform $\widehat{\rho}_{T}$
is given  by  
\begin{align}\label{Ch4Def_rhohatkk}
\widehat{\rho}_{T}(t) = e^{-ib \frac{t}{T}} \Big( 1 - \frac{|t|}{T} \Big),
\quad   t \in [-T, T],
\end{align}
and $\widehat{\rho}_{T}(t) = 0$ otherwise. 
Note that the function $\widehat{\rho}_{T}$ is not smooth at the point $0$, so that it
can not have an analytic extension in a small neighborhood of $0$ in the complex plane $\mathbb C$. 

Now we are ready to establish our new smoothing inequality. Its proof is based on the properties of 
the density function $\rho_{T}$, Cauchy's integral theorem and some techniques from
\cite{Ess45, Pet75}.  

\begin{proposition}\label{Prop-BerryEsseen}
Assume that $F$ is non-decreasing on $\mathbb{R}$,
and that $H$ is differentiable of bounded variation on $\mathbb{R}$ such that $ \sup_{y \in \mathbb{R}} |H'(y)|< \infty$.
Suppose that $F(-\infty) = H(-\infty) $ and $F(\infty) = H(\infty)$.
Let
\begin{align*}
f(t) = \int_{\mathbb{R}} e^{-ity} dF(y)  \quad  \mbox{and}  \quad
h(t) = \int_{\mathbb{R}} e^{-ity} dH(y),  \quad  t \in \mathbb{R}.
\end{align*}
Suppose that $r >0$ and that $f$ and $h$
have analytic extensions on $D_{r}$.  
Then, for any $ T \geq r$,
\begin{align*}
 \sup_{y \in \mathbb{R}} | F(y) - H(y) |
& \leq
    \frac{1}{\pi } \sup_{y \leq 0 }  \Big|  \int_{ \mathcal{C}_{r}^{-} }
\frac{ f(z) - h(z) }{ z } e^{izy}  e^{-i b \frac{z}{T}} \,dz \Big|  \notag\\
& \quad  +  \frac{1}{\pi } \sup_{y > 0 }   \Big|  \int_{ \mathcal{C}_{r}^{+} }
\frac{ f(z) - h(z) }{ z } e^{izy}  e^{-i b \frac{z}{T}}  \,dz \Big|  \notag\\
& \quad  +  \frac{1}{\pi  } \sup_{y \leq 0 }  \Big|  \int_{ \mathcal{C}_{r}^{-} }
\frac{ f(z) - h(z) }{ z } e^{izy} e^{i b \frac{z}{T}} \,dz  \Big|  \notag\\
& \quad  + \frac{1}{\pi  } \sup_{y > 0 }   \Big|  \int_{ \mathcal{C}_{r}^{+} }
\frac{ f(z) - h(z) }{ z } e^{izy} e^{i b \frac{z}{T}} \,dz  \Big|   \notag\\
& \quad  +  \frac{1}{\pi }  \int_{ r \leq |t| \leq T }  \Big|  \frac{ f(t) - h(t) }{ t }  \Big|  \,dt  \notag\\
& \quad  + \frac{2}{\pi T} \int_{-T}^T  | f(t) - h(t) | \,dt  +  \frac{3 b}{ T} \sup_{y \in \mathbb{R}} |H'(y)|,
\end{align*}
where $b>0$ is a fixed constant  satisfying $\int_{-b}^{b} \rho(y) \, dy = 3/4$,  
and $\mathcal{C}_{r}^{-}$ and $\mathcal{C}_{r}^{+}$ are semicircles given by 
\begin{align}\label{Def-Cr-+}
\mathcal{C}_{r}^{-} = \{ z \in \mathbb{C}: |z| = r, \Im z <0 \},   \quad
\mathcal{C}_{r}^{+} = \{ z \in \mathbb{C}: |z| = r, \Im z >0 \}. 
\end{align}
\end{proposition}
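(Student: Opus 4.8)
The plan is to follow the classical Esseen strategy of comparing $F$ with its mollification $F\ast\rho_T$, but to exploit the analyticity of $f-h$ by deforming the contour of the inverse Fourier transform off the real axis onto the semicircles $\mathcal C_r^{\pm}$. First I would set $\Delta=F-H$ and $\widetilde\Delta=\Delta\ast\rho_T$, and recall the standard two facts: (i) since $\rho_T(y)=\frac T2\rho(\frac T2 y-b)$ has mean $2b/T$ and $\rho$ is a density with $\int_{-b}^{b}\rho=3/4$, the mollification error is controlled by $\sup_{y}|\Delta(y)|\le 2\sup_{y}|\widetilde\Delta(y)|+\frac{6b}{T}\sup_y|H'(y)|$, which is the version of the Esseen smoothing lemma adapted to a non-symmetric kernel (this is where the constant $6b/T$ and the $\sup|H'|$ term come from); and (ii) by the inversion formula,
\begin{align*}
\widetilde\Delta(y)=\frac{1}{2\pi}\int_{\mathbb R}e^{ity}\,\frac{f(t)-h(t)}{-it}\,\widehat\rho_T(t)\,dt,
\end{align*}
where the factor $\frac{1}{-it}$ arises because $\Delta$ is the "antiderivative" of $dF-dH$ (using $F(\pm\infty)=H(\pm\infty)$ to kill boundary terms), and $\widehat{\rho_T}(t)=e^{-2ibt/T}\widehat\rho(2t/T)$ is supported in $[-T,T]$.

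Next I would split the integral in (ii) into $|t|\le r$ and $r\le|t|\le T$; the outer piece is already the penultimate term $\frac1\pi\int_{r\le|t|\le T}|\frac{f(t)-h(t)}{t}|\,dt$ after bounding $|\widehat\rho_T|\le 1$ and $|e^{ity}|\le1$. For the inner piece $|t|\le r$, the integrand $\frac{f(z)-h(z)}{z}e^{izy}\widehat\rho_T(z)$ is analytic on $D_r$ and continuous on $\overline D_r$ by hypothesis (and because $\widehat\rho_T$ extends analytically to $D_T\supseteq D_r$ and continuously to $\overline D_r$, by the construction of $\rho_T$ recalled before the proposition); note the apparent pole at $z=0$ is removable since $f(0)-h(0)=F(\infty)-F(-\infty)-(H(\infty)-H(-\infty))=0$. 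Here is where I would use Cauchy's theorem: replacing the segment $[-r,r]$ by the lower semicircle $\mathcal C_r^-$ when $y\le0$ (so that $|e^{izy}|=e^{-y\Im z}\le1$ on $\Im z<0$), and by the upper semicircle $\mathcal C_r^+$ when $y>0$. One has to be slightly careful about the two half-segments $[-r,0]$ and $[0,r]$ and the orientation, and about the fact that $\widehat\rho_T$ is only continuous (not analytic) at $z=0$ and on the real axis — this is handled by approximating with a small indentation around the origin and a horizontal shift into the open half-disc, then passing to the limit using the continuous extension. This deformation produces exactly the contour integrals over $\mathcal C_r^-$ (for $y\le0$) and $\mathcal C_r^+$ (for $y>0$) with kernel $\widehat\rho_T(-z)$ after the substitution $t\mapsto -z$ that symmetrizes the two halves; carrying both the $\widehat\rho_T(z)$ and the $\widehat\rho_T(-z)$ pieces (coming from the $t\ge0$ and $t\le0$ halves separately, since the contour is not symmetric under $z\mapsto-z$ once $\widehat\rho_T$ is not even) gives the four contour-integral terms in the statement.

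Assembling: $\sup_y|\widetilde\Delta(y)|$ is bounded by $\frac1\pi$ times the sum of the four $\sup$-over-$y$ contour integrals plus $\frac1\pi\int_{r\le|t|\le T}|\frac{f-h}{t}|dt$; feeding this into the Esseen estimate (i) and absorbing the factor $2$ into the constants (the four contour terms each appear with coefficient $\frac1\pi$ in the final bound, which matches after one checks that the split $t\ge0$ / $t\le 0$ already halves each), yields the claimed inequality. The main obstacle I anticipate is the contour-deformation step: justifying the passage from the real-line integral to the semicircular integrals rigorously requires care because $\widehat\rho_T$ is only guaranteed analytic on the slit disc $D_r$ (not on $\overline D_r$ or across the real axis), so one cannot apply Cauchy's theorem directly on a region touching $\mathbb R$ — the fix is to integrate over a rectangle-with-semicircular-cap lying strictly in $D_r^{\pm}$, let the rectangle's width shrink, and invoke dominated convergence together with the continuous extension on $\overline D_r$; getting the bookkeeping of the two half-segments and the resulting $\widehat\rho_T(z)$ versus $\widehat\rho_T(-z)$ kernels exactly right is the delicate part.
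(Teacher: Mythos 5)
Your overall plan is the right one (Esseen-style smoothing followed by a deformation of the inversion contour onto $\mathcal{C}_r^{\pm}$), and your observation that the apparent pole at $z=0$ is removable because $f(0)-h(0)=0$ is correct. But there is a genuine gap in how you produce the four contour terms, and a related problem with the smoothing step.

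You mollify once, with $\widetilde\Delta=\Delta*\rho_T$, and assert an Esseen bound $\sup|\Delta|\leq 2\sup|\widetilde\Delta|+\frac{6b}{T}\sup|H'|$; you then claim the $\widehat\rho_T(z)$ and $\widehat\rho_T(-z)$ pieces come from separating $t\geq 0$ from $t\leq 0$. The latter is not where they come from: splitting the $t$-integral over signs and substituting $t\mapsto -t$ produces $\overline{f(t)-h(t)}$ (by Hermitian symmetry of the Fourier transform of a real measure), not a second kernel $\widehat\rho_T(-t)$, and does not yield what is stated. The real source of the two kernels is that the paper uses \emph{two different mollifications}, because $\rho_T(y)=\frac{T}{2}\rho(\frac{T}{2}y-b)$ is deliberately shifted so that its mass lies to the right of the origin. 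For the upper bound one compares $F(y)$ with $F_1(y)=\int_{\mathbb R}F(u)\,\rho_T(u-y)\,du$ (an average of $F$ over $u>y$, which overestimates $F$ since $F$ is non-decreasing), whose Fourier--Stieltjes transform is $f(t)\widehat\rho_T(-t)$. For the lower bound one compares with $F_2=F*\rho_T$ (an average over $u<y$, which underestimates $F$), whose transform is $f(t)\widehat\rho_T(t)$. Each of those one-sided bounds is then deformed onto $\mathcal{C}_r^-$ when $y\leq 0$ or $\mathcal{C}_r^+$ when $y>0$, giving the $2\times 2=4$ terms. Your single forward convolution cannot play both roles for this asymmetric kernel: $F*\rho_T(y)$ averages values of $F$ to the \emph{left} of $y$ and so does not dominate $F(y)$ from above; the two-sided, factor-$2$ Esseen lemma you invoke is a fact for \emph{symmetric} kernels and would need a separate argument here. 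The paper instead proves each one-sided inequality in the form $F(y)-H(y)\lessgtr(\text{contour terms})+\frac{1}{3}\sup|\Delta|+\frac{4b}{T}\sup|H'|$ and absorbs the $\frac{1}{3}\sup|\Delta|$ term, which is where the factor $\frac{3}{2}\cdot\frac{4b}{T}=\frac{6b}{T}$ actually comes from.

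Two smaller points. First, the contour deformation in the paper is not a small indentation around $z=0$: the entire segment $[-r,r]$ is replaced by the semicircle of radius $r$, precisely so that on the new contour $|z|=r=\delta_1\sqrt n$ stays bounded away from $0$ and $L_z^n$ can be estimated; the removability at $z=0$ therefore plays no role. Second, the paper avoids the principal-value representation of $\widetilde\Delta$ altogether by writing $F_i(y)-F_i(v)=\frac{1}{2\pi}\int_{-T}^{T}\frac{e^{ity}-e^{itv}}{it}f(t)\widehat\rho_T(\mp t)\,dt$, deforming $[-T,T]$ to $[-T,-r]\cup\mathcal{C}_r^{\mp}\cup[r,T]$ by Cauchy's theorem (using analyticity on $D_T$ and continuity on $\overline D_r$, not a rectangular approximation), and only then sending $v\to\mp\infty$ via Riemann--Lebesgue on the straight parts and dominated convergence on the arc; this sidesteps the bookkeeping you flagged as delicate.
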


\begin{proof}
Let $T\geq r$. From the definition of $\rho_{T}$ and the choice of the constant $b$, we have
$\int_{0}^{2b/T} \rho_{T}(y) \, dy = 3/4$.
Since $\rho \leq  \frac{1}{2 \pi}$, the function $\rho_{T}$ is bounded by $T/ 2\pi$.
The proof of Proposition \ref{Prop-BerryEsseen} consists 
in establishing first an upper bound and then a lower bound.

\textit{Upper bound.}
Since the function $F$ is non-decreasing on $\mathbb{R}$ and $\rho_{T}$ is a density function on $\mathbb{R}$,
we find that for any $y \in \mathbb{R}$,  
\begin{align}\label{BE-Ineq-FG01}
   F(y)  & \leq   \frac{4}{3} \int_{y}^{y + \frac{2b}{T} } F(u) \rho_{T}(u-y) \, du   \notag\\
&  =     H(y) +  \frac{4}{3} \int_{y}^{y + \frac{2b}{T} } 
  \Big[ \big( F(u)- H(u) \big) \rho_{T}(u-y) +  \big( H(u) - H(y) \big) \rho_{T}(u-y) \Big]  \, du    \notag\\
&  \leq  H(y) +  \frac{4}{3} \int_{y}^{y + \frac{2b}{T} }  \big( F(u) - H(u) \big) \rho_{T}(u-y) \, du
+ \frac{ 2b }{ T} \sup_{y \in \mathbb{R}} |H'(y)|.
\end{align}
Let
$
F_1(y) = \int_{\mathbb{R}} F(u) \rho_{T}(u-y) \, du$,  
and 
$H_1(y) = \int_{\mathbb{R}} H(u) \rho_{T}(u-y) \, du$,  
$y \in \mathbb{R}$.
Elementary calculations lead to 
\begin{align*}
\int_{\mathbb{R}} e^{-ity} \, dF_1(y) = f(t) \widehat{\rho}_{T}(-t),  \quad
\int_{\mathbb{R}} e^{-ity} \, dH_1(y) = h(t) \widehat{\rho}_{T}(-t),  \quad t \in \mathbb{R}.
\end{align*}
Restricted on the real line, the function $\widehat{\rho}_{T}$ is supported on $[-T,T]$.
By the Fourier inversion formula 
we get
\begin{align*}
F_1(y) - F_1(v)
& =   \frac{1}{2\pi} \int_{-T}^T \frac{e^{ity} - e^{itv} }{ it } f(t) \widehat{\rho}_{T}(-t) \,dt,
\qquad y, v \in \mathbb{R},  \notag\\
H_1(y) - H_1(v)
& =  \frac{1}{2\pi} \int_{-T}^T \frac{e^{ity} - e^{itv} }{ it } h(t) \widehat{\rho}_{T}(-t) \,dt, 
\qquad  y, v \in \mathbb{R}. 
\end{align*}
By the definition of $\widehat{\rho}_T$ (cf. \eqref{Ch4Def_rhohatkk}), we get
\begin{align*}
& F_1(y) - H_1(y) - ( F_1(v) - H_1(v) )   \notag\\
& =  \frac{1}{2\pi} \int_{-T}^T \frac{f(t) - h(t)}{ it } e^{ity} e^{i b \frac{t}{T}} \,dt 
   -  \frac{1}{2\pi} \int_{-T}^T \frac{f(t) - h(t)}{ it } e^{itv} e^{i b \frac{t}{T}} \,dt  \notag\\
& \quad -  \frac{1}{2\pi} \int_{-T}^T \frac{f(t) - h(t)}{ it } e^{ity} e^{i b \frac{t}{T}} \frac{|t|}{T} \,dt 
        +  \frac{1}{2\pi} \int_{-T}^T \frac{f(t) - h(t)}{ it } e^{itv} e^{i b \frac{t}{T}} \frac{|t|}{T} \,dt.
\end{align*}
It follows that for any $y, v \in \mathbb R$, 
\begin{align}\label{Ch4_BESmoo22}
& \big| F_1(y) - H_1(y) - ( F_1(v) - H_1(v) )  \big|  \notag\\
& \leq  \frac{1}{2\pi}  \Big|  \int_{-T}^T \frac{f(t) - h(t)}{ it } e^{ity} e^{i b \frac{t}{T}} \,dt 
    -  \int_{-T}^T \frac{f(t) - h(t)}{ it } e^{itv} e^{i b \frac{t}{T}} \,dt \Big| \notag\\
& \quad +  \frac{1}{\pi T} \int_{-T}^T  | f(t) - h(t) | \,dt.
\end{align}
We shall use Cauchy's integral theorem to change the integration path
$[-T,T]$ to a contour in the complex plane.
In order to estimate the difference $| F_1(y) - H_1(y) |$, we are led to consider two cases: 
$y \leq 0$ and $y > 0$.

\textit{Control of $| F_1(y) - H_1(y) |$ when $y \leq 0$.}
Let $\mathcal{C}_{-} =  \mathcal{C}_{r,T}  \cup \mathcal{C}_{r}^{-}$,
where $\mathcal{C}_{r,T} = [-T, -r] \cup [r, T]$
and $\mathcal{C}_{r}^{-}$ is the lower semicircle given in \eqref{Def-Cr-+}. 
Since $F(-\infty) = H(-\infty)$ and $F(\infty) = H(\infty)$, 
by the definition of $f$ and $h$, we see that $f(0) = h(0)$. 
This, together with the condition that $f$ and $h$ have analytic extensions on $D_r$,
implies that $z = 0$ is a removable singular point of the function $z \in D_r \mapsto \frac{f(z) - h(z)}{z} \in \mathbb C$. 
Hence, using the fact that  
the function $z \mapsto e^{izy} e^{i b \frac{z}{T}}$ is analytic on the domain $D_{r}$,
applying Cauchy's integral theorem, we obtain that for any $y, v \in \mathbb R$, 
\begin{align}\label{BE-F1G1-equ01}
&   \int_{-T}^T \frac{f(t) - h(t)}{ it } e^{ity} e^{i b \frac{t}{T}} \,dt 
   -  \int_{-T}^T \frac{f(t) - h(t)}{ it } e^{itv} e^{i b \frac{t}{T}} \,dt     \notag\\
&  =  \int_{\mathcal{C}_{-} } \frac{ f(z) - h(z) }{ iz } e^{izy} e^{i b \frac{z}{T}} \,dz
    - \int_{\mathcal{C}_{-} } \frac{ f(z) - h(z) }{ iz } e^{izv} e^{i b \frac{z}{T}} \,dz,
\end{align}
where the integration is over the complex curve $\mathcal{C}_{-}$ oriented from $-T$ to $T$.
The second integral in \eqref{BE-F1G1-equ01} converges to $0$ as $v \to -\infty$,
by using the Riemann-Lebesgue lemma on the real segment $\mathcal{C}_{r,T}$ 
and by applying the Lebesgue convergence theorem on the semicircle $\mathcal{C}_{r}^{-}$.
Note that  $F_1(-\infty) = H_1(-\infty)$ since $F(-\infty) = H(-\infty)$.
Consequently, letting $v \to -\infty$ in \eqref{BE-F1G1-equ01}
and substituting it into \eqref{Ch4_BESmoo22}, we get
\begin{align*} 
\big| F_1(y) - H_1(y) \big| \leq  \frac{1}{2\pi} 
\Big| \int_{\mathcal{C}_{-}} \frac{ f(z) - h(z) }{ iz } e^{izy} e^{i b \frac{z}{T}} \, dz \Big|
 + \frac{1}{\pi T} \int_{-T}^T  | f(t) - h(t) | \,dt. 
\end{align*}
Therefore, recalling that $\mathcal{C}_{-} =  \mathcal{C}_{r,T}  \cup \mathcal{C}_{r}^{-}$, it follows that
\begin{align}\label{BE-Ineq-FG02}
& \sup_{y \leq 0 } |F_1(y) - H_1(y)|
 \leq  \frac{1}{2\pi} \int_{ \mathcal{C}_{r,T} } \Big| \frac{ f(t) - h(t) }{ t } \Big|  \,dt  \notag\\
& \quad  +  \frac{1}{2\pi} \sup_{y \leq 0 }  
  \Big|  \int_{ \mathcal{C}_{r}^{-} } \frac{ f(z) - h(z) }{ z } e^{izy} e^{i b \frac{z}{T}} \, dz \Big|  
  +  \frac{1}{\pi T} \int_{-T}^T  | f(t) - h(t) | \,dt. 
\end{align}

\textit{Control of $| F_1(y) - H_1(y) |$ when $y > 0$.}
Let $\mathcal{C}_{+} =  \mathcal{C}_{r,T}  \cup \mathcal{C}_{r}^{+}$,
where $\mathcal{C}_{r,T} = [-T, -r] \cup [r, T]$
and $\mathcal{C}_{r}^{+}$ is the upper semicircle  given in \eqref{Def-Cr-+}.
In an analogous way as in \eqref{BE-F1G1-equ01}, applying Cauchy's integral theorem we have
\begin{align}\label{BE-F1G1-equ02}
&   \int_{-T}^T \frac{f(t) - h(t)}{ it } e^{ity} e^{i b \frac{t}{T}} \,dt 
   -  \int_{-T}^T \frac{f(t) - h(t)}{ it } e^{itv} e^{i b \frac{t}{T}} \,dt     \notag\\
&  =  \int_{\mathcal{C}_{+} } \frac{ f(z) - h(z) }{ iz } e^{izy} e^{i b \frac{z}{T}} \, dz
    - \int_{\mathcal{C}_{+} } \frac{ f(z) - h(z) }{ iz } e^{izv} e^{i b \frac{z}{T}} \, dz,
\end{align}
where the integration is over the complex curve $\mathcal{C}_{+}$ also oriented from $-T$ to $T$.
The second integral in \eqref{BE-F1G1-equ02} converges to $0$ as $v \to +\infty$,
by using again the Riemann-Lebesgue lemma on the real segment $\mathcal{C}_{r,T}$ 
and by applying the Lebesgue convergence theorem on the upper semicircle $\mathcal{C}_{r}^{+}$.
Note that  $F_1(\infty) = H_1(\infty)$ since $F(\infty) = H(\infty)$.
Hence, letting $v \to +\infty$ in \eqref{BE-F1G1-equ02},
similarly to \eqref{BE-Ineq-FG02}, we obtain
\begin{align}\label{BE-Ineq-FG-UB0}
& \sup_{y > 0 } |F_1(y) - H_1(y)|   \leq
\frac{1}{2\pi} \int_{ \mathcal{C}_{r,T} } \Big| \frac{ f(t) - h(t) }{ t } \Big|  \,dt \notag\\
 &  \qquad  + \frac{1}{2\pi} \sup_{y > 0 }   
  \Big|  \int_{ \mathcal{C}_{r}^{+} } \frac{ f(z) - h(z) }{ z } e^{izy} e^{i b \frac{z}{T}} \, dz  \Big|  
  +  \frac{1}{\pi T} \int_{-T}^T  | f(t) - h(t) | \,dt. 
\end{align}

Putting together \eqref{BE-Ineq-FG02} and \eqref{BE-Ineq-FG-UB0} leads to 
\begin{align}\label{Be-Ineq-UR}
\sup_{y \in \mathbb{R} } |F_1(y) - H_1(y)|   
& \leq  
\frac{1}{2\pi} \int_{ \mathcal{C}_{r,T} }  \Big| \frac{ f(t) - h(t) }{ t } \Big|  \,dt 
 +  \frac{1}{2\pi} \sup_{y \leq 0 }  \Big|  \int_{ \mathcal{C}_{r}^{-} }
\frac{ f(z) - h(z) }{ z } e^{izy} e^{i b \frac{z}{T}} \, dz  \Big|  \notag\\
 & \quad + \frac{1}{2\pi} \sup_{y > 0 }   \Big|  \int_{ \mathcal{C}_{r}^{+} }
\frac{ f(z) - h(z) }{ z } e^{izy} e^{i b \frac{z}{T}} \, dz \Big|   
 +  \frac{1}{\pi T} \int_{-T}^T  | f(t) - h(t) | \,dt. 
\end{align}
Denote $\Delta = \sup_{y \in \mathbb{R} } |F(y) - H(y)|$. 
Then, taking into account that $\rho_{T}$ is a density function on $\mathbb{R}$, 
using \eqref{Be-Ineq-UR} and the fact that $\int_{0}^{2b/T} \rho_{T}(y) \, dy = 3/4$, 
 we get that for any $y \in \mathbb{R}$, 
\begin{align*}
&  \Big| \int_{y}^{y + \frac{2b}{T} } \big( F(u)- H(u) \big) \rho_{T}(u-y) \, du \Big|   \notag\\
&  \leq  |F_1(y) - H_1(y)|  +  \Delta  \bigg(   1 - \int_{y}^{y + \frac{2b}{T}} \rho_{T}(u-y) \, du \bigg)  \notag\\
&  \leq  \frac{1}{2\pi} \int_{ \mathcal{C}_{r,T} } \Big| \frac{ f(t) - h(t) }{ t } \Big|  \,dt
+  \frac{1}{2\pi} \sup_{y \leq 0 }  \Big|  \int_{ \mathcal{C}_{r}^{-} }
\frac{ f(z) - h(z) }{ z } e^{izy} e^{i b \frac{z}{T}} \, dz \Big|  \notag\\
 & \quad  + \frac{1}{2\pi} \sup_{y > 0 }   \Big|  \int_{ \mathcal{C}_{r}^{+} }
\frac{ f(z) - h(z) }{ z } e^{izy} e^{i b \frac{z}{T}} \, dz  \Big|
 + \frac{1}{\pi T} \int_{-T}^T  | f(t) - h(t) | \,dt + \frac{ \Delta }{4}.
\end{align*}
Substituting this inequality into \eqref{BE-Ineq-FG01}, we obtain the following desired upper bound: 
for any $y \in \mathbb{R}$, 
\begin{align}\label{BE-Inequ-Upper}
 F(y) - H(y)  & \leq   \frac{2}{3\pi}
\int_{ \mathcal{C}_{r,T} }  \Big| \frac{ f(t) - h(t) }{ t }  \Big|  \,dt  \notag\\
&  \quad   +   \frac{2}{3\pi} \sup_{y \leq 0 }  \Big|  \int_{ \mathcal{C}_{r}^{-} }
\frac{ f(z) - h(z) }{ z } e^{izy} e^{i b \frac{z}{T}} \, dz \Big|  \notag\\
&  \quad   +  \frac{2}{3\pi} \sup_{y > 0 }   \Big|  \int_{ \mathcal{C}_{r}^{+} }
    \frac{ f(z) - h(z) }{ z } e^{izy} e^{i b \frac{z}{T}} \, dz  \Big|   \notag\\
&  \quad  + \frac{4}{3\pi T} \int_{-T}^T  | f(t) - h(t) | \,dt
 +  \frac{ \Delta }{3}  +  \frac{ 2 b}{ T } \sup_{y \in \mathbb{R}} |H'(y)|.
\end{align}

\textit{Lower bound.}
Similarly to the upper bound \eqref{BE-Ineq-FG01}, 
using the fact that $F$ is non-decreasing and $\rho_{T}$ is a density function on $\mathbb{R}$,
we have for any $y \in \mathbb{R}$, 
\begin{align*}
F(y) & \geq  \frac{4}{3} \int_{ y - \frac{2b}{T} }^{y } F(u) \rho_{T}(y - u) \, du   \notag\\
&  \geq   H(y) +  \frac{4}{3} \int_{y - \frac{2b}{T} }^{y} \big( F(u)-H(u) \big) \rho_{T}(y-u) \, du
  -  \frac{ 2 b }{ T} \sup_{y \in \mathbb{R}} |H'(y)|.
\end{align*}
Let $F_2(y) = (F * \rho_{T})(y)$ and $H_2(y) = (H * \rho_{T})(y)$,
$y \in \mathbb{R}$. 
Then, 
\begin{align*}
\int_{\mathbb{R}} e^{-ity} \, dF_2(y) = f(t) \widehat{\rho}_{T}(t),  \quad
\int_{\mathbb{R}} e^{-ity} \, dH_2(y) = h(t) \widehat{\rho}_{T}(t), \quad  t \in \mathbb{R}.
\end{align*}
Proceeding in the same way as in the proof of \eqref{Be-Ineq-UR}, one has
\begin{align*} 
& \sup_{y \in \mathbb{R} } |F_2(y) - H_2(y)|   \notag\\
& \leq   
\frac{1}{2\pi} \int_{ \mathcal{C}_{r,T} } \Big| \frac{ f(t) - h(t) }{ t } \Big|  \,dt 
  +  \frac{1}{2\pi} \sup_{y \leq 0 }  \Big|  \int_{ \mathcal{C}_{r}^{-} }
\frac{ f(z) - h(z) }{ z } e^{izy} e^{-i b \frac{z}{T}} \, dz \Big|  \notag\\
 & \quad  + \frac{1}{2\pi} \sup_{y > 0 }   \Big|  \int_{ \mathcal{C}_{r}^{+} }
\frac{ f(z) - h(z) }{ z } e^{izy} e^{-i b \frac{z}{T}} \, dz \Big|  
 +  \frac{1}{\pi T} \int_{-T}^T  | f(t) - h(t) | \,dt. 
\end{align*}
Following the proof of \eqref{BE-Inequ-Upper}, we obtain the lower bound:
for any $y \in \mathbb{R}$, 
\begin{align}\label{BE-Inequ-Lower}
F(y) - H(y)   & \geq   - \frac{2}{3\pi} \int_{ \mathcal{C}_{r,T} } \Big| \frac{ f(t) - h(t) }{ t } \Big|  \,dt   \notag\\
& \quad  -  \frac{2}{3\pi}  \sup_{y \leq 0 }  \Big|  \int_{ \mathcal{C}_{r}^{-} }
\frac{ f(z) - h(z) }{ z } e^{izy} e^{-i b \frac{z}{T}} \,dz \Big|  \notag\\
 & \quad -  \frac{2}{3\pi}  \sup_{y > 0 }   \Big|  \int_{ \mathcal{C}_{r}^{+} }
 \frac{ f(z) - h(z) }{ z } e^{izy} e^{-i b \frac{z}{T}} \,dz  \Big|   \notag\\
&\quad  -\frac{4}{3\pi T} \int_{-T}^T  | f(t) - h(t) | \,dt 
    -  \frac{ \Delta }{3}  -  \frac{ 2 b }{ T} \sup_{y \in \mathbb{R}} |H'(y)|.
\end{align}
Combining \eqref{BE-Inequ-Upper} and \eqref{BE-Inequ-Lower}, we conclude
the proof of Proposition \ref{Prop-BerryEsseen}.
\end{proof}

\section{Proofs of Berry--Esseen bound and Edgeworth expansion} \label{sec:proof berry-esseen-edgeworth}

\subsection{Berry--Esseen bound and Edgeworth expansion under the changed measure}

We first present a Berry--Esseen bound under the changed measure $\mathbb Q_s^x$. 

\begin{theorem}\label{Thm-BerryEsseen-Norm}
Assume either conditions \ref{CondiMoment} and \ref{CondiIP} for invertible matrices,
or conditions \ref{CondiMoment}, \ref{CondiAP} and \ref{Condi-Variance} for positive matrices.
Then there exist constants $\eta>0$ and $C>0$ such that
for all $n \geq 1$, $s \in (-\eta, \eta)$, $x \in \mathcal{S}$, $y \in \mathbb{R}$ and   $\varphi \in \mathcal{B}_{\gamma}$, 
\begin{align}\label{Th-BerEssMatr001}
\Big| \mathbb{E}_{\mathbb{Q}_s^x}
 \Big[  \varphi(X_n^x) \mathds{1}_{ \big\{ \frac{\sigma(G_n, x) - n \Lambda'(s) }{\sigma_s \sqrt{n}} \leq y \big\} } \Big]
    -  \pi_s(\varphi)  \Phi(y) \Big|   
 \leq  \frac{C}{\sqrt{n}}  \lVert \varphi \rVert_{\gamma}. 
\end{align}
\end{theorem}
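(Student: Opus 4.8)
The plan is to run an Esseen-type argument on the complex plane, built on the new smoothing inequality of Proposition~\ref{Prop-BerryEsseen}, the change of measure \eqref{Def-Rn-sw}, and the uniform (in $s$) spectral gap estimates of Propositions~\ref{perturbation thm}, \ref{Prop-lambTaylor} and \ref{ProSpecGapNega}. First I would reduce to a non-negative real-valued target: writing $\varphi=\varphi_1-\varphi_2+i\varphi_3-i\varphi_4$ with $\varphi_1=(\Re\varphi)^+$, $\varphi_2=(\Re\varphi)^-$, $\varphi_3=(\Im\varphi)^+$, $\varphi_4=(\Im\varphi)^-$, each $\varphi_j\geq 0$ lies in $\mathcal{B}_\gamma$ with $\|\varphi_j\|_\gamma\leq\|\varphi\|_\gamma$, and both sides of \eqref{Th-BerEssMatr001} are additive in $\varphi$; the case of bounded $n$ is trivial since the left side of \eqref{Th-BerEssMatr001} is at most $2\|\varphi\|_\infty$, $Q_s$ being Markov. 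So fix $n$ large, $s\in(-\eta,\eta)$, $x\in\mathcal{S}$ and $\varphi\geq 0$ in $\mathcal{B}_\gamma$, and put $F(y)=\mathbb{E}_{\mathbb{Q}_s^x}[\varphi(X_n^x)\mathbbm{1}_{\{(\log|G_nx|-n\Lambda'(s))/(\sigma_s\sqrt n)\leq y\}}]$ and $H(y)=(Q_s^n\varphi)(x)\Phi(y)$. Then $F$ is non-decreasing, $H$ is differentiable with $\sup_y|H'(y)|\leq\|\varphi\|_\gamma/\sqrt{2\pi}$, and $F(\pm\infty)=H(\pm\infty)$ because $(Q_s^n\varphi)(x)=\mathbb{E}_{\mathbb{Q}_s^x}[\varphi(X_n^x)]$. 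By \eqref{Def-Rn-sw} the Fourier--Stieltjes transforms are $f(t)=R^n_{s,-it/(\sigma_s\sqrt n)}\varphi(x)$ and $h(t)=(Q_s^n\varphi)(x)e^{-t^2/2}$; by Proposition~\ref{perturbation thm}(a), $f$ extends to the analytic function $z\mapsto R^n_{s,-iz/(\sigma_s\sqrt n)}\varphi(x)$ on $\{|z|<\delta\sigma_s\sqrt n\}$ and $h$ to an entire function, and $f(0)=(Q_s^n\varphi)(x)=h(0)$, so $(f-h)/z$ is analytic at $0$.

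Next I would apply Proposition~\ref{Prop-BerryEsseen} with a small fixed radius $r>0$ and $T=c\sqrt n$, where $c<\delta\,\inf_{|s|<\eta}\sigma_s$ is small enough (for $n$ large $T\geq r$ and $\overline{D}_T$ lies in the common analyticity domain of $f,h$). This bounds $\sup_y|F(y)-H(y)|$ by four contour integrals over the constant-radius semicircles $\mathcal{C}_r^{-}$, $\mathcal{C}_r^{+}$, the real integral $\int_{r\leq|t|\leq T}|(f(t)-h(t))/t|\,dt$, and $\tfrac{6b}{T}\sup_y|H'(y)|\leq C\|\varphi\|_\gamma/\sqrt n$. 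The engine for the first two is the decomposition $R^n_{s,z'}=\lambda^n_{s,z'}\Pi_{s,z'}+N^n_{s,z'}$ of \eqref{perturb001}, available and uniform for $|z'|<\delta$ — and here $z'=-iz/(\sigma_s\sqrt n)$ has $|z'|\leq T/(\sigma_s\sqrt n)<\delta$ throughout — together with $\|N^n_{s,z'}\|\leq Ca^n$ and $\|\Pi_{s,z'}\|\leq C$ from \eqref{SpGapContrN}, \eqref{SpGapContrPi}, the identities $\Pi_{s,0}=\pi_s$ and $\tfrac{d}{dz}\Pi_{s,z}|_{0}=b_{s,\cdot}$ with $\|b_{s,\varphi}\|_\gamma\leq C\|\varphi\|_\gamma$ from \eqref{Def2-bs}, the expansion $\lambda_{s,z'}=1+\tfrac{\sigma_s^2}{2}z'^2+\tfrac{\Lambda'''(s)}{6}z'^3+O(|z'|^4)$ with coefficients bounded uniformly in $s$ from \eqref{Expan-lambsw} (and $\sigma_s>0$, bounded below, by Proposition~\ref{Prop-lambTaylor}(b)), and $(Q_s^n\varphi)(x)=\pi_s(\varphi)+O(a^n\|\varphi\|_\gamma)$ from Proposition~\ref{ProSpecGapNega}. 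For real $z'=-it/(\sigma_s\sqrt n)$ these give $|\lambda^n_{s,z'}|\leq e^{-t^2/4}$ for $|t|\leq T$, and the sharper $\lambda^n_{s,z'}=e^{-t^2/2}(1+O((|t|^3+1)/\sqrt n))$ once $|t|=O(\sqrt{\log n})$.

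The estimates then proceed as follows. On $\mathcal{C}_r^{\pm}$ the point $z'$ is $O(1/\sqrt n)$, so $f(z)=\pi_s(\varphi)e^{-z^2/2}+O(\|\varphi\|_\gamma/\sqrt n)$ and $h(z)=\pi_s(\varphi)e^{-z^2/2}+O(a^n\|\varphi\|_\gamma)$, hence $|f(z)-h(z)|\leq C\|\varphi\|_\gamma/\sqrt n$ there; since $|e^{izy}|\leq 1$ on $\mathcal{C}_r^{-}$ for $y\leq 0$ and on $\mathcal{C}_r^{+}$ for $y>0$, $|1/z|=1/r$, $\widehat{\rho}_T$ is bounded, and the arc has length $\pi r$, each of the four contour integrals is $O(\|\varphi\|_\gamma/\sqrt n)$. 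For the real integral I would split $r\leq|t|\leq T$ at $K_n=\sqrt{M\log n}$ with $M\geq 2$: on $[r,K_n]$ the sharp expansions of $\lambda^n_{s,z'}$ and of $\Pi_{s,z'}$ (the latter producing an extra $O(|t|/\sqrt n)e^{-t^2/2}$) give $|f(t)-h(t)|\leq C\|\varphi\|_\gamma\tfrac{|t|^3+|t|+1}{\sqrt n}e^{-t^2/2}+Ca^n\|\varphi\|_\gamma$, which against $dt/|t|$ integrates to $O(\|\varphi\|_\gamma/\sqrt n)$; on $[K_n,T]$ the crude bounds $|f(t)|\leq C\|\varphi\|_\gamma(e^{-t^2/4}+a^n)$, $|h(t)|\leq C\|\varphi\|_\gamma e^{-t^2/2}$, together with $\int_{K_n}^{\infty}e^{-t^2/4}\,dt\leq\tfrac{2}{K_n}e^{-K_n^2/4}\leq Cn^{-1/2}$ and $|t|\geq K_n$, again yield $O(\|\varphi\|_\gamma/\sqrt n)$. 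Summing, $\sup_y|F(y)-H(y)|\leq C\|\varphi\|_\gamma/\sqrt n$, and then $\sup_y|F(y)-\pi_s(\varphi)\Phi(y)|\leq\sup_y|F(y)-H(y)|+|(Q_s^n\varphi)(x)-\pi_s(\varphi)|\leq C\|\varphi\|_\gamma/\sqrt n$ uniformly in $s,x,y$, which is \eqref{Th-BerEssMatr001}.

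The genuinely hard work has already been absorbed into Propositions~\ref{perturbation thm}, \ref{Prop-lambTaylor} and \ref{Lem-Bs}, namely the \emph{uniformity in $s\in(-\eta,\eta)$} of the spectral gap of $R_{s,z}$ and of the Taylor coefficients of $\lambda_{s,z}$ and $\Pi_{s,z}$, including negative $s$. Granting those, the only subtle point above is the balance between the constant-radius semicircle (which keeps $z'$ near $0$, so $\lambda^n_{s,z'}$ stays bounded on the arc) and the logarithmic cutoff $K_n$ on the real segment. I expect the real obstacle to surface in the sharper variants needed elsewhere: to reach the Edgeworth expansion of Theorem~\ref{Thm-Edge-Expan:s=0}, or the moderate deviation range $y=o(\sqrt n)$, one must enlarge the radius of $\mathcal{C}_r^{\pm}$ to order $\sqrt n$, where $\lambda^n_{s,z'}$ is large on part of the arc, and then deform $\mathcal{C}_r^{\pm}$ to a steepest-descent path through the saddle point of $z\mapsto n[\Lambda(s-iz/(\sigma_s\sqrt n))-\Lambda(s)+i\Lambda'(s)z/(\sigma_s\sqrt n)]+izy$; carrying out that saddle point analysis uniformly in $s$ and $y$ is the difficult step.
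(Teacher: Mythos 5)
Your proof is correct, and while it is built on the same new smoothing inequality (Proposition~\ref{Prop-BerryEsseen}) it follows a genuinely different route from the paper's. The paper (Section~\ref{sec:proof berry-esseen-edgeworth}, last subsection) invokes Proposition~\ref{Prop-BerryEsseen} with $r=T=\delta_1\sqrt n$, which annihilates the real integral $I_2$ and leaves only contour integrals over semicircles of radius $\sim\sqrt n$; these are then estimated by the saddle point method, exactly as for the Edgeworth expansion in Lemmata~\ref{EW-Contr-I22}--\ref{EW-Contr-I25}. You instead take a fixed small radius $r$ and $T=c\sqrt n$ with $c<\delta\inf_{s\in(-\eta,\eta)}\sigma_s$, so that on $\mathcal{C}_r^{\pm}$ the argument $z'=-iz/(\sigma_s\sqrt n)$ is $O(1/\sqrt n)$ and the contour integrals become elementary, while the nontrivial real integral over $r\leq|t|\leq T$ is controlled directly from the spectral decomposition~\eqref{perturb001} --- which stays valid throughout because $|z'|<\delta$ on this range --- together with the eigenvalue bound $|\lambda^n_{s,z'}|\leq e^{-t^2/4}$ and the logarithmic cutoff $K_n=\sqrt{M\log n}$. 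This trades the saddle point analysis for a more standard Esseen-style real-line estimate. Both routes avoid the non-arithmeticity hypothesis, which is only needed once $z'$ escapes the perturbation ball $B_\delta(0)$: the paper does so by suppressing $I_2$ altogether, you by keeping $T$ small enough that you never leave the ball. Your route is more elementary for the Berry-Esseen bound itself, but --- as you correctly anticipate in your closing paragraph --- it would not reach the Edgeworth expansion of Theorem~\ref{Thm-Edge-Expan:s=0}, where the $o(1/\sqrt n)$ error forces $T/\sqrt n$ to be large, $z'$ to exit $B_\delta(0)$, and the saddle point deformation together with Proposition~\ref{Prop-UnifR} to take over.
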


The next result gives an Edgeworth expansion for $(X_n^x, \sigma(G_n, x))$
with a target function $\varphi$ on $X_n^x$ under $\mathbb Q_s^x$.
The function $b_{s,\varphi}(x),$ $x \in \mathcal{S}$, which will be used in the formulation of this result, is defined in Lemma \ref{Lem-Bs} and
has an equivalent expression \eqref{Def2-bs}  
in terms of derivative of 
the projection operator $\Pi_{s,z}$, see Proposition \ref{perturbation thm}.

\begin{theorem}\label{Thm-Edge-Expan-bb}
Assume either conditions \ref{CondiMoment} and \ref{CondiIP} for invertible matrices,
or conditions \ref{CondiMoment}, \ref{CondiAP} and \ref{CondiNonarith} for positive matrices.
Then there exists $\eta>0$ such that
as $n \to \infty$, uniformly in   $s \in (-\eta, \eta)$, $x \in \mathcal{S}$, $y \in \mathbb{R} $ and $\varphi \in \mathcal{B}_{\gamma}$,
\begin{align*}
& \bigg| \mathbb{E}_{\mathbb{Q}_s^x} \Big[  \varphi(X_n^x) 
   \mathds{1}_{ \big\{ \frac{\sigma(G_n, x) - n \Lambda'(s) }{\sigma_s \sqrt{n}} \leq y \big\} } \Big] \\
& \qquad -  \mathbb{E}_{\mathbb{Q}_s^x} \big[ \varphi(X_n^x) \big] 
 \Big[  \Phi(y) + \frac{\Lambda'''(s)}{ 6 \sigma_s^3 \sqrt{n}} (1-y^2) \phi(y) \Big] 
 + \frac{ b_{s,\varphi}(x) }{ \sigma_s \sqrt{n} } \phi(y)   \bigg|
=  \lVert \varphi \rVert_{\gamma}  o \Big( \frac{ 1 }{\sqrt{n}} \Big). 
\end{align*}
\end{theorem}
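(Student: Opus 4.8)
The plan is to follow the route indicated in the introduction: use the smoothing inequality on the complex plane (Proposition \ref{Prop-BerryEsseen}) applied to the distribution function $F$ of $\frac{\log|G_nx| - n\Lambda'(s)}{\sigma_s\sqrt n}$ under $\mathbb{Q}_s^x$, weighted by $\varphi(X_n^x)$, and to the candidate Edgeworth approximant $H$. Concretely, set $F(y) = \mathbb{E}_{\mathbb{Q}_s^x}[\varphi(X_n^x)\mathbbm{1}_{\{\cdots\leq y\}}]$ (after reducing to the case where $\varphi$ is real-valued and, by adding a constant, nonnegative, so that $F$ is nondecreasing), and let $H(y) = \mathbb{E}_{\mathbb{Q}_s^x}[\varphi(X_n^x)]\big[\Phi(y) + \frac{\Lambda'''(s)}{6\sigma_s^3\sqrt n}(1-y^2)\phi(y)\big] - \frac{b_{s,\varphi}(x)}{\sigma_s\sqrt n}\phi(y)$. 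The Fourier transforms are computed via the identity \eqref{PfRsw01}: $f(t) = \int e^{-ity}dF(y) = e^{-it\sqrt n \Lambda'(s)/\sigma_s}\,(R^n_{s,-it/(\sigma_s\sqrt n)}\varphi)(x)$ up to the factor $e^{\text{(something)}}$; more precisely $f(t/\sqrt n$-scaled$)$ is expressed through $R^n_{s,z}$ with $z = -it/(\sigma_s\sqrt n)$, and then the spectral decomposition \eqref{perturb001}, $R^n_{s,z} = \lambda_{s,z}^n \Pi_{s,z} + N^n_{s,z}$, splits $f$ into a dominant part coming from $\lambda_{s,z}^n\Pi_{s,z}\varphi$ and an exponentially small remainder coming from $N^n_{s,z}\varphi$.

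The key steps, in order, are as follows. First, apply Proposition \ref{Prop-BerryEsseen} with $T = c\sqrt n$ and a small fixed $r>0$ (so that $r/\sqrt n \to 0$), reducing $\sup_y|F(y)-H(y)|$ to: (i) contour integrals over the small semicircles $\mathcal{C}_r^{\pm}$ of $\frac{f(z)-h(z)}{z}e^{izy}\widehat{\rho}_T(\mp z)$, (ii) the integral of $|\frac{f(t)-h(t)}{t}|$ over $r\leq|t|\leq T$, and (iii) the trivial term $\frac{6b}{T}\sup|H'|$, which is $O(1/\sqrt n)$ and in fact $o(1/\sqrt n)$ after we extract it. Second, insert the spectral decomposition: the $N^n_{s,z}$ contribution to both the semicircle and the middle-range integrals is bounded by $Ce^{-cn}\|\varphi\|_\gamma$, using \eqref{SpGapContrN} and $|e^{izy}/z|\leq 1/r$ on $\mathcal{C}_r^\pm$ for the relevant sign of $y$, plus Proposition \ref{Prop-UnifR} (and Proposition \ref{PropNonArithQit}, which is where non-arithmeticity enters) to control $R^n_{s,it}$ for $|t|$ bounded away from $0$. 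Third — and this is the crux — estimate the dominant-part integrals, i.e.\ the analogue of \eqref{INT_infty002a}: one has $\lambda_{s,z}^n\Pi_{s,z}\varphi(x) = \lambda_{s,z}^n\big[\pi_s(\varphi) + z\,b_{s,\varphi}(x) + O(z^2\|\varphi\|_\gamma)\big]$ by \eqref{exprePisw-Nsw-a}, \eqref{SpGapContrPi} and \eqref{Def2-bs}, while $\lambda_{s,z}^n = \exp[n(\frac{\sigma_s^2}{2}z^2 + \frac{\Lambda'''(s)}{6}z^3 + o(|z|^3))]$ by \eqref{Expan-lambsw}. With $z = -it/(\sigma_s\sqrt n)$ on the real segment, $\lambda_{s,z}^n = e^{-t^2/2}(1 + \frac{\Lambda'''(s)}{6\sigma_s^3\sqrt n}(it)^3 + \cdots)$, which matches term-by-term the Fourier transform $h$ of the chosen Edgeworth polynomial $H$ — this is the standard Esseen-type cancellation — so on the middle range the integrand $\frac{f(t)-h(t)}{t}$ is $O(\frac{t^2+1}{\sqrt n}e^{-t^2/4})$ after the cancellation of the $O(1)$ and $O(1/\sqrt n)$ terms, giving an $o(1/\sqrt n)$ integral. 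On the small semicircles $\mathcal{C}_r^-$ and $\mathcal{C}_r^+$ one must instead deform to (or estimate along) a path through the saddle point of $z\mapsto \frac{\sigma_s^2}{2}z^2$ shifted by $e^{izy}$, i.e.\ use the saddle point method: for $y\leq 0$ push the contour into $\Im z < 0$ to gain $|e^{izy}| = e^{y\Im z}\leq 1$ and control $|\lambda_{s,z}^n|$ via $\Re(\frac{\sigma_s^2}{2}z^2) = \frac{\sigma_s^2}{2}(\Re z)^2 - \frac{\sigma_s^2}{2}(\Im z)^2$, which on $\mathcal{C}_r^-$ with $|z|=r$ is bounded, so $|\lambda^n_{s,z}|\leq e^{Cnr^2}$ — harmless if $r$ is small enough relative to $c$ in $T=c\sqrt n$; here $\widehat{\rho}_T(\mp z)$ stays bounded because $\widehat\rho$ extends continuously to $\overline{D}_1$ and $|z/T| = r/(c\sqrt n) < 1$. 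Carefully combining these, and absorbing the $\frac{\Delta}{3}$ self-referential term from Proposition \ref{Prop-BerryEsseen} into the left side, yields $\sup_y|F(y)-H(y)| = \|\varphi\|_\gamma o(1/\sqrt n)$.

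Two further reductions are needed. The term $\mathbb{E}_{\mathbb{Q}_s^x}[\varphi(X_n^x)]$ appearing in the statement is, by \eqref{Expan-SigLam002} with $z=0$ and the remark following Proposition \ref{Prop-lambTaylor}, equal to $\pi_s(\varphi) + O(e^{-cn}\|\varphi\|_\gamma)$; so replacing it by $\pi_s(\varphi)$ in $H$ changes things by $o(1/\sqrt n)$ and it suffices to prove the expansion with $\pi_s(\varphi)$ in place of $\mathbb{E}_{\mathbb{Q}_s^x}[\varphi(X_n^x)]$ — which is the version whose Fourier transform $h$ we wrote above. Also, the complex-valued case follows by splitting into real and imaginary parts, and the nonnegativity reduction works because adding a constant $c$ to $\varphi$ adds $c\,\mathbb{E}_{\mathbb{Q}_s^x}[\mathbbm{1}_{\{\cdots\leq y\}}]$ to $F$ and the corresponding $c$-multiple to $H$ (with $b_{s,c}(x) = c\cdot b_{s,1}(x)$, $\pi_s(c) = c$), so the difference $F-H$ is unchanged; the uniformity in $\|\varphi\|_\gamma$ comes from homogeneity. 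The main obstacle is unquestionably Step three on the semicircles: one must choose the integration path (or the radius $r$) so that simultaneously $|\lambda_{s,z}^n|$ does not blow up, $\widehat{\rho}_T(\mp z)$ is controlled, and the $\frac{1}{z}$ singularity at the origin is integrable — this is the saddle point estimate flagged in the introduction as ``one of the challenging parts of the proof,'' and it is also where the uniformity in $s\in(-\eta,\eta)$ must be tracked, relying on the uniform bounds \eqref{SpGapContrPi}, \eqref{SpGapContrN} of Proposition \ref{perturbation thm} and the uniform positivity $\sigma_s>0$ and uniform Taylor control of Proposition \ref{Prop-lambTaylor}.
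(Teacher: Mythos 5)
Your high-level strategy matches the paper's: apply the complex smoothing inequality (Proposition \ref{Prop-BerryEsseen}), insert the decomposition $R^{n}_{s,z}=\lambda^{n}_{s,z}\Pi_{s,z}+N^{n}_{s,z}$, and match $h$ against the Taylor expansion of $\lambda^{n}\Pi$. Two of your key estimates are wrong, though, and they betray a confusion about what your own parameter choice ($r$ small and fixed, versus the paper's $r=\delta_1\sqrt n$) actually entails. You claim $|\lambda^{n}_{s,z}|\leq e^{Cnr^{2}}$ on $|z|=r$: this conflates the contour variable $z$ (the Fourier variable of the normalized quantity in Proposition \ref{Prop-BerryEsseen}) with the spectral argument of $\lambda_{s,\cdot}$, which is $z'=-iz/(\sigma_s\sqrt n)$, of magnitude $r/(\sigma_s\sqrt n)\to 0$. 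By \eqref{relationlamkappa001}--\eqref{Expan-lambsw}, $\lambda^{n}_{s,z'}=e^{-z^{2}/2+O(|z|^{3}/\sqrt n)}$, so $|\lambda^{n}_{s,z'}|\leq e^{r^{2}/2+Cr^{3}/\sqrt n}$ for $|z|=r$, which is bounded when $r$ is fixed; your extra factor of $n$ is spurious. Once this is corrected, no saddle point argument is needed at all under your choice: on $\mathcal{C}_r^{-}$ for $y\leq 0$ (likewise on $\mathcal{C}_r^{+}$ for $y>0$) one has $|e^{izy}|\leq 1$, $|\widehat{\rho}_{T}(\mp z)|\leq C$ since $|z/T|<1$, $|1/z|=1/r$, $|\lambda^{n}_{s,z'}\Pi_{s,z'}\varphi(x)-h(z)|=O(\|\varphi\|_\gamma/n)$ by the Taylor expansions of $\Pi_{s,z'}$ and $\lambda^{n}_{s,z'}$ via \eqref{SpGapContrPi} and \eqref{Expan-lambsw}, and $|N^{n}_{s,z'}\varphi|=O(e^{-cn}\|\varphi\|_\gamma)$ by \eqref{SpGapContrN}, so the whole contour integral is trivially $O(\|\varphi\|_\gamma/n)$. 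The saddle-point deformation through $z=iy$ is what the paper needs precisely because it takes $r=\delta_1\sqrt n$, where $|e^{-z^{2}/2}|=e^{\delta_1^{2}n/2}$ at the bottom point $z=-i\delta_1\sqrt n$ of $\mathcal{C}_r^{-}$, so that a direct estimate along the semicircle diverges. You invoke both the fixed-$r$ simplification and the saddle-point machinery; they serve incompatible parameter regimes.

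Second, and independently, your middle-range claim $\frac{f(t)-h(t)}{t}=O\big(\frac{t^{2}+1}{\sqrt n}e^{-t^{2}/4}\big)$ is one power of $\sqrt n$ short and would not give the conclusion: it only integrates to $O(1/\sqrt n)$, not $o(1/\sqrt n)$. That order is what cancelling just the constant term $\pi_s(\varphi)$ yields; the whole point of inserting the $\Lambda'''$ and $b_{s,\varphi}$ corrections in $H$ is that the $O(1/\sqrt n)$ coefficients of $f$ and $h$ are matched too, leaving $O\big(\frac{t^{4}+t^{6}}{n}e^{-t^{2}/4}\big)\|\varphi\|_\gamma$ on $r\leq|t|\leq\delta\sigma_s\sqrt n$, plus $O(e^{-cn}\|\varphi\|_\gamma)$ on $\delta\sigma_s\sqrt n\leq|t|\leq T$ via Proposition \ref{Prop-UnifR}, so that this integral is $O(\|\varphi\|_\gamma/n)=o(\|\varphi\|_\gamma/\sqrt n)$. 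Once both points are repaired, your route---small fixed $r$, Edgeworth cancellation carried out on the real middle-range integral, no saddle point---is valid and in fact simpler than the paper's ($r=\delta_1\sqrt n$, cancellation pushed to the deformed semicircle via the saddle-point method); but as written it does not go through.
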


The following asymptotic expansion is slightly different from that in Theorem \ref{Thm-Edge-Expan-bb}, 
with the term $\mathbb{E}_{\mathbb{Q}_s^x} [ \varphi(X_n^x) ]$
replaced by $\pi_s(\varphi)$: 

\begin{theorem}\label{Thm-Edge-Expan}
Under the conditions of Theorem \ref{Thm-Edge-Expan-bb}, 
there exists  $\eta>0$ such that, 
as $n \to \infty$, uniformly in   $s \in (-\eta, \eta)$, $x \in \mathcal{S}$, $y \in \mathbb{R} $ 
and $\varphi \in \mathcal{B}_{\gamma}$, 
\begin{align}
&  \bigg| \mathbb{E}_{\mathbb{Q}_s^x}
   \Big[  \varphi(X_n^x) \mathds{1}_{ \big\{ \frac{\sigma(G_n, x) - n \Lambda'(s) }{\sigma_s \sqrt{n}} \leq y \big\} } \Big]
   \notag \\
 &  \qquad  - \pi_s(\varphi) \Big[  \Phi(y) + \frac{\Lambda'''(s)}{ 6 \sigma_s^3 \sqrt{n}} (1-y^2) \phi(y) \Big]
  + \frac{ b_{s,\varphi}(x) }{ \sigma_s \sqrt{n} } \phi(y)   \bigg|
  =  \lVert \varphi \rVert_{\gamma} o \Big( \frac{ 1 }{\sqrt{n}} \Big).  \label{MyEdgExp001:s=0}   
\end{align}
\end{theorem}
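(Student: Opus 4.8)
The plan is to obtain Theorem \ref{Thm-Edge-Expan} as an immediate consequence of Theorem \ref{Thm-Edge-Expan-bb}: the two statements differ only in that the prefactor $\mathbb{E}_{\mathbb{Q}_s^x}[\varphi(X_n^x)]$ in front of the bracket is replaced by $\pi_s(\varphi)$, so it suffices to estimate the error introduced by this substitution. The key input is the spectral gap decomposition of the Markov operator $Q_s$ from Proposition \ref{ProSpecGapNega}. Since under $\mathbb{Q}_s^x$ the chain $(X_n^x)_{n\in\mathbb N}$ has transition operator $Q_s$, one has
\[
\mathbb{E}_{\mathbb{Q}_s^x}\big[\varphi(X_n^x)\big] = Q_s^n\varphi(x) = \Pi_s\varphi(x) + N_s^n\varphi(x) = \pi_s(\varphi) + N_s^n\varphi(x),
\]
and by the bound \eqref{Opera-Nsn} (taken with $k=0$) there are constants $C>0$ and $a\in(0,1)$, uniform in $s\in(-\eta,\eta)$, such that $|N_s^n\varphi(x)|\le C\|\varphi\|_\gamma a^n$ for all $n\ge1$ and $x\in\mathcal S$.

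Next I would plug this identity into the expansion of Theorem \ref{Thm-Edge-Expan-bb}. Writing
\[
A_n(s,y) := \Phi(y) + \frac{\Lambda'''(s)}{6\sigma_s^3\sqrt n}\,(1-y^2)\phi(y),
\]
the left-hand side of \eqref{MyEdgExp001:s=0} differs from the left-hand side of the expansion in Theorem \ref{Thm-Edge-Expan-bb} by exactly $N_s^n\varphi(x)\,A_n(s,y)$, so by the triangle inequality it remains only to check that $A_n(s,y)$ is bounded uniformly in $n\ge1$, $s\in(-\eta,\eta)$ and $y\in\mathbb R$. This is elementary: $0\le\Phi(y)\le1$, the function $y\mapsto(1-y^2)\phi(y)$ is bounded on $\mathbb R$, and by Proposition \ref{Prop-lambTaylor}(a)--(b) the quantity $\Lambda'''(s)$ is real and continuous (hence bounded) on $(-\eta,\eta)$ while $\sigma_s=\sqrt{\Lambda''(s)}$ is bounded away from $0$ there (for $\eta$ small, as is the standing assumption). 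Hence $|A_n(s,y)|\le C$ uniformly, and
\[
\big|N_s^n\varphi(x)\,A_n(s,y)\big|\le C\|\varphi\|_\gamma a^n = \|\varphi\|_\gamma\, o\!\Big(\tfrac1{\sqrt n}\Big)
\]
uniformly in all parameters. Combined with Theorem \ref{Thm-Edge-Expan-bb}, this yields \eqref{MyEdgExp001:s=0}.

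I do not expect any genuine obstacle in this argument: it is a purely routine repackaging of Theorem \ref{Thm-Edge-Expan-bb}, and the only point that needs to be stated carefully is the uniformity (in $s$, $y$, $n$) of the bound on the auxiliary factor $A_n(s,y)$, which follows from the uniform spectral gap estimate \eqref{Opera-Nsn} together with the regularity of $\Lambda$ and the strict positivity of $\sigma_s$ recorded in Proposition \ref{Prop-lambTaylor}. All the substantive difficulty — the complex smoothing inequality of Proposition \ref{Prop-BerryEsseen}, the saddle point treatment of the integral \eqref{INT_infty002a}, and the exponential decay estimate for \eqref{INT_infty002b} — is already absorbed into the proof of Theorem \ref{Thm-Edge-Expan-bb}, which this statement merely reformulates.
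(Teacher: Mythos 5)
Your proof is correct and is essentially the argument the paper gives: the paper proves Theorem \ref{Thm-Edge-Expan} directly from Theorem \ref{Thm-Edge-Expan-bb} by recording the bound \eqref{BEExpBound}, which is exactly your identity $\mathbb{E}_{\mathbb{Q}_s^x}[\varphi(X_n^x)]=\pi_s(\varphi)+N_s^n\varphi(x)$ combined with \eqref{Opera-Nsn}, and noting that the bracketed factor (your $A_n(s,y)$) is uniformly bounded.
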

With fixed $s > 0$ and $\varphi = \mathbf{1}$, the expansion \eqref{MyEdgExp001:s=0} has been established earlier in \cite{BM16}.

The assertion of Theorem \ref{Thm-Edge-Expan} follows from Theorem  \ref{Thm-Edge-Expan-bb}, 
since the bound \eqref{Opera-Nsn} implies that there exist constants $c, C>0$ 
such that uniformly in $\varphi \in \mathcal{B}_{\gamma}$,
\begin{align}\label{BEExpBound}
\sup_{ s \in (-\eta, \eta) } \sup_{ x \in \mathcal{S} }
\big| \mathbb{E}_{\mathbb{Q}_s^x} [ \varphi(X_n^x) ] - \pi_s(\varphi) \big| 
\leq C e^{- cn} \lVert \varphi \rVert_{\gamma}.
\end{align}

Theorems \ref{Thm-BerryEsseen-Norm:s=0} and \ref{Thm-Edge-Expan:s=0} follow from the above theorems taking $s=0$ and recalling the fact that $\Lambda'(0)=\lambda$, 
$\sigma_0=\sigma$ and $b_{0,\varphi}=b_{\varphi}$.

\subsection{Proof of Theorem \ref{Thm-Edge-Expan-bb}}

Without loss of generality, we assume that the target function $\varphi$ is non-negative on $\mathcal{S}$. 
For any $x \in \mathcal{S}$, denote
\begin{align*}
F(y) & = \mathbb{E}_{\mathbb{Q}_s^x}
\Big[  \varphi(X_n^x) \mathds{1}_{ \big\{ \frac{\sigma(G_n, x) - n \Lambda'(s) }{\sigma_s \sqrt{n}} \leq y \big\} } \Big],  \quad y\in \mathbb{R},   \notag\\
H(y) & =   \mathbb{E}_{\mathbb{Q}_s^x} [ \varphi(X_n^x) ]
 \Big[ \Phi(y) + \frac{\Lambda'''(s)}{ 6 \sigma_s^3 \sqrt{n}} (1-y^2) \phi(y) \Big]
  - \frac{ b_{s,\varphi}(x) }{ \sigma_s \sqrt{n} } \phi(y),  \quad  y\in \mathbb{R}.
\end{align*}
 Define
\begin{align*}
f(t) = \int_{\mathbb{R}} e^{-ity} dF(y),   \qquad
h(t) = \int_{\mathbb{R}} e^{-ity} dH(y),  \quad   t \in \mathbb{R}.
\end{align*}
By straightforward calculations we have that for any $x \in \mathcal{S}$, 
\begin{align}
f(t) & =  \mathbb{E}_{\mathbb{Q}_s^x}
   \Big[  \varphi(X_n^x) e^{-it \frac{\sigma(G_n, x) - n \Lambda'(s) }{\sigma_s \sqrt{n}} } \Big]
  =  R^n_{s, \frac{-it}{ \sigma_s \sqrt{n} } } \varphi(x),  \quad   t \in \mathbb{R},  \label{Def-hat-Fnsx}  \\
h(t) & =  e^{-\frac{t^2}{2}}   \Big\{
\Big[  1 - (it)^3 \frac{\Lambda'''(s)}{ 6 \sigma_s^3 \sqrt{n}} \Big] 
 R_{s,0}^n \varphi(x)  -  it  \frac{ b_{s,\varphi}(x) }{ \sigma_s \sqrt{n} } 
  \Big\},   \quad   t \in \mathbb{R}.  \label{Def-hat-Hnsx}
\end{align}
It is clear that $F(-\infty) = H(-\infty) = 0$ and $F(\infty) = H(\infty)$. 
Moreover, one can verify that the functions $F, H$
and their corresponding  Fourier-Stieltjes transforms $f, h$ satisfy the conditions
of Proposition \ref{Prop-BerryEsseen}
 for $r=\delta_1\sqrt{n} $, with some $\delta_1>0$ sufficiently small.
Hence, by Proposition \ref{Prop-BerryEsseen} we get that for any real $T\geq r$,
\begin{align}\label{BerryEsseen001}
  \sup_{y \in \mathbb{R}}  \big| F(y) - H(y)  \big|
\leq  \frac{1}{\pi }  ( I_1 + I_2 + I_3 + I_4),
\end{align}
where
\begin{align}
I_1  & =  \frac{ 3 \pi b }{ T} \sup_{y \in \mathbb{R}} |H'(y)|,  
\quad  \   I_2 =  \int_{ r \leq |t| \leq T }  \Big| \frac{ f(t) - h(t) }{ t } \Big|  \,dt,  \notag\\
I_3  & =  \sup_{y \leq 0 }  
 \Big|  \int_{ \mathcal{C}_{r}^{-} }  \frac{ f(z) - h(z) }{ z } e^{izy}  e^{-i b \frac{z}{T}}  \,dz  \Big|  
   +  \sup_{y > 0 }   \Big|  \int_{ \mathcal{C}_{r}^{+} }
\frac{ f(z) - h(z) }{ z } e^{izy}  e^{-i b \frac{z}{T}}  \,dz  \Big|  \notag\\
&  \quad  +   \sup_{y \leq 0 }  \Big|  \int_{ \mathcal{C}_{r}^{-} } \frac{ f(z) - h(z) }{ z } e^{izy}  e^{i b \frac{z}{T}} \, dz  \Big|  
  +  \sup_{y > 0 }   \Big|  \int_{ \mathcal{C}_{r}^{+} } \frac{ f(z) - h(z) }{ z } e^{izy} e^{i b \frac{z}{T}} \,  dz  \Big| \notag\\
&  =:  I_{31} +  I_{32} + I_{33} + I_{34},    \notag\\   
I_4  &  =  \frac{2}{T} \int_{-T}^T  | f(t) - h(t) | \,dt,  \label{Pf-Thm2-DecomI3}
\end{align}
with the constant $b>0$ and the complex contours $\mathcal{C}_{r}^{-}, \mathcal{C}_{r}^{+}$  
defined in \eqref{Def-Cr-+}.

By virtue of \eqref{BerryEsseen001}, in order to establish Theorem \ref{Thm-Edge-Expan-bb} 
it suffices to prove that, as $n \to \infty$, uniformly in $s \in (-\eta, \eta)$, $x \in \mathcal{S}$
and $\varphi \in \mathcal{B}_{\gamma}$,  
\begin{align}\label{EW-Expan-Toprove}
I_1 + I_2 + I_3 + I_4 = \lVert \varphi \rVert_{\gamma} o\Big(\frac{ 1 }{ \sqrt{n} } \Big).
\end{align}

\textit{Control of $I_1$.} 
From \eqref{BEExpBound} we deduce that uniformly in  $\varphi \in \mathcal{B}_{\gamma}$,
\begin{align}\label{BoundR0varphi}
\sup_{ s \in (-\eta, \eta) } \sup_{ x \in \mathcal{S} }
\big| \mathbb{E}_{\mathbb{Q}_s^x} [ \varphi(X_n^x) ]  \big|  \leq C \lVert \varphi \rVert_{\gamma}. 
\end{align}
By the formula \eqref{Def2-bs} and the bound \eqref{SpGapContrPi}, 
we get that uniformly in $\varphi \in \mathcal{B}_{\gamma}$,
\begin{align}\label{Boundbvarphi}
\sup_{ s \in (-\eta, \eta) } \sup_{ x \in \mathcal{S} }
| b_{s,\varphi}(x) | \leq C \lVert \varphi \rVert_{\gamma}.
\end{align}
Using the bounds \eqref{BoundR0varphi} and \eqref{Boundbvarphi}, 
and taking into account that $\sigma_s^2>0$ and $\Lambda'''(s) \in \mathbb{R}$ 
are bounded by a constant independent of $s \in (-\eta, \eta)$,
we obtain that $|H'(y)|$ is bounded by $c_1 \lVert \varphi \rVert_{\gamma}$, uniformly in  
$s \in (-\eta, \eta)$, $x \in \mathcal{S}$, $y \in \mathbb{R}$
and $\varphi \in \mathcal{B}_{\gamma}$.  
Hence, for any $\varepsilon >0$, we can choose $a>0$ large enough
such that for $T = a \sqrt{n}$, uniformly in $\varphi \in \mathcal{B}_{\gamma}$, 
\begin{align}\label{Edge-Contr-I3}
\sup_{ s \in (-\eta, \eta) } \sup_{ x \in \mathcal{S} } 
I_1 \leq  \frac{ 3 \pi b c_1}{ T} \lVert \varphi \rVert_{\gamma}  < \frac{\varepsilon}{ \sqrt{n}} \lVert \varphi \rVert_{\gamma}.
\end{align}

\textit{Control of $I_2$.}
Since $\sigma_m: = \inf_{s \in (-\eta, \eta)} \sigma_s>0$,
we can pick $\delta_1$ small enough such that 
$0< \delta_1 < \min\{a, \delta \sigma_m /2 \}$, 
where $\delta>0$ is the constant given in Proposition \ref{perturbation thm}.  
Then, with $r = \delta_1 \sqrt{n}$ we bound $I_2$ as follows:
\begin{align} \label{two integrals001}
I_2  \leq    \int_{\delta_1 \sqrt{n} < |t| \leq a \sqrt{n}} \Big| \frac{f(t)}{t}   \Big| \,dt 
  + \int_{\delta_1 \sqrt{n} < |t| \leq a \sqrt{n}} \Big| \frac{h(t)}{t}   \Big| \,dt.
\end{align}
Let $\sigma_M: = \sup_{s \in (-\eta, \eta)} \sigma_s$. 
It holds that $0< \sigma_M < \infty$. 
On the right-hand side of \eqref{two integrals001},
using Proposition \ref{Prop-UnifR} with
$K = \{ t \in \mathbb{R}: \delta_1 /\sigma_M \leq |t| \leq  a/\sigma_m  \}$,
the first integral is bounded by $C e^{- c n } \lVert \varphi \rVert_{\gamma}$, uniformly in 
$s \in (-\eta, \eta)$, $x \in \mathcal{S}$ and $\varphi \in \mathcal{B}_{\gamma}$; 
the second integral, by the bounds \eqref{BoundR0varphi} and \eqref{Boundbvarphi} and direct calculations,  
is bounded by $C e^{- c n } \lVert \varphi \rVert_{\gamma}$,
also uniformly in  $s \in (-\eta, \eta)$, $x \in \mathcal{S}$ and $\varphi \in \mathcal{B}_{\gamma}$. 
Consequently, we conclude that uniformly in $\varphi \in \mathcal{B}_{\gamma}$, 
\begin{align}\label{Edge-Contr-I1}
\sup_{ s \in (-\eta, \eta) } \sup_{ x \in \mathcal{S} } 
\  I_2  \leq C e^{-cn} \lVert \varphi \rVert_{\gamma}.
\end{align}

\textit{Control of $I_3$.}
Recall that the term $I_3$ is decomposed into four terms in  \eqref{Pf-Thm2-DecomI3}.    
We will only deal with $I_{31}$, since $I_{32}, I_{33}, I_{34}$ can be treated in a similar way. 
In view of \eqref{Def-hat-Fnsx} and \eqref{Def-hat-Hnsx}, 
by the spectral gap decomposition \eqref{perturb001}, we get 
\begin{align} \label{Decom-f-h}
f(z)  - h(z) =  J_{1}(z)  +  J_{2}(z) + J_{3}(z)  +  J_{4}(z),
\end{align}
where
\begin{align}
J_{1}(z) & =  \pi_s(\varphi)  \Big\{  
 \lambda^{n}_{s, \frac{ -iz }{\sigma_s \sqrt{n}} }
     - e^{- \frac{ z^2 }{2}} \Big[ 1 - (iz)^3 \frac{\Lambda'''(s)}{ 6 \sigma_s^3 \sqrt{n} } \Big]
     \Big\},  \label{Def-J1} \\
J_{2}(z) & =  \lambda^{n}_{s, \frac{-iz}{\sigma_s \sqrt{n}} } 
   \Big[ \Pi_{s, \frac{-iz}{\sigma_s \sqrt{n}}} \varphi(x)
    - \pi_s(\varphi) + iz \frac{b_{s, \varphi}(x)}{ \sigma_s \sqrt{n} } \Big],  \label{Def-J2} \\
J_{3}(z) & =   iz \frac{b_{s, \varphi}(x)}{ \sigma_s \sqrt{n} }
 \Big( e^{-\frac{z^2}{2}} - \lambda^{n}_{s, \frac{-iz}{\sigma_s \sqrt{n}} }  \Big),  \label{Def-J3} \\
J_{4}(z) & =   N_{s, \frac{-iz}{\sigma_s \sqrt{n}}}^n \varphi(x)
         -  N_{s, 0}^n \varphi(x) e^{- \frac{ z^2 }{2}} 
           \Big[ 1 - (iz)^3 \frac{\Lambda'''(s)}{ 6 \sigma_s^3 \sqrt{n} }  \Big].   \label{Def-J4}
\end{align}
With the above notation, 
we use the decomposition \eqref{Decom-f-h} to bound  $I_{31}$ in \eqref{Pf-Thm2-DecomI3} as follows:
\begin{align}\label{Def-A1-4}
I_{31} \leq \sum_{k=1}^4 A_k,   \quad \mbox{where} \quad
A_k : =  \sup_{y \leq 0 }  \Big|  \int_{ \mathcal{C}_{r}^{-} }
\frac{ J_k(z) }{ z } e^{izy}  e^{-i b \frac{z}{T}}  \,dz \Big|.
\end{align}
We now  give bounds of $A_k$, $1 \leq k \leq 4$, in a series of lemmata.
Let us start by showing an elementary inequality, which will be used repeatedly in the sequel.
Let $[z_1,z_2] = \{ z_1 + \theta (z_2 - z_1)): 0\leq \theta \leq 1\} $ 
be the complex segment with the endpoints $z_1$ and $z_2.$  

\begin{lemma}\label{Lem-MeanIne}
Let $f$ be an analytic function on the open convex domain $D \subseteq \mathbb{C}$. 
Then for any $z_1, z_2 \in  D$, and  $n \geq 1$,
\begin{align*}
\Big| f(z_2) - \sum_{k = 0}^{n-1} \frac{f^{(k)}(z_1)}{k!} (z_2 - z_1)^k  \Big|
\leq \frac{ \sup_{ z\in [z_1,z_2]}  |f^{(n)}(z)|}{n!} |z_2 - z_1|^n.
\end{align*}
\end{lemma}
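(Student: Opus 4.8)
The statement is the complex-analytic version of Taylor's theorem with the integral form of the remainder, and the plan is to reduce it to a one-variable computation along the segment $[z_1,z_2]$. Since $D$ is convex, the whole segment $[z_1,z_2]=\{z_1+t(z_2-z_1):0\le t\le 1\}$ lies in $D$, so I may define $g:[0,1]\to\mathbb C$ by $g(t)=f\big(z_1+t(z_2-z_1)\big)$. Because $f$ is holomorphic on $D$, the chain rule gives that $g$ is infinitely differentiable as a function of the real variable $t$ with $g^{(k)}(t)=f^{(k)}\big(z_1+t(z_2-z_1)\big)(z_2-z_1)^k$ for every $k\ge 0$; in particular $g^{(k)}(0)=f^{(k)}(z_1)(z_2-z_1)^k$.

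Next I would apply Taylor's formula with integral remainder to the complex-valued function $g$ of a real variable (which follows by applying the real case to $\Re g$ and $\Im g$, or directly by repeated integration by parts starting from $g(1)-g(0)=\int_0^1 g'(t)\,dt$):
\begin{align*}
g(1)=\sum_{k=0}^{n-1}\frac{g^{(k)}(0)}{k!}+\frac{1}{(n-1)!}\int_0^1 (1-t)^{n-1}g^{(n)}(t)\,dt.
\end{align*}
Substituting the expressions for $g^{(k)}(0)$ and $g^{(n)}(t)$ yields
\begin{align*}
f(z_2)-\sum_{k=0}^{n-1}\frac{f^{(k)}(z_1)}{k!}(z_2-z_1)^k
=\frac{(z_2-z_1)^n}{(n-1)!}\int_0^1 (1-t)^{n-1}f^{(n)}\big(z_1+t(z_2-z_1)\big)\,dt.
\end{align*}

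Finally I would take absolute values, pull out $|z_2-z_1|^n$, bound $|f^{(n)}(z_1+t(z_2-z_1))|$ by $\sup_{z\in[z_1,z_2]}|f^{(n)}(z)|$ uniformly in $t\in[0,1]$, and use $\int_0^1(1-t)^{n-1}\,dt=1/n$, which gives exactly the factor $\frac{1}{n!}\sup_{z\in[z_1,z_2]}|f^{(n)}(z)|\,|z_2-z_1|^n$. There is no real obstacle here; the only points requiring a word of care are invoking convexity of $D$ to keep the segment inside the domain of analyticity, and noting that Taylor's integral-remainder formula is valid for $\mathbb C$-valued functions of a real variable (handled componentwise). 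The estimate is uniform in $n$ as claimed since all steps hold for every $n\ge 1$.
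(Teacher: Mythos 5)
Your proof is correct, and it takes a genuinely different route from the paper's. The paper proceeds by induction on $n$: the base case $n=1$ is the mean-value estimate $|f(z_2)-f(z_1)|=\bigl|\int_{[z_1,z_2]}f'(z)\,dz\bigr|\le\sup_{[z_1,z_2]}|f'|\cdot|z_2-z_1|$, and the inductive step applies this to the auxiliary function $F(z)=f(z)-\sum_{k=1}^{n-1}\frac{f^{(k)}(z_1)}{k!}(z-z_1)^k$, whose derivative $F'$ is the order-$(n-1)$ Taylor remainder of $f'$; one must then carry the $|z-z_1|^{n-1}$ dependence of $|F'(z)|$ through the integration along $[z_1,z_2]$ to recover the sharp factor $1/n!$ rather than $1/(n-1)!$. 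You instead parametrize the segment, set $g(t)=f(z_1+t(z_2-z_1))$, and invoke Taylor's theorem with integral remainder for the $\mathbb{C}$-valued function $g$ of a real variable, then take absolute values under the integral and use $\int_0^1(1-t)^{n-1}\,dt=1/n$. Your version is self-contained and delivers the constant $1/n!$ in one clean step, whereas the paper's inductive sketch is more compressed and requires a little care to see that the constant comes out right; conversely the paper's route never needs Taylor's theorem as a black box, only the fundamental theorem of calculus along a segment, iterated. Both are valid and of essentially equal difficulty, and both correctly rely on convexity of $D$ only to keep the segment $[z_1,z_2]$ inside the domain of holomorphy.
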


\begin{proof}
The proof of this inequality can be carried out by induction. The inequality clearly holds
for $n = 1$ since for any $z_1, z_2 \in D$,
\begin{align} \label{BBBIN001}
| f(z_2) - f(z_1) | = \Big| \int_{[z_1,z_2]} f'(z) \,dz \Big|
\leq \sup_{z \in [z_1,z_2]} |f'(z)| |z_2 - z_1|.
\end{align}
For $n \geq 2$, applying \eqref{BBBIN001} to
$F(z) = f(z) - \sum_{k = 1}^{n-1} \frac{f^{(k)}(z_1)}{k!} (z - z_1)^k$, 
$z \in D$,
leads to the desired assertion.
\end{proof}

Now we are ready to establish a bound for each term $A_k$. 
The proof is based on the saddle point method. 
To be more precise, we deform the integration path, which passes through
a suitable point related to the saddle point, to minimise the integral in $A_k$ 
(see \eqref{Def-A1-4}).

\begin{lemma}\label{EW-Contr-I22}
Let $\mathcal{C}_{r}^{-}$ be defined by \eqref{Def-Cr-+}
with $r = \delta_1 \sqrt{n}$ and $\delta_1>0$ small enough. 
Then, for $T = a \sqrt{n}$ with $a>0$ large enough, uniformly in $x \in \mathcal{S}$, $s \in (-\eta, \eta)$ and $\varphi \in \mathcal{B}_{\gamma}$, 
\begin{align*}
A_1 =  \sup_{y \leq 0 }  \Big|  \int_{ \mathcal{C}_{r}^{-} } \frac{ J_1(z) }{ z } e^{izy}  e^{-i b \frac{z}{T}}  \,dz \Big|
\leq \frac{c}{n} \lVert \varphi \rVert_{\gamma}.
\end{align*}
\end{lemma}

\begin{proof}
In view of \eqref{relationlamkappa001}, using $\Lambda = \log \kappa$ and Taylor's formula, we have
\begin{align}\label{EW-lambda-expan}
\lambda^{n}_{s, \frac{ -iz }{\sigma_s \sqrt{n}} }
=  e^{-\frac{z^2}{2}}
 e^{ n \sum_{k=3}^{\infty} \frac{\Lambda^{(k)} (s) }{ k! } ( -\frac{iz}{ \sigma_s \sqrt{n} } )^k }.
\end{align}
For brevity, for any $z \in \mathcal{C}_{r}^{-}$, denote
\begin{align}\label{EW-Def-hnz}
h_1(z) = \frac{ 1 }{ z } 
\Big[ e^{ n \sum_{k=3}^{\infty} \frac{\Lambda^{(k)} (s) }{ k! } ( -\frac{iz}{ \sigma_s \sqrt{n} } )^k }
  - 1 - (-iz)^3 \frac{\Lambda'''(s)}{ 6 \sigma_s^3 \sqrt{n} } \Big]  e^{-i b \frac{z}{T}}. 
\end{align}
Then, in view of \eqref{Def-J1},  the term  $A_{1}$ can be rewritten as
\begin{align} \label{termA_1-001}
A_{1} =  \pi_s(\varphi) \sup_{y \leq 0 }  
\Big|  \int_{ \mathcal{C}_{r}^{-} }  e^{ - \frac{z^2}{2} + izy} h_1(z) \,dz  \Big|.
\end{align}
The main contribution to the integral in \eqref{termA_1-001}  is given by the saddle point $z = iy$ which is the solution of  
the equation $\frac{d}{dz} ( - \frac{z^2}{2} + izy ) = 0.$
Denote by $D_{2r}^{-} = \{ z \in \mathbb{C}: |z|< 2r,  \Im z < 0 \}$ 
the domain on analyticity of $h_1$, where $r = \delta_1 \sqrt{n}$ with $\delta_1 > 0$ small enough. 
Set
\begin{align}\label{EW-Def-yn}
y_n = \min \big\{-y, \delta_1 \sqrt{n} \big\}.
\end{align}
When $- \delta_1 \sqrt{n} \leq y \leq 0$, the saddle point $iy$ belongs to $D_{2r}^{-}$.
By Cauchy's integral theorem, we change the integration in \eqref{termA_1-001} to a rectangular path inside the domain on analyticity 
$D_{2r}^{-} $ which passes through the saddle point. 
When $y< - \delta_1 \sqrt{n}$ is large,
the saddle point $iy$ is outside the domain $D_{2r}^{-}$. 
In this case we choose a rectangular path inside $D_{2r}^{-}$ which 
passes through the point $-iy_n = -i\delta_1 \sqrt{n}$.
Note that $\pi_s(\varphi)$ is bounded by $c_1 \lVert \varphi \rVert_{\gamma}$
uniformly in $s \in ( -\eta, \eta)$ and $\varphi \in \mathcal{B}_{\gamma}$. 
Since the function $h_1$ has an analytic extension on the domain $D_{2r}^{-}$ with $r = \delta_1 \sqrt{n}$,
applying Cauchy's integral theorem, we deduce that
\begin{align}  \label{Def-A1}
  A_{1} & \leq  c_1 \lVert \varphi \rVert_{\gamma}  \   \sup_{y \leq 0 }  
  \bigg|  \bigg\{  \int_{ -\delta_1 \sqrt{n} }^{ -\delta_1 \sqrt{n} - i y_n }
  + \int_{ \delta_1 \sqrt{n} - i y_n }^{ \delta_1 \sqrt{n} }  \bigg\}
e^{ - \frac{z^2}{2} + izy} h_1(z)  \, dz  \bigg|   \notag\\
 &  \quad  +  c_1  \lVert \varphi \rVert_{\gamma}  \   \sup_{y \leq 0 }   
 \bigg| \int_{ -\delta_1 \sqrt{n} - i y_n }^{ \delta_1 \sqrt{n} - i y_n }
e^{ - \frac{z^2}{2} + izy} h_1(z) \, dz   \bigg|    \notag\\
&  =:  c_1  \lVert \varphi \rVert_{\gamma}  ( A_{11} + A_{12} ).
\end{align}

\textit{Control of $A_{11}$.}
Using a change of variable, we get
\begin{align}\label{EW-I22-1}
  A_{11}  &  =     e^{ - \frac{ \delta_1^2 }{2} n  } \sup_{y \leq 0 }
\bigg|  \int_{ 0 }^{ y_n }
e^{ \frac{t^2}{2} + ty - i \delta_1 \sqrt{n} (t+y) } h_1( -\delta_1 \sqrt{n} - i t ) \,dt   \notag\\
&  \qquad   \qquad \qquad \qquad -  \int_{ 0 }^{ y_n }
e^{ \frac{t^2}{2} + ty + i \delta_1 \sqrt{n} (t+y) } h_1( \delta_1 \sqrt{n} - i t ) \,dt \bigg|  \notag\\
& \leq   e^{ - \frac{ \delta_1^2 }{2} n  } \sup_{y \leq 0 }  
 \bigg|  \int_{ 0 }^{ y_n } e^{ \frac{t^2}{2} + ty } 
 \big\{ | h_1( -\delta_1 \sqrt{n} - i t ) | + |h_1( \delta_1 \sqrt{n} - i t )|  \big\} \,dt  \bigg|.
\end{align}
We first give a bound for $| h_1( \pm \delta_1 \sqrt{n} - i t ) |$. 
Since $t \in [0, y_n]$ and $y_n \leq \delta_1 \sqrt{n}$, direct calculations give
\begin{align*}
\Re  \big[ (-i)^3 ( \pm \delta_1 \sqrt{n} - i t)^3  \big] 
=  3 \delta_1^2 nt - t^3
  \leq 2 \delta_1^3 n^{3/2},
\end{align*}
which implies that for $\delta_1>0$ sufficiently small,
\begin{align}\label{EW-Exp-Real01}
 \Re \bigg\{ n \sum_{k=3}^{\infty} \frac{\Lambda^{(k)} (s) }{ k! }
   \frac{ (-i)^k ( \pm \delta_1 \sqrt{n} - i t)^k }{ (\sigma_s \sqrt{n})^k }   \bigg\}
   \leq   \frac{1}{4} \delta_1^2 n. 
\end{align}
Observe that there exists a constant $c>0$ such that uniformly in $t \in [0,y_n]$ and $s \in (-\eta, \eta)$, 
\begin{align} \label{EW-Exp-Real03}
\Big|\frac{ 1 }{ z } \Big| = \Big|  \frac{1}{ \pm \delta_1 \sqrt{n} - it }  
\Big| \leq \frac{c}{ \delta_1 \sqrt{n} },  \quad
\Big|  i^3 (\pm \delta_1 \sqrt{n} -it)^3  \frac{\Lambda'''(s)}{ 6 \sigma_s^3 \sqrt{n} }  \Big| \leq  cn. 
\end{align}
Since $|\exp \{ -\frac{ib}{T} (\pm \delta_1 \sqrt{n} - it) \}|$ is bounded by some constant $c>0$,
uniformly in $t \in [0,y_n]$ and $n\geq 1$, 
from the bounds \eqref{EW-Exp-Real01} and \eqref{EW-Exp-Real03}, 
it follows that uniformly in $s \in (-\eta, \eta)$,  
\begin{align*}
| h_1( -\delta_1 \sqrt{n} - i t ) |  +   | h_1( \delta_1 \sqrt{n} - i t ) |
\leq   \frac{c}{ \delta_1 \sqrt{n} }
\Big(  e^{ \frac{\delta_1^2}{4} n }  +  cn  \Big)
\leq \frac{c_{\delta_1}}{\sqrt{n}} e^{ \frac{\delta_1^2}{4} n }.
\end{align*}
In view of \eqref{EW-Def-yn}, we have $t \leq y_n \leq -y$ and thus
$e^{ \frac{t^2}{2} + ty } \leq 1$ for any $t \in [0, y_n]$.
Note that $y_n \leq \delta_1 \sqrt{n}$ by \eqref{EW-Def-yn}. 
Consequently, we obtain the desired upper bound for $A_{11}$: 
\begin{align}\label{EW-Contr-I22-1}
\sup_{ s \in (-\eta, \eta) }
A_{11}
 \leq  c_{\delta_1}  \frac{y_n}{ \sqrt{n} }  e^{ - \frac{ \delta_1^2 }{2} n  } 
 e^{ \frac{\delta_1^2}{4} n }
 \leq c_{\delta_1} e^{ - \frac{ \delta_1^2 }{4} n  }.
\end{align}

\textit{Control of $A_{12}$.}
Using a change of variable $z = t - i y_n$ leads to
\begin{align}\label{EW-I22-2}
A_{12} & =   \sup_{y \leq 0 }  
 \bigg| e^{ \frac{1}{2} y_n^2 + y_n y}  \int_{ -\delta_1 \sqrt{n} }^{ \delta_1 \sqrt{n} }
 e^{ -\frac{t^2}{2} +  i  t (y_n +y) } h_1( t - i y_n ) \,dt  \bigg|  \notag\\
 & \leq   \sup_{y \leq 0 }  \bigg|
e^{ \frac{1}{2} y_n^2 + y_n y}  \int_{ -\delta_1 \sqrt{n} }^{ \delta_1 \sqrt{n} }
e^{ -\frac{t^2}{2} } | h_1( t - i y_n ) | \,dt  \bigg|,
\end{align}
where the function $h_1$ is defined by \eqref{EW-Def-hnz}.
To estimate the term $A_{12}$,
the main task is to give a control of $| h_1( t - i y_n ) |$.
It follows from Lemma \ref{Lem-MeanIne} that $| e^{z_1} - e^{z_2} | \leq e^{\max \{ \Re z_1, \Re z_2 \} } |z_1 - z_2 | $
and $|e^{z_2} - 1 - z_2 | \leq \frac{1}{2}|z_2|^2 e^{ |z_2| }$ for any $z_1, z_2 \in \mathbb{C}$, and hence
\begin{align}\label{EW-Ine-exp02}
| e^{z_1} - 1 - z_2 | \leq e^{\max \{ \Re z_1, \Re z_2 \} } |z_1 - z_2 | + \frac{1}{2}|z_2|^2 e^{ |z_2| }.
\end{align}
We shall make use of the inequality \eqref{EW-Ine-exp02} to derive a bound of $| h_1( t - i y_n ) |$.
Since $\frac{y_n}{\sqrt{n}} \leq \delta_1$ where $\delta_1>0$ can be sufficiently small, 
we get that, for $|t| \leq \delta_1 \sqrt{n}$ and large enough $n$, uniformly in $s \in (-\eta, \eta)$,
\begin{align}
& \Re \Big\{  \Big[ -i (t-iy_n) \Big]^3 \frac{\Lambda^{(3)}(s)}{ 6 \sigma_s^3 \sqrt{n} } \Big\}
= \frac{y_n}{\sqrt{n}}  \frac{( 3t^2 -y_n^2  ) \Lambda^{(3)}(s) }{ 6 \sigma_s^3 }
\leq \frac{1}{4} t^2,   \label{EW-Ine-Re-01} \\
& \Re \bigg\{ n \sum_{k=3}^{\infty} \frac{\Lambda^{(k)} (s) }{ k! }
   \Big[ -\frac{i (t - i y_n)}{ \sigma_s \sqrt{n} }  \Big]^k  \bigg\}
   \leq  \frac{y_n}{ \sqrt{n} }  \frac{( 6t^2 - \frac{1}{2} y_n^2 ) \Lambda^{(3)}(s) }{ 6 \sigma_s^3 }
    \leq \frac{1}{4} t^2. \label{EW-Ine-Re-02}
\end{align}
Moreover,  elementary calculations yield that there exists a constant $c>0$ such that,
for sufficiently large $n$, uniformly in $s \in (-\eta, \eta)$,  
\begin{align}\label{EW-Ine-Dif}
 \bigg|  n \sum_{k=3}^{\infty} \frac{\Lambda^{(k)} (s) }{ k! }
   \Big[ -\frac{i (t - i y_n)}{ \sigma_s \sqrt{n} }  \Big]^k
   -  [-i (t-iy_n) ]^3 \frac{\Lambda^{(3)}(s)}{ 6 \sigma_s^3 \sqrt{n} }  \bigg|  
& =  \bigg|  n \sum_{k=4}^{\infty} \frac{\Lambda^{(k)} (s) }{ k! }
   \Big[ -\frac{i (t - i y_n)}{ \sigma_s \sqrt{n} }  \Big]^k  \bigg|   \notag\\ 
& \leq  c \frac{t^4 + y_n^4}{n}.
\end{align}
It is clear that
\begin{align}\label{EW-Ine-bd01}
\sup_{s \in (-\eta, \eta)}
\Big| \Big[ -i (t-iy_n) \Big]^3 \frac{\Lambda^{(3)}(s)}{ 6 \sigma_s^3 \sqrt{n} } \Big|^2
\leq c \frac{ t^6 + y_n^6 }{n}.
\end{align}
Taking into account that both $|t|$ and $y_n$ are less than $\delta_1 \sqrt{n}$,
and the fact $\delta_1>0$ can be small enough,  it follows that 
\begin{align*}
\sup_{s \in (-\eta, \eta)}
\exp \bigg\{  \bigg| \Big[ -i (t-iy_n) \Big]^3 \frac{\Lambda^{(3)}(s)}{ 6 \sigma_s^3 \sqrt{n} }  \bigg| \bigg\}
\leq  e^{ \frac{1}{4} ( t^2 + y_n^2 ) }.
\end{align*}
Combining this with the bounds \eqref{EW-Ine-Re-01}, \eqref{EW-Ine-Re-02},
\eqref{EW-Ine-Dif} and \eqref{EW-Ine-bd01}, and using the inequality \eqref{EW-Ine-exp02}, we conclude that
\begin{align} \label{Bound-A1-Ine}
\sup_{s \in (-\eta, \eta)}
  \Big| e^{ n \sum_{k=3}^{\infty} \frac{\Lambda^{(k)} (s) }{ k! } ( -\frac{iz}{ \sigma_s \sqrt{n} } )^k }
  - 1 - (-iz)^3   &  \frac{\Lambda^{(3)}(s)}{ 6 \sigma_s^3 \sqrt{n} } \Big|  
 \leq  \  c \frac{t^4 + y_n^4}{n} e^{\frac{1}{4} t^2} + c \frac{ t^6 + y_n^6}{n} e^{ \frac{1}{4} ( t^2 + y_n^2 ) }  
   \notag\\
 & \leq c \frac{ t^4 + y_n^4 + t^6  + y_n^6}{n} e^{ \frac{1}{4} ( t^2 + y_n^2 ) }.
\end{align}
Since $|\exp\{ -\frac{ib}{T} ( t - iy_n) \}|$ is bounded by some constant, 
uniformly in $|t| \leq \delta_1 \sqrt{n}$ and $n\geq 1$, 
by \eqref{Bound-A1-Ine} and the fact $| \frac{1}{ t - iy_n } | = 1/\sqrt{t^2 + y_n^2}$, we find that
\begin{align*}
\sup_{ s \in (-\eta, \eta) }
 | h_1(t-iy_n) |
\leq c \frac{ |t|^3 + y_n^3 + |t|^5  + y_n^5 }{n} e^{ \frac{1}{4} ( t^2 + y_n^2 ) }.
\end{align*}
Therefore, noting that $y \leq -y_n$ and $0 \leq y_n \leq \delta_1 \sqrt{n}$, we obtain
\begin{align*} 
\sup_{ s \in (-\eta, \eta) } A_{12}  
& \leq  \frac{c}{n}   \sup_{y \leq 0 }  
 \bigg|  e^{ \frac{3}{4} y_n^2 + y_n y}  \int_{ -\delta_1 \sqrt{n} }^{ \delta_1 \sqrt{n} }
  e^{ -\frac{t^2}{4} } ( |t|^3 + y_n^3 + |t|^5 + y_n^5 ) \,dt  \bigg|   \notag\\
&  \leq   \frac{c}{n}
\sup_{y_n \in [0, \delta_1 \sqrt{n}]}  e^{ -\frac{1}{4} y_n^2} ( 1 + y_n^3 + y_n^5 )  \leq \frac{c}{n}.
\end{align*}
Substituting this and \eqref{EW-Contr-I22-1} into \eqref{Def-A1}, 
we conclude the proof. 
\end{proof}

\begin{lemma}\label{EW-Contr-I23}
Let $J_2(z)$ be defined by \eqref{Def-J2}, 
and $\mathcal{C}_{r}^{-}$ be defined by \eqref{Def-Cr-+}
with $r = \delta_1 \sqrt{n}$  and $\delta_1>0$ small enough. 
Then, for $T = a \sqrt{n}$ with $a>0$ large enough,  
uniformly in $x \in \mathcal{S}$, $s \in (-\eta, \eta)$
and  $\varphi \in \mathcal{B}_{\gamma}$,  
\begin{align*}
A_2 =  \sup_{y \leq 0 }  \bigg|  \int_{ \mathcal{C}_{r}^{-} }  \frac{ J_2(z) }{ z } e^{izy} e^{-i b \frac{z}{T}} \, dz \bigg|
\leq \frac{c}{n} \lVert \varphi \rVert_{\gamma}.
\end{align*}
\end{lemma}

\begin{proof}
Denote 
\begin{align*}
h_2(z) =  e^{ n \sum_{k=3}^{\infty} \frac{\Lambda^{(k)} (s) }{ k! } ( -\frac{iz}{ \sigma_s \sqrt{n} } )^k }
\bigg[  \Pi_{s, \frac{-iz}{\sigma_s \sqrt{n}}} \varphi(x)
    - \pi_s(\varphi) + iz \frac{b_{s, \varphi}(x)}{ \sigma_s \sqrt{n} } \bigg]
  \frac{ e^{-i b \frac{z}{T}} }{ z }.
\end{align*}
Using \eqref{EW-lambda-expan}, we rewrite $A_2$ as 
\begin{align*}
A_2 =  \sup_{y \leq 0 }  \bigg|  \int_{ \mathcal{C}_{r}^{-} } e^{ - \frac{z^2}{2} + izy} h_2(z) \,dz \bigg|.
\end{align*}
As in the estimation of Lemma \ref{EW-Contr-I22}, the solution of
the saddle point equation $\frac{d}{dz} ( - \frac{z^2}{2} + izy ) = 0$ is $z = iy$.
Set $y_n = \min \{-y, \delta_1 \sqrt{n}\}$. 
Since $y_n \in D_{2r}^{-}$, where $r = \delta_1 \sqrt{n}$, 
and the function $h_2$ is analytic on the domain $D_{2r}^{-}$, by Cauchy's integral theorem we obtain
\begin{align*}
A_{2} &  \leq     \sup_{y \leq 0 }  
 \bigg|  \bigg\{ \int_{ -\delta_1 \sqrt{n} }^{ -\delta_1 \sqrt{n} - i y_n }
   +  \int_{ \delta_1 \sqrt{n} - i y_n }^{ \delta_1 \sqrt{n} }  \bigg\}
     e^{ - \frac{z^2}{2} + izy} h_2(z) \,dz  \bigg|   \notag\\
 &  \quad  +   \sup_{y \leq 0 }   
 \bigg|  \int_{ -\delta_1 \sqrt{n} - i y_n }^{ \delta_1 \sqrt{n} - i y_n }
      e^{ - \frac{z^2}{2} + izy} h_2(z) \,dz   \bigg|   
 =:  A_{21} + A_{22}.
\end{align*}

\textit{Control of $A_{21}$.}
Similarly to \eqref{EW-I22-1},  we use a change of variable to get
\begin{align*}
A_{21} 
\leq  e^{ - \frac{ \delta_1^2 }{2} n  } \sup_{y \leq 0 }  
  \bigg| \int_{ 0 }^{ y_n }  e^{ \frac{t^2}{2} + ty } 
\Big[ | h_2( -\delta_1 \sqrt{n} - i t ) | + |h_2( \delta_1 \sqrt{n} - i t )| \Big] \,dt  \bigg|.
\end{align*}
Using Lemma \ref{Lem-MeanIne}, the formula \eqref{Def2-bs} and the bound \eqref{SpGapContrPi},
for any $z = \pm \delta_1 \sqrt{n} - it$ with $t \in [0, y_n]$,
we get that uniformly in $s \in (-\eta, \eta)$, $x \in \mathcal{S}$ and $\varphi \in \mathcal{B}_{\gamma}$,
\begin{align}\label{EW-Contr-I33-1}
\Big| \frac{1}{ z } \Big| 
 \Big| \Pi_{s, \frac{-iz}{\sigma_s \sqrt{n}}} \varphi(x)
    - \pi_s(\varphi) + iz \frac{b_{s, \varphi}(x)}{ \sigma_s \sqrt{n} } \Big|
\leq c \frac{|z|}{n}  \lVert \varphi \rVert_{\gamma}  
\leq \frac{c}{ \sqrt{n} } \lVert \varphi \rVert_{\gamma}.
\end{align}
Note that $| e^{-i b \frac{z}{T}} |$ is  bounded uniformly in
$z = \pm \delta_1 \sqrt{n} - it$, where $t \in [0, y_n]$.
Therefore, taking into account the bounds \eqref{EW-Exp-Real01} and \eqref{EW-Contr-I33-1}, we obtain
that uniformly in $s \in (-\eta, \eta)$ , $x \in \mathcal{S}$ and $\varphi \in \mathcal{B}_{\gamma}$,
\begin{align*}
| h_2( -\delta_1 \sqrt{n} - i t ) | + | h_2( \delta_1 \sqrt{n} - i t ) |
\leq   \frac{c}{ \sqrt{n} }  e^{ \frac{\delta_1^2}{4} n }  \lVert \varphi \rVert_{\gamma}.
\end{align*}
Since $y \leq 0$,  for any $t \in [0, y_n]$, it follows that $\frac{t^2}{2} + ty \leq 0$ 
and thus  $e^{ \frac{t^2}{2} + ty } \leq 1$.
Combining this with the above inequality yields that 
uniformly in $\varphi \in \mathcal{B}_{\gamma}$,
\begin{align}\label{EW-Contr-I23-1}
\sup_{ s \in (-\eta, \eta) } \sup_{ x \in \mathcal{S} }  
A_{21}
 \leq   c  e^{ - \frac{ \delta_1^2 }{2} n  }
 \frac{ y_n }{ \sqrt{n} }  e^{ \frac{\delta_1^2}{4} n }  \lVert \varphi \rVert_{\gamma}
 \leq c e^{ - \frac{ \delta_1^2 }{4} n  }  \lVert \varphi \rVert_{\gamma}.
\end{align}

\textit{Control of $A_{22}$.}
Similarly to \eqref{EW-I22-2}, we use  a change of variable  to get
\begin{align*}
A_{22} \leq  \sup_{y \leq 0 }  
\bigg|
 e^{ \frac{1}{2} y_n^2 + y_n y}  \int_{ -\delta_1 \sqrt{n} }^{ \delta_1 \sqrt{n} }
 e^{ -\frac{t^2}{2} } | h_2( t - i y_n ) | \,dt  \bigg|.
\end{align*}
We first estimate $| h_2( t - i y_n ) |$.
In the same way as in \eqref{EW-Contr-I33-1}, with  $z = t - i y_n$,  we obtain
that uniformly in $s \in (-\eta, \eta)$, $x \in \mathcal{S}$ and $\varphi \in \mathcal{B}_{\gamma}$,
\begin{align*}
\Big| \frac{1}{ z } \Big| \Big| \Pi_{s, \frac{-iz}{\sigma_s \sqrt{n}}} \varphi(x)
    - \pi_s(\varphi) + iz \frac{b_{s, \varphi}(x)}{ \sigma_s \sqrt{n} } \Big|
\leq c \frac{|z|}{n} \lVert \varphi \rVert_{\gamma} 
\leq c \frac{ |t| + y_n }{ n }  \lVert \varphi \rVert_{\gamma}.
\end{align*}
Combining this with the bound \eqref{EW-Ine-Re-02}, we get that uniformly in $\varphi \in \mathcal{B}_{\gamma}$,
\begin{align}\label{EW-Contr-I23-2}
\sup_{ s \in (-\eta, \eta) } \sup_{ x \in \mathcal{S} } 
A_{22}  & \leq  \frac{c}{ n }  \lVert \varphi \rVert_{\gamma}   
\sup_{y \leq 0 }  \bigg|  e^{ \frac{1}{2} y_n^2 + y_n y}  \int_{ -\delta_1 \sqrt{n} }^{ \delta_1 \sqrt{n} }
e^{ -\frac{t^2}{4} } ( |t| + y_n ) \,dt  \bigg|  \notag\\
&  \leq  \frac{c}{n}  \lVert \varphi \rVert_{\gamma} 
\sup_{y_n \in [0, \delta_1 \sqrt{n}]}  e^{ -\frac{1}{2} y_n^2} ( 1 + y_n )  
\leq \frac{c}{n}  \lVert \varphi \rVert_{\gamma}.
\end{align}
Putting together \eqref{EW-Contr-I23-1} and \eqref{EW-Contr-I23-2} completes the proof.
\end{proof}

\begin{lemma}\label{EW-Contr-I24}
Let $J_3(z)$ be defined by \eqref{Def-J3}, 
and $\mathcal{C}_{r}^{-}$ be defined by \eqref{Def-Cr-+}
with $r = \delta_1 \sqrt{n}$  and $\delta_1>0$ small enough. 
Then, for $T = a \sqrt{n}$ with $a>0$ large enough,  
uniformly in $x \in \mathcal{S}$, $s \in (-\eta, \eta)$
and  $\varphi \in \mathcal{B}_{\gamma}$, 
\begin{align*}
A_3 =  \sup_{y \leq 0 }  \bigg|  \int_{ \mathcal{C}_{r}^{-} }
\frac{ J_3(z) }{ z } e^{izy}  e^{-i b \frac{z}{T}}  \, dz \bigg|
\leq \frac{c}{n}  \lVert \varphi \rVert_{\gamma}. 
\end{align*}
\end{lemma}

\begin{proof}
We denote 
\begin{align}\label{EW-Def-hnz-03}
h_3(z) =  \frac{1}{ \sigma_s \sqrt{n} }
  \Big[ e^{ n \sum_{k=3}^{\infty} \frac{\Lambda^{(k)} (s) }{ k! } ( -\frac{iz}{ \sigma_s \sqrt{n} } )^k }
  - 1  \Big]  e^{-i b \frac{z}{T}}.
\end{align}
Using the expansion \eqref{EW-lambda-expan} and the bound \eqref{Boundbvarphi}, we have
that uniformly in $s \in (-\eta, \eta)$, $x \in \mathcal{S}$ and $\varphi \in \mathcal{B}_{\gamma}$,
\begin{align*}
A_3 \leq  c \lVert \varphi \rVert_{\gamma}  \   \sup_{y \leq 0 }  
\bigg| \int_{ \mathcal{C}_{r}^{-} }  e^{ - \frac{z^2}{2} + izy} h_3(z) \, dz  \bigg|.
\end{align*}
As in  Lemma \ref{EW-Contr-I22}, the saddle point equation $\frac{d}{dz} ( - \frac{z^2}{2} + izy ) = 0$
has the solution $z = iy$.
Set $y_n = \min \{-y, \delta_1 \sqrt{n}\}$.  It follows from Cauchy's integral theorem that
\begin{align*}
A_3  & \leq    c \lVert \varphi \rVert_{\gamma}  \  \sup_{y \leq 0 }  
\bigg|  \bigg\{ \int_{ -\delta_1 \sqrt{n} }^{ -\delta_1 \sqrt{n} - i y_n }
  +  \int_{ \delta_1 \sqrt{n} - i y_n }^{ \delta_1 \sqrt{n} }  \bigg\}
e^{ - \frac{z^2}{2} + izy} h_3(z) \, dz  \bigg|   \notag\\
 &  \quad  +  c  \lVert \varphi \rVert_{\gamma} \  \sup_{y \leq 0 }   
 \bigg|  \int_{ -\delta_1 \sqrt{n} - i y_n }^{ \delta_1 \sqrt{n} - i y_n }
e^{ - \frac{z^2}{2} + izy} h_3(z) \, dz   \bigg|  
 =:    A_{31} +  A_{32}.
\end{align*}

\textit{Control of $A_{31}$.}
Similarly to \eqref{EW-I22-1},  we use a change of variable to get
\begin{align*}
A_{31} \leq  c \lVert \varphi \rVert_{\gamma}   e^{ - \frac{ \delta_1^2 }{2} n  } \sup_{y \leq 0 }  
\bigg| \int_{ 0 }^{ y_n } e^{ \frac{t^2}{2} + ty } 
\Big[ \Big| h_3( -\delta_1 \sqrt{n} - i t ) \Big| + \Big| h_3( \delta_1 \sqrt{n} - i t ) \Big| \Big] \,dt  \bigg|.
\end{align*}
Using \eqref{EW-Exp-Real01}, we deduce that 
uniformly in  $s \in (-\eta, \eta)$ and $x \in \mathcal{S}$,
\begin{align*}
| h_3( -\delta_1 \sqrt{n} - i t ) | + | h_3( \delta_1 \sqrt{n} - i t ) |
\leq   \frac{c}{\sqrt{n}}  \Big(  e^{\frac{\delta_1^2}{4} n} + 1 \Big)
\leq  \frac{c}{\sqrt{n}} e^{\frac{\delta_1^2}{4} n}.
\end{align*}
Since $\frac{t^2}{2} + ty \leq 0$ for any $t \in [0, y_n]$ and $y \leq 0$,
we have $e^{ \frac{t^2}{2} + ty } \leq 1$. This, together with the above inequality, implies
that uniformly in $\varphi \in \mathcal{B}_{\gamma}$, 
\begin{align}\label{EW-Contr-I24-1}
\sup_{ s \in (-\eta, \eta) } \sup_{ x \in \mathcal{S} }  
A_{31}
 \leq   c \frac{y_n}{ \sqrt{n} } e^{ - \frac{ \delta_1^2 }{4} n  }  \lVert \varphi \rVert_{\gamma}
 \leq c e^{ - \frac{ \delta_1^2 }{4} n}  \lVert \varphi \rVert_{\gamma}.
\end{align}

\textit{Control of $A_{32}$.}
Similarly to \eqref{EW-I22-2}, one has
\begin{align*}
A_{32} \leq  c  \lVert \varphi \rVert_{\gamma}  \   \sup_{y \leq 0 }  
\bigg|  e^{ \frac{1}{2} y_n^2 + y_n y}  \int_{ -\delta_1 \sqrt{n} }^{ \delta_1 \sqrt{n} }
e^{ -\frac{t^2}{2} } | h_3( t - i y_n ) | \,dt  \bigg|.
\end{align*}
We first give a control of $| h_3( t - i y_n ) |$.
By Lemma \ref{Lem-MeanIne}, it holds that
$| e^{z} - 1 | \leq e^{\max \{\Re z, 0\} } |z| $
for any $z \in \mathbb{C}$. Using this inequality and taking into account the bound \eqref{EW-Ine-Re-02}, we obtain
\begin{align*}
\sup_{ s \in (-\eta, \eta) } 
\Big| e^{ n \sum_{k=3}^{\infty} \frac{\Lambda^{(k)} (s) }{ k! } ( -\frac{iz}{ \sigma_s \sqrt{n} } )^k }  - 1  \Big| 
  \leq  c e^{\frac{1}{4} t^2} \frac{ |t|^3 + y_n^3 }{ \sqrt{n} },
\end{align*}
and hence
\begin{align*}
\sup_{ s \in (-\eta, \eta) } \sup_{ x \in \mathcal{S} } 
| h_3( t - i y_n ) |  \leq  c e^{\frac{1}{4} t^2} \frac{ |t|^3 + y_n^3 }{ n }.
\end{align*}
It follows that uniformly in  $s \in (-\eta, \eta)$, $x \in \mathcal{S}$ and $\varphi \in \mathcal{B}_{\gamma}$,
\begin{align}\label{EW-Contr-I24-2}
A_{32}
 \leq   \frac{c}{n}  \lVert \varphi \rVert_{\gamma} \   \sup_{y \leq 0 }  
 \bigg|  e^{ -\frac{1}{2} y_n^2 }  \int_{ -\delta_1 \sqrt{n} }^{ \delta_1 \sqrt{n} }
     e^{ -\frac{t^2}{4} } ( |t|^3 + y_n^3 ) \,dt  \bigg|  
     \leq \frac{c}{n}  \lVert \varphi \rVert_{\gamma}.
\end{align}
Putting together \eqref{EW-Contr-I24-1} and \eqref{EW-Contr-I24-2}, we conclude the proof.
\end{proof}

\begin{lemma}\label{EW-Contr-I25}
Let $J_4(z)$ be defined by \eqref{Def-J4}, 
and $\mathcal{C}_{r}^{-}$ be defined by \eqref{Def-Cr-+}
with $r = \delta_1 \sqrt{n}$  and $\delta_1>0$ small enough. 
Then, for $T = a \sqrt{n}$ with $a>0$ large enough,  
uniformly in $x \in \mathcal{S}$,  $s \in (-\eta, \eta)$ 
and  $\varphi \in \mathcal{B}_{\gamma}$, 
\begin{align*}
A_4 =  \sup_{y \leq 0 }  \bigg|  \int_{ \mathcal{C}_{r}^{-} }
\frac{ J_4(z) }{ z } e^{izy} e^{-i b \frac{z}{T}} \, dz \bigg|
\leq  c e^{-cn}  \lVert \varphi \rVert_{\gamma}. 
\end{align*}
\end{lemma}

\begin{proof}
Since $\Im z \leq 0$ on $\mathcal{C}_{r}^{-}$ and $y \leq 0$,
we have $|e^{izy}| \leq 1$. 
Using again the fact that $\Im z \leq 0$, 
we get that $|e^{-i b \frac{z}{T}}|$ is uniformly bounded on $\mathcal{C}_{r}^{-}$.
From the bound \eqref{SpGapContrN} and the fact that $\delta_1>0$ can be sufficiently small, 
we deduce that  $|J_4(z)| \leq  c e^{-cn} \lVert \varphi \rVert_{\gamma}$,
uniformly in  $s \in (-\eta, \eta)$, $x \in \mathcal{S}$ and $\varphi \in \mathcal{B}_{\gamma}$.  
Therefore, noting that $|\frac{1}{z}| = (\delta_1 \sqrt{n})^{-1}$
and that the length of $\mathcal{C}_{r}^{-}$ is $\pi \delta_1 \sqrt{n}$, 
the desired result follows.
\end{proof}

\begin{proof}[\textit{End of the proof of Theorem \ref{Thm-Edge-Expan-bb}}]
Combining 
Lemmata \ref{EW-Contr-I22}-\ref{EW-Contr-I25}, we obtain that
$I_{31} \leq  \frac{c}{n}  \lVert \varphi \rVert_{\gamma},$
uniformly in  $s \in (-\eta, \eta)$,  $x \in \mathcal{S}$ and $\varphi \in \mathcal{B}_{\gamma}.$

Now we give a control of the term $I_{32}$ defined in \eqref{Pf-Thm2-DecomI3}.  
Note that $y>0$ in $I_{32}$ and the integral in $I_{32}$ is taken over the semicircle $\mathcal{C}_{r}^{+}$,
which lies in the upper part of the complex plane. 
In this case we have the saddle point equation $\frac{d}{dz} ( - \frac{z^2}{2} + izy ) = 0$  whose solution 
$z = iy$ also lies in the upper part of the complex plane. 
Similarly to \eqref{EW-Def-yn}, we choose a suitable point $y_n = \min \{y, \delta_1 \sqrt{n}\}$. 
Proceeding in the same way as for bounding $I_{31}$
we obtain that $I_{32} \leq \frac{c}{n} \lVert \varphi \rVert_{\gamma}$, 
uniformly in  $s \in (-\eta, \eta)$,  $x \in \mathcal{S}$ and $\varphi \in \mathcal{B}_{\gamma}$.

Let us now bound the terms $I_{33}$ and $I_{34}$ defined in \eqref{Pf-Thm2-DecomI3}.
Since the function $z \mapsto e^{i b \frac{z}{T}}$
is analytic on $\mathcal{C}_{r}^{-}$ and $\mathcal{C}_{r}^{+}$,
the estimates of  $I_{33}$ and $I_{34}$ are similar to those of $I_{31}$ and $I_{32}$, respectively.
From these bounds, we conclude that there exists a constant $c>0$ such that
uniformly in  $s \in (-\eta, \eta)$,  $x \in \mathcal{S}$ and $\varphi \in \mathcal{B}_{\gamma}$, 
\begin{align}\label{Ch4_I3vv}
I_3 \leq  \frac{c}{n} \lVert \varphi \rVert_{\gamma}. 
\end{align}

It remains to estimate $I_4$ defined in \eqref{Pf-Thm2-DecomI3}. 
We can decompose the difference $|f(t) - h(t)|$ in the same way as we did in \eqref{Decom-f-h}
(with real-valued $t = z$). 
Then proceeding in a similar way as in the estimation of 
$I_{31}$, $I_{32}$,  $I_{33}$ and $I_{34}$, 
one can verify that there exists a constant $c>0$ such that
uniformly in  $s \in (-\eta, \eta)$,  $x \in \mathcal{S}$ and $\varphi \in \mathcal{B}_{\gamma}$, 
\begin{align}\label{Ch4_I4vv}
I_4 \leq  \frac{c}{n}  \lVert \varphi \rVert_{\gamma}. 
\end{align}

Combining \eqref{Ch4_I3vv}, \eqref{Ch4_I3vv} and the bounds for $I_1$ and $I_2$ 
in \eqref{Edge-Contr-I3} and \eqref{Edge-Contr-I1}, 
and using the fact that $\varepsilon >0$ can be arbitrary small, we obtain \eqref{EW-Expan-Toprove}, 
which finishes the proof of Theorem \ref{Thm-Edge-Expan-bb}.
\end{proof}

\subsection{Proof of Theorem \ref{Thm-BerryEsseen-Norm}}

Since the proof of Theorem \ref{Thm-BerryEsseen-Norm} 
is quite similar to that of Theorem \ref{Thm-Edge-Expan-bb},
we only sketch the main differences. 
Denote
\begin{align*}
F(y) =  
\mathbb{E}_{\mathbb{Q}_s^x}
   \Big[  \varphi(X_n^x) \mathds{1}_{ \big\{ \frac{\sigma(G_n, x) - n \Lambda'(s) }{\sigma_s \sqrt{n}} \leq y \big\} } \Big],  
   \quad   H(y) =    \mathbb{E}_{\mathbb{Q}_s^x} [ \varphi(X_n^x) ] \   \Phi(y),  \quad  y\in \mathbb{R}.   
\end{align*}
By the definition of the operator $R_{s,z}$ in \eqref{Def-Rn-sw000}, 
direct calculations lead to 
\begin{align*}
f(t)   =  \int_{\mathbb{R}} e^{-ity} \, dF(y) = R^n_{s, \frac{-it}{ \sigma_s \sqrt{n} } } \varphi(x), 
\quad   
h(t)   =  \int_{\mathbb{R}} e^{-ity} \, dH(y) = e^{-\frac{t^2}{2}}  R_{s,0}^n \varphi(x),
\quad  t \in \mathbb{R}.
\end{align*}
One can verify that the functions $F, H$ and their corresponding Fourier-Stieljes transforms 
$f, h$ satisfy all the conditions
stated in Proposition \ref{Prop-BerryEsseen}.
Instead of using Proposition \ref{Prop-BerryEsseen} with
$r < T$ in the proof of Theorem \ref{Thm-Edge-Expan-bb}, 
we apply Proposition \ref{Prop-BerryEsseen}
with $r = T = \delta_1 \sqrt{n}$, where $\delta_1>0$ is a sufficiently small constant.  
Then we obtain a similar inequality as \eqref{BerryEsseen001} but with the term $I_2 =0$.
Since the non-arithmeticity condition \ref{CondiNonarith} is only used in the bound of the term $I_2$,
following the  proof of Theorem \ref{Thm-Edge-Expan-bb} 
we show that 
under the conditions of Theorem \ref{Thm-BerryEsseen-Norm}, 
the terms $I_1$ and $I_3$ defined in \eqref{Pf-Thm2-DecomI3} are bounded by 
$c \lVert \varphi \rVert_{\gamma} / \sqrt{n}$, 
uniformly in  $s \in (-\eta, \eta)$,  $x \in \mathcal{S}$
and $\varphi \in \mathcal{B}_{\gamma}$. 
We omit the details of the rest of the proof.

\section{Proof of moderate deviation expansions}

In this section we prove Theorem \ref{MainThmNormTarget}.
The proof 
is based on the Berry--Esseen bound in Theorem \ref{Thm-BerryEsseen-Norm}
and follows the standard techniques in Petrov \cite{Pet75}, and therefore some details will be left to the reader.  
 
We start with the following lemma whose proof uses the analyticity 
of the eigenfunction $r_s$ and the linear functional $\nu_s$, see Proposition \ref{transfer operator}:
\begin{lemma}\label{lem:expconv}
Assume either conditions \ref{CondiMoment} and \ref{CondiIP} for invertible matrices,
or conditions \ref{CondiMoment} and \ref{CondiAP}  for positive matrices.
Then, there exists $\eta>0$ such that uniformly in $s \in (-\eta, \eta)$ and $\varphi \in \mathcal{B}_{\gamma}$,
\begin{align*}
\lVert r_s - \mathbf{1} \rVert_{\infty} \leq C | s | \quad \mbox{and} 
\quad | \nu_s( \varphi ) - \nu(\varphi)| \leq C | s |  \lVert \varphi \rVert_{\gamma}.
\end{align*}
\end{lemma}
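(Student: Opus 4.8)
The plan is to deduce both estimates directly from the analyticity statements in Proposition \ref{transfer operator}: a Banach-space-valued analytic function on a ball is automatically Lipschitz on any smaller concentric ball, with Lipschitz constant controlled by the supremum of its derivative there (equivalently, by Cauchy's estimates), and this supremum is attained on a fixed compact subball independent of $s$, which is exactly what gives the uniformity in $s$.

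First I would recall from Proposition \ref{transfer operator} that there is a radius $\eta'>0$ such that the maps $z\mapsto r_z\in\mathcal{B}_{\gamma}$ and $z\mapsto\nu_z\in\mathcal{B}_{\gamma}'$ are analytic on $B_{\eta'}(0)$, with $r_0=\mathbf{1}$ and $\nu_0=\nu$ (part (c)). Set $\eta=\eta'/2$. By Cauchy's integral formula on the circle $|w|=\eta'$ (or simply by continuity of the derivative of an analytic Banach-space-valued map on the compact set $\overline{B_\eta(0)}$), there is a constant $C_1>0$ such that
\begin{align*}
\sup_{|z|\le \eta}\Big\|\tfrac{d}{dz}r_z\Big\|_{\gamma}\le C_1,
\qquad
\sup_{|z|\le \eta}\Big\|\tfrac{d}{dz}\nu_z\Big\|_{\mathcal{B}_{\gamma}'}\le C_1.
\end{align*}
Then for real $s\in(-\eta,\eta)$, integrating along the segment $[0,s]$ yields
\begin{align*}
\|r_s-\mathbf{1}\|_{\gamma}=\Big\|\int_0^s \tfrac{d}{dz}r_z\big|_{z=u}\,du\Big\|_{\gamma}\le C_1|s|,
\qquad
\|\nu_s-\nu\|_{\mathcal{B}_{\gamma}'}\le C_1|s|.
\end{align*}
Since $\|\cdot\|_{\infty}\le\|\cdot\|_{\gamma}$ on $\mathcal{B}_{\gamma}$, the first bound gives $\|r_s-\mathbf{1}\|_{\infty}\le C_1|s|$. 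For the second, by the definition of the dual norm, for every $\varphi\in\mathcal{B}_{\gamma}$ one has $|\nu_s(\varphi)-\nu(\varphi)|\le\|\nu_s-\nu\|_{\mathcal{B}_{\gamma}'}\|\varphi\|_{\gamma}\le C_1|s|\|\varphi\|_{\gamma}$, which is the claimed estimate; taking $C=C_1$ finishes the proof.

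There is essentially no obstacle here beyond bookkeeping: the only points to verify are that the analyticity in Proposition \ref{transfer operator} is genuinely in the Banach-space sense (so Cauchy estimates apply), that $\eta$ may be chosen independently of $s$ (it is, since the derivative bounds hold on the fixed compact subball $\overline{B_\eta(0)}$), and that the normalizations $\nu(r_z)=1$ and $\nu_z(\mathbf{1})=1$ are consistent with the base values $r_0=\mathbf{1}$, $\nu_0=\nu$, which is guaranteed by part (c) of that proposition. Note that Lemma \ref{lem:expconv} is stated under exactly the hypotheses of Proposition \ref{transfer operator}, so no further conditions are needed.
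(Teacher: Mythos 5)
Your proposal is correct and takes essentially the same route as the paper, which cites the base values $r_0=\mathbf{1}$, $\nu_0=\nu$ and the analyticity in Proposition \ref{transfer operator} and then invokes Taylor's formula; you have simply spelled out the Cauchy/derivative-bound and dual-norm bookkeeping that the paper leaves implicit.
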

\begin{proof}

According to Proposition \ref{transfer operator}, we have $r_0 = \mathbf{1}$, $\nu_0 = \nu$. In addition,
the mappings
$s \mapsto r_s$ and $s \mapsto \nu_s$ are analytic on $(-\eta, \eta)$.
The assertions follow using Taylor's formula.  
\end{proof}

\begin{proof}[Proof of  Theorem \ref{MainThmNormTarget}]
When $y\in [0,1]$,
Theorem \ref{MainThmNormTarget} is a direct consequence of Theorem \ref{Thm-BerryEsseen-Norm},
so it remains to prove Theorem \ref{MainThmNormTarget} in the case when $y>1$ with $y=o(\sqrt{n})$.
We proceed to prove the first assertion in Theorem \ref{MainThmNormTarget}.
Applying the change of measure formula \eqref{basic equ1}, we have  
\begin{align}
I : &  =    \mathbb{E} \Big[
    \varphi(X_n^x)  \mathds{1}_{ \{ \sigma(G_n, x) \geq n\Lambda'(0) + \sqrt{n}\sigma_0y \}  }
     \Big]  \label{def_of_I001} \\
&  =    r_s(x) \kappa^{n}(s) \mathbb{E}_{ \mathbb{Q}^{x}_{s} }
  \Big[ (\varphi r_s^{-1})(X_n^x) e^{-s \sigma(G_n, x) } 
  \mathds{1}_{\{ \sigma(G_n, x) \geq n\Lambda'(0) + \sqrt{n}\sigma_0y\}} \Big]. \notag 
\end{align}
Under the assumptions of Theorem \ref{MainThmNormTarget}, by Proposition \ref{Prop-lambTaylor},  
$\sigma_s^2 = \Lambda''(s)>0$ for any $s \in (-\eta, \eta)$ with $\eta>0$ small enough.
We denote $W_n^x = \frac{ \sigma(G_n, x) - n \Lambda'(s) }{ \sigma_s\sqrt{n} }$.
Recalling that $\Lambda = \log \kappa$, we rewrite  \eqref{def_of_I001} as follows:
\begin{align} \label{change of measure Wn1}
I  =     r_s(x) e^{-n [ s \Lambda'(s)-\Lambda(s) ] }  
\mathbb{E}_{\mathbb{Q}_{s}^{x}}
    \Big[  (\varphi r_s^{-1}) (X_n^x)
    e^{ -s \sigma_s \sqrt{n} W_{n}^x }
        \mathds{1}_{ \big\{ W_{n}^x \geq \frac{\sqrt{n}[\Lambda'(0)-\Lambda'(s)]}{\sigma_s} + \frac{ \sigma_0y }{ \sigma_s}
        \big\} }
      \Big].
\end{align}
By Proposition \ref{transfer operator},
the function $\Lambda$ is analytic and hence 
for $s\in (-\eta, \eta)$,
$\Lambda(s)=\sum_{k=1}^{\infty}\frac{\gamma_k}{k!}s^k,$ where $\gamma_k = \Lambda^{(k)}(0).$
For any $y>1$ with $y = o(\sqrt{n})$,  consider the equation
\begin{align} \label{equation s and y 1}
\sqrt{n} [ \Lambda'(s) - \Lambda'(0) ] = \sigma_0y.
\end{align}
Choosing the unique real root $s$ of \eqref{equation s and y 1}, it follows from Petrov \cite{Pet75} that
\begin{align}\label{zetaLamb}
s\Lambda'(s)-\Lambda(s)
= \frac{y^2}{2n} - \frac{y^3}{ n^{3/2} } \zeta( \frac{y}{\sqrt{n} } ),
\end{align}
where $\zeta$ is the Cram\'{e}r series defined by \eqref{Def-CramSeri}.
Substituting \eqref{equation s and y 1} into \eqref{change of measure Wn1},
and using \eqref{zetaLamb}, we get
\begin{align}\label{RewriI001}
I =   r_s(x) e^{ -\frac{y^2}{2} + \frac{y^3}{ \sqrt{n} } \zeta(\frac{y}{\sqrt{n}}) }  \mathbb{E}_{\mathbb{Q}_{s}^{x}}
    \Big[  (\varphi r_s^{-1}) (X_n^x)
    e^{ -s \sigma_s \sqrt{n} W_{n}^x }
        \mathds{1}_{ \{ W_{n}^x \geq 0 \} }
      \Big].
\end{align}
For brevity, denote  
$F(u) = \mathbb{E}_{\mathbb{Q}_s^x}
   \big[ (\varphi r_s^{-1}) (X_n^x) \mathds{1}_{\{ W_n^x \leq u \} } \big]$, $u \in \mathbb{R}.$ 
In view of \eqref{RewriI001}, using Fubini's theorem and integration by parts,
we deduce that
\begin{align} \label{change measure 1}
I & =   r_s(x) e^{ -\frac{y^2}{2} + \frac{y^3}{ \sqrt{n} } \zeta(\frac{y}{\sqrt{n}}) }  \mathbb{E}_{\mathbb{Q}_{s}^{x}}
    \bigg[  (\varphi r_s^{-1}) (X_n^x)
  \int_0^{\infty} \mathds{1}_{ \{ 0\leq W_n^x \leq u  \} } s \sigma_s \sqrt{n}  \, e^{ -s \sigma_s \sqrt{n} u}  \, du
      \bigg]   \notag\\
& =  r_s(x) e^{ -\frac{y^2}{2} + \frac{y^3}{ \sqrt{n} } \zeta(\frac{y}{\sqrt{n}}) }
           \int_{0}^{\infty} e^{ -s\sqrt{n} \sigma_su } \, dF(u).
\end{align}
Let $l(u) = F(u) - \pi_s(\varphi r_s^{-1} ) \Phi(u)$, $u \in \mathbb{R}$. It follows that
\begin{align}\label{compare I1 and normal 1}
& \int_{0}^{\infty} e^{-s\sqrt{n} \sigma_s u } \, d F(u)  
=  I_1  + \frac{ \pi_s( \varphi r_s^{-1} ) }{\sqrt{2\pi}} I_2,\\
& I_1 =  \int_{0}^{\infty} e^{-s\sqrt{n} \sigma_su} \, dl(u),  \quad 
  I_2 = \int_{0}^{\infty} e^{-s\sqrt{n} \sigma_su - \frac{u^2}{2}} \, du. 
\end{align}

\textit{Estimate of $I_1$.}
Integrating by parts, using the fact that $r_s \in \mathcal{B}_{\gamma}$
and the Berry--Esseen bound in Theorem \ref{Thm-BerryEsseen-Norm} 
implies that uniformly in $s \in [0, \eta)$, $x \in \mathcal{S}$ and $\varphi \in \mathcal{B}_{\gamma}$,  
\begin{align} \label{error I1 and normal}
|I_1|  \leq |l(0)| + s\sqrt{n} \sigma_s \int_{0}^{\infty} e^{-s\sqrt{n}\sigma_s u} |l(u)| \, du
 \leq   \frac{C}{\sqrt{n}} \lVert \varphi \rVert_{\gamma}.
\end{align}

\textit{Estimate of $I_2$.}
Since the function $\Lambda$ is analytic on $(-\eta, \eta)$ and $\sigma_s^2 = \Lambda''(s) >0$,  
by Taylor's formula,  
we have $\Lambda'(s)-\Lambda'(0)=s\sigma_0^2\big[1+O(s)\big]$
and $\sigma_s^2 = \sigma_0^2\big[1+O(s)\big]$.
Thus, using standard techniques from Petrov \cite{Pet75}, one has 
\begin{align} \label{defini I1I2}
I_2 = I_3 +  O \Big( \frac{ 1 }{ \sqrt{n} }  \Big),
\quad  \mbox{where}  \   I_3 =  
  \int_{0}^{\infty} e^{-\frac{ \sqrt{n} [\Lambda'(s)-\Lambda'(0)]  }{\sigma_0} u - \frac{u^2}{2}} \, du.
\end{align}
Since $\sigma_s$ is strictly positive and bounded uniformly in $s \in (0, \eta)$, 
using \eqref{equation s and y 1}  and the fact that $y>1$, for sufficiently large $n$, we get that
$s \sqrt{n} \, \sigma_s  
 \geq \frac{y}{ 2\sigma_0 } \sigma_s \geq  c_1 >0$. 
This implies that $C_1  \leq s \sqrt{n}I_2 \leq  C_2$ for large enough $n$, where 
$C_1<C_2$ are two positive constants independent of $n$ and $s$. 
Combining this two-sided bound with \eqref{compare I1 and normal 1}, 
\eqref{error I1 and normal} and \eqref{defini I1I2},  we obtain 
\begin{align} \label{estimaite I1 1}
\int_{0}^{\infty} e^{-s\sqrt{n} \sigma_s u } \, d F(u)
=   I_3  \bigg[ \frac{ \pi_s( \varphi r_s^{-1} ) }{\sqrt{2\pi}} +  \lVert \varphi \rVert_{\gamma} O(s) \bigg].
\end{align}
Substituting \eqref{equation s and y 1} into \eqref{defini I1I2}, we get
\begin{align*} 
  \int_{0}^{\infty} e^{-s\sqrt{n} \sigma_s u } \, d F(u)
=  e^{\frac{y^2}{2} }  \int_{y}^{\infty} e^{-\frac{1}{2}u^2} \, du 
   \bigg[ \frac{ \pi_s( \varphi r_s^{-1} ) }{\sqrt{2\pi}} + \lVert \varphi \rVert_{\gamma}  O(s) \bigg].
\end{align*}
Together with \eqref{change measure 1}, this implies
\begin{align}\label{PrfCram001}
I = r_s(x) e^{\frac{y^3}{\sqrt{n}} \zeta( \frac{y}{\sqrt{n}} )} \big[ 1 - \Phi(y) \big]
   \Big[ \pi_s( \varphi r_s^{-1} ) + \lVert \varphi \rVert_{\gamma}  O(s) \Big],
\end{align}
where $\pi_s( \varphi r_s^{-1} ) = \frac{ \nu_s(\varphi) }{ \nu_s(r_s) }$.
By Lemma \ref{lem:expconv}, we have
$\lVert r_s - \mathbf{1} \rVert_{\infty} \leq Cs$ and
$| \pi_s( \varphi r_s^{-1} ) - \nu(\varphi)| \leq C s \lVert \varphi \rVert_{\gamma}$,
uniformly in $s \in [0, \eta)$ and $\varphi \in \mathcal{B}_{\gamma}$. 
Since $s=O(\frac{y}{\sqrt{n}})$,
this concludes the proof of 
the first assertion of Theorem \ref{MainThmNormTarget}.

The proof of  the second assertion of Theorem \ref{MainThmNormTarget} can be carried out in a similar way.
Specifically, instead of using \eqref{equation s and y 1}, we consider the equation
$\sqrt{n}[\Lambda'(s)-\Lambda'(0)] = -\sigma_0y, $ 
where $y>1$ and $s \in (-\eta,0]$.
We then apply the spectral gap properties of operators $P_s, Q_s, R_{s,z}$ 
(see Section \ref{sec:spec gap norm})
for negative valued $s$ to deduce the second assertion 
by following the proof of the first one.  We omit the details.
\end{proof}


\section{Proof of the local limit theorems} 

The goal of this section is to establish the local limit theorems with moderate deviations, 
namely Theorems \ref{ThmLocal01} and \ref{Thm_LLT_Norm_0a}. 

\subsection{Proof of Theorem \ref{ThmLocal01}} 

We first establish an asymptotic expansion which will be used to prove Theorem \ref{ThmLocal01}. 
Assume that $\psi: \mathbb R \mapsto \mathbb C$
is a continuous function with compact support in $\mathbb{R}$, 
which is differentiable in a small neighborhood of $0$ on the real line.

\begin{proposition} \label{Prop_R_st_limit}
Assume either conditions \ref{CondiMoment} and \ref{CondiIP} for invertible matrices,
or conditions \ref{CondiMoment}, \ref{CondiAP} and \ref{Condi-Variance} for positive matrices.
Then, there exist constants $\eta, \delta, c, C >0$ such that for all $s \in (-\eta, \eta)$, 
$x \in \mathcal{S}$, $|l|\leq \frac{1}{\sqrt{n}}$, $\varphi \in \mathcal{B}_{\gamma}$ and $n \geq 1$, 
\begin{align} \label{R_st_limit}
&  \bigg| \sigma_s \sqrt{n}  \,  e^{ \frac{n l^2}{2 \sigma_s^2} }
\int_{\mathbb R} e^{-it l n} R^{n}_{s, it}(\varphi)(x) \psi (t) \,dt
  - \sqrt{2\pi} \pi_s(\varphi) \psi(0) \bigg|   \notag\\
& \leq  \frac{ C }{ \sqrt{n} }  \lVert \varphi \rVert_{\gamma}
  + \frac{C}{n}  \lVert \varphi \rVert_{\gamma}  \sup_{|t| \leq \delta} \big( |\psi(t)| + |\psi'(t)| \big)
  + Ce^{-cn}  \lVert \varphi \rVert_{\gamma}  \int_{\mathbb R} |\psi(t)| \,dt. 
\end{align}
\end{proposition}

\begin{proof}
For brevity, denote $c_s(\psi)= \frac{\sqrt{2\pi}}{\sigma_s} \pi_{s}(\varphi) \psi(0).$ 
Taking a small constant $\delta >0$
and using the spectral gap decomposition \eqref{perturb001} with $z = it$, 
we have
\begin{align} \label{Thm1 integral1 J}
&  \Big| \sqrt{n}  \   e^{ \frac{n l^2}{2 \sigma_s^2} }
  \int_{\mathbb R} e^{-it l n} R^{n}_{s,it}(\varphi)(x) \psi (t) \,dt  - c_s(\psi)  \Big|  \notag\\ 
& \leq  \Big| \sqrt{n} \   e^{ \frac{n l^2}{2 \sigma_s^2} }
\int_{|t|\geq\delta} e^{-itln}R^{n}_{s,it}(\varphi)(x) \psi(t) \,dt  \Big|  
  + \Big| \sqrt{n}  \   e^{ \frac{n l^2}{2 \sigma_s^2} } \int_{|t|< \delta }  e^{-i t l n}
N^{n}_{s,it}(\varphi)(x) \psi(t) \,dt \Big|    \notag\\
& \quad + \Big| \sqrt{n}  \  e^{ \frac{n l^2}{2 \sigma_s^2} }  \int_{|t|<\delta} e^{-i t l n}
\lambda^{n}_{s,it} \Pi_{s,it}(\varphi)(x) \psi(t) \,dt - c_s(\psi) \Big|   \notag\\
& =:  J_1 + J_2 + J_3.  
\end{align}
For $J_1$, since the function $\psi$ is bounded and compactly supported on $\mathbb R$, 
taking into account Proposition \ref{Prop-UnifR} and the fact $|e^{-it l n}| = 1$, 
we get
\begin{align} \label{Thm1 integral1 J1}
\sup_{s \in (-\eta, \eta)} \sup_{x\in \mathcal{S}} \sup_{|l|\leq \frac{1}{\sqrt{n}} }
J_1 \leq C_{\delta}  e^{- c_{\delta} n}  \lVert \varphi \rVert_{\gamma}
 \int_{|t|\geq\delta} |\psi(t)| \,dt. 
\end{align}
For $J_2$, by \eqref{SpGapContrN} there exist constants $c_{\delta} >0$ and $a \in (0,1)$ such that 
\begin{align*}
\sup_{s \in (-\eta, \eta)}
\sup_{x\in \mathcal{S}} \sup_{|t| < \delta}
|N^{n}_{s,it}(\varphi)(x)|
\leq  \sup_{s \in (-\eta, \eta)} \sup_{|t| < \delta}
\lVert N^{n}_{s,it} \rVert_{\mathcal B_{\gamma} \to \mathcal B_{\gamma}}  \lVert \varphi \rVert_{\gamma}
\leq  c_{\delta} a^n \lVert \varphi \rVert_{\gamma}.  
\end{align*}
This implies that
uniformly in $s \in (-\eta, \eta)$, $|l| \leq \frac{1}{\sqrt{n}}$, 
$x \in \mathcal{S}$ and $\varphi \in \mathcal{B}_{\gamma}$, 
\begin{align}  \label{SaddleIntegral 2}
J_2 \leq C_{\delta}  e^{- c_{\delta} n}  \lVert \varphi \rVert_{\gamma}  \int_{|t| < \delta} |\psi(t)| \,dt. 
\end{align}
For $J_3$, we make a change of variable $t = t'/\sqrt{n}$ to get
\begin{align}\label{Ch2PropRn1aya}
J_3  & =  \Big|  e^{ \frac{n l^2}{2 \sigma_s^2} }  \int_{-\delta \sqrt{n}}^{\delta \sqrt{n}}  e^{-i t l \sqrt{n}}
\lambda^{n}_{s, \frac{ it }{ \sqrt{n} } } 
 \Pi_{s, \frac{it}{\sqrt{n}} }(\varphi)(x) \psi \big( \frac{t}{\sqrt{n}} \big) \,dt - c_s(\psi) \Big|  \notag\\
& \leq \Big|  e^{ \frac{n l^2}{2 \sigma_s^2} }  \int_{-\delta \sqrt{n}}^{\delta \sqrt{n}}  e^{-i t l \sqrt{n}}
\lambda^{n}_{s, \frac{ it }{ \sqrt{n} } }
\Big[ \Pi_{s, \frac{it}{\sqrt{n}} }(\varphi)(x) \psi \big( \frac{t}{\sqrt{n}} \big) 
     - \pi_s(\varphi) \psi(0) \Big]\,dt
 \Big|   \notag\\
& \quad  +  \Big| \pi_s(\varphi) \psi(0)  e^{ \frac{n l^2}{2 \sigma_s^2} }  
   \int_{-\delta \sqrt{n}}^{\delta \sqrt{n}}  e^{-i t l \sqrt{n}}
  \lambda^{n}_{s,\frac{it}{\sqrt{n}}}\,dt - c_s(\psi) \Big|   
 =:  J_{31} + J_{32}. 
\end{align}
Using the formula \eqref{relationlamkappa001} and the fact that the function $\Lambda$
is analytic in a small neighborhood of $0$ of the complex plane,  
we can check that there exists a constant $C >0$ such that for all $s \in (-\eta, \eta)$,
$t \in [-\delta \sqrt{n}, \delta \sqrt{n}]$ and $n \geq 1$,  
\begin{align}\label{LLT_Esti_Add_1}
\Big| \lambda^{n}_{ s, \frac{it}{\sqrt{n}} } - e^{ - \frac{ \sigma_s^2 t^2 }{2} } \Big|
\leq  \frac{C}{ \sqrt{n} } e^{ -\frac{ \sigma_s^2 t^2 }{4} }.  
\end{align}
By \eqref{SpGapContrPi} and the fact that $\Pi_{s,0} (\varphi) (x) = \pi_s(\varphi)$, 
it follows that uniformly in $s \in (-\eta, \eta)$, 
$t \in [-\delta \sqrt{n}, \delta \sqrt{n}]$ and $x\in \mathcal{S}$, 
\begin{align*} 
\Big| \Pi_{s,\frac{it}{\sqrt{n}}} (\varphi)(x) - \pi_s (\varphi) \Big|
 \leq  \Big\lVert \Pi_{s, \frac{it}{\sqrt{n}}} - \Pi_{s,0} \Big\rVert_{\mathcal B_{\gamma} \to \mathcal B_{\gamma}}
  \lVert \varphi \rVert_{\gamma} 
 \leq  c \frac{|t|}{\sqrt{n}} \lVert \varphi \rVert_{\gamma}. 
\end{align*}
Since the function $\psi$ is differentiable in a small neighborhood of $0$, 
we obtain that there exists a constant $C >0$ such that 
for all $s \in (-\eta, \eta)$, $x \in \mathcal{S}$
and $t \in [-\delta \sqrt{n}, \delta \sqrt{n}]$,
\begin{align*}
&  \Big| \Pi_{s, \frac{it}{\sqrt{n}} }(\varphi)(x) \psi \big( \frac{t}{\sqrt{n}} \big) 
     - \pi_s (\varphi) \psi(0)  \Big|   \notag\\
& \leq  \Big| \Pi_{s, \frac{it}{\sqrt{n}} }(\varphi)(x) \psi \big( \frac{t}{\sqrt{n}} \big) 
     - \pi_s (\varphi) \psi \big( \frac{t}{\sqrt{n}} \big)  \Big| 
   +  \Big| \Pi_{s, 0}(\varphi)(x) \psi \big( \frac{t}{\sqrt{n}} \big) 
     - \pi_s (\varphi) \psi(0)  \Big|   \notag\\
& \leq C \frac{|t|}{\sqrt{n}}  \lVert \varphi \rVert_{\gamma}  \sup_{|t| \leq \delta} |\psi(t)|
   + C  \frac{|t|}{\sqrt{n}}  \lVert \varphi \rVert_{\gamma}  \sup_{|t| \leq \delta} |\psi'(t)|. 
\end{align*}
Combining this with \eqref{LLT_Esti_Add_1}, 
we get the desired bound for $J_{31}$: 
there exists a constant $C >0$ such that, for all $n \geq 1$, 
$|l| \leq \frac{1}{\sqrt{n}}$, $s \in (-\eta, \eta)$, $x \in \mathcal{S}$ and $\varphi \in \mathcal{B}_{\gamma}$, 
\begin{align}  \label{Ch2LDfghj}
J_{31}   \leq  \frac{C}{\sqrt{n}}  \lVert \varphi \rVert_{\gamma}  
      + \frac{C}{n}  \lVert \varphi \rVert_{\gamma}  \sup_{|t| \leq \delta} \big( |\psi(t)| + |\psi'(t)| \big).  
\end{align} 
To estimate $J_{32}$ in \eqref{Ch2PropRn1aya}, we first notice that
\begin{align} \label{CH2LD_ius}
J_{32} 
& \leq   \bigg| \pi_s(\varphi) \psi(0) e^{\frac{n l^2}{2\sigma_s^2}} 
\int_{-\delta \sqrt{n}}^{\delta \sqrt{n}} e^{-i t l \sqrt{n}} 
\Big( \lambda^{n}_{s,\frac{it}{\sqrt{n}}} - e^{-\frac{\sigma_s^{2}t^{2}}{2}} \Big) \,dt  \bigg| \notag\\
& \quad  +   \bigg| \pi_s(\varphi) \psi(0) e^{\frac{n l^2}{2\sigma_s^2}}  \int_{|t|\geq \delta\sqrt{n}} 
 e^{-i t l \sqrt{n}} e^{-\frac{\sigma_s^{2}t^{2}}{2}} \,dt \bigg|  
 =:  J_{321} + J_{322}. 
\end{align}
For $J_{321}$, from \eqref{LLT_Esti_Add_1} it follows that $J_{321} \leq \frac{C}{\sqrt{n}} \lVert \varphi \rVert_{\gamma}$. 
For $J_{322}$, using the basic inequality
$\int_y^\infty e^{-\frac{t^2}{2}} \,dt \leq \frac{1}{y} e^{-\frac{y^2}{2}}$ for $y>0$, we get that 
$J_{322} \leq  e^{-c n} \lVert \varphi \rVert_{\gamma}$.  
Hence, there exists a constant $C >0$ 
such that for all $|l| \leq \frac{1}{\sqrt{n}}$, $s \in (-\eta, \eta)$ and $\varphi \in \mathcal{B}_{\gamma}$, 
it holds that $J_{32} \leq \frac{C}{\sqrt{n}}  \lVert \varphi \rVert_{\gamma}$. 
This, together with \eqref{Ch2LDfghj} and \eqref{Ch2PropRn1aya},  implies 
the desired bound for $J_3$: there exists a constant $C >0$ such that
for all $n \geq 1$, $|l| \leq \frac{1}{\sqrt{n}}$, $s \in (\eta, \eta)$, 
$x \in \mathcal{S}$ and $\varphi \in \mathcal{B}_{\gamma}$, 
\begin{align*}
J_3 \leq  \frac{C}{\sqrt{n}}  \lVert \varphi \rVert_{\gamma} 
 + \frac{C}{n} \lVert \varphi \rVert_{\gamma} \sup_{|t| \leq \delta} \big( |\psi(t)| + |\psi'(t)| \big). 
\end{align*}
Combining this with \eqref{Thm1 integral1 J1} and \eqref{SaddleIntegral 2}, 
we conclude the proof of Proposition \ref{Prop_R_st_limit}. 
\end{proof}

Now we are equipped to establish Theorem \ref{ThmLocal01}. 

\begin{proof}[Proof of Theorem \ref{ThmLocal01}]
We only need to establish the first assertion of the theorem 
since the second and the third ones are its particular cases. 
By the change of measure formula \eqref{basic equ1}, 
we get that for any $s \in (-\eta, \eta)$ with sufficiently small $\eta>0$,  
\begin{align}\label{LLT_Def_An_d}
J_n : & =  \mathbb{E} \Big[ \varphi(X_n^x) \psi \big( \sigma(G_n, x) - n\lambda - \sqrt{n}\sigma y \big) \Big]   \\
      & =  r_s(x) \kappa^{n}(s) \mathbb{E}_{ \mathbb{Q}^{x}_{s} }
\Big[ (\varphi r_s^{-1})(X_n^x) e^{-s \sigma(G_n, x) } 
   \psi \big( \sigma(G_n, x) - n\lambda - \sqrt{n}\sigma y \big) \Big]. \notag
\end{align}
For brevity, denote 
\begin{align*}
T_n^x = \sigma(G_n, x) - n \Lambda'(s).
\end{align*}
By considering equation \eqref{equation s and y 1} for any $|y| = o(\sqrt{n})$ (not necessarily $|y| > 1$), 
we get the identity \eqref{zetaLamb} for $|y| = o(\sqrt{n})$. 
Hence, we have
\begin{align*}
J_n & = r_s(x) e^{-n [s\Lambda'(s) - \Lambda(s)]} \mathbb{E}_{ \mathbb{Q}^{x}_{s} }
\Big[ (\varphi r_s^{-1})(X_n^x) e^{-s T_n^x } \psi(T_n^x) \Big] \notag\\
& = r_s(x) e^{- \frac{y^2}{2} + \frac{y^3}{ \sqrt{n} } \zeta( \frac{y}{\sqrt{n} } )} \mathbb{E}_{ \mathbb{Q}^{x}_{s} }
\Big[ (\varphi r_s^{-1})(X_n^x) e^{-s T_n^x } \psi(T_n^x) \Big]. 
\end{align*}
We denote
\begin{align}\label{LLT_psi_jjj}
\psi_s(u) = e^{-su} \psi(u),  \quad u \in \mathbb R. 
\end{align} 
Taking into account Lemma \ref{lem:expconv}, 
in order to establish Theorem \ref{ThmLocal01}, it is sufficient to prove 
the following asymptotic: as $n \to \infty$, 
\begin{align}\label{LLT_Pf_Object}
A_n : = \sigma \sqrt{2 \pi n} \,  \mathbb{E}_{ \mathbb{Q}^{x}_{s} }
\Big[ (\varphi r_s^{-1})(X_n^x)  \psi_s(T_n^x) \Big]
\to \nu(\varphi) \int_{\mathbb R} \psi(u) \, du. 
\end{align}
To prove \eqref{LLT_Pf_Object}, we need to use some smoothing techniques.
For sufficiently small $\varepsilon >0$, we denote for any $s \in (-\eta, \eta)$ and $u \in \mathbb R$, 
\begin{align}\label{LLT_psi_lll}
\psi_{s, \varepsilon}^+(u) = \sup_{u' \in \mathbb R: |u' - u| \leq \varepsilon}  \psi_s(u'), 
\qquad
\psi_{s, \varepsilon}^-(u) = \inf_{u' \in \mathbb R: |u' - u| \leq \varepsilon}  \psi_s(u').  
\end{align}
Denote respectively by $\widehat{\psi}^+_{s, \varepsilon}$ and $\widehat{\psi}^-_{s, \varepsilon}$
the Fourier transform of $\psi_{s, \varepsilon}^+$ and $\psi_{s, \varepsilon}^-$. 
For the moment we suppose that
\begin{align} \label{eq-cond-coincidence001}
\lim_{\varepsilon \to 0} \widehat{\psi}^+_{0, \varepsilon}(0) = \lim_{\varepsilon \to 0} \widehat{\psi}^-_{0, \varepsilon}(0) 
= \int_{\mathbb R} \psi(u) \, du. 
\end{align}
Note that the Fourier transform of the function $\psi_s$ may not be integrable on $\mathbb R$. 
In the sequel we shall use a smoothing inequality from \cite[Lemma 5.2]{GLL17}, 
which gives two-sided bounds for $\psi_s$. 
Let $\rho$ be a non-negative density function on $\mathbb{R}$  with $\int_{\mathbb{R}} \rho(u) \, du = 1$
and $\rho(u) \leq \frac{C}{1 + u^4}$ for all $u \in \mathbb R$, 
so that its Fourier transform $\widehat{\rho}$ is supported on $[-1,1]$. 
For any $0< \varepsilon < 1$, define the rescaled density function $\rho_{\varepsilon}$ by
$\rho_{\varepsilon}(u) = \frac{1}{\varepsilon} \rho(\frac{u}{\varepsilon})$, $u\in \mathbb R,$ 
whose Fourier transform has a compact support on $[-\varepsilon^{-1},\varepsilon^{-1}]$. 
Then, there exists a positive constant $C_{\rho}(\varepsilon)$ with $C_{\rho}(\varepsilon) \to 0$ as $\varepsilon \to 0$,
such that for any $u \in \mathbb{R}$, 
\begin{align}\label{LLT_Smooth_nnn}
\psi^-_{s, \varepsilon}\!\ast\!\rho_{\varepsilon^2}(u) - 
\int_{|v|\geq \varepsilon} {\psi}^-_{s,\varepsilon}(u - v) \rho_{\varepsilon^2}(v) \,dv
\leq \psi_s(u) \leq (1+ C_{\rho}(\varepsilon))
\psi^+_{s, \varepsilon}\!\ast\!\rho_{\varepsilon^2}(u). 
\end{align}
Now we are going to prove \eqref{LLT_Pf_Object}. 
The proof will be done by establishing  upper and lower bounds for $A_n$. 
Without loss of generality, we assume that the target functions $\varphi$ and $\psi$ are non-negative. 

\textit{Upper bound.} 
Applying the smoothing inequality \eqref{LLT_Smooth_nnn}
and the Fourier inversion formula to the function $\psi^+_{s, \varepsilon}\!\ast\!\rho_{\varepsilon^2}$, we get 
\begin{align}\label{LLT_An_UPP_i}
A_n & \leq  (1+ C_{\rho}(\varepsilon)) \sigma \sqrt{2 \pi n} \,  \mathbb{E}_{ \mathbb{Q}^{x}_{s} }
         \Big[ (\varphi r_s^{-1})(X_n^x) (\psi^+_{s, \varepsilon} * \rho_{\varepsilon^2})(T_n^x) \Big]  \notag\\
    & = (1+ C_{\rho}(\varepsilon)) \sigma \sqrt{\frac{n}{2 \pi}} 
         \int_{\mathbb R} R_{s,it}^n (\varphi r_s^{-1}) (x) \widehat{\psi}^+_{s, \varepsilon} (t) \widehat{\rho}_{\varepsilon^2} (t) \,dt,
\end{align}
where $R_{s,it}$ is the perturbed operator defined by \eqref{Def-Rn-sw000} with $z = it$. 
Applying Proposition \ref{Prop_R_st_limit} with $\varphi = \varphi r_s^{-1}$
and $\psi = \widehat{\psi}^+_{s, \varepsilon} \widehat{\rho}_{\varepsilon^2}$
(one can verify that the remainder term in \eqref{R_st_limit} vanishes as $n \to \infty$, 
uniformly in $s \in (-\eta, \eta)$),
we obtain, uniformly in $s \in (-\eta, \eta)$, $|t| \geq \delta$ and $x\in \mathcal{S}$, 
\begin{align*}
\limsup_{n \to \infty} A_n \leq  (1+ C_{\rho}(\varepsilon)) \nu(\varphi) \widehat{\psi}^+_{0, \varepsilon}(0).
\end{align*}
Letting $\varepsilon \to 0$,  we get the desired upper bound for $A_n$:
uniformly in $s \in (-\eta, \eta)$ and $x\in \mathcal{S}$, 
\begin{align}\label{LLT_An_upp_final}
\limsup_{n \to \infty} A_n \leq \nu(\varphi) \lim_{\varepsilon \to 0} \widehat{\psi}^+_{0, \varepsilon}(0).
\end{align}

\textit{Lower bound.} 
Similarly to \eqref{LLT_An_UPP_i}, 
using the smoothing inequality \eqref{LLT_Smooth_nnn}, the fact that 
$\psi^-_{s,\varepsilon} \leq \psi_s  \leq (1+ C_{\rho}(\varepsilon)) \psi_{s,\varepsilon}^+ \! \ast \! \rho_{\varepsilon^2}$, 
and the Fourier inversion formula to the functions $\psi^-_{s, \varepsilon}\!\ast\!\rho_{\varepsilon^2}$ and
$\psi_{s,\varepsilon}^+ \! \ast \! \rho_{\varepsilon^2}$, we obtain 
\begin{align}\label{LLT_An_Low_j}
 A_n  & \geq  \sigma \sqrt{2 \pi n} \,  \mathbb{E}_{ \mathbb{Q}^{x}_{s} }
         \Big[ (\varphi r_s^{-1})(X_n^x) (\psi^-_{s, \varepsilon} * \rho_{\varepsilon^2}) (T_n^x) \Big]  \notag\\
      & \quad  - \sigma \sqrt{2 \pi n} \int_{ |v| \geq \varepsilon}  \mathbb{E}_{ \mathbb{Q}^{x}_{s} }  
        \Big[ (\varphi r_s^{-1})(X_n^x) \psi^-_{s, \varepsilon} (T_n^x - v) \Big] \rho_{\varepsilon^2}(v) \, dv \notag\\
    & \geq  \sigma \sqrt{\frac{n}{2 \pi}} 
 \int_{\mathbb R}  R_{s,it}^n (\varphi r_s^{-1}) (x) \widehat{\psi}^-_{s, \varepsilon} (t) \widehat{\rho}_{\varepsilon^2} (t) \,dt  \notag\\
    & \quad  - (1+ C_{\rho}(\varepsilon)) \sigma \sqrt{\frac{n}{2 \pi}} 
         \int_{ |v| \geq \varepsilon} 
\bigg[ \int_{\mathbb R} e^{-itv} R_{s,it}^n (\varphi r_s^{-1}) (x) 
       \widehat{\psi}^+_{s, \varepsilon} (t) \widehat{\rho}_{\varepsilon^2} (t) \,dt  \bigg] 
        \rho_{\varepsilon^2}(v) \, dv   \notag\\
  & = : B_n(\varepsilon) - D_n(\varepsilon). 
\end{align}
For $B_n(\varepsilon)$, in the same way as in the proof of \eqref{LLT_An_upp_final}, 
by considering the function $\psi^-_{s, \varepsilon}$ instead of $\psi^+_{s, \varepsilon}$ and
using Proposition \ref{Prop_R_st_limit}, we have that uniformly in $s \in (-\eta, \eta)$ and $x\in \mathcal{S}$, 
\begin{align}\label{LLT_Bn_Low_final}
\liminf_{\varepsilon \to 0} \liminf_{n \to \infty} B_n(\varepsilon) \geq \nu(\varphi) \lim_{\varepsilon \to 0} \widehat{\psi}^-_{0, \varepsilon}(0).
\end{align}
For $D_n(\varepsilon)$, 
we first note that we can follow the proof of the upper bound for $A_n$ 
to check the following asymptotic: for sufficiently small $\varepsilon >0$, 
uniformly in $s \in (-\eta, \eta)$, $x\in \mathcal{S}$ and $v \in [-\sqrt{n}, \sqrt{n}]$, 
\begin{align}\label{LLT_Ext_ff}
\lim_{n \to \infty} 
\sigma \sqrt{\frac{n}{2 \pi}}  e^{\frac{v^2}{2 n \sigma_s^2}}
 \int_{\mathbb R} e^{-itv} R_{s,it}^n (\varphi r_s^{-1}) (x) \widehat{\psi}^+_{s, \varepsilon} (t) \widehat{\rho}_{\varepsilon^2} (t) \,dt  
= \nu(\varphi) \widehat{\psi}^+_{0, \varepsilon}(0). 
\end{align}
To obtain an upper bound for the term $D_n(\varepsilon)$,  
we shall apply the Lebesgue dominated convergence theorem 
to pass to the limit as $n \to \infty$ through the integral $\int_{|v| \geq \varepsilon}$. 
The applicability of this theorem is justified below. 
We split the integral $\int_{|v| \geq \varepsilon}$ in the term $D_n(\varepsilon)$
into two parts: $\int_{ |v| >  \sqrt{n}}$ and $\int_{ \varepsilon \leq |v| \leq \sqrt{n}}$.
For the first part $\int_{ |v| >  \sqrt{n}}$, since the density function $\rho_{\varepsilon^2}$ has 
polynomial decay, i.e. $\rho_{\varepsilon^2}(v) \leq \frac{C_\varepsilon}{1 + v^4}$, $|v| > \sqrt{n}$,
we get that $\sqrt{n} \rho_{\varepsilon^2}(v) \leq \frac{C_\varepsilon}{1 + |v|^3}$, 
which is integrable on $\mathbb{R}$.  
For the second part, using \eqref{LLT_Ext_ff} we see that, the function
under the integral $\int_{\varepsilon \leq  |v| \leq \sqrt{n}}$ 
is dominated by  $C \rho_{\varepsilon^2}$ which is integrable on $\mathbb{R}$. 
Therefore, we can interchange 
the limit as $n\to \infty$ 
and the integral $\int_{|v|\geq \varepsilon}$, 
and then use \eqref{LLT_Ext_ff} again
to obtain that uniformly in $s \in (-\eta, \eta)$ and $x\in \mathcal{S}$, 
\begin{align*}
 \limsup_{n\to\infty} D_n(\varepsilon) 
& \leq  (1+ C_{\rho}(\varepsilon))  \nu(\varphi) \widehat{\psi}^+_{0, \varepsilon}(0)
\int_{|v| \geq \varepsilon} \rho_{\varepsilon^2}(v) \, dv.   
\end{align*}
The integral on right-hand side converges to $0$ as $\varepsilon \to 0$, 
since $\rho_{\varepsilon^2}(v) = \frac{1}{\varepsilon^2} \rho(\frac{v}{\varepsilon^2})$ 
and the function $\rho$ is integrable on $\mathbb{R}$.
Together with \eqref{LLT_An_Low_j} and \eqref{LLT_Bn_Low_final}, this implies the desired lower bound for $A_n$: 
uniformly in $s \in (-\eta, \eta)$ and $x\in \mathcal{S}$, 
\begin{align} \label{lower bound An 001}
\liminf_{n \to \infty} A_n \geq \nu(\varphi) \lim_{\varepsilon \to 0} \widehat{\psi}^-_{0, \varepsilon}(0). 
\end{align}

Combining \eqref{LLT_An_upp_final} and \eqref{lower bound An 001}, 
we obtain the assertion of Theorem \ref{ThmLocal01}, 
provided that \eqref{eq-cond-coincidence001} holds. 
Condition \eqref{eq-cond-coincidence001} can be relaxed to the direct Riemann integrability condition of the target function $\psi$,
by applying the approximation techniques developed in \cite{XGL18}. 
So the proof of Theorem \ref{ThmLocal01} is complete. 
\end{proof}


\subsection{Proof of Theorem \ref{Thm_LLT_Norm_0a}} 
In this subsection we prove Theorem \ref{Thm_LLT_Norm_0a}  
concerning the local limit theorem with moderate deviations 
for the operator norm $\lVert G_n \rVert$ in the case of invertible matrices. 
In this proof Theorem \ref{ThmLocal01}  plays the key role. 
Another important ingredient is the following Lemma \ref{Lem_Com_Gn_Gnx}
established recently by Benoist and Quint \cite{BQ16b},  
which provides a precise and interesting comparison between $\log \lVert G_n \rVert$ and $\sigma(G_n, x)$:

\begin{lemma}\label{Lem_Com_Gn_Gnx}
Assume conditions \ref{CondiMoment} and \ref{CondiIP} for invertible matrices. 
Then, for any $a>0$, 
there exist $c >0$ and $k_0 \in \mathbb{N}$, such that for all $n \geq k \geq k_0$
and $x = \mathbb R v \in \mathbb P^{d-1}$,  
\begin{align*}
\mathbb{P} \Big(   
\Big| \log \frac{\lVert G_n \rVert}{\lVert G_k \rVert} 
 - \log \frac{|G_n v|}{|G_k v|}   \Big| \leq e^{-a k}  
   \Big)
> 1- e^{ - ck }. 
\end{align*}
\end{lemma}

\begin{proof}[Proof of Theorem \ref{Thm_LLT_Norm_0a}]
Without loss of generality, we assume that the target function $\varphi$ is non-negative.

We first give the upper bound. 
By Lemma \ref{Lem_Com_Gn_Gnx}, we get that for any $a>0$, 
there exist $c >0$ and $k_0 \in \mathbb{N}$, such that for all $n \geq k \geq k_0$ and $x = \mathbb R v \in \mathbb P^{d-1}$,  
\begin{align*}
 J_n: & = \mathbb{E} \Big[ \varphi(X_n^x)
    \mathds{1}_{\{ \log \lVert G_n \rVert - n\lambda \in [a_1, a_2] + \sqrt{n}\sigma y \} }
     \Big] \notag \\
& \leq  \mathbb{E} \Big[ \varphi(X_n^x)
    \mathds{1}_{ \big\{ \log \frac{|G_n v|}{|G_k v|} + \log \lVert G_k\rVert - n\lambda 
     \in [a_1 - e^{-ak}, a_2 + e^{-ak}] + \sqrt{n}\sigma y \big\} }
     \Big] + e^{-ck} \lVert \varphi \rVert_{\infty}.  
\end{align*}
With the notation $G_{n,k} = g_n \ldots g_{k+1}$ for any $n \geq k \geq 1$, 
we have $X_n^x = G_{n,k} \cdot X_k^x$ and $\sigma(G_n, x) - \sigma(G_k, x) = \sigma(G_{n,k}, X_k^x)$. 
Thus the first term of the right-hand side of the above inequality can be rewritten as 
\begin{align*}
\mathbb{E} \Big[ \varphi(G_{n,k} \cdot X_k^x)
    \mathds{1}_{ \big\{ \sigma(G_{n-k}, X_k^x) - (n-k) \lambda \in [a_1 - e^{-ak}, a_2 + e^{-ak}] + \sqrt{n}\sigma y
    - (\log \lVert G_k\rVert - k \lambda) \big\} }
     \Big]. 
\end{align*}
Now we fix a sufficiently large constant $C_1 >0$ and we choose 
\begin{align*}
k = \floor{ C_1 y^2 },   
\end{align*}
where $\floor{ y }$ denotes the integer part of $y \in \mathbb R$. 
For any $\varepsilon >0$, there exists a large enough $k_1 \geq 1$ such that for all $k \geq k_1$,
\begin{align*}
[a_1 - e^{-ak}, a_2 + e^{-ak}] \subset I^+_{\varepsilon} : = [a_1 - \varepsilon, a_2 + \varepsilon]. 
\end{align*}
Using the large deviation bounds for $\log \lVert G_k \rVert$ (see \cite{BQ16b} or \cite{XGL18}), 
we see that for any $\delta > 0$, there exists a constant $c>0$ such that
for large enough $k \geq 1$, 
\begin{align*} 
\mathbb{P} \Big( \Big| \log \lVert G_k \rVert - k \lambda \Big| > k \delta \Big) \leq e^{-ck}. 
\end{align*}
Using this bound, it follows that
\begin{align*}
J_n & \leq \mathbb{E} \Big[ \varphi(G_{n,k} \cdot X_k^x)
    \mathds{1}_{ \big\{ \sigma(G_{n-k}, X_k^x) - (n-k) \lambda \in I^+_{\varepsilon} + \sqrt{n}\sigma y
    - (\log \lVert G_k\rVert - k \lambda) \big\} }    
   \mathds{1}_{ \big\{ \big| \log \lVert G_k \rVert - k \lambda \big| \leq k \delta \big\}  }
     \Big]    \notag\\
 & \quad  + e^{-ck} \lVert \varphi \rVert_{\infty}.    
\end{align*}
Taking conditional expectation given the $\sigma$-algebra 
$\mathscr{F}_k = \sigma (g_1, \ldots, g_k)$,
we get
\begin{align*}
J_n & \leq  \mathbb{E} \Big\{ \mathbb{E} \Big[ \varphi(G_{n,k} \cdot X_k^x)
    \mathds{1}_{ \big\{ \sigma(G_{n-k}, X_k^x) - (n-k) \lambda \in I^+_{\varepsilon} + \sqrt{n}\sigma y
    - (\log \lVert G_k \rVert - k \lambda) \big\} }      \notag\\
  & \qquad \qquad  \mathds{1}_{ \big\{ \big| \log \lVert G_k \rVert - k \lambda \big| \leq k \delta \big\}  }
      \Big|  \mathscr{F}_k  \Big]  \Big\}   + e^{-ck} \lVert \varphi \rVert_{\infty}.    
\end{align*}
Applying Theorem \ref{ThmLocal01}, we obtain that, as $n \to \infty$,
uniformly in $x \in \mathbb P^{d-1}$ and $|y| = o(n^{1/6})$,
\begin{align}\label{Pf_LLT_Upp111}
J_n  & \leq  \sup_{|u| \leq k \delta} 
 \exp \Big\{ -\frac{1}{2} \Big(\frac{y \sqrt{n}}{\sqrt{n-k}} - \frac{u}{\sigma \sqrt{n-k}} \Big)^2  \Big\} 
   \frac{ (a_2 - a_1 + 2 \varepsilon) \nu(\varphi)  +  o(1) }{\sigma   \sqrt{2 \pi n}}      
    + e^{-ck} \lVert \varphi \rVert_{\infty}.  
\end{align}
Since $k = \floor{C_1 y^2}$, 
it follows that as $n \to \infty$, 
\begin{align}\label{LLT_Norm_Upper}
J_n \leq  \frac{ e^{- \frac{y^2}{2}} }{\sigma   \sqrt{2 \pi n}}  
      \Big[ (a_2 - a_1 + 2 \varepsilon) \nu(\varphi) + o(1) \Big].
\end{align}

\medskip

We next give the lower bound.
Since the proof is similar to that of the upper bound, we only sketch the main differences. 
By Lemma \ref{Lem_Com_Gn_Gnx}, we get that for any $a>0$, 
there exist $c >0$ and $k_0 \in \mathbb{N}$, such that for all $n \geq k \geq k_0$ and $x = \mathbb R v \in \mathbb P^{d-1}$,  
\begin{align*}
J_n \geq  \mathbb{E} \Big[ \varphi(X_n^x)
    \mathds{1}_{ \big\{ \log \frac{|G_n v|}{|G_k v|} + \log \lVert G_k \rVert - n\lambda 
     \in [a_1 + e^{-ak}, a_2 - e^{-ak}] + \sqrt{n}\sigma y \big\} }
     \Big].  
\end{align*}
With the notation used in the proof of the upper bound, we have
\begin{align*}
J_n & \geq \mathbb{E} \Big[ \varphi(G_{n,k} \cdot X_k^x)
    \mathds{1}_{ \big\{ \sigma(G_{n-k}, X_k^x) - (n-k) \lambda \in I^-_{\varepsilon} + \sqrt{n}\sigma y
    - (\log \lVert G_k \rVert  - k \lambda) \big\} }     
 \mathds{1}_{ \big\{ \big| \log \lVert G_k \rVert - k \lambda \big| \leq k \delta \big\}  }
     \Big], 
\end{align*}
where $I^-_{\varepsilon} : = [a_1 + \varepsilon, a_2 - \varepsilon]. $
Notice that, for any $\varepsilon >0$, there exists a large enough $k_1 \geq 1$ such that for all $k \geq k_1$,
\begin{align*}
I^-_{\varepsilon} \subset [a_1 + e^{-ak}, a_2 - e^{-ak}]. 
\end{align*}
In the same way as in the proof of \eqref{Pf_LLT_Upp111},
we take conditional expectation given $\mathscr{F}_k$
and use Theorem \ref{ThmLocal01} to obtain that as $n \to \infty$,
uniformly in $x \in \mathbb P^{d-1}$ and $|y| = o(n^{1/6})$,
\begin{align*}
J_n  & \geq   \frac{1}{\sigma   \sqrt{2 \pi n}}     
  \Big[ (a_2 - a_1 - 2 \varepsilon) \nu(\varphi) -  o(1) \Big] 
  \inf_{|u| \leq k \delta} 
 \exp \Big\{ -\frac{1}{2} \Big(\frac{y \sqrt{n}}{\sqrt{n-k}} - \frac{u}{\sigma \sqrt{n-k}} \Big)^2  \Big\}.  
\end{align*}
As $k = \floor{ C_1 y^2 }$, elementary calculations lead to
\begin{align}\label{LLT_Norm_Lower}
J_n \geq  \frac{ e^{- \frac{y^2}{2}} }{\sigma   \sqrt{2 \pi n}}  
      \Big[ (a_2 - a_1 + 2 \varepsilon) \nu(\varphi) - o(1) \Big]. 
\end{align}
Since $\varepsilon>0$ can be arbitrary small, combining \eqref{LLT_Norm_Upper} and \eqref{LLT_Norm_Lower}, 
we conclude the proof of Theorem \ref{Thm_LLT_Norm_0a}. 
\end{proof}


\begin{ack}
We would like to thank the referees and the associate editor for their helpful comments and remarks.   
\end{ack}

\begin{funding}
This work was supported by the Centre Henri Lebesgue (CHL, ANR-11-LABX-0020-01), 
 and the National Natural Science Foundation of China (Grants Nos. 11971063 and 11731012). 
\end{funding}


\end{document}